\def\@xocreysp{ }
\definecolor{labelkey}{gray}{.25}
\definecolor{refkey}{gray}{.9}
\theoremstyle{plain}
\newtheorem{theorem}{Theorem}[section]
\newtheorem{corollary}[theorem]{Corollary}
\newtheorem{proposition}[theorem]{Proposition}
\newtheorem{lemma}[theorem]{Lemma}
\newtheorem*{proposition*}{Proposition}
\newtheorem*{lemma*}{Lemma}
\newtheorem*{corollary*}{Corollary}
\theoremstyle{definition}
\newtheorem{example}[theorem]{Example}
\newtheorem*{examples*}{Examples}
\newtheorem*{example*}{Example}
\theoremstyle{remark}
\newtheorem{remark}[theorem]{Remark}
\newtheorem*{remarks*}{Remarks}
\newtheorem*{remark*}{Remark}
\numberwithin{equation}{section}
\let\Cst\C
\renewcommand{\Cl}[2][normal]{\Cst[#1]\labelconstant{#2}{\string #1}}
\newcommand\ProvideMathOperator[2]{\ifdefined#1 \typeout{warning: DeclareMathOperator #1 already defined}\else\DeclareMathOperator{#1}{#2}\fi}
\providecommand{\SB}{\backslash}
\renewcommand{\le}{\leqslant}
\renewcommand{\ge}{\geqslant}
\providecommand{\Mun}{\mathds{1}}
\providecommand{\mdemi}{\tfrac{1}{2}}
\providecommand{\eps}{\varepsilon}
\ProvideMathOperator {\GL} {GL}
\ProvideMathOperator {\PGL} {PGL}
\ProvideMathOperator {\SL} {SL}
\ProvideMathOperator {\PSL} {PSL}
\ProvideMathOperator {\SO} {SO}
\ProvideMathOperator {\TO} {O}
\ProvideMathOperator {\SU} {SU}
\ProvideMathOperator {\U} {U}
\ProvideMathOperator {\USp} {USp}
\ProvideMathOperator {\Sp} {Sp}
\ProvideMathOperator {\GSp} {GSp}
\ProvideMathOperator {\GO} {GO}
\ProvideMathOperator {\Spin} {Spin}
\ProvideMathOperator {\TLie} {Lie}
\ProvideMathOperator {\TAd} {Ad}
\ProvideMathOperator {\Res}  {Res}
\ProvideMathOperator {\Mchar}{char}
\ProvideMathOperator {\Mdiag}{diag}
\ProvideMathOperator {\MId}{Id}
\ProvideMathOperator {\vol}  {vol}
\ProvideMathOperator {\Mid}  {Id}
\ProvideMathOperator {\Mvol} {vol}
\ProvideMathOperator {\MTr}  {Tr}
\ProvideMathOperator {\Mtr}  {tr}
\providecommand{\A}{\mathbb{A}}
\renewcommand{\C}{\mathbb{C}}
\providecommand{\F}{\mathbb{F}}
\providecommand{\G}{\mathbb{G}}
\providecommand{\N}{\mathbb{N}}
\providecommand{\Q}{\mathbb{Q}}
\providecommand{\R}{\mathbb{R}}
\providecommand{\Z}{\mathbb{Z}}
\providecommand{\cB}{\mathcal{B}}
\providecommand{\cC}{\mathcal{C}}
\providecommand{\cD}{\mathcal{D}}
\providecommand{\cF}{\mathcal{F}}
\providecommand{\cL}{\mathcal{L}}
\providecommand{\cU}{\mathcal{U}}
\providecommand{\CmA}{\mathcal{A}}
\providecommand{\CmE}{\mathcal{E}}
\providecommand{\CmF}{\mathcal{F}}
\providecommand{\CmH}{\mathcal{H}}
\providecommand{\CmL}{\mathcal{L}}
\providecommand{\CmO}{\mathcal{O}}
\providecommand{\CmP}{\mathcal{P}}
\providecommand{\CmT}{\mathcal{T}}
\providecommand{\CmU}{\mathcal{U}}
\providecommand{\kL}{\mathfrak{L}}
\providecommand{\kR}{\mathfrak{R}}
\providecommand{\ka}{\mathfrak{a}}
\providecommand{\kb}{\mathfrak{b}}
\providecommand{\kk}{\mathfrak{k}}
\providecommand{\kl}{\mathfrak{l}}
\providecommand{\km}{\mathfrak{m}}
\providecommand{\kp}{\mathfrak{p}}
\providecommand{\kq}{\mathfrak{q}}
\providecommand{\FmF}{\mathfrak{F}}
\providecommand{\FmM}{\mathfrak{M}}
\providecommand{\Fma}{\mathfrak{a}}
\providecommand{\Fmf}{\mathfrak{f}}
\providecommand{\Fmg}{\mathfrak{g}}
\providecommand{\Fmk}{\mathfrak{k}}
\providecommand{\Fmm}{\mathfrak{m}}
\providecommand{\Fmo}{\mathfrak{o}}
\providecommand{\Tmc}{\mathbf{c}}
\renewcommand{\Im}{\operatorname{Im}}
\renewcommand{\km}{\mathfrak{m}}
\newcommand{\tO}{\mathrm{O}}
\newcommand{\Jgeom}{J_{\mathrm{geom}}}
\ProvideMathOperator{\sgn}{sgn}
\ProvideMathOperator {\PO} {PO}
\newcommand{\oi}{\mathrm{oi}}
\newcommand{\glob}{\mathrm{glob}}
\begin{document}
\title[]{Sato-Tate equidistribution for families of Hecke--Maass forms on $\SL(n,\R) / \SO(n)$}
\author[]{Jasmin Matz}
\address{Department of Mathematical Science, Universitetsparken 5, 2100 Copenhagen, Denmark}
\email{matz@math.ku.dk}
\author[]{Nicolas Templier}
\address{Department of Mathematics, Cornell University, Malott Hall, Ithaca NY 14853-4201}
\email{templier@math.cornell.edu}
\date{\today}
\keywords{Automorphic forms, $L$-functions}

\begin{abstract}
We establish the Sato-Tate equidistribution of Hecke eigenvalues of the family of
Hecke--Maass cusp forms on $\SL(n,\Z)\backslash \SL(n,\R) / \SO(n)$.
As part of the proof, we establish a uniform upper-bound for spherical
functions on semisimple Lie groups which is of independent interest.
For each of the principal, symmetric square and exterior square $L$-functions, we deduce
the level distribution with restricted
support of the low-lying zeros. We also deduce average estimates toward 
Ramanujan, including an improvement on the previous 
literature in the case $n=2$.
\end{abstract}

\maketitle
{\parskip=0pt \small \tableofcontents}

\section{Introduction}

Hecke--Maass cusp forms are certain eigenfunctions of the Laplace operator on the locally
symmetric space $\SL(n,\Z) \backslash \SL(n,\R)/
\SO(n)$. Beyond
the existence of such forms and structure theory, we want to
study spectral properties such as the Weyl's law, the distribution of Hecke eigenvalues,
temperedness, and average behavior in families.
Major difficulties in the analysis of the trace formula arise when the space is
not
compact, and when the test function is not of compact support. In this paper we deal with
both difficulties together and solve a long-standing equidistribution problem that
generalizes some classical results of Selberg for $n=2$.

Selberg~\cite{Selb56} introduced the trace formula and derived the Weyl's law for
$\SL(2,\Z)$, which is an asymptotic count for the family of Hecke--Maass cusp forms on
$\SL(2,\Z) \backslash \SL(2,\R)/
\SO(2)$, ordered by the size of their eigenvalues.
Much later, Sarnak~\cite{Sarnak87} observed that a variant of the same argument can be
used to establish a much more precise result concerning averages
of Hecke eigenvalues, namely the Sato-Tate
equidistribution for the same family of Hecke--Maass cusp forms. This
entails to
inserting test functions of varying support in the Selberg trace formula, such as a
Hecke operator $T_p$ with $p$ growing arbitrary large, and estimating
the geometric side in a concrete way.
Sarnak and Piatetskii-Shapiro~\cite{Sarnak87}*{\S4} then raised the problem of
generalizing this to the family
of Hecke--Maass cusp forms on $\SL(n,\Z) \backslash \SL(n,\R)/
\SO(n)$ for an arbitrary $n\ge 2$.
The present paper solves this problem.

Arthur generalized~\cite{Arthur:tr-I} the Selberg trace formula to general reductive
groups, by decomposing the geometric side into coarse equivalence classes
and constructing truncation operators. He further introduced in~\cite{Ar81} the
weighted orbital integrals, which enabled him to develop in~\cite{Ar86} a fine
expansion of the geometric side. Especially important for our purposes are
the splitting formulas established in~\cite{Ar81}*{\S6}, and the weighted measures
constructed in~\cite{Art88a}.

In addition to these fundamental results of Arthur, we use the recursive analysis by
Lapid--M\"uller~\cite{LM09} of the spectral side of the trace formula for
$\GL(n)$, the method and results by Shin and the second-named
author~\cite{ST11cf} on uniform estimates of certain orbital integrals, and the estimate
by the first-named author~\cite{Ma:coeff} of Arthur's global coefficients.
Then there are two important novelties. We develop in Part~\ref{partI} some uniform germ
estimates for orbital integrals of certain bi-$\SO(n)$-invariant unbounded functions.
This difficulty arises because the test function is not of compact support, and our
method is of independent interest, notably in view of the new estimates on Harish-Chandra
spherical
functions.
In Part~\ref{part2}, we establish uniform bounds for all the terms that appear in
Arthur's fine geometric expansion for $\GL(n)$.
This paper is also the first to establish a remainder term in the Weyl's law for
$\SL(n,\Z)$ when $n\ge 3$.

\subsection{Main result}
Let $G=\GL(n)$ and $\A=\R\times\A_f$ the ring of adeles of $\Q$. Let $K=K_{\infty}\cdot K_f$ the usual maximal compact subgroup of
\[
G(\A)^1:=\{g\in G(\A),\ |\det g|_{\A}=1\}
\]
given by $K_{\infty}=\TO(n)\subseteq G(\R)$ and $K_f=G(\hat{\Z})\subseteq G(\A_f)$. Let $K_{\infty}^{\circ}=\SO(n)\subseteq K_{\infty}$ be the identity component of $K_{\infty}$.
Let $\Pi_{\text{cusp}}(G(\A)^1)$ denote the set of irreducible unitary representations $\pi$ occurring  in the cuspidal part of $L^2(G(\Q)\backslash G(\A)^1)$.
Such $\pi$ can be uniquely extended to a cuspidal automorphic representation of $G(\A)$ whose central character has finite order, and conversely.
We say that $\pi$ is spherical (resp. unramified) if $\pi^{K_\infty}$ (resp. $\pi^{K_f}$) is non-zero. Unramified representations $\pi$ in $\Pi_{\text{cusp}}(G(\A)^1)$ correspond to unramified cuspidal automorphic representation of $G(\A)$ with trivial central character.

For $\pi\in \Pi_{\text{cusp}}(G(\A)^1)$ let $\lambda_{\pi}\in \ka^*_\C/ W$ denote the
infinitesimal character of  the archimedean component $\pi_{\infty}$.  Here $\ka$ is the
Lie algebra of the subgroup  $A\subset G(\R)^1$ of diagonal matrices with positive
entries and $W\simeq \mathfrak{S_n}$ is the Weyl group. For $t>0$ and a bounded
open
subset $\Omega\subseteq i\ka^*$ let
\[
\Lambda_\Omega(t):=2 \vol(G(\Q)\backslash G(\A)^1/K_f)|W|^{-1} \int_{t\Omega} 
\left|
\frac{\mathbf{c}(\rho)}
{\mathbf{c}(\lambda)}\right|^2 d\lambda
\]
where $\mathbf{c}$ denotes Harish-Chandra's $c$-function for 
$G(\R)^1:=\{g\in G(\R)\mid |\det g|_{\R}=1\}$, and $\rho$ is the half-sum of positive 
roots.
It is of order $t^d$ as $t\rightarrow\infty$  where $d=\dim_{\R} (G(\R)^1/K_{\infty}) =\frac{n(n+1)}{2}-1$.

We define characters $\chi_{\pm}: \TO(n)/\{\pm\MId\}=\PO(n)\longrightarrow \{\pm1\}$  as follows: $\chi_+$ is the trivial character, that is, $\chi_+(k)=1$ for all $k\in \PO(n)$, and $\chi_-(k)=\det k$ if $n$ is even and is the trivial character if $n$ is odd. Note that the group $\PO(n)$ is disconnected if $n$ is even in which case $ \{\det(\pm\MId)\}=\{1\}$ while $\PO(n)$ is connected if $n$ is odd in which case  $\{\det(\pm\MId)\}=\{\pm 1\}$.
 We  view $\chi_{\pm}$ as unitary characters on $K_{\infty}=\TO(n)$ which are invariant under $\TO(n)\cap Z(\R) = \{ \pm \MId\}$, and which are both trivial if $n$ is odd. Here $Z$ denotes the center of $G$.

If $\chi\in\{\chi_+,\chi_-\}$  then $\pi_{\infty}\otimes\chi$ defines another element in the unitary dual of $G(\R)$, and the set of fixed vectors $(\pi_{\infty}\otimes\chi)^{K_{\infty}}$ under $K_{\infty}$ is non-empty if and only if $\pi_{\infty}$ has $K_{\infty}$-type $\chi$, that is, if one vector in $\pi_\infty$ is $\chi$-invariant with respect to the $K_\infty$-action. If $\chi=\chi_+$, $(\pi\otimes\chi_+)^{K_{\infty}}=\pi^{K_{\infty}}\neq 0$ means that $\pi_{\infty}$ is spherical.

\begin{theorem}\label{thm:weyl}
For any integer $n\ge3$ there is an effective constant $A>0$ depending only on $n$, 
and for any
  non-empty $W$-invariant bounded open subset $\Omega\subseteq i\ka^*$ with
piecewise
$C^2$-boundary there is a constant $c_1>0$ such that the following holds. Let  $\chi\in\{\chi_+, \chi_-\}$  and let $\tau:G(\A_f)\longrightarrow\C$
be the characteristic function of a compact bi-$K_f$-invariant subset. Then
\[
\lim_{t\rightarrow\infty}\Lambda_\Omega(t)^{-1}\sum_{\substack{\pi\in\Pi_{\text{cusp}}(G(\A)^1), \\ \lambda_{\pi}\in t\Omega}} \dim (\pi_{\infty}\otimes \chi)^{K_{\infty}} \Mtr\pi_f(\tau)
= \sum_{\gamma\in Z(\Q)/\{\pm 1\}} \tau(\gamma),
\]
where $Z$ is the center of $G$.
Moreover, for all $t\ge1$ we have
\[
\left|\sum_{\substack{\pi\in\Pi_{\text{cusp}}(G(\A)^1), \\ \lambda_{\pi}\in t\Omega}} \dim (\pi_{\infty}\otimes \chi)^{K_{\infty}} \Mtr\pi_f(\tau)
-  \Lambda_\Omega(t) \sum_{\gamma\in Z(\Q)/\{\pm 1\}} \tau(\gamma) \right|
 \le c_1 \|\tau\|_{L^1(G(\A_f))}^{A} t^{d-1/2}.
\]
\end{theorem}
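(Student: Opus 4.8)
The plan is to feed Arthur's invariant trace formula for $G(\A)^1$ a test function $f=f_\infty\otimes\tau$ (Haar measure on $G(\A_f)$ normalised so that $K_f$ has volume one), where $f_\infty$ is an archimedean test function attached to $t\Omega$ and to the $K_\infty$-type $\chi$, and then to match the spectral and geometric expansions. Since $\Lambda_\Omega(t)\asymp t^d$, the quantitative estimate implies the limit formula, so it is enough to prove the former. To build $f_\infty$, fix once and for all an even non-negative $\beta\in C_c^\infty(i\ka^*)$ of mass one and let $\widehat h_t^{\,\pm}=\mathbf 1_{(t\Omega)_{\pm1}}\ast\beta$ be the $\beta$-smoothings of slight dilations of $t\Omega$: these are $W$-invariant, entire of exponential type $O(1)$, satisfy $\widehat h_t^{\,-}\le\mathbf 1_{t\Omega}\le\widehat h_t^{\,+}$, and $\int(\widehat h_t^{\,+}-\widehat h_t^{\,-})\,\mathbf c(\lambda)^{-2}\,d\lambda=O(t^{d-1})$ because $\partial\Omega$ is piecewise $C^2$. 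By the Paley--Wiener theorem for the $\chi$-spherical transform on $G(\R)^1$, let $f_\infty^{\pm}$ be the smooth function, supported in a \emph{fixed} compact set and transforming by $\chi^{-1}$ on each side under $K_\infty$, whose spherical transform is $\widehat h_t^{\,\pm}$; then $\Mtr\,\pi_\infty(f_\infty^{\pm})=\dim(\pi_\infty\otimes\chi)^{K_\infty}\,\widehat h_t^{\,\pm}(\lambda_{\pi_\infty})$ for all irreducible unitary $\pi_\infty$, while $f_\infty^{\pm}(\MId)$ equals $\int_{t\Omega}\mathbf c(\lambda)^{-2}\,d\lambda$ up to explicit constants depending only on $n$ and an error $O(t^{d-1})$. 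Restricted to the cuspidal part, the spectral side for $f_\infty^{\pm}\otimes\tau$ is $\sum_{\pi}\dim(\pi_\infty\otimes\chi)^{K_\infty}\,\widehat h_t^{\,\pm}(\lambda_\pi)\,\Mtr\,\pi_f(\tau)$, and replacing $\widehat h_t^{\,\pm}$ by $\mathbf 1_{t\Omega}$ costs at most $O(\|\tau\|_{L^1(G(\A_f))}\,t^{d-1})$, using $|\Mtr\,\pi_f(\tau)|\le\|\tau\|_{L^1(G(\A_f))}$ for unramified $\pi_f$ and a Weyl-type upper bound — a cruder run of the trace formula with a fixed positive test function — for the number of cusp forms with $\lambda_\pi$ in a bounded neighbourhood of $t\partial\Omega$.

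Next I would show that the residual and continuous contributions to the spectral side are of strictly lower order, $O(\|\tau\|_{L^1(G(\A_f))}^{A}\,t^{d-1}\log^{O(1)}t)$; this is the ``essentially cuspidal'' mechanism. For $\GL(n)$ the residual discrete spectrum is described by Moeglin--Waldspurger in terms of cusp forms on smaller $\GL(m)$, and the infinitesimal characters so produced sweep out subsets of $i\ka^*$ of dimension $<d$. The continuous part is a sum over proper Levi subgroups $M$ of integrals over $i\ka_M^*$ of logarithmic derivatives of Rankin--Selberg $L$-functions paired against traces of intertwining operators composed with $\pi_\lambda(f)$; bounding $L'/L$ on the unitary axis, together with Matz's bounds for the relevant global coefficients and the lower-dimensionality of the constant-term spectrum on each $M$, keeps this of lower order. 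Hence the spectral side equals the left-hand side of the theorem, up to $O(\|\tau\|_{L^1(G(\A_f))}^{A}\,t^{d-1/2})$.

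On the geometric side I would invoke Arthur's fine expansion
\[
\Jgeom(f)=\sum_{M}\frac{|W_0^M|}{|W_0^G|}\sum_{\gamma}a^M(S,\gamma)\,J_M(\gamma,f),
\]
with $S$ a finite set of places containing $\infty$ and the places ramifying $\tau$. The term $M=G$, $\gamma\in Z(\Q)$, equals $\vol(G(\Q)\backslash G(\A)^1)\,f_\infty^{\pm}(\MId)\sum_{\gamma\in Z(\Q)}\tau(\gamma)$; inserting the value of $f_\infty^{\pm}(\MId)$, collecting the volume and Weyl-group normalisations, and using $Z(\R)\cap G(\R)^1=\{\pm\MId\}$ identifies it with $\Lambda_\Omega(t)\sum_{\gamma\in Z(\Q)/\{\pm1\}}\tau(\gamma)+O(\|\tau\|_{L^1(G(\A_f))}^{A}\,t^{d-1})$. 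Every remaining term is an error: for each of the finitely many $M$ and each non-central $\gamma$ the weighted orbital integral factors as $J_M(\gamma,f)=J_M^{S\setminus\{\infty\}}(\gamma,\tau)\cdot J_M(\gamma,f_\infty^{\pm})$; the finite-place factor and the coefficient $a^M(S,\gamma)$ are each $O(\|\tau\|_{L^1(G(\A_f))}^{A})$ — the latter by Matz's estimate — and, since $f_\infty^{\pm}$ has fixed support while $\operatorname{supp}\tau$ has controlled size, only $O(\|\tau\|_{L^1(G(\A_f))}^{A})$ conjugacy classes $\gamma$ contribute. The crucial point is the archimedean factor: one needs $|J_M(\gamma,f_\infty^{\pm})|\ll t^{d-1/2}$ \emph{uniformly} over non-central $\gamma$ and over $M$, after which summing the finitely many terms produces the desired total error.

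That uniform bound is the main obstacle. The trivial estimate of $J_M(\gamma,f_\infty^{\pm})$ by sup-norm times weighted volume of support only gives $O(t^d)$, the size of the main term itself, so genuine cancellation must be extracted: $f_\infty^{\pm}$ is a spectral packet $\int_{t\Omega}\widehat h_t^{\,\pm}(\lambda)\,\varphi_\lambda\,\mathbf c(\lambda)^{-2}\,d\lambda$ over the elementary spherical functions $\varphi_\lambda$, and one must integrate the oscillation of $\varphi_\lambda$ through the weighted orbital integral while controlling the behaviour as $\gamma$ approaches the singular set by a germ expansion — a uniform version of Arthur's $(G,M)$-family and descent machinery. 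This is exactly the content of the uniform germ estimates for real orbital integrals of bi-$\SO(n)$-invariant unbounded functions established earlier in the paper; assembling them into the global bound above, uniformly as both the spectral radius $t$ and the Hecke operator $\tau$ grow, is the technical heart of the argument. Matching the (equal) spectral and geometric expansions then yields the stated inequality, and specialising to fixed $\tau$ and letting $t\to\infty$ gives the limit formula.
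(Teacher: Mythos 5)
Your proposal is essentially the paper's own argument: insert $f_\infty\otimes\tau$ into the Arthur--Selberg trace formula, extract the cuspidal main term on the spectral side with the continuous and residual spectrum of lower order (as in Lapid--M\"uller), and on the geometric side use the fine expansion, Matz's bounds on $a^M(\gamma,S)$, the $p$-adic orbital integral bounds, and the Part~\ref{partI} uniform archimedean bound to get the $t^{d-1/2}$ error, the only cosmetic difference being that you use a single smoothed-indicator test function while the paper integrates the packet $f^\mu_\pm$ over $\mu\in t\Omega$. One small repair: your $\beta$ cannot lie in $C_c^\infty(i\ka^*)$ if $f_\infty^\pm$ is to be compactly supported (take instead $\beta=|\hat g|^2$ with $g\in C_c^\infty(\ka)$), so the sandwich $\widehat h_t^{\,-}\le\mathbf 1_{t\Omega}\le\widehat h_t^{\,+}$ holds only up to rapidly decaying tails, which is harmless given the boundary-neighbourhood count you already invoke.
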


\begin{example}
If $\tau=\tau_0$ is the characteristic function of $K_f$, then Theorem~\ref{thm:weyl} 
is a
Weyl's law with remainder term,
\begin{equation}\label{weyllaw}
\left|
\{
\pi\in \Pi_{\text{cusp}}(G(\A)^1),\
\lambda_\pi\in t\Omega,\
(\pi_\infty\otimes \chi)^{K_\infty} \neq 0,\
\pi_f^{K_f}\neq 0
\}\right|
=
\Lambda_\Omega(t) + O(t^{d-1/2}),
\end{equation}
which was also established in an unpublished manuscript of the second-named author.
This is new already for $n=3$. The asymptotic Weyl's law, i.e., without remainder term,
was established for $\SL_3(\Z)$ by Miller~\cite{Miller01}, for $\SL_n(\Z)$, $n\ge 3$,
by M\"uller~\cite{Muller07}, and for quasi-split reductive groups
by~Lindenstrauss--Venkatesh~\cite{LV07}.
\end{example}

\begin{remark}
Conditional on the assumption that the lattice $G(\Q)\cap K_f$ is neat, which is false in
our case since $\PSL(n,\Z)$ is not torsion free, a stronger version of the
Weyl's law~\eqref{weyllaw} with remainder $t^{d-1}(\log t)^{\max(3,n)}$ is due to
Lapid--M\"uller~\cite{LM09}.
\end{remark}

\subsection{Hecke--Maass forms}\label{s:intro:maass}
We can restate the result classically in terms of Hecke--Maass cusp forms which are smooth functions $f$ on
	\[
	\SL_n(\Z) \backslash \SL_n(\R) / \SO_n(\R) \simeq G(\Q) \backslash G(\A)^1 / K_{\infty}^{\circ}K_f
	\]
	that are eigenfunctions of the Laplace operator, the Hecke operators and are cuspidal. Hecke--Maass cusp forms can be divided into \emph{even} and \emph{odd} forms. Let $W$ denote the Hecke operator corresponding to the double coset $\SL_n(\Z)\Mdiag(-1,1,\ldots,1)\SL_n(\Z)=\SL_n(\Z)\Mdiag(-1,1,\ldots,1) $.  Then $f$ is called even if $Wf=f$ and odd if $Wf=-f$. If $n$ is odd, there are no odd Hecke--Maass cusp forms so that all Hecke--Maass cusp forms are even. If $n$ is even, asymptotically half of all Hecke--Maass cusp forms are even and half are odd as follows from Theorem~\ref{thm:weyl} and also from~\cite{Muller07}.

An even Hecke--Maass cusp form $f$ generates a spherical unramified representation $\pi$ in $\Pi_{\text{cusp}}(G(\A)^1)$ and conversely, if $\pi$ is spherical unramified, $\pi^K$ is one-dimensional and $f$ is a non-zero element in $\pi^K$.  If $n$ is even, then the odd Hecke--Maass cusp forms $f$ generate unramified representations $\pi$ in $\Pi_{\text{cusp}}(G(\A)^1)$ of $K_{\infty}$-type $\chi_-$.  If conversely $\pi$ is unramified with $K_{\infty}$-type $\chi_-$, the subspace of $\pi^{K_f}$ transforming under $K_{\infty}$ according to $\chi_-$ is one-dimensional, and $f$ is a non-zero element in this subspace.

	For every prime $p$ we can attach the Satake parameter $\alpha_f(p)\in 
	{\C^{\times}}^{n}/\mathfrak{S}_n$ to $f$ which we denote in coordinates as 
	$\alpha^{(j)}_f(p)$. There is a Satake isomorphism between the algebra of 
	symmetric Laurent polynomials $\C[x^{\pm}_1,\cdots,x^{\pm}_n]^{\mathfrak{S}_n}$ and 
	the algebra of bi-$G(\Z_p)$-invariant functions on $G(\Q_p)$. If the polynomial 
	$\phi$ corresponds to $\tau_p$, then
\[
\phi(\alpha^{(1)}_f(p),\cdots,  \alpha^{(n)}_f(p)) = \phi(\alpha_f(p)) = \Mtr \pi_f(\tau_p 1_{K_f^{(p)}}),
\]
where $1_{K_f^{(p)}}$ denotes the characteristic function of $K_f$ away from $p$.

Since the central character is trivial, $\alpha_f^{(1)}(p)\cdots  \alpha_f^{(n)}(p)=1$. Let $\mu_p$ be the unramified Plancherel measure of $\PGL_n(\Q_p)$. It is supported on the elements $\alpha\in {S^1}^n/\mathfrak{S}_n$  such that $\alpha^{(1)}\cdots \alpha^{(n)}=1$ and for any corresponding pair $\phi \leftrightarrow \tau_p$,
\[
\int_{  {S^1}^n/\mathfrak{S}_n} \phi \mu_p = \vol(\Z_p)^{-1} \int_{Z(\Q_p)} \tau_p(z)dz.
\]
An exact formula for $\mu_p$ is given by Macdonald~\cite{book:macd71}.
Our main theorem in classical terms is:
\begin{theorem}\label{th:heckemaass} For $n\ge3$, and any $\phi \in \C[x^{\pm}_1,\cdots,x^{\pm 
}_n]^{\mathfrak{S}_n}$ with coefficients less than one, any prime $p$ and any $t\ge 1$,
\begin{equation*}
	\left|
	\sum_{f:~\lambda_f\in t\Omega}
	 \phi(\alpha_f(p))
	 - \Lambda_\Omega(t) \int_{  {S^1}^n/\mathfrak{S}_n} \phi \mu_p
	\right|
	\le c_1 p^{A\deg(\phi)} t^{d-1/2}
\end{equation*}
where $f$ runs through either even or odd Hecke--Maass cusp forms if $n$ is even, 
and through all Hecke--Maass cusp forms if $n$ is odd. Here $\deg(\phi)$ satisfies 
$\deg(x_1\cdots x_n)=0$ and $\deg(e_i)=1$ for all the other non-constant elementary 
symmetric polynomials $1\le i\le n-1$.
\end{theorem}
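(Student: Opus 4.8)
The plan is to deduce Theorem~\ref{th:heckemaass} from Theorem~\ref{thm:weyl} by a dictionary between the adelic and classical languages, already set up in Section~\ref{s:intro:maass}. First I would fix the prime $p$ and the symmetric Laurent polynomial $\phi\in\C[x_1^{\pm1},\dots,x_n^{\pm1}]^{\mathfrak{S}_n}$ with coefficients of absolute value $\le 1$, and let $\tau_p$ be the bi-$G(\Z_p)$-invariant function on $G(\Q_p)$ corresponding to $\phi$ under the Satake isomorphism; set $\tau:=\tau_p\cdot 1_{K_f^{(p)}}$, a characteristic-function-scaled combination of double cosets, so that $\Mtr\pi_f(\tau)=\phi(\alpha_f(p))$ for every unramified $\pi$. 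The two cases of the theorem correspond to the two choices of $\chi$: take $\chi=\chi_+$ to count even forms (equivalently spherical unramified $\pi$, where $\dim\pi^{K_\infty}=1$), and $\chi=\chi_-$ to count odd forms (unramified $\pi$ of $K_\infty$-type $\chi_-$, where $\dim(\pi_\infty\otimes\chi_-)^{K_\infty}=1$); when $n$ is odd, $\chi_-=\chi_+$ and there is only one case. Thus the left-hand sum $\sum_{f:\lambda_f\in t\Omega}\phi(\alpha_f(p))$ is exactly $\sum_{\pi\in\Pi_{\mathrm{cusp}}(G(\A)^1),\ \lambda_\pi\in t\Omega}\dim(\pi_\infty\otimes\chi)^{K_\infty}\,\Mtr\pi_f(\tau)$ restricted to the unramified $\pi$; since $\tau$ is supported on $K_f$-double cosets, only unramified $\pi$ contribute, so the two sums agree on the nose.

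Next I would identify the main term. The quantity $\sum_{\gamma\in Z(\Q)/\{\pm1\}}\tau(\gamma)$ in Theorem~\ref{thm:weyl} picks out the central elements: $Z(\Q)=\Q^\times$ embedded diagonally, and $\tau(\gamma)=\tau_p(\gamma_p)\prod_{\ell\neq p}1_{G(\Z_\ell)}(\gamma_\ell)$ is nonzero only when $\gamma$ is a $p$-power unit away from $p$, i.e.\ $\gamma=p^k$ for $k\in\Z$, and after quotienting by $\{\pm1\}$ this is a sum over $Z(\Q_p)/\Z_p^\times\cong\Z$. Using the displayed identity $\int_{(S^1)^n/\mathfrak{S}_n}\phi\,\mu_p=\vol(\Z_p)^{-1}\int_{Z(\Q_p)}\tau_p(z)\,dz$ together with the fact that $\tau_p$ is bi-$G(\Z_p)$-invariant (hence $Z(\Z_p)=\Z_p^\times$-invariant on the center) and that $\vol(\Z_p)^{-1}\int_{Z(\Q_p)}\tau_p\,dz=\sum_{z\in Z(\Q_p)/\Z_p^\times}\tau_p(z)$, I get $\sum_{\gamma\in Z(\Q)/\{\pm1\}}\tau(\gamma)=\int_{(S^1)^n/\mathfrak{S}_n}\phi\,\mu_p$. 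So the main term $\Lambda_\Omega(t)\sum_\gamma\tau(\gamma)$ of Theorem~\ref{thm:weyl} becomes $\Lambda_\Omega(t)\int_{(S^1)^n/\mathfrak{S}_n}\phi\,\mu_p$, matching Theorem~\ref{th:heckemaass}.

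It remains to translate the error term. Theorem~\ref{thm:weyl} gives a bound $c_1\|\tau\|_{L^1(G(\A_f))}^A t^{d-1/2}$, so I need $\|\tau\|_{L^1(G(\A_f))}=\|\tau_p\|_{L^1(G(\Q_p))}$ (the components away from $p$ are characteristic functions of $G(\Z_\ell)$, which have $L^1$-norm $1$ under the chosen Haar measure) to be bounded by $p^{O(\deg\phi)}$. This is the one genuinely nontrivial input: I would write $\phi=\sum_{\mu}a_\mu m_\mu$ in the basis of monomial symmetric functions (or Schur functions) with $|a_\mu|\le 1$ by hypothesis, reduce to bounding $\|\tau_p^{(\mu)}\|_{L^1}$ for the Hecke operator $\tau_p^{(\mu)}$ attached to a single dominant coweight $\mu$ with $|\mu|\le\deg\phi$, and then invoke the Cartan decomposition: $\tau_p^{(\mu)}$ is (a $\pm$-combination of) characteristic functions of double cosets $G(\Z_p)\varpi^{\nu}G(\Z_p)$ for coweights $\nu\le\mu$, each of volume $p^{\langle 2\rho,\mu\rangle}\le p^{O(n)\deg\phi}$ by Macdonald's formula for the index $[G(\Z_p)\varpi^\mu G(\Z_p):G(\Z_p)]$; the number of such $\nu$ and the Satake-transition coefficients are bounded polynomially in $p$ as well (again via Macdonald / the Kato--Lusztig formula, the coefficients being values of Kazhdan--Lusztig polynomials that grow at most polynomially in $p$). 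Collecting, $\|\tau_p\|_{L^1}\le (\#\{\text{terms}\})\cdot\max p^{\langle2\rho,\mu\rangle}\le C_n\,p^{A_0\deg\phi}$ for constants depending only on $n$, which after enlarging $A$ and $c_1$ (absorbing the power of the combinatorial constant) yields the stated $c_1 p^{A\deg\phi}t^{d-1/2}$. The main obstacle is precisely making the dependence of $\|\tau_p\|_{L^1}$ on $p$ explicitly polynomial in $\deg\phi$ with exponent independent of $p$ and $\phi$; everything else is bookkeeping with the Satake isomorphism and the measure normalizations. I would therefore isolate that estimate as a short lemma on Hecke operators — essentially the statement that the Satake transform of the characteristic function of $G(\Z_p)\varpi^\mu G(\Z_p)$, divided by $p^{\langle\rho,\mu\rangle}$, has bounded coefficients and that the $L^1$-mass is $p^{\langle2\rho,\mu\rangle}$ — and then the theorem follows by assembling the three pieces above.
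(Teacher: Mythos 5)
Your proposal is correct and follows essentially the same route as the paper: deduce the result from Theorem~\ref{thm:weyl} via the Satake dictionary, identify the central-element sum with the Plancherel integral, and bound $\|\tau_p\|_{L^1(G(\Q_p))}$ by $p^{O_n(\deg\phi)}$ using the volume $p^{\langle\xi,2\rho\rangle}$ of the double cosets (Gross/Macdonald). The only cosmetic difference is that the paper expands $\phi$ directly in the basis $e_{p,\xi}$ of Satake transforms of double-coset indicators rather than in the monomial/Schur basis with Kato--Lusztig transition coefficients; your version spells out the change-of-basis control that the paper leaves implicit.
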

\begin{proof}
	The first term agrees with that of Theorem~\ref{thm:weyl}. For the second term let $\tau_p$ correspond to $\phi$ under the Satake correspondence. Then
\[
\sum_{\gamma\in Z(\Q)/\{\pm 1\}}
\tau_p(\gamma) 1_{K_f^{(p)}}(\gamma)
=
\sum_{z\in Z(\Q_p)/Z(\Z_p)} \tau_p(z)
=  \vol(\Z_p)^{-1} \int_{Z(\Q_p)} \tau_p(z)dz.
\]

 For any $\xi=(\xi_1,\ldots,\xi_n)\in \Z^n$ denote by $e_{p,\xi}$ the polynomial that correspond under the Satake correspondence to the indicator function $\tau_{p,\xi}$ on the double coset
\[
K_pp^{\xi} K_p:=G(\Z_p)\Mdiag(p^{\xi_1}, \ldots, p^{\xi_n})G(\Z_p),
\]
 see Section~\ref{sec:arthur-selberg}. The polynomials $\{e_{p,\xi}\}$ form a basis of the symmetric polynomial algebra. We have
\[
\| \tau_{p,\xi} \|_{L^1(G(\Q_p))} \asymp p^{\langle \xi, 2\rho \rangle}
\]
which follows from \cite{Gross:satake}*{Prop.7.4}
where $\rho$ is half-sum of positive roots. Then $\langle \xi, 2\rho \rangle \le (n-1)^2 (\max\xi-\min\xi)$.
On the other hand we have $\deg(e_{p,\xi})=\max \xi - \min \xi$, since $e_{p,\xi}$ is a linear combination of monomial symmetric polynomials, which concludes the proof.
\end{proof}

We now turn to Fourier coefficients which occur often in the study of Hecke--Maass cusp forms. To obtain analogous results for the distribution of Fourier coefficients we shall simply insert Schur polynomials for $\phi$ in Theorem~\ref{th:heckemaass} as we now explain.

Every Hecke--Maass cusp form $f$ is generic. We denote the normalized Fourier
coefficients as $a_f$ with $a_f(1)=1$.  We say that $\nu=(\nu_1,\ldots,\nu_n)\in \Z^n$ is
dominant if $\nu_1\ge \cdots \ge \nu_n\ge 0$. For each dominant $\nu$ there is a Schur
polynomial $s_\nu  \in \C[x_1,\ldots,x_n]^{\mathfrak{S}_n}$ and the
Shintani/Casselman-Shalika formula reads $a_f(p^\nu)=s_\nu(\alpha_f(p))$. Precisely, for
any $\nu \in \Z^n$
\[
a_f(p^{\nu_1}, \ldots, p^{\nu_n})
=
\begin{cases}
	s_\nu(\alpha^{(1)}_f(p), \ldots,\alpha^{(n)}_f(p) ) &
	\text{if $\nu$ is dominant,}\\
	0&
	\text{otherwise.}
\end{cases}
\]
The Schur polynomials form a basis of the algebra of symmetric polynomials.

It is traditional to consider the coefficients $A_f$ which are directly related to the $a_f$ by
\[
A_f(m_1,\ldots,m_{n-1}) = a_f(m_1m_2\cdots m_{n-1}, \ldots,m_{1},1)
\]
for all $m_1,\ldots,m_{n-1}\in \Z_{\ge 1}$.
\begin{theorem}\label{th:fouriermaass} For any integers $m_1,\ldots,m_{n-1}\in \Z_{\ge 1}$ and any $t\ge 1$,
\[
\sum_{\substack{
	\text{$f$ Hecke-Maass}\\
	\lambda_f\in t\Omega}
	}
	A_f(m_1,\ldots, m_{n-1})
	= \Lambda_\Omega(t) \gamma(m_1, \ldots, m_{n-1})
	+ O((m_1 \cdots m_{n-1})^{A}t^{d-1/2}).
\]
Here $\gamma(m_1,\ldots,m_{n-1})$ is multiplicative in each of the $n-1$ variables.
Moreover for any prime $p$ and any $\nu\in \Z^n$ such that $\nu_1\ge \ldots \ge \nu_n=0$,
\[
 \gamma(p^{\nu_{n-1}},p^{\nu_{n-2}-\nu_{n-1}}, \ldots, p^{\nu_1-\nu_2}) =
p^{-\langle\nu,\rho\rangle} P_{0,\nu}(p)
\]
where $P_{0,\nu}$ is the Kazhdan-Lusztig polynomial with parameters $0,\nu$ in $\Z^n$ viewed inside the affine Weyl group of type $A_n$.
\end{theorem}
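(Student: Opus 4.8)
The plan is to deduce Theorem~\ref{th:fouriermaass} from Theorem~\ref{th:heckemaass} by making the appropriate choices of $\phi$ and unwinding the combinatorics on the spectral side, then identifying the arising $p$-adic integral against $\mu_p$ with a Kazhdan--Lusztig polynomial. First I would reduce the statement about $A_f$ to a statement about $a_f(p^{\nu_1},\ldots,p^{\nu_n})$. Since $A_f(m_1,\ldots,m_{n-1}) = a_f(m_1\cdots m_{n-1},\ldots,m_1,1)$ and the $a_f$ are multiplicative in the sense of Hecke relations (each $a_f$ factors through a product over primes of local contributions, because $f$ is a Hecke eigenform), it suffices to treat prime-power arguments, where $a_f(p^{\nu_1},\ldots,p^{\nu_n}) = s_\nu(\alpha_f(p))$ by Casselman--Shalika. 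The multiplicativity of $\gamma$ in each variable is then inherited from the multiplicativity of the Satake/Hecke structure: one defines $\gamma(m_1,\ldots,m_{n-1})$ as the corresponding product of local factors $\int_{(S^1)^n/\mathfrak{S}_n} s_\nu\, \mu_p$, which is legitimate since the Schur polynomials form a basis of the symmetric polynomial algebra and each has coefficients bounded (after rescaling, absorbed into the constant) so Theorem~\ref{th:heckemaass} applies term by term.

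Next I would insert $\phi = s_\nu$ into Theorem~\ref{th:heckemaass}. The main term becomes $\Lambda_\Omega(t)\int_{(S^1)^n/\mathfrak{S}_n} s_\nu\,\mu_p$, and the error term is $\ll p^{A\deg(s_\nu)} t^{d-1/2}$; since $\deg(s_\nu) = \nu_1 - \nu_n$ (the Schur polynomial $s_\nu$ is a sum of monomial symmetric functions $m_\mu$ with $\mu \le \nu$ in dominance order, and the degree convention in Theorem~\ref{th:heckemaass} measures $\max - \min$ of exponents), combining over primes with $m_i = p_i^{\nu_{n-i}-\nu_{n-i+1}}$ yields the claimed uniform error $O((m_1\cdots m_{n-1})^A t^{d-1/2})$. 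So the remaining task is the evaluation $\gamma(p^{\nu_{n-1}},\ldots,p^{\nu_1-\nu_2}) = \int_{(S^1)^n/\mathfrak{S}_n} s_\nu\,\mu_p$, and identifying this with $p^{-\langle\nu,\rho\rangle} P_{0,\nu}(p)$.

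For that identification I would argue as follows. The unramified Plancherel measure $\mu_p$ on the tempered spectrum of $\PGL_n(\Q_p)$ has an explicit density due to Macdonald, and integrating a Schur polynomial against it is the standard computation giving a ratio related to the Poincaré series of the affine Weyl group; concretely, $\int s_\nu\,\mu_p = \langle s_\nu, \text{(spherical Plancherel density)}\rangle$ equals $p^{-\langle \nu,\rho\rangle}$ times the value at $q = p$ of the Kostka--Foulkes / Lusztig $q$-analogue, which by Lusztig's theorem is the affine Kazhdan--Lusztig polynomial $P_{0,\nu}$ (here I use that under the Satake isomorphism the spherical function associated to $\nu$ pairs with tempered characters to give exactly this $q$-analogue; equivalently, $\mu_p$ is the image under Satake of the Plancherel measure and the Macdonald formula for the spherical function at the identity gives the Hall--Littlewood specialization). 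I would then just need to check the normalization: the factor $p^{-\langle\nu,\rho\rangle}$ matches the half-shift $p^{-\langle\nu,\rho\rangle}$ by which $a_f$ is normalized versus the unnormalized Hecke eigenvalue, and the indexing $\nu_n = 0$ together with the reindexing $m_i = p^{\nu_{n-i}-\nu_{n-i+1}}$ matches the statement. I expect the main obstacle to be precisely this last step: pinning down the exact form of $\mu_p$ from Macdonald's formula and correctly matching it to Lusztig's identification of the Hall--Littlewood $q$-analogue with the affine KL polynomial $P_{0,\nu}$ — in particular getting the Weyl-group element labels, the choice of dominant representative, and the power of $p$ all consistent with the normalizations used earlier in the paper for $\alpha_f(p)$ and $\mu_p$. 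Everything else is bookkeeping built on Theorem~\ref{th:heckemaass} and the fact that Schur polynomials form a basis with controlled degrees.
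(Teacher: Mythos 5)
Your proposal follows essentially the same route as the paper: insert $\phi = s_\nu$ into Theorem~\ref{th:heckemaass} via Casselman--Shalika, reduce to prime powers by multiplicativity, and identify $\int_{(S^1)^n/\mathfrak{S}_n} s_\nu\,\mu_p$ with $p^{-\langle\nu,\rho\rangle}P_{0,\nu}(p)$. The only difference is that the paper handles this last identification by citing S.-I.~Kato (and Gross, Proposition~4.4) rather than re-deriving it from Macdonald's formula and Lusztig's theorem, which is exactly the content of that reference, so your sketch of the "main obstacle" is already settled in the literature.
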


\begin{example}
If $\nu$ is a fundamental weight then $\int s_\nu \mu_p=0$, see~\cite{Gross:satake} where it is furthermore explained that the conceptual reason for this vanishing is that all the fundamental representations of $\GL(n)$ are minuscule.  Thus $\gamma(m_1,\ldots,m_{n-1})$ is zero if $m_1\cdots m_{n-1}$ is square-free and not equal to one. For example if $n=3$, then  $\gamma(1,p)=0$ which corresponds to the average of the coefficients
\[
A_f(1,p)=a_f(p,1,1)
= s_{(1,0,0)}(\alpha_f(p))
= \alpha^{(1)}_f(p) + \alpha_f^{(2)}(p) +  \alpha^{(3)}_f(p).
\]
More information on the polynomial $P_{0,\nu}$ can be found in the discussion
following~\cite{FGKV:geometric-Whittaker}*{Prop.6.3}.
\end{example}
\begin{proof}[Proof of Theorem \ref{th:fouriermaass}]
	We only provide the details when the $m$'s are powers of a prime $p$, the general case being similar. Since
\[
a_f(p^{\nu_1},\ldots,p^{\nu_{n-1}},1 )
 =  s_\nu(\alpha^{(1)}_f(p), \ldots,\alpha^{(n)}_f(p) ),
\]
and $\deg(s_\nu) = \nu_1$, Theorem~\ref{th:heckemaass} yields
\[
\sum_{\substack{
	\text{$f$ Hecke-Maass}\\
	\lambda_f\in t\Omega}
	}
A_f(p^{\nu_{n-1}},p^{\nu_{n-2}-\nu_{n-1}}, \ldots, p^{\nu_1-\nu_2})
	= \Lambda_\Omega(t) \int_{{S^1}^n/ \mathfrak{S}_n} s_\nu \mu_p
+ O(p^{A \nu_1 }t^{d-1/2}).
\]
The integral against the Plancherel measure is equal to $\gamma(p^{\nu_{n-1}},p^{\nu_{n-2}-\nu_{n-1}}, \ldots, p^{\nu_1-\nu_2})$, so the second assertion of the theorem follows from the formula
\[
\int_{{S^1}^n/ \mathfrak{S}_n} s_\nu \mu_p =  p^{-\langle \nu,\rho\rangle} P_{0,\nu}(p)
\]
which can be found in the work of S.-I.~Kato~\cite{Kato82}, see
also~\cite{Gross:satake}*{Prop.4.4}.
\end{proof}

We note that it is not difficult to deduce a similar result for a product of Fourier coefficients $A_f$. We insert a product of Schur polynomials in Theorem~\ref{th:heckemaass} in which case the main term can be computed in terms of Littlewood-Richardson coefficients.

\begin{example}\label{ex:n=2-thm1}
	If $n=2$, then Theorem~\ref{th:fouriermaass} is established by Sarnak~\cite{Sarnak87}
	and the analogous results for holomorphic modular forms is established by Serre.
 For $m\in \Z_{\ge 1}$   the Fourier coefficients $A_f(m)=a_f(m,1)$  coincide with the eigenvalues of the Hecke operator $T_m$ because $f$ is unramified, thus a newform.  We have $\ka^*\simeq \R$ and without loss of generality we may choose $\Omega=(-1,1)$. The condition $\lambda_f\in t\Omega$ means that the Laplace eigenvalue of $f$ is greater than $\frac14$ and less than $\frac14+ t^2$. We have $\Lambda_{\Omega}(t)\sim t^2/12$ and for $m=1$ the result reduces to the Weyl's law established by Selberg. For general $m$ the main term involves
\[
\gamma(m)= |m|^{-\frac{1}{2}} \delta_{m=\square}
\]
 where $\delta_{m=\square}$ is one if $m$ is a perfect square and zero otherwise.
\end{example}

For $n=2$, related results with the added factor $L(1,\mathrm{sym}^2
f)^{-1}$ are obtained via the Kuznetsov trace formula by Bruggeman,
Deshouillers--Iwaniec. For $n=3$, there are results by Goldfeld--Kontorovich and
Blomer~\cite{Blomer:Kuznetsov} via a generalization of the Kuznetsov trace formula, again
with the addition of the arithmetic weights
$L(1,\mathrm{Ad} f)^{-1}$.

The method by Luo~\cite{Luo:nonvanishing-Weyl} for removing
weights is based on large sieve inequalities, and has been employed
in~\cite{Lau-Wang:Sato-Tate-gl2} to establish Theorem~\ref{thm:weyl} for $n=2$.
For $n=3$, after a version of the present paper was posted on
arXiv, Buttcane--Zhou~\cite{Buttcane-Zhou:weight} succeeded in
establishing Theorem~\ref{thm:weyl} in a similar way, using a new
version of the Kuznetsov trace formula for $\SL(3,\Z)$ due to Blomer--Buttcane.

\subsection{Average bounds towards Ramanujan}
The Plancherel measure $\mu_p$ on the unitary dual of $\PGL_n(\Q_p)$ is supported on 
the tempered spectrum. As a consequence of the Sato-Tate equidistribution 
Theorem~\ref{thm:weyl} for families we can deduce quantitative bounds towards 
Ramanujan.
\begin{corollary}\label{cor:ramanujan} (i) There is an effective constant $c>0$ 
(depending only on $n$)
such that for any $t\ge 1$, any $\theta\ge 0$, and any prime $p$, 
	\begin{equation}\label{plain-ramanujan}
	\left|
	\{
f,\
\|\lambda_f\|\le t
,\ \max_{1\le j\le n} \log_p |\alpha^{(j)}_f(p)|  > \theta
\}
\right|
\ll p^{2\theta} t^{d-c\theta},
	\end{equation}
	where $f$ runs through Hecke--Maass cusp forms on $\SL(n,\Z)\backslash \SL(n,\R)/
	\SO(n)$, and the implied constant depends on $n$ only.

(ii) Moreover, the following stronger bound holds
\begin{equation}\label{strong-ramanujan}
\left|
	\{
f,\
\|\lambda_f\|\le t
,\ \max_{1\le j\le n} \log_p |\alpha^{(j)}_f(p)|  > \theta
\}
\right|
\ll t^{d - x\left\lfloor \frac{c}{x}\right\rfloor \theta},
\end{equation}
where $x:= \dfrac{2 \log p}{\log t}$ and $\left\lfloor \frac{c}{x}\right\rfloor$ denotes the largest 
integer less or equal than $\frac cx$.
\end{corollary}

In words the corollary says that exceptions to Ramanujan for Hecke-Maass forms 
are sparse. 
This generalizes a result obtained by Sarnak~\cite{Sarnak87}*{Thm.1.1} for $n=2$ 
and $p$ 
fixed and
Blomer--Buttcane--Raulf~\cite{Blomer-Buttcane-Raulf}*{Thm.3} for $n=3$. The LHS is 
zero
for $\theta > \frac{1}{2} - \frac{1}{n^2+1}$ (due to Luo--Rudnick--Sarnak and Serre)
and conjecturally for $\theta=0$ (generalized Ramanujan conjecture).

If $p$ is fixed, then the term $p^{2\theta}$ can be dropped in~\eqref{plain-ramanujan} 
and the bound becomes $\ll t^{d-c\theta}$.
The bound~\eqref{plain-ramanujan} is a simple version of the 
bound~\eqref{strong-ramanujan}, using that $x \left\lfloor \frac{c}{x}\right\rfloor\ge c-x$.
The function $\R_{\ge 0}\to [0,c]$, $x\mapsto x \left\lfloor \frac{c}{x}\right\rfloor$ 
is piecewise linear, with discontinuous jumps for every $x$ that is an 
integer fraction of $c$. It is equal to $c$ at $x=0$.
It is equal to $0$ for every $x> c$, which means that the 
bound~\eqref{strong-ramanujan} is trivial for $p^2 > t^c$.

\begin{remark*}
Independently, Lau-Wang~\cite{Lau-Wang:L1}*{Thm.7.3} have obtained the same 
result as Corollary~\ref{cor:ramanujan} with a different proof.
Note that their bound is stated as $\ll t^{d-c\theta}$, which is incorrect since the 
term 
$p^{2\theta}$ cannot be dropped in our bound~\eqref{plain-ramanujan}
and the product 
$x\left\lfloor
\frac{c}{x}
\right\rfloor$ cannot be replaced by $c$ in our bound~\eqref{strong-ramanujan} because 
Theorem~\ref{thm:weyl} is trivial when $p$ is large. The source of the 
error is that 
the 
bound $||\tau_h||\le T^{1/4A}$ on the last line of page 788 of \cite{Lau-Wang:L1} 
doesn't hold for $p>T^{\delta_0}$ because it implies $h=1$, whereas $||\tau_1||$ is 
unbounded as the prime $p$ grows.
\end{remark*}

We offer the following additional corollary which is non-trivial even in the 
region 
$p>t^{\frac c2}$.
Neither Corollary~\ref{cor:ramanujan} nor Corollary~\ref{cor:largep} implies the 
other.
Both corollaries are obtained from applying Theorem~\ref{th:heckemaass} in a 
situation where the main term and the remainder term are balanced (that is 
we optimize the choice of degree of our test polynomials so that the two terms 
are close). For Corollary~\ref{cor:ramanujan} the main term is greater 
than the remainder term, whereas for Corollary~\ref{cor:largep} the main term 
is smaller than the bound for the remainder term.

\begin{corollary}\label{cor:largep} (i) For any $t\ge 1$, any $\theta\ge 0$, and any 
prime $p > t^{\frac c2}$, 
	\begin{equation}\label{largep-ramanujan}
	\left|
	\{
f,\
\|\lambda_f\|\le t
,\ \max_{1\le j\le n} \log_p |\alpha^{(j)}_f(p)|  > \theta
\}
\right|
\ll t^{d - \frac{x}{2} (\frac{1}{c} - \frac{1}{x} - 2 \theta)}.
	\end{equation}

(ii) More generally, for any $t\ge 1$, any $\theta\ge 0$ 
and any prime $p$,
\begin{equation}\label{plusone-ramanujan}
\left|
	\{
f,\
\|\lambda_f\|\le t
,\ \max_{1\le j\le n} \log_p |\alpha^{(j)}_f(p)|  > \theta
\}
\right|
\ll t^{d - \frac12 + x (\left\lfloor \frac{c}{x}\right\rfloor+1) ( \frac{1}{2c} - \theta ) },
\end{equation}
where again the implied constant depends on $n$ only.
\end{corollary}

The bound~\eqref{largep-ramanujan} is a version of the 
bound~\eqref{plusone-ramanujan} in the restricted range $p > t^{\frac c2}$, using 
that $\left\lfloor \frac{c}{x}\right\rfloor = 0$ for $x>c$.
Corollary~\ref{cor:largep} is worse than the trivial bound $t^d$ for $p^2 > 
t^{c/(1 - 2c\theta)}$.

The proofs of Corollary~\ref{cor:ramanujan} and Corollary~\ref{cor:largep} are 
given in \S\ref{sub:pf-ram}, where we obtain $c=1/(2A)$.
We shall construct test polynomials $\phi\in 
\C[x^\pm_1,\ldots,x^\pm_n]^{\mathfrak S_n}$ with uniformly bounded coefficients which are 
optimal 
for establishing average 
bounds towards Ramanujan. Namely $\phi(\alpha) \ge |\alpha|_\infty^{\deg(\phi)}$ when evaluated 
on Satake 
parameters $\alpha \in (\C^\times)^n/\mathfrak S_n$ which are unitary in the sense that 
$\alpha=1/\overline{\alpha}$. These polynomials are of independent interest, and the 
present paper seems to be the first to give an optimal construction in higher 
rank.

\subsection{Revisiting the case n=2}
The Kuznetsov trace formula for $\SL_2(\Z)$ yields the best known average bounds 
toward Ramanujan in the case $n=2$.
We have that 
\[
A_f(p) = a_f(p,1) = s_{(1,0)}(\alpha^{(1)}_f(p), \alpha^{(2)}_f(p))
=\alpha^{(1)}_f(p) + \alpha^{(2)}_f(p)
\] 
is the normalized eigenvalue of $f$ for 
the $p$th Hecke operator.
Blomer--Buttcane--Raulf~\cite{Blomer-Buttcane-Raulf}*{p.902} have established the 
following.
For any $\epsilon,\theta >0$, any $t\ge 1$ and any prime $p$,
\begin{equation}\label{BBR}
 \left|
	\{
f,\
\|\lambda_f\|\le t
,\ |A_f(p)| > 2p^\theta
\}
\right|
\ll_{\epsilon} t^{2-
\frac{2 \log p}{\log t} \left\lfloor
\frac{4\log t}{\log p}\right\rfloor \theta + \epsilon},
\end{equation}
where the multiplicative constant depends on $\epsilon$ only.
In comparing notation with \cite{Blomer-Buttcane-Raulf}, write 
$\alpha=2p^\theta$, which is equivalent to $\theta = \frac{\log \alpha/2}{\log p}$.
As before, since $\left\lfloor
\frac{4\log t}{\log p}\right\rfloor \le \frac{4\log t}{\log p}$, a consequence of~\eqref{BBR} 
is that
\begin{equation*}
 \left|
	\{
f,\
\|\lambda_f\|\le t
,\ |A_f(p)| > 2p^\theta
\}
\right|
\ll_{\epsilon} p^{2\theta}
t^{2-8\theta + \epsilon}.
\end{equation*}

We shall  make some improvements to the above bound~\eqref{BBR}. Since 
\[
|A_f(p)| > p^{\theta} + p^{-\theta}  \iff \max_{j=1,2} \log_p 
|\alpha^{(j)}_f(p)|  > \theta,
\]
we deduce that
\[
  \left|
	\{
f,\
\|\lambda_f\|\le t
,\ |A_f(p)| > 2p^\theta
\}
\right|
\le 
\left|
	\{
f,\
\|\lambda_f\|\le t
,\ \max_{j=1,2} \log_p |\alpha^{(j)}_f(p)|  > \theta
\}
\right|.
\]
We shall establish in \eqref{SL2-8x} that the bound~\eqref{BBR} holds for the latter 
quantity, by 
modifying the combinatorics of Hecke eigenvalues 
in~\cite{Blomer-Buttcane-Raulf}*{\S2}.
Again, a consequence is that for any $\epsilon,\theta >0$, any $t\ge 1$ and any 
prime $p$,
\begin{equation*}\label{t8theta}
  \left|
	\{
f,\
\|\lambda_f\|\le t
,\ \max_{j=1,2} \log_p |\alpha^{(j)}_f(p)|  > \theta
\}
\right|
\ll_{\epsilon} 
p^{2\theta}
t^{2-8\theta + \epsilon}.
\end{equation*}
In particular, if $p$ is fixed then the term $p^{2\theta}$ can be dropped and the 
bound becomes $\ll_\epsilon t^{2-8\theta + \epsilon}$.

Additionally, we offer the complementary bound~\eqref{SL2-plusone} which is 
obtained
by increasing the degree of the test polynomial:
The bound \eqref{SL2-8x} is obtained by rounding down the ratio $\frac{4\log 
t}{\log p}$ to 
the nearest integer, whereas the bound~\eqref{SL2-plusone} is obtained by 
\emph{rounding it up}, see \S\ref{sub:proof-SL2}.
Neither \eqref{SL2-8x} nor \eqref{SL2-plusone} implies the other.
\begin{theorem}\label{t:SL2-Ramanujan}
Suppose $n=2$. For any $\epsilon,\theta >0$, any $t\ge 1$ and any prime $p$,
\begin{equation}\label{SL2-8x}
  \left|
	\{
f,\
\|\lambda_f\|\le t
,\ \max_{j=1,2} \log_p |\alpha^{(j)}_f(p)|  > \theta
\}
\right|
\ll_{\epsilon} t^{2- 2 \frac{\log p}{\log t} \left\lfloor \frac{4 \log t}{\log p} \right\rfloor \theta
+ \epsilon},
\end{equation}
\begin{equation}\label{SL2-plusone}
  \left|
	\{
f,\
\|\lambda_f\|\le t
,\ \max_{j=1,2} \log_p |\alpha^{(j)}_f(p)|  > \theta
\}
\right|
\ll_{\epsilon} p^{
( \left\lfloor \frac{4 \log t}{\log p} \right\rfloor +1) \cdot (\frac12-2\theta)
+ \epsilon},
\end{equation}
where the multiplicative constants depend on $\epsilon$ only.
\end{theorem}

An interesting feature of~\eqref{SL2-plusone} is that it is non-trivial even in 
the region $p> t^4$, where it becomes $\ll_\epsilon p^{\frac12-2\theta
+ \epsilon}$.
In fact, it becomes trivial in the region $p > t^{\frac{4}{1-4\theta+2\epsilon}}$.

In the region $p\le t^4$, we may take the minimum of~\eqref{SL2-8x} and 
\eqref{SL2-plusone} to deduce the following interesting bound.
\begin{corollary}\label{cor:ranges}
For any $\epsilon,\theta >0$, any integer $m\ge 1$, any $t\ge 1$ and any prime $p$ with $p\le 
t^{4/m}$, 
\begin{equation}\label{ranges}
   \left|
	\{
f,\
\|\lambda_f\|\le t
,\ \max_{j=1,2} \log_p |\alpha^{(j)}_f(p)|  > \theta
\}
\right|
\ll_{\epsilon} 
t^{2(1+m)(1-4\theta)/(1+m-4\theta)+\epsilon},
\end{equation}
whrer the multiplicative constant depends on $\epsilon$ only.
\end{corollary}

\begin{examples*} For $m=1,2,4$, the corollary specializes to the following 
bounds:

(i) For any $\epsilon,\theta >0$, and any $p\le t^4$, 
\begin{equation*}\label{p4let}
   \left|
	\{
f,\
\|\lambda_f\|\le t
,\ \max_{j=1,2} \log_p |\alpha^{(j)}_f(p)|  > \theta
\}
\right|
\ll_{\epsilon} 
t^{2(1-4\theta)/(1-2\theta)+\epsilon}.
\end{equation*}

(ii) For any $\epsilon,\theta >0$, and any $p\le t^2$, 
\begin{equation*}\label{p2let}
   \left|
	\{
f,\
\|\lambda_f\|\le t
,\ \max_{j=1,2} \log_p |\alpha^{(j)}_f(p)|  > \theta
\}
\right|
\ll_{\epsilon} 
t^{6(1-4\theta)/(3-4\theta)+\epsilon}.
\end{equation*}

(iii) For any $\epsilon,\theta >0$, and any $p\le t$, 
\begin{equation}\label{corrected-BBR}
   \left|
	\{
f,\
\|\lambda_f\|\le t
,\ \max_{j=1,2} \log_p |\alpha^{(j)}_f(p)|  > \theta
\}
\right|
\ll_{\epsilon} 
t^{10(1-4\theta)/(5-4\theta)+\epsilon}.
\end{equation}
\end{examples*}

\begin{remark*}
The third region $p\le t$ had also been considered 
in~\cite{Blomer-Buttcane-Raulf}*{Prop.1}. 
However there is an inaccuracy 
due to not taking into account that $\left\lfloor \frac{4\log t}{\log p} \right\rfloor$ is a 
priori smaller than $\frac{4\log t}{\log p}$
in the exponent of~\eqref{BBR} (the same also 
affects~\cite{Blomer-Buttcane-Raulf}*{Thm.1}). 
A remedy is to calculate that the infimum of $\frac{\log p}{\log t} \left\lfloor 
\frac{4 \log t}{\log p} \right\rfloor$ for $\frac{\log p}{\log t}\in 
[0,1]$ is equal to $\frac{16}{5}$ and attained as $\frac{\log p}{\log t}$ tends to 
$\frac{4}{5}^-$.
This yields a bound $\ll_\epsilon t^{2-\frac{32}{5}\theta + \epsilon}$, which is 
weaker than our bound~\eqref{corrected-BBR}.
\end{remark*}

\subsection{Main ideas for the proof of Theorem \ref{thm:weyl}} \label{sub:ideas}
The main tool to prove Theorem \ref{thm:weyl} will be Arthur--Selberg's trace formula for $\GL(n)$ in which we insert a suitable family of test functions.
We are facing the two difficulties that the lattice subgroup $\SL(n,\Z)$ is not
cocompact, and that the test functions are not of uniform compact support.
Functions that are not of compact support occur frequently for $\GL(2)$ and are more
recent
in higher rank~\cite{Blomer:Kuznetsov,ST11cf}.

Since $\SL(n,\Z)$ is not cocompact, there is a continuous spectrum which complicates the
analysis of the cuspidal spectrum. A lot of work has been done on this problem starting
from the pioneering works of Selberg and Langlands on Eisenstein series. Thanks to the
description of the
discrete spectrum of $\GL(n)$ by Moeglin--Waldspurger, a satisfactory grasp of the spectral side of the trace formula is achieved in~\cite{Muller07,LM09}.

Our work then happens on the geometric side of the trace formula.
As we shall explain now,
the approach is similar to that of~\cite{ST11cf}, with several important additions.
In the remainder of this introduction we shall focus only on the trivial 
$K_{\infty}$-type $\chi=\chi_+$. The global test functions have the form $(f \cdot 
\tau)_{|G(\A)^1}$ where $\tau$ is as in Theorem~\ref{thm:weyl} (a Hecke operator) and 
$f$ is a smooth bi-$K_\infty$-invariant function on $G(\R)$ compactly supported mod 
center. The support of the global test function is not uniformly bounded 
because $\tau$ is varying. To still make use of the Arthur--Selberg trace formula, 
this demands a good understanding of the behavior of the orbital integrals over 
$G(\A)$-conjugacy classes of varying elements $\gamma\in G(\Q)$.

Arthur's fine geometric expansion and splitting formula for $(G,M)$-families 
yield a decomposition of global orbital integrals as a sum  over certain Levi 
subgroups $M, L_1, L_2$ containing $M$ of products of three terms
\[
a^M(\gamma,S) J_M^{L_1}(\gamma,f^{L_1})J_M^{L_2}(\gamma,\tau_S^{L_2}),
\]
where $a^M(\gamma, S)$ are certain global coefficients, $f^{L_1}$ is a function on $L_1(\R)$ constructed from $f$, $\tau_S^{L_2}$ is a function on $L_2(\Q_S)$ constructed from $\tau$, $J_M^{L_1}(\gamma, f^{L_1})$ is a weighted orbital integral on the $L_1(\R)$-orbit of $\gamma$, and $J_M^{L_2}(\gamma, \tau_S^{L_2})$ is a weighted orbital integral on the $L_2(\Q_S)$-orbit of $\gamma$; see Section~\ref{sec:fine}. Here $S$ is a finite set of primes such that $\tau$ equals the unit element in the Hecke algebra at the primes outside of $S$. Our estimates need to be polynomial in $S$ and $\tau$, i.e., the remainder term should be at most a power of $\|\tau\|_{L^1(G(\Q_S))}$.

We are going to estimate these three terms separately:

\noindent (i) The archimedean orbital integrals $J_M^{L_1}(\gamma,f^{L_1})$ are the 
subject of Part~\ref{partI}. We establish an estimate that is polynomial in $\gamma$ 
with a specific dependence on the function $f$,  see the summary in the next 
\S\ref{sub:intro:germ}.
A similar estimate was obtained in an unpublished manuscript of the 
second-named author on the Weyl's law with remainder term for $\SL(n,\Z)$, however at 
the time without the polynomial dependence on $\gamma$, which we now achieve in this 
paper, notably thanks to Proposition~\ref{prop:spher:fct:bound}.

\noindent (ii) The first-named author~\cite{Ma:coeff} has established an upper-bound for Arthur's global coefficients $a^M(\gamma,S)$ that is polynomial in $\gamma$ and $S$.
Recent works of Chaudouard--Laumon~\cite{Laumon-Chaudouard} and Chaudouard~\cite{Chaudouard:comptage,Chaudouard:unipotent} provide exact formulas, and logarithmic upper-bounds, however these cover only a limited number of cases which is not sufficient for our purpose.

\noindent (iii) We establish in Part~\ref{part2} uniform bounds for the non-archimedean
orbital integrals $J_M^G(\gamma, \tau_S)$ that are polynomial in $\gamma$ and $\tau$,
i.e., a power of $D^{G}(\gamma_s)$ (see below for a definition) and
$\|\tau_S\|_{L^1(G(\Q_S))}$.
These bounds originate from the work of Shin and the second-named 
author~\cite{ST11cf}*{\S7}.
We provide in this paper a complete treatment, which produces effective constants and is 
independent of
motivic integration methods~\cite{ST11cf}*{App.~B}.

\subsection{Germ estimates for certain unbounded functions}\label{sub:intro:germ}
Let $G=\GL_n(\R)^1$ in this subsection. Let $T_0\subseteq G$ be the split maximal torus of diagonal elements. A Levi subgroup $M\subseteq G$ is called semi-standard if $T_0\subseteq M$.

We want to estimate the weighted orbital integral $J^G_M(\gamma,f)$ in a uniform way in 
both $\gamma$ and $f$. The uniformity in $\gamma$ is closely related with germ expansions. 
Germ expansions occur for example when $\gamma$ is regular semisimple and approaches a 
singular element and have been first studied in the work of Harish-Chandra~\cite{HC57}. 
The descent formulas apply to a fixed $f$, $M=G$ and $\gamma$ a varying semisimple element.
An important result of Harish-Chandra, that we shall generalize, is that for any $f\in \cC^\infty_c(G)$, there is a constant $C(f)>0$ such that for every semisimple element $\gamma\in G$,  $|J^G_G(\gamma,f)|\le C(f)$.
The dependence in $f$ is studied by 
Duistermaat--Kolk--Varadarajan~\cite{DKV83}, also in the case $M=G$, but for $\gamma$ 
a fixed semisimple element.

Our main result is an estimate where both $\gamma$ and $f$ vary. The uniformity in 
$\gamma$ is needed because we consider varying Hecke operators $\tau$ in 
Theorem~\ref{thm:weyl}. The uniformity in $f$ is needed to take the limit $t\to \infty$ 
in Theorem~\ref{thm:weyl}, because as is standard in the study~\cite{DKV83} of the spectra 
of locally symmetric spaces, $f$ depends on $t$.

The dependence on $\gamma$ is quantified by the Weyl discriminant
\[
D^G(\gamma_s)
= \left|\det(1- \mathrm{Ad}(\gamma_s) | \mathfrak{g}/\mathfrak{g}_{\gamma_s})\right|_\C
=  \prod_{\substack{1\le i<j\le n \\ \rho_i\neq\rho_j}}
 |1-\rho_i\rho_j^{-1}|_{\C}
\]
for $\rho_1, \ldots, \rho_n\in\C$ the eigenvalues of $\gamma_s\in G$ acting on $\R^n$. It is locally bounded and never vanishes, however it becomes arbitrary small if $\gamma_s$ is close to an irregular element (and is discontinuous at these points).

Next recall that $\ka$ is the Lie algebra of $A\subset T_0$, the connected component of the identity. We identify $\ka$ with the space of vectors $(X_1,\ldots,X_n)\in \R^n$ with $X_1+\cdots + X_n =0$.  Let $\ka^+=\{(X_1, \ldots, X_n)\in\ka\mid X_1\ge\ldots\ge X_n\}$ be the positive Weyl chamber of $\ka$. Let $K=\TO(n)$ be the maximal compact subgroup of $G$. We then have a map
\[
X: G\longrightarrow \ka^+
\]
given by the Cartan decomposition, namely, for $g\in G$ the element $X(g)\in \ka^+$ is the unique element such that $g\in Ke^{X(g)} K$.

There are different ways to approach the dependence in $f$. We 
follow~\cite{DKV83} in using the spherical Paley-Wiener theorem, however we then quickly 
differ from~\cite{DKV83}, because we establish cancellations by integrating a different 
variable, see Section~\ref{sec:spherfcts}. Our approach naturally leads to consider orbital 
integrals of functions of the form
\[
g\mapsto f(g) \| X(g) \|^{-\eta},\ g\in G,
\]
where $f\in C_c^\infty(G)$ is fixed and $\eta>0$.  Note that the function is unbounded in a neighborhood of $K_\infty$. In fact $X(g)=0$ iff $g\in K_\infty$.

Our first main result of Part~\ref{partI} is the following:
\begin{theorem}\label{thm:bound:orb:int:real}
There exist constants $\eta>0$ and $B< \infty$ depending only on $n$ such that the following holds.
For any $f\in C_c^\infty(G)$ there is a constant $C(f)>0$ depending only on $n$ and $f$ such 
that for any pair $(M,\gamma)$ consisting of a semi-standard Levi subgroup $M\subseteq G$ and 
an element $\gamma\in M$ with $(M,\gamma)\neq(G,\pm1)$,
\begin{equation*}
|J_M^G(\gamma, f \| X(\cdot)\|^{-\eta})|
\le C(f)D^G(\gamma)^{-B}.
\end{equation*}
\end{theorem}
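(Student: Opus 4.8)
The plan is to reduce the unbounded weighted orbital integral $J_M^G(\gamma, f_1\|X(\cdot)\|^{-\eta})$ to the known theory of weighted orbital integrals of compactly supported functions by a dyadic decomposition of the singularity at $K_\infty$. Write $\|X(g)\|^{-\eta} = \sum_{k\ge 0} \psi_k(g)$ where $\psi_k$ is supported on the shell $\{g : 2^{-k-1} \le \|X(g)\| \le 2^{-k}\}$ (plus one bounded piece supported away from $K_\infty$, which is harmless since $f_1\|X(\cdot)\|^{-\eta}$ is then in $C_c^\infty$ there and the existing estimates from~\cite{Matz} or the archimedean analogue apply directly). On the $k$-th shell $\|X(\cdot)\|^{-\eta} \asymp 2^{k\eta}$, so $f_1\psi_k$ is a smooth function bounded by $O(2^{k\eta})$ with all derivatives bounded by $O(2^{k(\eta+\text{const})})$ — the loss in derivatives is what costs a power of the discriminant. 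The goal is then to show $|J_M^G(\gamma, f_1\psi_k)| \le C(f_1)\, 2^{-k\delta}\, D^G(\gamma)^{-B'}$ for some $\delta>0$ and a $k$-independent $B'$, and sum the geometric series in $k$.

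The key input for the per-shell bound is a \emph{cancellation} coming from integrating a suitable transverse variable in the Cartan decomposition, which is the point at which this argument departs from~\cite{DKV83} and~\cite{LM09}: one parametrizes $g = k_1 e^{X} k_2$ and observes that the weight function $v_M(g)$ in Arthur's weighted orbital integral, together with the orbital measure on the $G$-conjugacy class of $\gamma$ intersected with the thin shell, contributes a factor that is small with the shell radius. More precisely, the set $Ke^XK \cap (\text{conjugacy class of }\gamma)$ for $\|X\|\le 2^{-k}$ has measure (against the orbital integral measure) bounded by a power of $2^{-k}$ times a power of $D^G(\gamma)^{-1}$, uniformly in $\gamma$; combining this with the $O(2^{k\eta})$ size of the integrand and the polynomial growth of the Arthur weight $v_M$ (which on a compactly supported piece is $O(1)$, or more precisely $O(\log)$, hence absorbed) gives the claimed decay provided $\eta$ is taken small enough relative to $\delta$. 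For the first assertion of the theorem, where $\gamma$ itself — not merely its semisimple part — may be regular but $(M,\gamma)\ne(G,\pm1)$, one handles the unipotent direction by Arthur's descent to the centralizer, where the semisimple part governs the discriminant, and one allows $\eta$ to be as large as needed (hence the larger, unspecified constant $\eta$ in that case), whereas when $(M,\gamma_s)\ne(G,\pm1)$ the descent lands on a proper Levi of the centralizer and a cruder bound with $\eta\le 1/2$ already suffices.

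I would organize the proof as follows. First, establish the dyadic partition of unity and record the derivative bounds for $f_1\psi_k$, identifying the constant $C(f_1)$ as a finite Sobolev-type norm of $f_1$. Second, prove the geometric lemma on the measure of $Ke^XK$-shells against orbital-integral measures, uniformly in $\gamma$ with an explicit power of $D^G(\gamma)$; this is where the real content lies and it is the step I expect to be the main obstacle, since it requires controlling how the conjugacy class meets a neighborhood of $K_\infty$ simultaneously in the radial variable and in $\gamma$, and the discriminant can degenerate. Third, combine with Arthur's splitting/descent formula to reduce the general $(M,\gamma)$ to the case $M=G$ with $\gamma$ semisimple (plus a unipotent integration that is controlled on the centralizer), apply the per-shell estimate, and sum over $k$; the convergence of $\sum_k 2^{-k\delta}2^{k\eta}$ dictates the admissible range of $\eta$, giving $0\le\eta\le 1/2$ in the semisimple-part case and a possibly larger $\eta$ (determined by the descent losses) in the general case. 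Finally, track that no constant depends on $\gamma$ beyond the displayed power of $D^G(\gamma)$, and that $B$ and $\eta$ depend only on $n$.
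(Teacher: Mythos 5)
Your plan has the right overall flavor --- in the end the paper's proof, like yours, is a pure absolute-value estimate: one bounds $|J_M^G(\gamma,\cdot)|$ by the positive distribution $\tilde J_M^G(\gamma,\cdot)$ and shows that $\|X(\cdot)\|^{-\eta}$ is integrable against the (weighted) orbital measure near $K$, uniformly in $\gamma$ up to powers of the discriminant. But the step you yourself flag as ``the main obstacle'' --- that the orbital measure of the shell $\{2^{-k-1}\le\|X(g)\|\le 2^{-k}\}$ intersected with the orbit of $\gamma$ is $\ll 2^{-k\delta}D^G(\gamma)^{-B'}$ uniformly in $\gamma$ --- is precisely the entire content of the theorem, and you give no argument for it. In the paper this is what Sections~\ref{sec:notation}--\ref{sec:orbital} supply: Arthur's formula~\eqref{eq:def:weighted:orb} writes $J_M^G(\gamma,f)$ as an integral over $C_G(\gamma_s)\backslash G$ of weighted unipotent integrals, the quotient is parametrized by the generalized Cartan decomposition of Proposition~\ref{prop:integration:formula}, the uniformity in $\gamma$ with discriminant powers comes from the support bounds of Lemmas~\ref{lem:bound:supp} and~\ref{lemma:norm:bound:unipotent} (everything confined to sets of size a power of $\Delta^-(\gamma_s)$), the weight $v_Q'$ is controlled by Kostant convexity (Lemma~\ref{lemma:bound:ss:weight}), and integrability of the singularity is won from the transverse directions: $\int\kL(u)^{-\eta}du$ over the unipotent radical converges for $\eta<\dim U_2/2$ with $\dim U_2\ge n-1$, and in the elliptic case the $\sinh$-Jacobian does the job (Lemma~\ref{lem:bound:first:int:non:central}). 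Note also that the ``cancellation from integrating a transverse variable'' you invoke belongs to the spherical-function estimate of Section~\ref{sec:spherfcts}, which is used later for $f^\mu_\pm$; no cancellation is used, or available, in the proof of this theorem, so the dyadic shells only reorganize an estimate you still have to prove.

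A second concrete failure is the case of central semisimple part, $\gamma_s=\pm1$ with $(M,\gamma_u)\neq(G,1)$, and more generally the unipotent directions. There the relevant weights are not the $(G,M)$-family weights $v_M'$ (which are indeed $O(\log|x|)$ and harmless on compacta) but Arthur's unipotent weights $w_{\CmO^{L_1}}^{M_R}(v)$, which involve $\log\|p(v)\|$ and blow up along zero sets of polynomials inside the unipotent radical; they cannot be ``absorbed'' as $O(1)$ or $O(\log)$ as you assert. The paper runs a separate argument for this (Proposition~\ref{prop:weight:orb:int:central}), combining Arthur's sublevel-set volume bound $\vol\{|p(v)|<\varepsilon\}\le B\varepsilon^t$ with the comparison $\|X(v)\|<\varepsilon\Rightarrow\|p(v)\|\ll\varepsilon^{1/2}$ of Lemma~\ref{lem:eps}, and it is exactly this case that forces the unspecified, possibly small exponent $\eta_0$ in the first assertion. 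Your reading of the two regimes is therefore reversed: the generous range $\eta\le 1/2$ is what one gets when $\gamma_s$ is non-central (via the threshold $(n-1)/2$ in Proposition~\ref{prop:bound:weight:orb:non:central}), while the general case only guarantees some small $\eta$ dictated by the unipotent weights --- larger $\eta$ means a more singular test function, so it is the stronger conclusion, not the ``cruder'' one. Without a proof of the uniform shell/measure estimate and without a treatment of the log-singular unipotent weights, the proposal does not yield the theorem.
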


The theorem generalizes several previous results. Our starting point is Arthur~\cite{Ar88a} 
who showed that for any $\gamma$, the weighted orbital integrals $f\mapsto J_M^G(\gamma,f)$ 
define smooth Radon measures on $G$ supported on the conjugacy class $O_M^G(\gamma)$, 
the conjugacy class in $G$ which is induced from  $O^M(\gamma)=\{ x^{-1}\gamma x,\ x\in M\}$ 
in $M$ (the unweighted case $M=G$ is due to Rao).

Duistermaat--Kolk--Varadarajan~\cite{DKV83} studied in great depth the unweighted case 
$M=G$, with $\gamma$  a fixed semisimple element, and $f$ the zonal spherical
function of spectral parameter $\mu$ multiplied by some characteristic function of compact support. Via the stationary phase method and the study of singularity
of the phase functions they are able to produce an asymptotic for large frequencies 
($\mu\to \infty$). Lapid-M\"uller~\cite{LM09} treated the weighted case $M\neq G$, with $\gamma=1$, 
in a way similar to~\cite{DKV83}.
This was extended by the first-named author~\cite{Matz}*{\S12} to the case of \emph{split} $\gamma$, using parabolic descent.
Our present approach is completely independent, even in the split case, and in fact we can recover the main results of~\cites{LM09,Matz} from Theorem~\ref{thm:bound:orb:int:real} and Proposition~\ref{prop:spher:fct:bound}.

The other direction is if $\gamma$ varies. Our formulation allows a direct
comparison with classical germ expansions of Harish-Chandra~\cite{HC57} and
Arthur~\cite{Ar88a}*{\S13} which correspond to $\eta=0$ (then the test function $f$
is smooth and bounded). The most recent result in this direction is
Arthur~\cite{ArthurGerm} who has generalized the descent formulas and germ
expansions of Harish-Chandra to the weighted case. If $\gamma$ is regular
semisimple and $\eta=0$, then it is shown in~\cite{ArthurGerm}*{\S3} that the bound
 $J_{M}^G(\gamma,f)\le C_B(f)D^G(\gamma)^{-B}$ holds for any fixed $B>0$ and that the 
 constant $C_B(f)$ can be
taken as a continuous semi-norm on $C_c^{\infty}(G)$ that extends to the
Harish-Chandra Schwartz space.

The relative position of $K$ and $O(\gamma)$ as submanifolds of $G$ play a role in finding 
good bounds for the weighted orbital integrals since $f\mapsto J_M^G(\gamma,f)$ is a 
distribution supported in the orbit $O(\gamma)$ and the test function is unbounded in a 
neighborhood of $K$. If $(M,\gamma)=(G, \pm1)$, then $O(\pm 1)=\{\pm 1\}\subset K$, and the test 
function $f\|\cdot \|^{-\eta}$ is not defined on that point.
The theorem says that conversely the condition $(M,\gamma)\neq (G,\pm 1)$ is sufficient to 
obtain cancellations. Our estimate is soft in the sense that the proof doesn't require 
hard analysis estimates at the 
cost of poor exponents (besides Arthur's theory of weighted orbital integrals, our main 
analytic input is a multidimensional van der Corput estimate with derivatives of order $2$).
From a representation-theoretic perspective, the analysis in~\cite{Sarnak87} for $n=2$
relies on the fact that Fourier transforms of local weighted orbital integrals are
explicitly known for the group $\SL(2,\R)$.

\begin{remark}
	It is natural to ask whether the exponent $1/2$ in 
	Theorem~\ref{thm:bound:orb:int:real} and Theorem~\ref{thm:weyl} can be doubled 
	to match the bound of Selberg for $\SL(2)$. We will see in 
	\S\ref{sec:spherfcts} below that the saving by $1/2$ comes from our uniform 
	estimate for zonal spherical functions, and as such it is sharp. There is 
	an additional saving by $1/2$ to be gained in the orbital integral, via a 
	geometric analysis of critical manifolds. The idea is to combine our 
	method in Part~\ref{partI} with the final sections of~\cite{DKV83}.
\end{remark}

To estimate the archimedean orbital integrals of (i) in \S\ref{sub:ideas}, we  eventually reduce with the help of Theorem~\ref{thm:bound:orb:int:real} to obtaining an estimate for zonal spherical functions $\phi_{\lambda}(g)$ that is uniform in both $\lambda\in i\Fma^*$ and $g\in G$. This is
achieved in Proposition~\ref{prop:spher:fct:bound} which is our second main result in Part~\ref{partI}. It shows that the zonal spherical function $\phi_\lambda(g)$ is uniformly small as soon as $g$ is away from the identity at distance greater than the frequency $\|\lambda\|^{-1}$.
Our proof is to apply a multidimensional van der Corput estimate in combination
with~\cite{DKV83}. Independently Blomer--Pohl~\cite{BP:sup-norm-Siegel} have obtained the
same estimate.

To gain further intuition of the role of the test functions $f\|X(\cdot)\|^{-\eta}$ in the Weyl's law it is helpful to draw the analogy with Fourier analysis on $\R$. Essentially the test function is the absolute value of the $\mathrm{sinc}$ function whose Fourier transform is a rectangular pulse (the indicator function in frequency of an interval $[-t,t]$). Similarly $f \|X(\cdot)\|^{-\eta}$ approximates the test functions whose spherical transform capture the automorphic spectrum of Laplace eigenvalue less than $t$ and this is how they appear in the proof of Theorem \ref{thm:weyl}. See Section~\ref{sec:spherfcts} for the exact formulas.

\subsection{Convention}
Throughout this paper the multiplicative constants in $\ll$, $\gg$, $\asymp$, $O()$ should be understood to depend on $n$ and could in principle be made explicit. Although we don't pursue this direction here, it would be interesting to understand the trace formula on $\GL(n)$ in the limit as $n\to \infty$, see e.g.~\cite{Miller:highest}.

\section{Symmetry type of families and low-lying zeros}

We fix a $W$-invariant non-empty bounded subset $\Omega\subset i\ka^*$ with piecewise
$C^2$-boundary. For any $n\ge 2$, we define a family of even Hecke--Maass cusp forms
$\FmF_{even}$, consisting of unramified spherical representations with spectral parameter
in the open cone $\R_{>0} \Omega$. Thus we let for all $t\ge 1$
\[
\FmF_{\mathrm{even}}(t)
:=
\{
\pi \in \Pi_{\mathrm{cusp}}(G(\A)^1),\ \lambda_\pi\in t\Omega,\ \pi^{K_f}\neq 0 \text{ and $\pi_\infty$ spherical}
\}.
\]
If $n$ is even we define similarly
\[
\FmF_{\mathrm{odd}}(t)
:=
\{
\pi \in \Pi_{\mathrm{cusp}}(G(\A)^1),\ \lambda_\pi\in t\Omega,\ \pi^{K_f}\neq 0 \text{ and $\pi_\infty$ with $K_\infty$-type $\chi_-$}
\}.
\]
In the sequel we let $\FmF$ be either $\FmF_{\mathrm{even}}$ or $\FmF_{\mathrm{odd}}$. The Weyl's law as in Theorem~\ref{thm:weyl} and~\cite{Muller07} shows that $|\FmF(t)|\sim \Lambda_\Omega(t)$ as $t\to \infty$.

\subsection{Principal $L$-functions} We attach to every representation $\pi$ the
principal $L$-function $L(s,\pi,\mathrm{std})$. We denote by $C(t)$ the average analytic
conductor for $\pi \in\FmF(t)$. We have $C(t) \asymp t^n$ as $t \to \infty$. The zeros
$\Lambda(\rho,\pi,\mathrm{std})=0$ are inside the critical strip, that is
$0<\mathrm{Re}\, \rho<1$.
\begin{theorem}\label{th:k-level}
	Let $k\ge 1$ and $\Phi_1,\ldots, \Phi_k$ be entire functions whose Fourier
	transforms are smooth and have small enough support. The average $k$-level density of
	low-lying zeros
	\begin{equation}\label{limit}
\frac{1}{|\FmF(t)|}
\sum_{\pi\in \FmF(t)}
\sum_{\substack{\rho_j=\mdemi + i\gamma_j\\ j=1\ldots k}}
\Phi_1\left(\frac{\gamma_1}{2\pi} \log C(t)\right)
\cdots
\Phi_k\left(\frac{\gamma_k}{2\pi}\log C(t)\right)
\end{equation}
  where the second sum is over $k$-tuples of zeros $\Lambda(\rho_j,\pi,\mathrm{std})=0$,
  converges as $t\to \infty$. The limit coincides with the $k$-level density of the
  eigenvalues of the $U(\infty)$ ensemble if $n\ge 3$. If $n=2$, the limit coincides with
  the $k$-level density of the eigenvalues of the $\SO(\mathrm{even})$ ensemble for
  $\FmF=\FmF_{\mathrm{even}}$ and the $k$-level density of the eigenvalues of the
  $\SO(\mathrm{odd})$ ensemble for $\FmF=\FmF_{\mathrm{odd}}$.
\end{theorem}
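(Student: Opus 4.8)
The plan is to reduce Theorem~\ref{th:k-level} to the quantitative equidistribution statement of Theorem~\ref{thm:weyl} (equivalently Theorem~\ref{th:heckemaass}), via the explicit formula relating sums over zeros of $L(s,\pi,\mathrm{std})$ to sums over prime powers of the Satake parameters $\alpha_f(p)$. The starting point is the Weil explicit formula applied to $\Lambda(s,\pi,\mathrm{std})$: for a Paley--Wiener test function $\Phi$ with $\widehat\Phi$ supported in $(-\delta,\delta)$, the sum $\sum_{\rho}\Phi\!\left(\frac{\gamma}{2\pi}\log C(t)\right)$ over zeros equals a main term coming from the archimedean $\Gamma$-factors and the conductor (which contributes the ``$\int\widehat\Phi$'' density term), plus a sum of the shape $-\frac{2}{\log C(t)}\sum_{p}\sum_{m\ge1}\frac{\log p}{p^{m/2}}\,\widehat\Phi\!\left(\frac{m\log p}{\log C(t)}\right)\big(\alpha_f^{(1)}(p)^m+\cdots+\alpha_f^{(n)}(p)^m\big)$ plus the contribution of the pole/special values, which is $o(1)$ on average. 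For the $k$-level density one expands the product $\prod_{j=1}^k\big(\text{main}_j+\text{primes}_j\big)$; the cross terms are handled by the standard combinatorial identity (as in Rudnick--Sarnak, Katz--Sarnak, Iwaniec--Luo--Sarnak) which, after one uses that distinct primes contribute independently and a single prime $p^m$ with $m\ge2$ is negligible, produces exactly the $k$-level density of the relevant random matrix ensemble provided one can evaluate, on average over $\FmF(t)$, the one-level prime sums $\frac{1}{|\FmF(t)|}\sum_{\pi\in\FmF(t)}\sum_p \frac{\log p}{\sqrt p}\,\widehat\Phi\!\left(\frac{\log p}{\log C(t)}\right)\!\big(\sum_j\alpha_f^{(j)}(p)\big)$ and the corresponding ``diagonal'' sums with squared Satake data.

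First I would record that $C(t)\asymp t^n$ and $|\FmF(t)|\sim\Lambda_\Omega(t)\asymp t^d$, so that $p$ ranges up to $\exp(\delta\log C(t))=C(t)^{\delta}\asymp t^{n\delta}$. Next, I invoke Theorem~\ref{th:heckemaass} (or directly Theorem~\ref{thm:weyl}) with $\phi$ taken to be the power-sum symmetric polynomial $p_m(x)=\sum_j x_j^m$ of degree $\le m$: it gives
\[
\sum_{f:\lambda_f\in t\Omega}\big(\alpha_f^{(1)}(p)^m+\cdots+\alpha_f^{(n)}(p)^m\big)
=\Lambda_\Omega(t)\int_{(S^1)^n/\mathfrak S_n}p_m\,d\mu_p+O\!\big(p^{Am}\,t^{d-1/2}\big).
\]
For $m=1$ the Plancherel integral $\int p_1\,d\mu_p$ vanishes (this is the $n\ge3$ minuscule vanishing already noted after Theorem~\ref{th:fouriermaass}, valid since the standard representation is minuscule), so the one-level prime sum over $\FmF(t)$ is $\ll \frac{1}{\log C(t)}\sum_{p\le C(t)^\delta}\frac{\log p}{\sqrt p}\cdot\frac{p^{A}t^{d-1/2}}{\Lambda_\Omega(t)}\ll t^{n\delta(A+1/2)-1/2}$, which is $o(1)$ as long as $\delta<\frac{1}{n(2A+1)}$; this is the ``small enough support'' hypothesis, and it forces the limit density to have no $p=1$ main term beyond $\int\widehat\Phi$, i.e. to be the $U(\infty)$ density when $n\ge3$. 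For $n=2$ one has instead $\int p_1\,d\mu_p\ne0$ and $\int p_2\,d\mu_p=1$ (as in the $\SO_2$ symmetry case of Iwaniec--Luo--Sarnak / Serre), which flips the sign of the diagonal term and yields the $\SO(\mathrm{even})$, resp. $\SO(\mathrm{odd})$, densities once one separates $\FmF_{\mathrm{even}}$ from $\FmF_{\mathrm{odd}}$ using the parity character $\chi_\pm$ in Theorem~\ref{thm:weyl}; the split of the family according to $\chi_-$ is exactly the odd/even dichotomy of \S\ref{s:intro:maass}.

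For the higher diagonal terms I need the second-moment/squared Satake sums: by the same application of Theorem~\ref{th:heckemaass} with $\phi=p_1^2$ or $p_2$, one gets $\frac{1}{|\FmF(t)|}\sum_{\pi}\big(\sum_j\alpha_f^{(j)}(p)\big)^2\to\int p_1^2\,d\mu_p$ and $\frac{1}{|\FmF(t)|}\sum_\pi\sum_j\alpha_f^{(j)}(p)^2\to\int p_2\,d\mu_p$, and these two Plancherel integrals are the only input (beyond the already-established equidistribution) needed to identify the ensemble: for $n\ge3$ both relevant traces die on average in the support range and the Katz--Sarnak combinatorics collapses the $k$-level sum onto the determinantal kernel $\frac{\sin\pi(x-y)}{\pi(x-y)}$ of $U(\infty)$. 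I would cite Rudnick--Sarnak and Katz--Sarnak for the passage from ``on-average one-level prime sums'' to the $k$-level statistic, since that combinatorial reduction is formal once the averaged prime sums are controlled.

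\textbf{Main obstacle.} The genuinely delicate point is not the explicit formula but tracking the allowable support $\delta$ and the negligibility of all off-diagonal and higher prime-power terms simultaneously. Concretely: (i) the error $O(p^{Am}t^{d-1/2})$ in Theorem~\ref{th:heckemaass} is polynomial in $p$, so summing over $p\le C(t)^\delta\asymp t^{n\delta}$ and over $m$ with $p^m\le t^{n\delta}$ must still beat $\Lambda_\Omega(t)\asymp t^d$; this is a clean geometric-series estimate but pins $\delta$ to roughly $\frac{1}{n(2A+1)}$ (with $A$ the exponent of Theorem~\ref{thm:weyl}), and one must check the $k$-fold product does not shrink the admissible $\delta$ to zero — it does not, because in the $k$-level expansion each factor uses its own prime sum on a disjoint set of primes, so the support condition is the same $\delta/k$-type shrinkage familiar from \cite{Rudnick-Sarnak} rather than something worse; (ii) one must verify that the archimedean/conductor main term of the explicit formula for $\Lambda(s,\pi,\mathrm{std})$, averaged over $\pi\in\FmF(t)$ with the normalization $\frac{1}{2\pi}\log C(t)$, converges to $\widehat\Phi(0)+\frac12\int_{-1}^1\widehat\Phi$ (or the appropriate shifted version), which requires that the archimedean Langlands parameters $\lambda_\pi$ equidistribute in $t\Omega$ with respect to the Plancherel density $\mathbf c(\lambda)^{-2}\,d\lambda$ — and this is precisely the content of Theorem~\ref{thm:weyl} applied with $\tau=\Un_{K_f}$, so it is available. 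Assembling these, one obtains the stated convergence and the ensemble identification; the only real work is the bookkeeping of exponents in (i).
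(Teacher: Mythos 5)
Your overall route --- the Weil explicit formula for $\Lambda(s,\pi,\mathrm{std})$, insertion of power-sum symmetric polynomials into Theorem~\ref{th:heckemaass} so that the quantitative error $O(p^{Am}t^{d-1/2})$ pins the admissible support of $\widehat\Phi_j$, and the Rudnick--Sarnak/Katz--Sarnak combinatorics to pass from averaged prime sums to the $k$-level statistic --- is exactly the machinery of \cite{ST11cf}*{\S12} that the paper invokes (its proof is a four-ingredient reduction to that reference), and for $n\ge 3$ your outline is sound: $\mathrm{std}$ is minuscule so $\int p_1\,\mu_p=0$, and $\int p_2\,\mu_p=\int s_{(2,0,\ldots,0)}\,\mu_p-\int s_{(1,1,0,\ldots,0)}\,\mu_p=O(1/p)$ since $s_{(1,1,0,\ldots,0)}=\Mtr\wedge^2$ is again minuscule; hence the family has rank zero and the $U(\infty)$ kernel drops out. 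Essential cuspidality is vacuous here since $\Lambda(s,\pi,\mathrm{std})$ is entire.

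The genuine gap is in your $n=2$ case. First, the Plancherel values you quote are wrong: for $n=2$ one still has $\int p_1\,\mu_p=0$ (the standard representation is a fundamental, minuscule weight, consistent with $\gamma(p)=0$ in the paper's example), and $\int p_2\,\mu_p=\int s_{(2,0)}\,\mu_p-\int s_{(1,1)}\,\mu_p=p^{-1}-1$, not $+1$; the point is precisely that $s_{(1,1)}\equiv 1$ because the central character is trivial, which produces the limiting value $-1$ and hence a $+\tfrac12\Phi(0)$ diagonal contribution, i.e.\ orthogonal symmetry, whereas your value $+1$ would output the symplectic sign. Second, and more seriously, the sign of this diagonal ($p^2$) term can only distinguish orthogonal from unitary/symplectic symmetry; it cannot separate $\SO(\mathrm{even})$ from $\SO(\mathrm{odd})$, whose densities differ (already at the one-level, within restricted support) by the $\Phi(0)$ contribution of a forced zero at the central point. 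To obtain the statement as claimed you must use that the root number is constant on each subfamily, $\epsilon(\tfrac12,\pi)=+1$ for all $\pi\in\FmF_{\mathrm{even}}$ and $-1$ for all $\pi\in\FmF_{\mathrm{odd}}$ (this is the paper's discussion of the average root number and the final ingredient listed in its proof of Theorems~\ref{th:k-level} and~\ref{th:sym-ext}), so that every member of the odd family carries an odd functional equation and a central zero. Your proposal never invokes the root number, and attributes the even/odd distinction to a sign flip of the diagonal term, so as written it cannot identify which of the two orthogonal ensembles occurs.
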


The sum~\eqref{limit}   encodes deep information about the correlation of low-lying zeros
of $L(s,\pi,\mathrm{std})$. The theorem is a partial confirmation of the Katz-Sarnak
heuristics~\cite{book:KS,SST} for this family. We emphasize that the result is entirely
unconditional (and similarly for Theorem~\ref{th:sym-ext}), for example we do not 
need to
assume the GRH because the functions $\Phi_j$ are entire.

If $n=2$, that is for classical Hecke-Maass forms on $\SL(2)$, the same result recently appeared in the work of Alpoge--Miller~\cite{Alpoge-Miller}, and is also to be compared with~\cite{ILS00} in the holomorphic case.

\begin{example}
	 If $k=1$, the limit of~\eqref{limit} is $\int_{-\infty}^{+\infty} \Phi_1(x) dx$. If $k=2$, the limit is
\[
\int_{\R^2} \Phi_1(x_1) \Phi_2(x_2)
\left[
1- \frac{\sin \pi(x_1-x_2)}{\pi(x_1-x_2)}
\right]^2
dx_1dx_2.
\]
In general the $k$-level density of the $\U(\infty)$ ensemble is given by the determinant of the Dyson kernel~\cite{book:KS}.
\end{example}

\subsection{Functorial lifts}
Next we want to consider more general $L$-functions. Since every $\pi\in \FmF$ has trivial central character, the $L$-group is $\SL(n,\C)$.  The symmetric square $L$-function $L(s,\pi,\mathrm{sym}^2)$ comes from the representation of $\SL(n,\C)$ on $\mathrm{Sym}^2 \C^n$. It has degree $n(n+1)/2$.  The exterior square $L$-function $L(s,\pi,\wedge^2)$ comes from the representation of $\SL(n,\C)$ on $\wedge^2 \C^n$. It has degree $n(n-1)/2$.  The adjoint $L$-function $L(s,\pi,\mathrm{Ad})$ comes from the adjoint representation	of $\SL(n,\C)$ on $\mathfrak{sl}(n,\C)$. It has degree $n^2-1$.  It is useful to note that $L(s,\pi\times \pi)= L(s,\pi,\mathrm{sym}^2)  L(s,\pi,\wedge^2)$ and $L(s,\pi\times \widetilde \pi)=\zeta(s) L(s,\pi,\mathrm{Ad})$. All these $L$-series converge absolutely for $\mathrm{Re}\, s>1$.

\begin{example}
For $n=2$, $L(s,\pi,\wedge^2)=\zeta(s)$ because $\pi$ has trivial central character, and also $L(s,\pi,\mathrm{Ad})=L(s,\pi,\mathrm{sym}^2)$. For $n=3$, $L(s,\pi,\wedge^2)=L(s,\widetilde\pi,\mathrm{std})$. There are no other relations between these $L$-functions for general representations $\pi$.
\end{example}

The analytic continuation and functional equation of the exterior square are known from
either the Langlands-Shahidi method, or the Jacquet-Shalika integral representation,
see~\cite{Cogdell-Matringe} and the references there. For the symmetric square it is
known from either the Langlands-Shahidi method or the Bump-Ginzburg~\cite{Bump-Ginzburg}
integral representation. For our purpose the choice of the construction is irrelevant
since it doesn't change the location of the zeros inside the critical strip.\footnote{In
both cases the local $L$ and $\gamma$-factors are conjectured to agree at ramified places
with those obtained by local Langlands correspondence but we shall not need this.}  The
meromorphic continuation and functional equation of the adjoint $L$-function follows from
Rankin-Selberg theory for $\Lambda(s,\pi\times \widetilde \pi)$ and by quotienting by
$\zeta(s)$. Thus, for each of the above $L$-functions, we can speak of its zeros $\rho_j
= \tfrac 12 + i\gamma_j$ and form the associated density statistics.

In each case we denote by $C_{\mathrm{sym}^2}(t)$,  $C_{\wedge^2}(t)$ and  $C_{\mathrm{Ad}}(t)$ the average analytic conductor for $\pi \in \FmF(t)$. We have $C_{\mathrm{sym}^2}(t) \asymp t^{\frac{n(n+1)}{2}}$, $C_{\wedge^2}(t) \asymp t^{\frac{n(n-1)}{2}}$ and $C_{\mathrm{Ad}}(t) \asymp t^{n^2-1}$ respectively as $t\to \infty$.
\begin{theorem}\label{th:sym-ext}   The average $k$-level density of the low-lying zeros
$\rho_j$ of the symmetric square $L$-functions $\Lambda(\rho_j,\pi,\mathrm{sym}^2)=0$
(resp. exterior square $L$-functions $\Lambda(\rho_j,\pi,\wedge^2)$ if $n\ge 3$, resp.
adjoint $L$-function $\Lambda(\rho_j,\pi,\mathrm{Ad})$) converges as $t\to \infty$ if the
Fourier transforms of $\Phi_1,\ldots, \Phi_k$ are smooth and have small enough support.
 The limit coincides with the $k$-level density of the eigenvalues of the $U(\infty)$ ensemble for the symmetric square and exterior square if $n\ge 3$, and of the $\Sp(\infty)$ ensemble for the adjoint.
\end{theorem}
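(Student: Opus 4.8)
The plan is to obtain Theorem~\ref{th:sym-ext} from the quantitative Sato--Tate estimate of Theorem~\ref{thm:weyl} by exactly the mechanism used for Theorem~\ref{th:k-level}, the one new point being that the random-matrix symmetry type is now dictated by the second Frobenius--Schur moment of the representation $r\in\{\mathrm{sym}^2,\wedge^2,\mathrm{Ad}\}$ of the dual group $\SL(n,\C)$ of $\PGL(n)$. I would begin by recording that for $\pi\in\FmF$ the completed $L$-function $\Lambda(s,\pi,r)$ has a meromorphic continuation and a functional equation $\Lambda(s,\pi,r)=\epsilon(\pi,r)\Lambda(1-s,\widetilde\pi,r)$ --- coming respectively from the Langlands--Shahidi method, from the Bump--Ginzburg or Jacquet--Shalika integral, and from $\Lambda(s,\pi\times\widetilde\pi)$ divided by $\zeta(s)$ --- and that, outside a density-zero subfamily of functorial lifts (negligible by the essential cuspidality established earlier in the paper; for the adjoint, $\Lambda(s,\pi,\mathrm{Ad})$ is in fact entire for every cuspidal $\pi$), $\Lambda(s,\pi,r)$ is entire. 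Together with the archimedean local factors this makes the Riemann--Weil explicit formula available with a clean main term, and since $r$ is irreducible and nontrivial this main term matches the pure random-matrix prediction with no spurious first-moment contribution.

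I would then apply the explicit formula to each of the $k$ factors in~\eqref{limit}, expand the product, and discard the negligible self-coincidence terms. What is left, besides the main term, is a linear combination of quantities
\[
\frac{1}{|\FmF(t)|}\sum_{\pi\in\FmF(t)}\ \prod_{i=1}^{m}\lambda_{\pi,r}(p_i^{\nu_i}),\qquad m\le k,
\]
over tuples of distinct primes $p_1,\dots,p_m$ unramified in $K_f$ and integers $\nu_i\ge 1$, weighted by $\prod_i\frac{\log p_i}{p_i^{\nu_i/2}\log C_r(t)}\widehat\Phi_{j_i}\!\big(\tfrac{\nu_i\log p_i}{\log C_r(t)}\big)$ and subject to $\prod_i p_i^{\nu_i}\ll C_r(t)^{\sigma}\asymp t^{\sigma\deg r}$, with $\sigma$ bounding the supports of the $\widehat\Phi_j$. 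Here $\lambda_{\pi,r}(p^\nu)=\Mtr r(\alpha_\pi(p)^\nu)$ is a symmetric Laurent polynomial $\phi_{r,\nu}$ in the Satake parameters of degree $O_n(\nu)$ and with coefficients bounded in terms of $n$, and, since distinct unramified primes contribute independent local Hecke operators, $\prod_i\phi_{r,\nu_i}(\alpha_\pi(p_i))=\Mtr\pi_f(\tau)$ for a bi-$K_f$-invariant $\tau$ with $\|\tau\|_{L^1}\ll\prod_i p_i^{O_n(\nu_i)}$. Theorem~\ref{thm:weyl}, equivalently its polynomial form Theorem~\ref{th:heckemaass}, then gives
\[
\frac{1}{|\FmF(t)|}\sum_{\pi\in\FmF(t)}\prod_{i=1}^m\phi_{r,\nu_i}(\alpha_\pi(p_i))
=\prod_{i=1}^m\int_{{S^1}^n/\mathfrak{S}_n}\phi_{r,\nu_i}\,\mu_{p_i}
+O\!\Big(\big(\textstyle\prod_i p_i^{\nu_i}\big)^{A}t^{-1/2}\Big),
\]
and summing the error over the admissible range produces a total error $\ll t^{A'\sigma-1/2}$, which is $o(1)$ once $\sigma$ is small enough in terms of $n$. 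This is the only place the support hypothesis enters, just as in Theorem~\ref{th:k-level}.

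It remains to compute the surviving main term. By Macdonald's formula the unramified Plancherel measure $\mu_p$ converges weakly, as $p\to\infty$, to the image on conjugacy classes of the Haar measure of $\SU(n)$, so $\int\phi_{r,\nu}\,\mu_p\to\int_{\SU(n)}\Mtr r(g^\nu)\,dg$; this equals $\langle r,\mathbf 1\rangle=0$ for $\nu=1$, the Frobenius--Schur indicator $\nu_2(r)\in\{0,\pm1\}$ for $\nu=2$, and the $\nu\ge3$ terms contribute nothing after being weighted and summed by the prime number theorem. Feeding these moments into the Katz--Sarnak combinatorics for the $k$-level density --- the same computation as in the proof of Theorem~\ref{th:k-level} and in~\cite{book:KS} --- yields $\int_{\R^k}\Phi_1(x_1)\cdots\Phi_k(x_k)\,W_{\mathcal G}(x)\,dx$ with $\mathcal G=U(\infty)$ when $\nu_2(r)=0$ and $\mathcal G=\Sp(\infty)$ when $\nu_2(r)=+1$. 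For $n\ge 3$ the representation $\mathrm{Sym}^2\C^n$ is not self-dual, so $\nu_2(\mathrm{sym}^2)=0$, and likewise $\nu_2(\wedge^2)=0$; the adjoint representation on $\mathfrak{sl}(n,\C)$ carries its Killing form, hence is orthogonal, so $\nu_2(\mathrm{Ad})=+1$. This gives the three asserted ensembles.

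I expect the main obstacle to be technical rather than conceptual: controlling the accumulated error from the polynomial-in-$p$ dependence of Theorem~\ref{thm:weyl} uniformly across the up-to-$k$-fold prime sums, so as to pin down an explicit admissible value of $\sigma$, and --- slightly more delicate --- checking that the thin subfamily of $\pi$ for which $\Lambda(s,\pi,r)$ fails to be entire (those $\pi$ that are functorial lifts of orthogonal or symplectic type) really is negligible in~\eqref{limit}, which is precisely where the essential cuspidality of the family is used.
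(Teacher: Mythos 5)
Your overall route is the same as the paper's: feed the quantitative Sato--Tate estimate of Theorem~\ref{thm:weyl} (with its polynomial dependence on the Hecke operator, which is what fixes the admissible support $\sigma$) into the Riemann--Weil explicit formula, use essential cuspidality to dispose of poles, and read off the ensemble from Frobenius--Schur indicators; the paper's own proof is exactly this list of ingredients with the details delegated to \cite{ST11cf}*{\S12}. Two points in your sketch, however, do not stand as written.

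First, your parenthetical assertion that $\Lambda(s,\pi,\mathrm{Ad})$ is entire for every cuspidal $\pi$ is not a theorem: it is the conjecture that $\zeta(s)$ divides $L(s,\pi\times\widetilde\pi)$, known only for $n\le 5$. What is available (and what \S\ref{sub:cuspidal} uses) is that $\Lambda(s,\pi,\mathrm{Ad})$ is holomorphic at $s=1$ and its only possible poles lie at the zeros of $\zeta(s)$; since these are fixed independently of $\pi$, one applies the explicit formula to $\Lambda(s,\pi\times\widetilde\pi)$ and the extra $\zeta$-zeros are negligible by the rapid decay of the $\Phi_j$. Second, and more seriously, your treatment of the $\nu=1$ prime terms invokes only the weak convergence $\mu_p\to$ Sato--Tate, i.e.\ $\int\phi_{r,1}\,\mu_p\to 0$ as $p\to\infty$. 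That is not enough: these terms carry the weight $\frac{\log p}{\sqrt{p}\,\log C}\,\hat\Phi_j\bigl(\tfrac{\log p}{\log C}\bigr)$ with $C\asymp t^{\deg r}$, and $\sum_{p\le C^{\sigma}}\frac{\log p}{\sqrt p}$ is of size $C^{\sigma/2}$, so a rate-free $o(1)$ bound on the first moment yields nothing. One needs the quantitative bound $\int\phi_{r,1}\,\mu_p=O(1/p)$ --- the rank-zero property in the sense of \cite{SST}, which the paper obtains from \cite{ST11cf}*{Lemma~2.9} using \cite{Kato82}; this is precisely the third bullet of the paper's proof. With that bound the $\nu=1$ contribution is $O(1/\log C)$, and the rest of your argument (the $\nu=2$ terms producing the Frobenius--Schur indicator, hence $U(\infty)$ for $\mathrm{sym}^2$ and $\wedge^2$ when $n\ge3$ and $\Sp(\infty)$ for $\mathrm{Ad}$, together with the support restriction controlling the accumulated $p^{A}t^{-1/2}$ errors) goes through as in the paper.
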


\begin{example}
For $k=1$ and for the zeros of the adjoint $L$-function, the limit of~\eqref{limit} is
\[
\int_{-\infty}^\infty \Phi_1(x) \left[
1- \frac{\sin 2\pi x}{2\pi x}
\right] dx.
\]
In general the $k$-level density is given by the determinant of the Dyson kernel for $\Sp(\infty)$~\cite{book:KS}.
\end{example}

\subsection{Essential cuspidality}\label{sub:cuspidal}
In establishing the above density statistics of the zeros, one needs to control the
poles.
All of the representations $\mathrm{std}$, $\mathrm{sym}^2$, $\wedge^2$ and $\mathrm{Ad}$ of
$\SL(n,\C)$ are irreducible. Following~\cite{SST} we say that each of the associated families are essentially cuspidal.
Essentially cuspidal families of $L$-functions are expected to have negligible number of poles on average which we shall now verify for each family in turn.

The completed $L$-functions $\Lambda(s,\pi,\mathrm{std})$ are entire so there is nothing to verify for the family of standard $L$-functions.

The completed Rankin-Selberg $L$-functions $\Lambda(s,\pi \times \widetilde \pi)$ have a simple pole at $s=1$ and therefore $\Lambda(s,\pi,\mathrm{Ad})$ is holomorphic at $s=1$. The other possible\footnote{It is conjectured that $\Lambda(s,\pi,\mathrm{Ad})$ is entire, i.e., $\zeta(s)$ divides $L(s,\pi\times \widetilde \pi)$. This is known for $n=2,3,4,5$ by the works of Shimura, Ginzburg, Bump--Ginzburg and Ginzburg--Hundley respectively.} poles of $\Lambda(s,\pi,\mathrm{Ad})$ are the zeros of $\zeta(s)$ inside the critical strip. Since these potential poles are fixed, thus independent of $\pi\in \FmF(t)$, they are negligible in the limit $t\to \infty$ of the average $k$-level density of the low-lying zeros. Indeed the explicit formula for $\Lambda(s,\pi \times \widetilde \pi)$  will capture the zeros of $\Lambda(s,\pi,\mathrm{Ad})$ while the extra zeros of $\zeta(s)$ are negligible in the limit since $\Phi_1, \ldots, \Phi_k$ are of rapid decay.

For the symmetric square and exterior square $L$-functions we shall need the following
result:
\[
\lim_{t\to \infty}
\frac{
 |\{\pi\in \FmF(t),\ \Lambda(s,\pi,\mathrm{sym}^2) \text{ is entire}\}
|
}
{ |\FmF(t)|} = 1,
\]
 and similarly for $\Lambda(s,\pi,\wedge^2)$ for $n\ge 3$.
Since $L(s,\pi \times \pi)=L(s,\pi,\mathrm{sym}^2)L(s,\pi,\wedge^2)$, the representation
$\pi$ is self-dual if and only if the symmetric square or exterior square $L$-function
has a pole.
Thus it is equivalent to establish that for $n\ge 3$, the number of self-dual automorphic
representations in $\FmF(t)$ is
negligible as $t\to \infty$. The precise statement is as follows:
\begin{proposition}[(Kala~\cite{Kala:phd})]\label{p:Kala}
For every $n\ge 3$, the following upper bounds hold
\[
|\{\pi\in \FmF(t),\ \Lambda(s,\pi,\wedge^2) \text{ has a pole}\}| \ll
t^{\frac{n^2+2n}{4}},
\]
\[
|\{\pi\in \FmF(t),\ \Lambda(s,\pi,\mathrm{sym}^2) \text{ has a pole}\}| \ll
\begin{cases}
t^{\frac{n^2-1}{4}} & \text{if $n$ is odd,} \\
t^{\frac{n^2}{4}} & \text{if $n$ is even.}
\end{cases}
\]
\end{proposition}

\begin{proof}
The assumptions in~\cite{Kala:phd} are somewhat different, so
we repeat the argument for the sake of completeness.
If $\pi\in \FmF(t)$ is such that $L(s,\pi,\wedge^2)$ has a pole, then
Arthur's classification says that $n$ is even, and that $\pi$ descends to a cuspidal
automorphic
representation $\sigma$ of the split
orthogonal group $SO_{n+1}$,
see~\cite{Cogdell-Kim-PS-Shahidi:functoriality}*{Thm.7.1}
and~\cite{Ginzburg-Rallis-Soudry:descent}. If $\pi
\in \FmF(t)$ is such that $L(s,\pi,\mathrm{sym}^2)$
has a
pole and $n$ is odd, then Arthur's classification says that $\pi$ descends to a cuspidal
automorphic representation $\sigma$ of the split
symplectic group $\Sp_{n-1}$,
see~\cite{Cogdell-Kim-PS-Shahidi:functoriality}*{Thm.7.2}
and~\cite{Ginzburg-Rallis-Soudry:descent}.
If $\pi\in \FmF(t)$ is such that $L(s,\pi,\mathrm{sym}^2)$ has a pole and $n$ is even,
then since the central character of $\pi$ is trivial, because $\pi$ is spherical and
unramified, Arthur's classification says that $\pi$ descends to a cuspidal automorphic
representation
$\sigma$ of the split orthogonal group $SO_n$,
see~\cite{Cogdell-Kim-PS-Shahidi:functoriality}*{Thm.7.2}
and~\cite{Ginzburg-Rallis-Soudry:descent}.
 (If the central character of $\pi$ were non-trivial, then the descent would be to a
 quasi-split orthogonal group.)

The condition $\pi^{K_f}\neq 0$, equivalently that $\pi_p$ is unramified for every prime
$p$ implies that $\sigma_p$ is also unramified. This is established
in~\cite{Cogdell-Kim-PS-Shahidi:functoriality}*{Thm.9.2} via a case-by-case
analysis.

There is a linear relation between the infinitesimal character
$\lambda_\pi$ of $\pi_\infty$, and the infinitesimal character $\lambda_\sigma$ of
$\sigma_\infty$, see~\cite{Kala:phd}*{Thm.3.3.2}.
In particular, the condition $\lambda_\pi\in t\Omega$ implies $\lambda_\sigma \in
t \Omega'$ for some open bounded $\Omega'$.

The upper bound of Donnelly~\cite{Donn82} towards Weyl's law gives an estimate for the
number of automorphic representations $\sigma$ that satisfy the above conditions (that
is, cuspidal,
unramified, and spherical with $\lambda_\sigma \in t
\Omega'$). Namely, the number of such representations is at most
$\ll t^{d'}$,
where $d'$ is the dimension of the symmetric space of the corresponding (split) classical
group,
that is $d'=\frac{n^2+2n}{4}$, resp. $d'=\frac{n^2-1}{4}$, and $d'=\frac{n^2}{4}$ in the
respective three cases. This concludes the proof of the proposition.
\end{proof}

\subsection{Homogeneity type}\label{sub:homogeneity}
As $p\to \infty$ the Plancherel measure $\mu_p$ converges to the Sato-Tate measure on
${S^1}^n/\mathfrak{S}_n$ attached to the Haar measure on $SU(n)$
by~\cite{ST11cf}*{Prop.5.3}.
Theorem~\ref{thm:weyl} is the Sato-Tate equidistribution for the family $\FmF$.

Following the terminology
in~\cite{SST} we can identify the respective homogeneity types by computing the
Frobenius--Schur indicators.
The representations $\mathrm{std}$, $\mathrm{sym}^2$ and $\wedge^2$ are non self-dual,
with the exception of $\mathrm{std}$ for $n=2$ which is self-dual symplectic. (This case
is already handled in~\cite{Alpoge-Miller}.)
The representation $\mathrm{Ad}$ is self-dual orthogonal because it preserves the Killing form on $\mathfrak{sl}(n,\C)$ which is bilinear symmetric and non-degenerate.

\subsection{Proof of Theorems~\ref{th:k-level} and \ref{th:sym-ext}}
This is similar to~\cite{ST11cf}*{\S12}, and combines the following:
\begin{itemize}
\item the Sato-Tate equidistribution Theorem~\ref{thm:weyl} for the families
$\FmF_{\mathrm{even}}$ and $\FmF_{\mathrm{odd}}$, where the exponent $A$ determines the
size of the support of the Fourier transform of the test functions
$\Phi_1,\ldots,\Phi_k$, and the homogeneity type is determined in
\S\ref{sub:homogeneity};
\item both $\FmF_{\mathrm{even}}$ and $\FmF_{\mathrm{odd}}$ are essentially cuspidal as
explained in \S\ref{sub:cuspidal};
\item both $\FmF_{\mathrm{even}}$ and $\FmF_{\mathrm{odd}}$ have rank zero in the sense
of~\cite{SST} because  $\int_{  {S^1}^n/\mathfrak{S}_n} \phi  \mu_p=O(\frac{1}{p})$ for
each of the respective polynomial functions $\phi(x)=\Mtr(x)$, $\phi(x)=
\Mtr(\mathrm{sym}^2(x))$, $\phi(x)=\Mtr(\wedge^2(x))$ and $\phi(x)=\Mtr(\mathrm{Ad}(x))$.
This is also established in complete generality in~\cite{ST11cf}*{Lem.2.9} using
combinatorial results from~\cite{Kato82}.
\end{itemize}
We omit the details since they are rather standard and one of the purposes of~\cite{SST}
was to organize the properties of families in such a way that a formal verification
becomes straightforward.

\subsection{The average root number}
Let $\psi$ be the standard additive character on $\Q\backslash \A_\Q$. Since both $\pi$
and $\psi$ are unramified at all finite places we have
$\epsilon(\frac12,\pi)=\epsilon(\frac12,\pi_\infty,\psi_\infty)$.

If $\pi_\infty$ is spherical then $\epsilon(\frac12,\pi_\infty,\psi_\infty)=1$ while if $\pi_\infty$ has $K_\infty$-type $\chi_-$ then $\epsilon(\frac12,\pi_\infty,\psi_\infty)=-1$. Thus even (resp. odd) unramified Maass cusp forms have root number equal to $1$ (resp. $-1$), which in the classical language~\cite{Go}*{\S9} is related to the $W$-eigenvalue as in \S\ref{s:intro:maass} and equivalently to the relation $A_f(1,\ldots,1,-1)=\epsilon(\frac12,f)$.  If $n$ is odd then all Maass forms are even, while if $n$ is even we have seen that $|\FmF_{\mathrm{even}}(t)| \sim |\FmF_{\mathrm{odd}}(t)|\sim \Lambda_\Omega(t)$ as $t\to \infty$, and therefore the root number is equidistributed between $\pm 1$.

\section{Average bounds towards Ramanujan}\label{s:pf:ram}

\subsection{Proof of Corollary~\ref{cor:ramanujan} and 
Corollary~\ref{cor:largep}}\label{sub:pf-ram}
	We shall apply the Sato-Tate equidistribution in the version of
	Theorem~\ref{th:heckemaass}.
	For $\alpha \in \C^n$ write $|\alpha|_\infty:=  \max\limits_{1\le j\le n}
	|\alpha^{(j)}|$.
Choose symmetric polynomials $\phi_0, \ldots,\phi_s$ as in Lemma~\ref{l:pol} below so 
that
	\[
	|\phi_0(\alpha)|^{2} + \cdots + |\phi_s(\alpha)|^{2} \ge |\alpha|^{2}_\infty	\]
	for all $\alpha \in  {\C}^n$. Form the conjugate Laurent polynomials
\[
\phi_j^\vee(x_1,\ldots , x_n) := \overline{
\phi_j({\overline{x_1}}^{-1},\ldots , {\overline{x_n}}^{-1})},
\quad
0\le j \le n,
	\]
and let $\tilde\phi := (\phi_0 \phi_0^{\vee}) + \cdots + (\phi_s \phi_s^\vee)$. In fact, 
Lemma~\ref{l:pol} shows that we may 
arrange that $\deg(\widetilde \phi)=2$, where $\deg(e_i)=1$ for $1\le i\le n-1$. 
For large $k\in\N$, to be chosen later, put $\phi(x_1,\ldots,x_n):=
\tilde\phi(x_1^k,\ldots,x_n^k)$ so that the degree of $\phi$ equals $k\deg(\tilde\phi)=2k$,
and the
coefficients of $\phi$ are the same as the coefficients of $\tilde \phi$, hence bounded
by some constant $a>0$ depending only on $n$.
Note that
$\int_{{S^1}^n/\mathfrak{S_n}} \phi \mu_p$ is bounded by
$\sup_{|\alpha|_{\infty}=1} |\tilde \phi(\alpha)| = \sup_{|\alpha|_{\infty}=1}
|\phi(\alpha)|=:b>0$,
which is also constant indepent of $k$.

For any unramified irreducible unitary representation $\pi$ of $\PGL_n(\Q_p)$ with Satake
parameter $\alpha \in (\C^{\times})^n/\mathfrak S_n$ we have
\[
\phi(\alpha) =
\Mtr \pi(\tau)
= |\phi_0(\alpha^k)|^{2} + \cdots + |\phi_s(\alpha^k)|^{2}\ge |\alpha|^{2k}_\infty,
\]
where $\alpha^k=((\alpha^{(1)})^k,\ldots, (\alpha^{(n)})^k)$.
We have that $\alpha$ is unitary in the sense that
\[
\{\alpha^{(1)},\ldots,\alpha^{(n)}\}
= \{
\frac{1}{\overline{\alpha^{(1)}}},\ldots,\frac{1}{\overline{\alpha^{(n)}}}
\}
.
\]
This implies $\phi_j^\vee(\alpha^k)=\overline{\phi_j(\alpha^k)}$.

Denote by $N$ the left-hand side of Corollary~\ref{cor:ramanujan}, that is, $N$ is the 
number of Hecke--Maass cusp forms $f$ with $||\lambda_f||\le t$ and $|\alpha_f(p)|_\infty = 
\max\limits_{1\le j \le n}|\alpha_f^{(j)}(p)| > p^{\theta}$.
By the above inequality for $\phi(\alpha)$, we have
\[
N p^{2k\theta}
\le
\sum_f
\phi(\alpha_f(p)),
\]
where now $f$ runs through all Hecke--Maass cusp forms with $||\lambda_f||\le t$.
We apply Theorem~\ref{th:heckemaass}, together with the above properties of the function
$\phi$ to conclude that this is bounded by
\[
\le b \Lambda_\Omega(t)
+ a c_1 p^{2Ak} t^{d-\frac12},
\]
where we recall that $\Lambda_{\Omega}(t)\sim b_1 t^d$ for some constant $b_1$.
We choose
$k:= 
\left\lfloor
\frac{\log t}{4A\log p}
\right\rfloor
$ to be the largest integer $\le \frac{\log t}{4A\log p}$. We obtain
\[
 N\le  p^{-2 \left\lfloor
\frac{\log t}{4A\log p}
\right\rfloor \theta
} 
(bb_1 t^d +a c_1 t^{\frac12} t^{d-\frac12})
\le 
 t^{-x \left\lfloor
\frac{c}{x}
\right\rfloor \theta} 
(bb_1 + a c_1)t^{d}
\]
with $c:=1/(2A)$, a constant depending on $n$, and $x=\frac{2\log p}{\log t}$.
 This concludes the proof of Corollary~\ref{cor:ramanujan}.

If instead we choose $k:= 
\left\lfloor
\frac{\log t}{4A\log p}
\right\rfloor +1
$, then we obtain
\[
 N\le  p^{-2\theta (\left\lfloor
\frac{c}{x}
\right\rfloor
+1)
}  (bb_1 + a c_1) p^{2 A (\left\lfloor
\frac{c}{x}
\right\rfloor
+1) }
t^{d-\frac12}.
\]
This concludes the proof of Corollary~\ref{cor:largep}.

\begin{lemma}\label{l:pol}
	 There exist symmetric polynomials $\phi_0, \ldots, \phi_s \in \C[x_1, \ldots, 
	 x_n]^{\mathfrak{S}_n}$ such that
	\[
\max\bigl(
	|\phi_0(\alpha) |, \ldots,  |\phi_s(\alpha)|\bigr) \ge |\alpha|_\infty	\]
	for all $\alpha \in  {\C}^n$.
	In fact, one can take $s=n$, and multiples the elementary symmetric 
	polynomials $e_0,e_1,\ldots, e_n$ for the polynomials $\phi_0,\ldots, \phi_{n}$.
\end{lemma}

\begin{proof}
 For an integer $m\in \left\{ 0, \ldots,n \right\}$, and $x=(x_1,\ldots,x_n)\in\C^n$ let
 \[
  \phi_m(x):=\frac{2^m}{m!\ (n-m)!} \sum_{\sigma\in\mathfrak{S}_n} x_{\sigma(1)}\cdot\ldots\cdot 
  x_{\sigma(m)}
 \]
be a multiple of the elementary symmetric polynomial of degree $m$ in $n$ 
variables. In particular $\phi_0=1$.
Let $x_{\max}\in \{x_1,\ldots,x_n\}$ be such that $|x_{\max}|=|x|_{\infty}=\max_{1\le j\le n}|x_j|$. Let $x^-\in\C^{n-1}$ be the vector obtained from $x$ by omitting the coordinate $x_{\max}$.
Then for every $1\le m\le n$
\[
 \phi_m(x)
 = 2 x_{\max}\phi_{m-1}(x^-) + \phi_m(x^-).
\]
Hence we have either
\begin{equation}\label{eq:low:bound:phi1}
 |\phi_m(x)|
 \ge |x_{\max}| |\phi_{m-1}(x^-)|,
\end{equation}
or
\begin{equation}\label{eq:low:bound:phi2}
 |\phi_m(x^-)|\ge |x_{\max}| |\phi_{m-1}(x^-)|.
\end{equation}

The inequality~\eqref{eq:low:bound:phi1} holds for $m=n$. Hence we can let $m_0$ be the smallest $m\in \left\{ 1, \ldots, n \right\}$ such that
\eqref{eq:low:bound:phi1} holds. For every $1\le m\le m_0-1$ the inequality \eqref{eq:low:bound:phi2} holds so that
\[
 |\phi_{m_0-1}(x^-)|
 \ge |x_{\max}| |\phi_{m_0-2}(x^-)|
 \ge \ldots
 \ge |x_{\max}|^{m_0-1}.
\]
Therefore,
\[
 |\phi_{m_0}(x)|
 \ge |x_{\max}| |\phi_{m_0-1}(x^-)|
 \ge |x_{\max}|^{m_0}.
\]
Hence the lemma follows with $s:=n+1$ and $\phi_j:=\phi_{j-1}$, for $j=1,\ldots,n+1$.
\end{proof}

We see that the constants in Corollary~\ref{cor:ramanujan} depend on the choice of polynomials in the above lemma which in turn depend on $n$ only.

If $\pi$ is an irreducible unitary representation of $\PGL_n(\Q_p)$ then, the associated 
Satake parameter $\alpha\in \C^n$ is unitary and $\alpha^{(1)} \cdots \alpha^{(n)}=1$. For the 
application to Corollary~\ref{cor:ramanujan} it would have been sufficient to establish 
Lemma~\ref{l:pol} with these two extra conditions on $\alpha \in \C^n$. This can be exploited 
for $n=2$ as shown below. For general $n$  we have decided to establish the lemma in this 
stronger form since the proof is essentially the same.

\begin{example}
	For $n=3$ consider the Schur polynomial $s_{(l,0,0)}$. 
	Blomer-Buttcane-Raulf~\cite{Blomer-Buttcane-Raulf} estimate the average of 
	$|A_f(p^l,1)|^{2k} = |s_{(l,0,0)}(\alpha)|^{2k}$ in the Kuznetsov trace formula.
The property used there is that $|s_{(l,0,0)}(\alpha)|$ is approximately greater 
than $|\alpha|_{\infty}^{l-1}(|\alpha|_\infty - 1)$ for all unitary $\alpha\in (\C^\times)^3/\mathfrak 
S_3$ such that 
$\alpha^{(1)}\alpha^{(2)}\alpha^{(3)}=1$. Since $|\alpha|_\infty$ may be arbitrary close to 
$1$, this forces~\cite{Blomer-Buttcane-Raulf} to choose $l$ large enough depending 
on $\theta$ which makes their final result less explicit than our
Corollary~\ref{cor:ramanujan}. The difference with our 
approach via Lemma~\ref{l:pol} is that our test polynomials $\phi$ constructed 
above have stronger approximation properties.
One could combine our construction with the 
strong estimates for the 
Kuznetsov trace formula for $\SL_3(\Z)$ 
in~\cite{Blomer-Buttcane-Raulf,Buttcane-Zhou:weight} to improve 
on the average bound towards Ramanujan in~\cite{Blomer-Buttcane-Raulf}*{Thm.2}. 
We haven't pursued the details here.
\end{example}

\subsection{Proof of Theorem~\ref{t:SL2-Ramanujan}}\label{sub:proof-SL2}
Assume for the remainder of this section that $n=2$. 
All unitary $\alpha\in (\C^\times)^2/\mathfrak S_2$ with 
$\alpha^{(1)}\alpha^{(2)}=1$ satisfy either that $|\alpha^{(1)}| = |\alpha^{(2)}|=1$ 
(tempered parameters), or
$\alpha^{(1)},\alpha^{(2)}\in \R^\times$ (non-tempered parameters).

Let $e_1(x_1,x_2)=x_1+x_2$. 
The average bound towards Ramanujan in~\cite{Sarnak87}, resp. in 
\cite{Blomer-Buttcane-Raulf}, is established by averaging $A_f(p)^{2k} = 
e_1(\alpha_f(p))^{2k}$ in Selberg trace formula for $\SL_2(\Z)$, resp. in Kuznetsov 
trace 
formula for $\SL_2(\Z)$.

Instead, following our construction above, we shall average 
\[
\left(\alpha^{(1)}_f(p)^{k} + \alpha^{(2)}_f(p)^k\right)^2
=
(A_f(p^k) - A_f(p^{k-2}))^2
=
 A_f(p^{k-2})^2 + A_f(p^k)^2 - 2 A_f(p^{k-2}) A_f(p^k),\quad k\ge 2
\]
in Kuznetsov trace formula.
If $k=1$, then we shall average $(\alpha^{(1)}_f(p) + \alpha^{(2)}_f(p))^2 = A_f(p)^2$.
If $k=0$, then we average $(1+1)^2=4$, which of course yields a trivial bound, 
though it is ok to include it (this corresponds to that~\eqref{SL2-8x} is trivial 
in the region $p^4>t$).

The approximation property we shall use is that $\left|{\alpha^{(1)}}^k + 
{\alpha^{(2)}}^k\right|
> |\alpha|^k_\infty$ for every unitary $\alpha \in (\C^\times)^2/\mathfrak S_2$ with 
$\alpha^{(1)}\alpha^{(2)}=1$ and every $k\ge 0$.
The rest of the proof is identical to~\cite{Blomer-Buttcane-Raulf}*{\S2}.
Note that $u_j$ in \cite{Blomer-Buttcane-Raulf} is our $f$, and $\lambda_j=\frac14 + 
t_j^2$ is our $||\lambda_f||^2$.
We obtain:
\[
Np^{2k\theta}
\le
 \sum_{||\lambda_f||\le t} \left(\alpha^{(1)}_f(p)^{k} + \alpha^{(2)}_f(p)^k\right)^2 \ll_\epsilon
(t^2+p^{\frac k2})^{1+\epsilon},
\]
where $N$ is the number of Hecke-Maass cusp forms $f$ with $||\lambda_f||\le t$ and 
$\max\limits_{j=1,2} |\alpha^{(j)}_f(p)|>p^\theta$.
Choosing $k=\left\lfloor
\frac{4 \log t}{\log p}
\right\rfloor$, we have $t^2 \ge p^{\frac k2}$, and  we deduce the estimate~\eqref{SL2-8x}.
Choosing $k=\left\lfloor
\frac{4\log t}{\log p}
\right\rfloor + 1$, we have $t^2 \le p^{\frac k2}$, and we deduce the 
estimate~\eqref{SL2-plusone}. \qed

\subsection{Proof of Corollary~\ref{cor:ranges}}
Fix $\theta \in [0,\frac14]$ and let $x=\frac{2 \log t}{\log p}$. 
Taking the minimum of~\eqref{SL2-8x} and~\eqref{SL2-plusone}, we need to study the 
function 
\[
r(x) =  
 \min\left(2 - x \left\lfloor
\frac{8}{x}
\right\rfloor \theta , x(  \left\lfloor
\frac{8}{x}
\right\rfloor + 1 )(\frac14 - \theta)\right).
\]
It is not difficult to verify that $r$ is function. We have
$r(\frac{8}{m})= 2-8\theta$ for every integer $m\ge 1$. 
Restricted to the subinterval $x\in (\frac{8}{m+1}, \frac{8}{m}]$, we have $\left\lfloor
\frac{8}{x}
\right\rfloor = m$, hence $r(x)$ is the minimum of two affine functions of $x$, which
can be seen to increase from its minimal value $r(\frac{8}{m+1})=2 - 8\theta$ to its 
maximal value
\[
r\left(\frac{8}{1+m-4\theta}\right) = 2 - \frac{8m\theta}{1+m - 4\theta} =  
\frac{2(1+m)(1-4\theta)}{1+m - 
4\theta},
\] 
and then decrease from there to its same minimum value $r(\frac{8}{m})=2 - 8\theta$.
Since the sequence $\frac{2(1+m)(1-4\theta)}{1+m - 
4\theta}$ is decreasing as the integer $m$ increases, we deduce
\[
 \max_{0\le x \le \frac{8}{m}} r(x) =
 \max_{\frac{8}{m+1}\le x \le \frac{8}{m}} r(x)
=
 \frac{2(1+m)(1-4\theta)}{1+m - 
4\theta},
\]
for every integer $m\ge 1$, which concludes the proof. \qed

\part{Local theory: Real orbital integrals}\label{partI}

General (weighted) orbital integrals were defined and studied by Arthur in a series of 
papers on establishing the trace formula for general reductive groups over number fields. 
In this Part~\ref{partI} we establish all the necessary estimates at the archimedean 
place for the group $\GL(n)$ over $\R$.

The properties of orbital integrals are rather mysterious even in the unweighted case 
and the weights introduce more difficulties. The literature contains some versions of germ 
expansions, and descent formulas but they are not directly applicable. It seems that a 
direct approach only exists in the unweighted and regular semisimple case which we 
present in Section~\ref{sec:HC}.
It would be interesting to refine the existing framework even further along the lines of~\cite{ArthurGerm}.
 For example the local trace formula at the archimedean place should come forth since it is implicit in what we are doing.

In the end we develop the material from the outset and shall rely in an essential way on Harish-Chandra's and Arthur's theorems and on  analytic techniques such as the multidimensional van der Corput inequality.
As mentioned in the introduction, we hope to return in a subsequent work to establishing 
sharp estimates
 where the idea will be to replace the van der Corput inequality by a combination of germ expansions and semiclassical estimates for Morse-Bott functions that vary in families.

The most important step of our approach is contained in Section~\ref{sec:spherfcts} with
a uniform estimate on zonal spherical functions  which seems to have been missed despite
their rather comprehensive study since the 60's.\footnote{This estimate has also been
established by Blomer--Pohl~\cite{BP:sup-norm-Siegel}, independently of our work, and for
a different purpose.}

At a coarse level, Part~\ref{partI} contains the main ingredients to establish the
remainder in Weyl's law with respect to the spectral parameter, and
Part~\ref{part2} is concerned with the polynomial control of the geometric side.
We expect that
Part~\ref{partI} and therefore Theorem~\ref{thm:bound:orb:int:real} extends to general
groups.
Our argument is fundamentally different from~\cite{Matz} who treats the case of 
$\GL_n(\C)$.
In Part~\ref{part2}, there are obstacles to work with general groups, both of local
and global nature, such as the global Arthur coefficients, and bounds for the 
residual
spectrum.

\section{Preliminaries}\label{sec:notation}

\subsection{Notation}
We work with the group $G=A_G\SB \GL_n(\R)$, where $A_G\simeq \R_{>0}$ is the group of scalar diagonal matrices with positive real entries. We can identify
\[
G\simeq \GL_n(\R)^1
=\{g\in\GL_n(\R)\mid |\det g|=1\}
\]
which is convenient to write down explicit matrices and examples. Hence $G$ can 
also be identified with the $\R$-points of an algebraic $\R$-group.
Let
\[
G_{\C}=\GL_n(\C)^1=\{g\in\GL_n(\C)\mid |\det g|=1\}
\]
with $|\cdot|:\C\longrightarrow\R_{\ge 0}$ the usual absolute value. Let $K=\tO(n)$ be the 
usual maximal compact subgroup of $G$, and $K_{\C}=\U(n)\subseteq G_{\C}$ the usual 
maximal compact subgroup of $G_{\C}$. (Note that $K_{\C}$ is \emph{not} the 
complexification of $K$.) Let $K^{\circ}=\SO(n)\subseteq K$ be the identity component 
of $K$.

Let $T_0$ be the diagonal torus of $G$ and $P_0$ be the Borel subgroup of upper-triangular matrices so that $P_0=T_0U_0$ for $U_0$ the unipotent radical of $P_0$. We call a parabolic subgroup \emph{standard} if it contains $P_0$, and \emph{semi-standard} if it contains $T_0$. Similarly, a Levi subgroup will be called semi-standard (resp.~standard) if it is the Levi component of some semi-standard (resp.~standard) parabolic subgroup. If $M\subseteq G$ is a semi-standard Levi subgroup, we denote by $\CmL(M)$ the set of all Levi subgroups in $G$ containing $M$, by $\CmF(M)$ the set of all parabolic subgroups containing $M$, and by $\CmP(M)$ the set of all parabolic subgroups with Levi component $M$. All these sets are finite. If $P\in \CmF(T_0)$, we denote by $U_P$ the unipotent radical of $P$, and by $M_P$ the Levi subgroup of $P$ containing $T_0$.

Let $W$ denote the Weyl group of the pair $(T_0, G)$. If $H\subseteq G$ is a Levi or parabolic subgroup, and $T\subseteq H$ a split torus, we denote by $\Phi(T,H)$ the set of roots of $T$ on $H$. We write $\Phi=\Phi(T_0, G)$, and  $\Phi^+=\Phi(T_0,U_0)$ for the usual set of positive roots of $T_0$ on $G$. Similarly, if $M\in\CmL(T_0)$, we put $\Phi^M=\Phi(T_0,M)$ and $\Phi^{M,+}=\Phi(T_0, U_0\cap M)$. Let $\Delta_0\subseteq \Phi^+$ be the set of simple roots in $\Phi^+$.

Let $\Fma:=\Fma_0^G=\TLie A_0^G\subseteq\Fmg:=\TLie G $, where $A_0^G\subseteq G$ denotes the subgroup of all diagonal matrices $\Mdiag(a_1, \ldots, a_n)$ with $a_1, \ldots, a_n\in\R_{>0}$ and $a_1\cdot\ldots\cdot a_n=1$. We identify $\Fma$ with the subspace of $\R^n$ consisting of all vectors $X=(X_1, \ldots, X_n)$ with $X_1+\ldots+ X_n=0$.
Let $\|\cdot\|:\R^n\longrightarrow\R$ denote the usual Euclidean norm. This then also defines a $W$-invariant norm on $\Fma$.

If $P=MU\in \CmF(T_0)$, let $A_M:=A_M^G\subseteq M$ be the identity component of the center of $M$, and $\ka_M=\ka_P=\TLie A_M$.
$M$ then equals the direct product $A_M\times M^1$ where $M^1=\bigcap_{\chi}\ker|\chi|$ with $\chi$ running over all unitary characters of $M$. Hence we get a map
\[
H_P: G\longrightarrow \ka_P
\]
characterized by the property that $g=e^{H_P(g)} muk$ with $m\in M^1$, $u\in U$, and $k\in K$. If $P=P_0$, we write $H_0=H_{P_0}$. 

\subsection{Distance functions on $G/K$}
For any $g\in G$, we define
\[
\kL(g):=\log \left( \frac{\Mtr(g^Tg)}{n}\right),
\]
where $g^T$ denotes the transpose of $g$.
 We have the Cartan decomposition $G=KAK$, and for any $g\in G$ we denote by $X(g)$ an element of $\Fma$ such that $g\in Ke^{X(g)}K$. Then $X(g)$ is unique up to Weyl group conjugation, and we can identify $X(g)$ with an element in the quotient $\Fma/W$. To make choices definite we can take $X(g)$ to be an element in the closure $\overline{\ka^+}$ of the positive Weyl chamber $\ka^+=\{X\in\ka\mid\forall\alpha\in\Delta_0:~\alpha(X)>0\}$. Then $X(g)$ is unique.

\begin{remark}
 \begin{enumerate}[(i)]
	 \item The mappings $g\mapsto X(g)\in\overline{\ka^+}$ and $g\mapsto \kL(g)\in \R_+$ are 
	 specific to our choice of maximal compact subgroup $K$ and Cartan 
	 involution $g\mapsto (g^T)^{-1}$.
\item $\kL$ is a bi-$K$-invariant function.
\item For any $g\in G$ we have $\kL(g)\ge0$ or, equivalently, $\Mtr(g^T g)/n\ge1$. This is because $\Mtr (g^T g)/n=(\xi_1+\ldots+\xi_n)/n\ge \det(g^T g)^{1/n}=1$, where $\xi_1, \ldots, \xi_n$ are the eigenvalues of $g^T g$ which are all real and positive. There is equality $\kL(g)=0$ if and only if $g\in K$ (because this happens if and only if $\xi_1=\cdots= \xi_n=1$).

\item $\kL$ has a canonical extension to $G_{\C}$ which satisfies all of the above 
properties (with $K_{\C}=\U(n)$ instead of $K$). Namely, $\kL(g):=\log\left(\Mtr\bar{g}^T 
g /n\right)$ for $\bar{g}$ the complex conjugate of $g$.
 \end{enumerate}
\end{remark}

\begin{lemma}\label{lem:compare:L:X}
If $\cB\subseteq G$ is a bounded set, then for any $g\in \cB$ we have the inequalities
\[
||X(g)||^2 \ll_\cB \kL(g) \le 2 ||X(g)||.
\]
\end{lemma}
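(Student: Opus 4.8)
The plan is to reduce everything to the Cartan decomposition $g\in Ke^{X(g)}K$ and the definition $\kL(g)=\log\bigl(\Mtr(g\,{}^tg)/n\bigr)$. Since $\kL$ is bi-$K$-invariant (Remark~(ii) above) and $X(g)$ is defined via the $KAK$-decomposition, I may assume $g=e^{X}$ with $X=(X_1,\dots,X_n)\in\overline{\ka^+}$, so that $g\,{}^tg=e^{2X}=\Mdiag(e^{2X_1},\dots,e^{2X_n})$ and hence
\[
\kL(g)=\log\Bigl(\tfrac1n\sum_{i=1}^n e^{2X_i}\Bigr),\qquad \|X(g)\|^2=\sum_{i=1}^n X_i^2 .
\]
Thus the lemma becomes a purely elementary inequality about the function $\Phi(X):=\log\bigl(\tfrac1n\sum_i e^{2X_i}\bigr)$ on the hyperplane $\sum_i X_i=0$.

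First I would prove the upper bound $\kL(g)\le 2\|X(g)\|$, which holds for \emph{all} $g\in G$ with no boundedness hypothesis. Since $X\in\overline{\ka^+}$ we have $X_1=\max_i X_i$, and because $\sum_i X_i=0$ we get $X_1\le \sqrt{\sum_i X_i^2}=\|X\|$ (indeed $X_1\le \sum_{i:X_i>0}X_i\le \sqrt{n}\,(\cdots)$ — more simply $X_1^2\le \|X\|^2$). Then $\tfrac1n\sum_i e^{2X_i}\le e^{2X_1}\le e^{2\|X\|}$, and taking logarithms gives $\kL(g)\le 2\|X\|$; note both sides are $\ge 0$ by Remark~(iii), so there is no sign issue. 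For the lower bound $\|X(g)\|^2\ll_{\cB}\kL(g)$, I would argue as follows: on a bounded set $\cB$ the quantity $\|X(g)\|$ is bounded, say by some $R=R(\cB)$, because $g\mapsto X(g)$ is continuous (and proper) and $\cB$ has compact closure. On the compact interval $X_i\in[-R,R]$ there is a constant $\kappa=\kappa(R)>0$ with $e^{2u}\ge 1+2u+\kappa u^2$ for all $u\in[-R,R]$ (Taylor expansion of $e^{2u}$, or simply convexity/continuity on a compact set). Summing over $i$ and using $\sum_i X_i=0$ gives $\tfrac1n\sum_i e^{2X_i}\ge 1+\tfrac{\kappa}{n}\|X\|^2$, hence
\[
\kL(g)=\log\Bigl(\tfrac1n\sum_i e^{2X_i}\Bigr)\ge \log\Bigl(1+\tfrac{\kappa}{n}\|X\|^2\Bigr)\ge \tfrac{\kappa}{n(1+\tfrac{\kappa}{n}R^2)}\,\|X\|^2,
\]
using $\log(1+s)\ge s/(1+s_{\max})$ on the bounded range $s\in[0,\tfrac{\kappa}{n}R^2]$. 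This gives the desired implied constant depending only on $\cB$ (through $R$).

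The only mildly delicate point — and the one I would be most careful about — is justifying that $\|X(g)\|$ is bounded on $\cB$: this is where the boundedness hypothesis is genuinely used, and it follows because $g\,{}^tg$ has eigenvalues $e^{2X_i}$ varying continuously with $g$, so on $\cB$ (with compact closure) these eigenvalues lie in a fixed compact subset of $\R_{>0}$, forcing $|X_i|\le R$. Everything else is a one-variable convexity estimate. I do not expect any real obstacle; the statement is essentially the comparison, on compacta, between the flat metric on $\ka$ and the metric induced by $\kL$, which are bi-Lipschitz-equivalent away from infinity but only Hölder-type-comparable ($\kL\asymp\|X\|$ near $K$ fails — there $\kL\sim \|X\|^2$, which is exactly why the lower bound is quadratic and the upper bound linear).
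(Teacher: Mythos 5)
Your proof is correct, and its skeleton is the same as the paper's: reduce via bi-$K$-invariance and the Cartan decomposition to $\kL(g)=\log\bigl(\Mtr(e^{2X(g)})/n\bigr)$, obtain the upper bound from $\Mtr(e^{2X(g)})/n\le e^{2\|X(g)\|}$, and obtain the lower bound from a quadratic-in-$\|X(g)\|$ lower bound for $\Mtr(e^{2X(g)})/n-1$, valid with a constant depending on $\cB$. The one place you diverge is the source of that quadratic bound: the paper invokes Alzer's sharp arithmetic--geometric mean inequality (whose applicability rests on $\det(g^tg)=1$, i.e.\ geometric mean equal to $1$), and then still needs $\max_i e^{2X_i}$ bounded and $(e^{2X_i}-1)^2\gg X_i^2$ on $\cB$, both of which require $\|X(g)\|$ bounded; you instead use the trace-zero condition $\sum_i X_i=0$ directly to kill the linear term in the pointwise Taylor bound $e^{2u}\ge 1+2u+\kappa(R)u^2$ on $[-R,R]$. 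Your variant is self-contained (no appeal to an external inequality) at the cost of making explicit the preliminary step $\|X(g)\|\le R(\cB)$ on $\cB$ --- a step the paper leaves implicit in the phrase ``constant depending only on $\cB$'' --- and your justification of that step (the eigenvalues of $g^tg$ are bounded above by the trace and below because their product is $1$) is sound; so is your elementary estimate $\log(1+s)\ge s/(1+s_{\max})$ on a bounded range. Both routes yield the lemma with constants of the same quality.
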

\begin{proof}
We use the Cartan decomposition $g=k_1e^{X(g)} k_2$. It yields $g^T g=k_2^{-1}e^{2X(g)}k_2$. Thus taking traces we obtain
\[
\kL(g)=\log \frac{\Mtr(e^{2X(g)})}{n}.
\]
Since $1\le \Mtr(e^{2X(g)})/ n\le e^{2\|X(g)\|}$ the second  upper bound of the 
lemma 
is clear.

For the lower bound we use a sharp version of the arithmetic-geometric mean inequality from~\cite{Alz97}. Write $X(g)=(X_1, \ldots, X_n)\in\R^n$, $X_1+\ldots+X_n=0$. Then
\[
\frac{1}{n} \Mtr(e^{2X(g)}) -1
 \ge \frac{1}{2\max_{i=1, \ldots, n} e^{2X_i}}
\cdot
\frac{1}{n}
\sum_{i=1}^n (e^{2X_i}-1)^2
\gg_\cB    \sum_{i=1}^n X_i^2
=  \|X(g)\|^2,
\]
where the second lower bound holds because the $X_i$ are bounded above and below
(depending on the choice of $\cB$).
 Hence, taking the logarithm, and using again that $\|X(g)\|$ is bounded above, we obtain
$\kL(g)
\gg_\cB \|X(g)\|^2$
which is the first inequality of the lemma.
\end{proof}

\begin{lemma}\label{lem:inequ:L:Iwasawa}
 Let $P=MU$ be any semi-standard parabolic subgroup in $G$. Then for any $g\in G$
\[
\kL(g)\ge \max\left(\kL(m),\kL(\Mid + m(u-\Mid))\right)
\]
for $g=muk$ an Iwasawa decomposition of $g$ with respect to $P$ and $K$.
\end{lemma}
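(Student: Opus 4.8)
The plan is to reduce the asserted inequality to an elementary fact about the Frobenius-type quantity $N(A):=\Mtr(A\,\prescript{t}{}{A})=\sum_{i,j}A_{ij}^2$, for which $e^{\kL(x)}=N(x)/n$ directly from the definition of $\kL$. First I would invoke the bi-$K$-invariance of $\kL$ to replace $g$ by $mu$, since $g=muk$ with $k\in K$; so it is enough to show $\kL(mu)\ge\kL(m)$ and $\kL(mu)\ge\kL(\Mid+m(u-\Mid))$. Next I would reduce to a \emph{standard} parabolic: writing $P=wP'w^{-1}$ with $P'$ standard and $w$ a permutation matrix (so $w\in K=\tO(n)$), and correspondingly $m=wm'w^{-1}$, $u=wu'w^{-1}$, one has $mu=wm'u'w^{-1}$ and $\Mid+m(u-\Mid)=w\bigl(\Mid+m'(u'-\Mid)\bigr)w^{-1}$, so each of the three values of $\kL$ equals the one for $P'$, again by bi-$K$-invariance. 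Hence I may assume $M$ consists of block-diagonal matrices for an ordered partition $n=n_1+\dots+n_r$ and $U=U_P$ of block-upper unitriangular matrices.

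The key step is then a matrix-support computation. Write $mu=m+m(u-\Mid)$. Since $m$ is block-diagonal and $u-\Mid$ is strictly block-upper-triangular, the product $m(u-\Mid)$ is again strictly block-upper-triangular; consequently the block-diagonal part of $mu$ is exactly $m$, the part strictly below the block-diagonal vanishes, and the part strictly above equals $m(u-\Mid)$. Thus $m$ and $m(u-\Mid)$ occupy disjoint sets of matrix entries, and therefore $N(mu)=N(m)+N(m(u-\Mid))$. The same bookkeeping applied to $\Mid+m(u-\Mid)$ (block-diagonal part the identity, part strictly above the block-diagonal equal to $m(u-\Mid)$, nothing below) gives $N(\Mid+m(u-\Mid))=N(\Mid)+N(m(u-\Mid))=n+N(m(u-\Mid))$. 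Dividing by $n$, this is the pair of identities $e^{\kL(mu)}=\bigl(N(m)+N(m(u-\Mid))\bigr)/n$ and $e^{\kL(\Mid+m(u-\Mid))}=\bigl(n+N(m(u-\Mid))\bigr)/n$, while $e^{\kL(m)}=N(m)/n$.

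To conclude: since $N(m(u-\Mid))\ge0$, the first identity gives $\kL(mu)\ge\kL(m)$ at once. For the second inequality it suffices to know $N(m)\ge n$; but $m\in M\subseteq\GL_n(\R)^1$, so $m\,\prescript{t}{}{m}$ is positive definite with $\det(m\,\prescript{t}{}{m})=(\det m)^2=1$, whence $N(m)=\Mtr(m\,\prescript{t}{}{m})\ge n\,\det(m\,\prescript{t}{}{m})^{1/n}=n$ by the arithmetic--geometric mean inequality --- precisely the estimate already used in the Remark preceding Lemma~\ref{lem:compare:L:X}. It follows that $e^{\kL(mu)}\ge\bigl(n+N(m(u-\Mid))\bigr)/n=e^{\kL(\Mid+m(u-\Mid))}$, which completes the argument. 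The only point that requires a little care is the matrix-support bookkeeping in the key step; once $P$ has been conjugated to a standard parabolic this is merely the observation that (block-diagonal)$\times$(strictly-block-upper-triangular) is strictly block-upper-triangular, so I do not anticipate any real difficulty.
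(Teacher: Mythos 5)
Your proof is correct and takes essentially the same route as the paper's: reduce to a standard parabolic by conjugating with a Weyl representative in $K$, expand $\Mtr\bigl((mu)^{t}(mu)\bigr)$, observe the cross term vanishes (your disjoint-support bookkeeping is the same fact the paper phrases as $\Mtr(m^{t}mu)=\Mtr(m^{t}m)$), and finish with $\Mtr(m^{t}m)\ge n$ from AM--GM since $|\det m|=1$. No gaps.
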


\begin{proof}
 Let $g=muk$ be the Iwasawa decomposition with respect to $P$ as in the lemma. There exists $x\in G$ such that $P'=x^{-1}Px$ is a standard parabolic with Levi $M'=x^{-1} Mx$ and unipotent radical $U'=x^{-1}Ux$. In fact, $x$ can be chosen in the set of representatives of the Weyl group in $K$. Let $m_1=x^{-1}mx\in M'$, and $u_1=x^{-1}ux\in U'$. Then
\begin{align*}
\kL(g) & =\kL(mu)=\kL(m_1u_1)= \kL(m_1 + (m_1u_1-m_1))\\
& = \log\left(\Mtr\frac{m_1^T m_1}{n} + \Mtr\frac{(m_1u_1-m_1)^T(m_1u_1-m_1)}{n} +2\Mtr\frac{m_1^T (m_1u_1-m_1)}{n}\right)\\
& = \log\left(\Mtr\frac{m_1^T m_1}{n} + \Mtr\frac{(m_1u_1-m_1)^T (m_1u_1-m_1)}{n}\right),
\end{align*}
where we used $\Mtr (m_1^T m_1u_1)=\Mtr m_1^T m_1$ for the last equality. Since $\Mtr(m_1^T m_1)\ge n=\Mtr(\Mid^T \Mid)$, and $\Mtr((m_1u_1-m_1)^T (m_1u_1-m_1))\ge0$, we get $\kL(g)\ge \kL(\Mid+m_1(u_1-\Mid))$ as well as $\kL(g)\ge\kL(m_1)$. Using the definition of $m_1$ and $u_1$ and the bi-$K$-invariance of $\kL$, we can replace $m_1$ by $m$ and $u_1$ by $u$ in these inequalities and obtain the assertion.
\end{proof}

\subsection{Weyl discriminant}
Let $D^G$ be the Weyl discriminant, that is, if $g\in G$ is a semisimple element, let
\[
D^G(g) = \det \left(1 -\TAd(g); \Fmg/\Fmg_{g}\right),
\]
where $\Fmg_g$ is the Lie algebra of the centralizer $C_G(g)$. More generally, if $g\in G$ is arbitrary, and $g_s$ is the semisimple part of $g$ in its Jordan decomposition, we also write $D^G(g):=D^G(g_s)$.

The relationship between the Weyl discriminant $D^G(g)$ and $\|X(g)\|$, $\kL(g)$, is less tight in general. There is no hope for a lower bound for $D^G(g)$ if we let $g$ vary over all of $G$. This is due to the fact that the map $g\mapsto D^G(g)$ has singularities if $g$ changes its ``singularity type", that is, if the rank of the semisimple centralizer $C_G(g_s)$ changes.
\begin{lemma}\label{lem:weyl:discr:est}
	For any $g\in G$,
$
|D^G(g)|^{\frac{2}{n(n-1)}}\ll e^{ \kL(g)}$,
where the implied multiplicative constant depends only on $n$.
In particular $|D^G(g)|\ll e^{n(n-1)\|X(g)\|}$.
\end{lemma}

\begin{proof}
We have that the absolute value $|D^G(g)|$ of the Weyl discriminant can be 
expressed as the absolute value of some 
homogeneous polynomial of degree $n(n-1)$ in the (complex) eigenvalues 
$\lambda_1,\ldots,\lambda_n$ of $g\in G$, with the polynomial depending on the singularity 
type of $g$. Indeed, we claim that
\[
 |D^G(g)| = 
\prod_{\substack{(i,j):\, i\neq j, \\ \lambda_i\neq\lambda_j}} \left| \lambda_j -\lambda_i\right|
 \prod_{\substack{(i,j):\, i\neq j, \\ \lambda_i = \lambda_j}} |\lambda_i|.
\]
(For example if $n=3$ and $g$ has repeated eigenvalues $\lambda_1,\lambda_1,\lambda_2$, 
then $|\lambda_1|^2 |\lambda_2|=1$, and $|D^G(g)|=|\lambda_1-\lambda_2|^4|\lambda_1|^2$.)
To establish our claim, write 
\[
   |D^G(g)| = \prod_{\substack{(i,j):\, \lambda_i\neq\lambda_j}} \left| 1-\frac{\lambda_i}{\lambda_j}\right|
   = \prod_{\substack{(i,j):\, \lambda_i\neq\lambda_j}} \left| \lambda_j -\lambda_i\right| 
   \prod_{\substack{(i,j):\, \lambda_i\neq\lambda_j}} |\lambda_j|^{-1}
\]
and clear the denominators in the second product as follows:
\[
 \prod_{\substack{(i,j):\, \lambda_i\neq\lambda_j}} |\lambda_j|^{-1} =
 \prod_{\substack{(i,j):\, i\neq j, \\ \lambda_i = \lambda_j}} |\lambda_i|
   \prod_{(i,j):\, i\neq j} |\lambda_j|^{-1}
= \prod_{\substack{(i,j):\, i\neq j, \\ \lambda_i = \lambda_j}} |\lambda_i|, 
\]  
where we used for the last equality that $\prod_{i} |\lambda_i|=|\det g|=1$. 

The Gershgorin circle theorem implies that the largest eigenvalue of a matrix is 
bounded by the largest sum of absolute values of elements in a row.
Since every element of $g$ is bounded in absolute value by $\Mtr(gg^T)^{\frac12}$, 
the largest eigenvalue of $g$ is bounded by $n^{\frac32} e^{\kL(g)/2}$,
and the first assertion follows.

Lemma~\ref{lem:compare:L:X} then yields the second asserted upper-bound for 
$|D^G(g)|$.
\end{proof}

\begin{example*}
The regular element 
$g=\textrm{diag}(a,2a,\cdots,(n-1)a,\frac1{(n-1)!a^{n-1}})$ is such that 
$|D^G(g)|\asymp a^{n(n-1)}$, and $e^{\kL(g)}\asymp a^2$ as $a\to \infty$.
Hence the exponent $n(n-1)$ in Lemma~\ref{lem:weyl:discr:est} is sharp. 
\end{example*}

\subsection{Norms on groups}\label{sec:norm-groups}
We define a norm on $G$ by setting
\[
|g|= e^{\|X(g)\|}
\]
where $g=k_1e^{X(g)} k_2\in K\exp(\ka) K$ is the Cartan decomposition of $g$ as before.
We extend the norm on $G_{\C}$ analogously to the real case: if $g\in G_{\C}$, we put 
$|g|=e^{\|X\|}$ for $g=k_1e^Xk_2\in K_{\C}\exp(\ka)K_{\C}$ the Cartan decomposition of $g$.
We have the properties: $|g|\ge 1$, $ |g|=|g^{-1}|$, and $|g_1g_2|\le |g_1||g_2|$.
This notation is well-defined if we consider $g\in G$ as an element in $G_{\C}$ 
since $X(g)$ is the same in the Cartan decomposition for $G$ and for $G_{\C}$.

\begin{lemma}\label{lemma:Xg-Lg}
For any $g\in G$, $\|X(g)\|\le \frac{\sqrt{n(n-1)}}{2}(\log n + \kL(g))$.
In particular, $|g|^{\frac{2}{\sqrt{n(n-1)}}}\le n \cdot e^{\kL(g)}$.
\end{lemma}

\begin{proof}
Write $X(g)=(X_1, \ldots, X_n)\in\R^n$, $X_1+\ldots+X_n=0$. Then it is not difficult to 
verify that
\[
 \|X(g)\| = (\sum\nolimits X_i^2)^{\frac12}  \le \sqrt{n(n-1)} \max\nolimits_i X_i,
\]
with equality achieved for the vector $(1,1,\cdots,1,1-n)$.
On the other hand
\[
 \log n + \kL(g)  = \log(\sum\nolimits_i e^{2X_i}) \ge 2 \max\nolimits_i X_i,
\]
which establishes the claim.
\end{proof}

\begin{lemma}\label{lemma:norm:iwasawa}
 There exist constants $c,c_1, c_2>0$, such that if $g=muk\in G_\C$ with $k\in K_\C$ 
 and $mu\in P=MU$ for $P$ a standard parabolic subgroup in $G_\C$, 
 then $|m|\le c |g|^{c_1}$, and $|u|\le c |g|^{c_2}$.

More precisely, we can take $c= n^{\frac{\sqrt{n(n-1)}}{2}} $, $c_1=\sqrt{n(n-1)}$, 
and $c_2=\sqrt{n(n-1)}+1$.
\end{lemma}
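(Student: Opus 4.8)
The plan is to reduce everything to two ingredients already in place: the comparison between the bi-$K_\C$-invariant function $\kL$ and the Cartan parameter $\|X(\cdot)\|$ from Lemma~\ref{lem:compare:L:X}, and a Frobenius-norm identity forced by the block structure of a standard parabolic. Throughout write $\|A\|_F^2=\Mtr(\bar A^tA)$ for the squared Frobenius norm, so that exactly as in the proof of Lemma~\ref{lem:compare:L:X} one has $\kL(x)=\log\big(\tfrac1n\|x\|_F^2\big)=\log\big(\tfrac1n\Mtr(e^{2X(x)})\big)$ for every $x\in G_\C$; also $\kL$ and $|\cdot|=e^{\|X(\cdot)\|}$ are bi-$K_\C$-invariant.

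The first genuine step is the inequality $\kL(g)\ge\kL(m)$. Since $\kL(g)=\kL(mu)$, decompose $mu=m+m(u-\MId)$. Because $P=MU$ is standard, $m$ is block diagonal, hence $\bar m^tm$ is block diagonal, while $u-\MId$ is block strictly upper triangular; thus $\bar m^tm(u-\MId)$ is block strictly upper triangular and $\langle m,\,m(u-\MId)\rangle_F=\Mtr(\bar m^tm(u-\MId))=0$, and likewise its conjugate $\langle m(u-\MId),\,m\rangle_F=0$. Therefore $\|mu\|_F^2=\|m\|_F^2+\|m(u-\MId)\|_F^2\ge\|m\|_F^2$, so $\kL(g)=\kL(mu)\ge\kL(m)$.

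Next I would convert this into a bound on $\|X(m)\|$. Choosing $X(m)=(X_1\ge\cdots\ge X_n)$ in $\overline{\ka^+}$, one has $\Mtr(e^{2X(m)})\ge e^{2X_1}$, and $\sum X_i=0$ forces $X_1\ge\|X(m)\|_\infty/(n-1)$ (trivial when $\|X(m)\|_\infty=X_1$; otherwise $-X_n=X_1+\cdots+X_{n-1}\le(n-1)X_1$). With $\|X(m)\|_\infty\ge\|X(m)\|/\sqrt n$ this gives $\kL(m)\ge\frac{2}{(n-1)\sqrt n}\|X(m)\|-\log n$. On the other hand $\kL(g)\le2\|X(g)\|$ (the upper bound of Lemma~\ref{lem:compare:L:X}, whose proof uses no boundedness hypothesis). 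Chaining $\frac{2}{(n-1)\sqrt n}\|X(m)\|-\log n\le\kL(m)\le\kL(g)\le2\|X(g)\|$ yields $\|X(m)\|\le(n-1)\sqrt n\,\|X(g)\|+\frac{(n-1)\sqrt n}{2}\log n$, i.e. $|m|\le n^{(n-1)\sqrt n/2}\,|g|^{(n-1)\sqrt n}$, which is the asserted bound with the stated $c$ and $c_1$. The bound on $u$ then follows formally: $u=m^{-1}(mu)=m^{-1}gk^{-1}$, and since $|k^{-1}|=1$, $|m^{-1}|=|m|$ and $|\cdot|$ is submultiplicative, $|u|\le|m|\,|g|\le c\,|g|^{c_1+1}=c\,|g|^{c_2}$.

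I do not expect a serious obstacle; the only points needing care are (a) that the vanishing of the cross term genuinely uses the block structure of a \emph{standard} parabolic, so that the unipotent radical $U$ is honestly block strictly upper triangular, and (b) tracking constants so that the $\sqrt n$ loss comes only from the $\ell^\infty$-versus-$\ell^2$ comparison and the $(n-1)$ loss only from $X_1\ge\|X(m)\|_\infty/(n-1)$, which together reproduce exactly the exponent $(n-1)\sqrt n$ and the constant $n^{(n-1)\sqrt n/2}$.
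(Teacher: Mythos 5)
Your proof is correct and takes essentially the same route as the paper: the chain $\kL(m)\le\kL(g)\le 2\|X(g)\|$ together with $\|X(m)\|\le \sqrt{n}\,(n-1)\max_i X_i(m)$ gives $|m|\le n^{(n-1)\sqrt{n}/2}|g|^{(n-1)\sqrt{n}}$, and then $|u|\le |m|\,|mu|\le c|g|^{c_2}$ by bi-$K_\C$-invariance and submultiplicativity, with identical constants. The only cosmetic difference is that you re-derive $\kL(g)\ge\kL(m)$ directly from the vanishing of the Frobenius cross term $\Mtr(\bar m^t m(u-\MId))$ for a standard parabolic, whereas the paper invokes Lemma~\ref{lem:inequ:L:Iwasawa}, whose proof is the same trace computation.
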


\begin{proof}
We can assume without loss of generality that $k=1$.
 Hence
\begin{equation*}
 |m|^{\frac{2}{\sqrt{n(n-1)}}}
\le n\cdot e^{\kL(m)} \le n\cdot  e^{\kL(g)}
\le n\cdot e^{2\|X(g)\|} = n\cdot |g|^2.
\end{equation*}
where the first estimate is Lemma~\ref{lemma:Xg-Lg}, the second inequality 
is Lemma~\ref{lem:inequ:L:Iwasawa}, 
and the third inequality is Lemma~\ref{lem:compare:L:X}.
This establishes the first estimate.
For the second estimate of the lemma, we write
\[
 |u|= |m^{-1} mu|\le |m| |mu|
\le n^{\frac{\sqrt{n(n-1)}}{2}} |g|^{\sqrt{n(n-1)}+1}.
\qedhere
\]
\end{proof}

\begin{lemma}\label{lemma:norm:jordan}
Let $c, c_1, c_2$ be as in Lemma~\ref{lemma:norm:iwasawa}. Suppose $g\in G$ has Jordan decomposition $g=g_sg_u$ with $g_s$ semisimple and $g_u\in C_G(g_s)$ unipotent. Then
\[
|g_s|\le c |g|^{c_1},\;\;\text{ and }\;\;
|g_u|\le c |g|^{c_2}.
\]
\end{lemma}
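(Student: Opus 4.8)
The plan is to reduce the statement to Lemma~\ref{lemma:norm:iwasawa} by conjugating $g$ into a standard parabolic so that its semisimple and unipotent parts land in the Levi and unipotent radical respectively. First I would recall that for $g\in G=\GL_n(\R)^1$, after conjugating by a suitable $h\in G$, the element $h g h^{-1}$ becomes upper-triangular (Lie--Kolchin over $\C$, then one can arrange a real conjugator since $g$ is real, or simply work inside $G_\C$ as the statement of Lemma~\ref{lemma:norm:iwasawa} permits). In such a realization the semisimple part $h g_s h^{-1}$ is the diagonal part and the unipotent part $h g_u h^{-1}$ lies in the unipotent radical $U$ of the Borel $P_0=T_0U_0$; more precisely $hg_sh^{-1}\in M=T_0$ and $hg_uh^{-1}\in U=U_0$ in the notation $P_0=MU$. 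Thus $hgh^{-1}=muk$ with $k=1$, $m=hg_sh^{-1}$, $u=hg_uh^{-1}$, and Lemma~\ref{lemma:norm:iwasawa} gives $|hg_sh^{-1}|\le c|hgh^{-1}|^{c_1}$ and $|hg_uh^{-1}|\le c|hgh^{-1}|^{c_2}$.

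The remaining issue is that conjugation by $h$ changes the norm, and $h$ is not controlled by $g$: one cannot bound $|hgh^{-1}|$ in terms of $|g|$ uniformly. So a naive conjugation does not suffice. Instead I would exploit that the norm $|\cdot|$ is defined through the Cartan decomposition and is therefore invariant under conjugation by elements of $K$ (resp.\ $K_\C$): if $h\in K_\C$ then $|hxh^{-1}|=|x|$ for all $x$, since $hxh^{-1}\in K_\C e^{X(x)}K_\C$. Hence the strategy is to arrange the conjugator to lie in $K_\C=\U(n)$. This is exactly Schur's theorem: every matrix in $\GL_n(\C)$ is unitarily conjugate to an upper-triangular matrix. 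Applying Schur's unitarization to $g\in G_\C$, we obtain $h\in\U(n)$ with $hgh^{-1}$ upper-triangular; its diagonal part is $hg_sh^{-1}$ (the eigenvalues of $g$, which are the eigenvalues of $g_s$) and its strictly-upper-triangular completion to the factorization is $hg_uh^{-1}\in U_0$. Here one uses that the Jordan decomposition is preserved by conjugation and that for an upper-triangular matrix the multiplicative Jordan decomposition has semisimple part equal to the diagonal and unipotent part upper-unitriangular, which sits in $U_0$.

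With $h\in K_\C$ chosen this way, $|hgh^{-1}|=|g|$, and Lemma~\ref{lemma:norm:iwasawa} applied to $hgh^{-1}=mu$ (with $m=hg_sh^{-1}\in M_{P_0}$, $u=hg_uh^{-1}\in U_{P_0}$, $k=1$) yields
\[
|hg_sh^{-1}|\le c\,|hgh^{-1}|^{c_1}=c\,|g|^{c_1},\qquad
|hg_uh^{-1}|\le c\,|hgh^{-1}|^{c_2}=c\,|g|^{c_2}.
\]
Finally, conjugation by $h\in K_\C$ again leaves the norm unchanged, so $|g_s|=|h^{-1}(hg_sh^{-1})h|=|hg_sh^{-1}|$ and likewise $|g_u|=|hg_uh^{-1}|$, giving the two claimed inequalities. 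The only genuinely delicate point — and the one I would state carefully — is the verification that after Schur unitarization the Jordan parts of $g$ map to an element of $T_0$ and an element of $U_0$ respectively; everything else is a direct application of $K_\C$-conjugation invariance of $|\cdot|$ together with Lemma~\ref{lemma:norm:iwasawa}.
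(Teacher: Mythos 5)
You follow the same route as the paper's own proof: conjugate by an element of $K_\C$ to put $g$ in upper-triangular form, apply Lemma~\ref{lemma:norm:iwasawa} to the resulting factorization with $k=1$, and use that $|\cdot|$ is invariant under $K_\C$-conjugation. So in terms of strategy you have reproduced the paper's argument, which asserts exactly the normal form you use: $k^{-1}g_sk$ diagonal and $k^{-1}g_uk$ upper triangular unipotent for a single $k\in K_\C$.

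The gap is in the step you yourself flag as delicate, and it is a genuine one. First, your stated justification is false: for an upper-triangular matrix the multiplicative Jordan decomposition is \emph{not} given by the diagonal part and the unitriangular complement. For $b=\left(\begin{smallmatrix}1&1\\0&2\end{smallmatrix}\right)$ one has $b_s=b$ and $b_u=\mathrm{Id}$ (the matrix is already semisimple), while $\mathrm{diag}(1,2)$ does not even commute with the unitriangular factor. Second, the normal form itself need not exist: if $h\in\U(n)$ and $hg_sh^{-1}$ is diagonal then $g_s$ is normal, and a semisimple matrix need not be normal (the same $b$). What Gram--Schmidt applied to a complete flag stabilized by both $g_s$ and $g_u$ does give is a $k\in K_\C$ making $k^{-1}g_sk$ and $k^{-1}g_uk$ simultaneously upper triangular; but then $k^{-1}g_sk$ is not the Levi factor of the Iwasawa factorization of $k^{-1}gk$, so Lemma~\ref{lemma:norm:iwasawa} only bounds the diagonal part of $k^{-1}gk$, not $|g_s|$. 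Moreover the difficulty is not merely expository: for $g$ proportional to $\left(\begin{smallmatrix}1&1&0\\0&1&1\\0&0&1+\varepsilon\end{smallmatrix}\right)$ one computes that $g_s$ is proportional to $\left(\begin{smallmatrix}1&0&\varepsilon^{-1}\\0&1&1\\0&0&1+\varepsilon\end{smallmatrix}\right)$ and $g_u=\left(\begin{smallmatrix}1&1&-\varepsilon^{-1}\\0&1&0\\0&0&1\end{smallmatrix}\right)$, so $|g_s|,|g_u|\gg\varepsilon^{-1}$ while $|g|$ remains bounded as $\varepsilon\to0$; hence no cleverer choice of $k$ can close the argument with constants depending only on $n$, and any complete treatment has to let the bound degrade with the eigenvalue separation (e.g.\ through $\Delta^-(g_s)$ or $|D^G(g_s)|$, as happens in Lemma~\ref{lem:bound:supp}). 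To be fair, the paper's proof of this lemma makes the same unproved assertion about the normal form, so you have matched its route; but as a standalone argument your proposal does not establish the statement, precisely at the point you identified as delicate.
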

\begin{proof}
There exists $k\in K_{\C}$ such that $k^{-1}gk$ is upper triangular, more 
precisely, $k^{-1} g_sk$ is diagonal, and $k^{-1}g_uk$ is an upper triangular 
unipotent matrix.
The assertion then follows from the previous lemma and $|g_s|=|k^{-1}g_sk|$.
\end{proof}

We note that there is a $p$-adic analogue of these norms, see
e.g.~\cite{BrTi72}*{(4.4.4)} for the proof of submultiplicativity, 
and~\cite{Art91}*{\S4} for some other properties.

\section{Setting for the main estimate}\label{sec:proof}
This section is to set up the notation and give some preliminaries for the proof of Theorem~\ref{thm:bound:orb:int:real} which will be given in Section~\ref{sec:orbital}.

\subsection{Twisted Levi subgroups}\label{sec:twisted}
For $M\in\CmL:=\CmL(T_0)$, consider the maximal tori contained in $M$, not 
necessarily $\R$-split.
There are only finitely many $M$-conjugacy classes of such maximal tori. We shall
choose a
finite set $\CmT^M_{\max}$ of representatives $T$ for these conjugacy classes such that
the minimal Levi subgroup $L\subseteq G$ containing $T$ is semi-standard, that is,
$L$ also contains the torus $T_0$.

Such a set of representatives can be realized as follows. Let $r_1, 
r_2\in\Z_{\ge0}$ with $r_1+2r_2=n$. Consider the maximal torus
$T_{r_1,r_2}'=\left(\GL_1\right)^{r_1}\times\left(\Res_{\C/\R} \GL_1 \right)^{r_2}$ embedded diagonally in $\GL_n$. Here and in the following we identify $(\Res_{\C/\R} \GL_1)(\R )=\GL_1(\C)$ with $\R^{\times}\SO(2)\subset \GL_2(\R)$. More precisely, $T_{r_1,r_2}'(\R)$ consists of matrices of the form
\begin{equation}\label{eq:max:twisted:tori}
\Mdiag\left(t_1, \ldots, t_{r_1}, \left(\begin{smallmatrix}a_1&b_1\\-b_1&a_1\end{smallmatrix}\right), \ldots, \left(\begin{smallmatrix}a_{r_2}&b_{r_2}\\-b_{r_2}&a_{r_2}\end{smallmatrix}\right)\right)
\end{equation}
with $t_1, \ldots, t_{r_1}\in\R^{\times}$ and $(a_1, b_1), \ldots, (a_{r _2}, b_{r_2})\in\R^2\backslash\{(0,0)\}$.
Let $T_{r_1,r_2}$ be the subset of all elements $t\in T_{r_1, r_2}'$ with $|\det t|=1$.
The minimal Levi subgroup in $\GL_n$ containing $T'_{r_1,r_2}$ is the
diagonally
embedded $(\GL_1)^{r_1}\times (\GL_2)^{r_2}$,  which is standard. The same holds for
$T_{r_1,r_2}\subset G$, and we can take
\[
\CmT^G_{\max}:=\{T_{r_1,r_2}\mid r_1, r_2\in\Z_{\ge0},~r_1+2r_2=n\}.
\]
If $M\in\CmL$ is arbitrary, it is conjugate by a Weyl group  element $w\in W$ to the standard Levi subgroup $(\GL_{n_1}\times\ldots\times\GL_{n_s})\cap G$, for suitable integers $n_1, \ldots, n_s\in\Z_{\ge1}$, $n_1+\ldots+n_s=n$. The set $\CmT^M_{\max}$ can then be chosen to consist of tori which are $w$-conjugates of concatenations of elements of the form~\eqref{eq:max:twisted:tori} with overall $|\det|$ equal to $1$.

A \emph{twisted} Levi subgroup is an $\R$-subgroup $L\subseteq G$ such that $L_\C=L\otimes_{\R}\C$ is a Levi subgroup in $G_{\C}$.  For $T\in\CmT^M_{\max}$ let $\CmL^M_{\text{twist}}(T)$ be the set of all twisted Levi subgroups in $M$ containing $T$ and having the same 
$\R$-rank as $T$.

These can again be described in terms of restriction of
scalars as follows. 
Any $L\in\CmL^G_{\text{twist}}(T_{r_1,r_2})$
is of the form $\left(L^1\times \Res_{\C/\R}L^2\right)\cap G$ for some semi-standard Levi 
subgroups
$L^1\in\CmL^{\GL_{r_1}(\R)}$ and $L^2\in \CmL^{\GL_{r_2}(\C)}$, which both contain the
maximal diagonal torus of $\GL_{r_1}(\R)$ and $\GL_{r_2}(\C)$, respectively. 
(The reader may take this description as a 
definition because it is how these subgroups shall arise in below). 
Both $T_{r_1,r_2}$ and $L$ have $\R$-rank equal to $r_1+r_2-1$.
To be 
precise, we embed here $(\Res_{\C/\R} \GL_{r_2})(\R)=\GL_{r_2}(\C)$ into 
$\GL_{2r_2}(\R)$  
by the map
\[\begin{pmatrix}
a_{11} + i b_{11} & \ldots & a_{1 r_2}+ib_{1 r_2} \\
\vdots & & \vdots \\
a_{r_2 1} + i b_{r_2 1} & \ldots & a_{r_2 r_2} + i b_{r_2 r_2} 
\end{pmatrix}
\mapsto
 \begin{pmatrix}
       a_{11}	& b_{11}		&\ldots	&a_{1 r_2}	&b_{1r_2}\\
       -b_{11}	& a_{11}		&\ldots	&-b_{1r_2}	&a_{1r_2} \\
       \vdots	& \vdots		&	&\vdots		&\vdots \\
        a_{r_21}	& b_{r_21}		&\ldots	&a_{r_2 r_2}	
        &b_{r_2 r_2}\\
       -b_{r_21}	& a_{r_21}		&\ldots	&-b_{r_2r_2}	
       &a_{r_2 r_2} \\
       \end{pmatrix}.
\]
Since there are only finitely many $r_1, r_2\ge0$ with $r_1+2r_2=n$ and since the sets
$\CmL^{\GL_{r_1}(\R)}$ and $\CmL^{\GL_{r_2}(\C)}$ are both finite, it follows that
$\CmL^G_{\text{twist}}(T_{r_1,r_2})$ is finite.
A description of $\CmL^M_{\text{twist}}(T)$ for general $M\in \CmL$ and $T\in 
\CmT^M_{\text{max}}$
can be obtained by $w$-conjugation as above, in particular
 $\CmL^M_{\text{twist}}(T)$ is again finite.

\begin{lemma}\label{lem:centralizer:twisted:levi}
For every $M\in \cL$, and every semisimple element $\sigma$ of $M$, there exist an
$M$-conjugate $\sigma'$ of $\sigma$, a torus $T\in\CmT^M_{\max}$, and twisted Levi
subgroups $L_1\in
\CmL^M_{\text{twist}}(T)$ and $L_2\in\CmL^G_{\text{twist}}(T)$ with $L_1\subseteq L_2$
such that
\begin{equation}\label{gammas-condition}
 \sigma' \in T,\
C_M(\sigma' )=L_1,\
 C_G(\sigma' )=L_2.
\end{equation}
\end{lemma}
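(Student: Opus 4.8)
The plan is to reduce the statement to the explicit description of twisted Levi subgroups via restriction of scalars that was set up just above, and to the classification of centralizers of semisimple elements in $\GL_n$. Here is the approach in order.

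First I would observe that for a semisimple $\gamma_s\in M$, its centralizer $C_G(\gamma_s)$ and $C_M(\gamma_s)$ are determined, over $\C$, by the eigenvalues of $\gamma_s$ acting on $\C^n$ together with the block structure of $M$: if the distinct eigenvalues are $\rho_1,\dots,\rho_k$ with multiplicities $m_1,\dots,m_k$ then $C_{G_\C}(\gamma_s)\cong(\GL_{m_1}\times\cdots\times\GL_{m_k})\cap G_\C$, and similarly $C_{M_\C}(\gamma_s)$ is the product of the corresponding factors in each $\GL_{n_j}$-block of $M$. In particular $C_G(\gamma_s)\otimes_\R\C$ and $C_M(\gamma_s)\otimes_\R\C$ are genuine Levi subgroups of $G_\C$, i.e.\ $C_M(\gamma_s)$ and $C_G(\gamma_s)$ are twisted Levi subgroups in the sense defined above; and $C_M(\gamma_s)\subseteq C_G(\gamma_s)$ trivially.

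Next I would arrange the torus. Since $\gamma_s$ is semisimple and defined over $\R$, it lies in some maximal $\R$-torus $T'$ of $M$; conjugating $\gamma_s$ by an element of $M(\R)$ we may assume $T'$ is one of the chosen representatives in $\CmT^M_{\max}$ — this uses exactly the statement made before the lemma that $\CmT^M_{\max}$ represents all $M$-conjugacy classes of maximal $\R$-tori (and that the minimal $\R$-Levi of $G$ containing such a $T'$ is semistandard). Set $T$ equal to that representative; then $\gamma_s\in T$. Because $T\subseteq C_M(\gamma_s)\subseteq C_G(\gamma_s)$, the twisted Levi subgroups $L_1:=C_M(\gamma_s)$ and $L_2:=C_G(\gamma_s)$ lie in $\CmL^M_{\mathrm{twist}}(T)$ and $\CmL^G_{\mathrm{twist}}(T)$ respectively, and $L_1\subseteq L_2$, which is \eqref{gammas-condition}. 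Concretely, in the model $T=T_{r_1,r_2}$ (or a $W$-conjugate of a concatenation of such pieces) one reads off $L_2$ as $(L^1\times\Res_{\C/\R}L^2)\cap G$ for block-diagonal Levi subgroups $L^1,L^2$ determined by which real and complex-conjugate-pair eigenvalue coordinates of $\gamma_s$ coincide, matching the restriction-of-scalars description of $\CmL^G_{\mathrm{twist}}(T)$ recalled above; $L_1$ is cut out the same way but inside $M$.

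For the uniqueness clause, fix $T$. Any two elements of $\CmL^M_{\mathrm{twist}}(T)$ (resp.\ $\CmL^G_{\mathrm{twist}}(T)$) that satisfy \eqref{gammas-condition} for $M$-conjugates of $\gamma_s$ which still lie in $T$ must be conjugate by an element normalizing $T$ in $M$: indeed if $m\gamma_s m^{-1}\in T$ and $\gamma_s\in T$ with $T$ maximal, then $m$ conjugates the two centralizers into each other and also conjugates $T$ into a maximal torus of the second centralizer containing the new point, and one adjusts by an element of that centralizer to land in $N_M(T)$; passing to the Weyl group $W(T,M):=N_M(T)/T$ gives uniqueness up to its action. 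The main obstacle — and the only place requiring care rather than bookkeeping — is this rationality/conjugacy step: over $\R$ a maximal torus is not unique up to $M(\R)$-conjugacy (there are several real forms, indexed by $(r_1,r_2)$), so one has to check that the $M(\R)$-conjugation used to move $\gamma_s$ into a chosen representative torus is compatible with the subsequent identification of centralizers, and that two a priori different choices of $L_1,L_2$ attached to the \emph{same} $T$ really do differ only by $W(T,M)$ and not by some outer ambiguity. This is handled by the explicit restriction-of-scalars parametrization of $\CmL^M_{\mathrm{twist}}(T)$, which shows each such twisted Levi is determined by a set partition of the coordinates of $T$, and centralizers correspond precisely to the coarsest partition refining the eigenvalue-coincidence pattern of $\gamma_s$; the residual freedom is exactly the action of $W(T,M)$ permuting coordinates.
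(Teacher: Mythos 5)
Your overall strategy (take $L_1,L_2$ to be the centralizers of $\gamma_s$ and observe that centralizers of semisimple elements in $\GL_n$ are twisted Levi subgroups) is close in spirit to the paper's, but there is a genuine gap: you never force the torus to be \emph{adapted} to $\gamma_s$. The paper's proof starts by conjugating $\gamma_s$ into the normal form $\Mdiag(g_1,\dots,g_{r_1},g_{r_1+1},\dots,g_{r_1+r_2})$ with $g_1,\dots,g_{r_1}\in\R^\times$ and the $2\times2$ blocks having \emph{irreducible} characteristic polynomial, so that $r_1,r_2$ are determined by $\gamma_s$, $T=T_{r_1,r_2}$ is matched to it, and $L_1=L_2$ is realized as the $\delta$-fixed points of the Levi $M_2$ obtained by grouping equal blocks --- visibly of the restriction-of-scalars shape $(L^1\times\Res_{\C/\R}L^2)\cap G$ that the later sections (the convention on standard parabolics of $L_2$, the generalized Cartan decomposition) actually use. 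Your main line instead puts $\gamma_s$ into \emph{some} maximal torus and conjugates that torus to a representative in $\CmT^M_{\max}$, with no control over which representative. With a non-adapted choice your parenthetical claim that one ``reads off'' $L_2$ from coordinate coincidences is simply false: for $\gamma_s=\Mdiag(a,a,b,b)\in T_{0,2}\subset\GL_4(\R)^1$ with $a\neq b$ real, the centralizer is $(\GL_2(\R)\times\GL_2(\R))\cap G$, which contains $T_{0,2}$ but is not of the form $(L^1\times\Res_{\C/\R}L^2)\cap G$ relative to $T_{0,2}$ (your recipe, based on the fact that the two complex coordinates differ, would output $T_{0,2}$ itself).

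The same unconstrained choice undermines your uniqueness argument, which leans on the claim that twisted Levis containing $T$ are parametrized by set partitions of the coordinates; that parametrization is only valid for the adapted torus. Concretely, in $T_{2,2}\subset\GL_6(\R)^1$ the elements with coordinates $(a,a;\,b,b)$ and $(b,b;\,a,b)$ ($a\neq b$ real, $a=b^{-2}$ say) are $G$-conjugate and both lie in $T_{2,2}$, but their centralizers are copies of $(\GL_2\times\GL_4)(\R)\cap G$ supported on the coordinate sets $\{1,2\},\{3,4,5,6\}$ and $\{3,4\},\{1,2,5,6\}$ respectively, and no element of the Weyl group of $(T_{2,2},G)$ conjugates one into the other, since any such element preserves the set of split coordinates $\{1,2\}$. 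So the uniqueness-up-to-$W(T,M)$ clause only makes sense once the torus is chosen adapted to $\gamma_s$ (equivalently, once $L_1,L_2$ are required to be of restriction-of-scalars type relative to $T$), which is exactly the step your proof omits and the paper's normal form supplies. Finally, even granting the adapted choice, your step ``adjust by an element of the centralizer to land in $N_M(T)$'' needs the two maximal tori of the centralizer to be conjugate under its \emph{real} points; you correctly flag this as the delicate point, but the partition argument you offer in its place does not establish it.
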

\begin{proof}
It is automatic that $C_G(\sigma')$ is a twisted Levi subgroup because 
$C_G(\sigma')\otimes_\R \C = C_{G_\C}(\sigma')$ is a Levi subgroup of $G_\C$.
It follows from the other assertions that $C_G(\sigma')=L_2$ is an element of 
$\CmL^G_{\text{twist}}(T)$. Thus we restrict to constructing $T$ and $L_1$.
Without loss of generality we may then only consider the case $M=G$. 

We have that $\sigma$ is $G$-conjugate to $\sigma':=\Mdiag(g_1, \ldots, g_{r_1}, 
g_{r_1+1}, 
\ldots, 
g_{r_1+r_2})$ with $r_1+2r_2=n$, $g_{r_1+1}, \ldots, g_{r_1+r_2}\in \R^{\times}\SO(2)$ 
with irreducible characteristic polynomial, and $g_{1}, \ldots, 
g_{r_1}\in\R^{\times}$, and we can assume that equal $g_i$ occur in consecutive 
order. 
There exist partitions $(s_1,\ldots,s_a)$ of $r_1$ and $(t_1, \ldots, t_b)$ of 
$r_2$ such that the Levi subgroup $M_1$ in $G$ corresponding to $(s_1, \ldots, 
s_a,2t_1, \ldots, 2t_b)$ is the maximal Levi in which $\sigma$ is elliptic. Let 
$\delta=\Mdiag(1, \ldots, 1,\delta_0, \ldots, \delta_0)$ with $r_1$-many $1$s, and $r_2$-many 
$\delta_0:=\left(\begin{smallmatrix}0&1\\-1&0\end{smallmatrix}\right)$. 
Let $T$ to be the torus $T_{r_1,r_2}$ defined above, and 
let $L_1$ be the 
set of fixed points of $M_1$ under conjugation by $\delta$. 
Then $T$ and $L_1$ satisfy~\eqref{gammas-condition}. Moreover $L_1\in 
\CmL_{\text{twist}}^M(T)$ because $L_1=\left(L^1\times \Res_{\C/\R}L^2\right)\cap G$ where
$L^1\in\CmL^{\GL_{r_1}(\R)}$ is the standard Levi subgroup of type $(s_1,\ldots,s_a)$ 
and $L^2\in \CmL^{\GL_{r_2}(\C)}$ is the standard Levi subgroup of 
type $(t_1, \ldots, t_b)$.
\end{proof}

\subsection{Convention}\label{s_convention}
We fix one of the finitely many triples of subgroups $T\subseteq L_1 \subseteq L_2$ as in
Lemma~\ref{lem:centralizer:twisted:levi}. In particular, $T\in\CmT^M_{\max}$, $L_1\in
\CmL^M_{\text{twist}}(T)$ and $L_2\in\CmL^G_{\text{twist}}(T)$.
Then, we shall denote by $M_2$ the smallest Levi
subgroup in $G$ containing $L_2$. Since $T\subseteq M_2$, we have that $M_2$ is
semi-standard.
In establishing the main estimate in Section~\ref{sec:orbital},
we shall restrict to the semisimple elements $\gamma_s=\sigma'\in M$
satisfying~\eqref{gammas-condition}.
We are allowed to proceed in this way for the proof because there are only finitely many
possible choices of a semi-standard Levi subgroup $M\in \cL$ and of a triple $T,L_1,L_2$.

\begin{remark}
Arthur makes similar choices in his study of weighted orbital integrals,
e.g.~\cite{Ar86}*{p.183},~\cite{Ar88a}*{p.230}. One subtle difference is that he has
the flexibility to choose $K$ within the set of all maximal compact subgroups which are
admissible relative to $M$ (in the sense of~\cite{Ar81}*{p.9}). Our type of test
functions depends by definition on the choice of $K$ (since $X(\cdot)$ does) so that
taking $K$ as the maximal compact subgroup to analyze the weighted orbital integrals is
canonical in our situation.
\end{remark}

\subsection{Measures}\label{sub_measures}
Let $F=\R$ or $F=\C$.
The measure $dx$ on $F$ will be the usual Lebesgue measure if $F=\R$, and twice the usual Lebesgue measure if $F=\C$. On $F^{\times}$ as well as $\R_{>0}$ we use the multiplicative measure $|x|_F^{-1}dx$ where $|x|_{\R}=|x|$ is the usual absolute value and $|x|_{\C}=|x|^2=x\bar{x}$. From this we obtain a measure on the unipotent radical $U$ of any semi-standard parabolic subgroup in $\GL_n(F)$ by identifying $U$ with $F^{\dim U}$ via the usual matrix coordinates. We also obtain a measure on the split maximal torus $T_0'$ of diagonal matrices in $\GL_n(F)$ by identifying $T_0'$ with $(F^{\times})^n$ via the usual matrix coordinates again. On $K=:K_{\R}$ and $K_{\C}$ we normalize the Haar measure such that $\vol(K)=\vol(K_{\C})=1$. The integration formula
\[
\int_{\GL_n(F)} f(g)~dg
=\int_{T_0'}\int_{U_0}\int_{K_F} f(tuk) ~dk~du~dt,
~~~~f\in C_c^{\infty}(\GL_n(F)),
\]
then fixes an invariant Haar measure on $\GL_n(F)$. To fix a measure on $\GL_n(F)^1$, we use the short exact sequence
\[
1\longrightarrow \GL_n(F)^1\longrightarrow \GL_n(F)\longrightarrow\R_{>0}\longrightarrow 1,
\]
where the map $\GL_n(F)\longrightarrow\R_{>0}$ is given by $g\mapsto |\det g|$.
In the same way, this fixes measures on all semi-standard Levi subgroups.

For twisted Levi subgroups, the pullback under the restriction of scalars defines measures on the twisted Levi subgroup as well as on all its (semi-)standard parabolic subgroups and their unipotent and Levi parts.

\subsection{Generalized Cartan and Iwasawa decomposition}\label{sub_gen-Cartan}
To make integration over the quotient $L_2\backslash G$ more explicit we use a mix 
of a generalized Cartan decomposition and the Iwasawa decomposition.  Let $P_2$ 
be a parabolic subgroup with Levi component $M_2$ so that the Iwasawa 
decomposition $G=P_2K=M_2U_2K$ holds. We then apply a generalized Cartan 
decomposition to $L_2\SB M_2$ as in~\cite{FlJe80} as follows. First we need to 
identify $L_2$ as a fixed point set of an involution of $M_2$.

\begin{lemma}
 $L_2$ is the fixed point set in $M_2$ of an involution $\sigma:M_2\longrightarrow M_2$.
Moreover, $\sigma $commutes with the Cartan involution $\theta:M_2\longrightarrow M_2$ defining $K^{M_2}=K\cap M_2$.
Hence $\sigma(K^{M_2})= K^{M_2}$,  $\theta(L_2)=L_2$, and  $K_2:=L_2^{\theta}=L_2\cap K$ is a maximal compact subgroup of $L_2$.
\end{lemma}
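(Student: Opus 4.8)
The plan is to produce the involution $\sigma$ explicitly from the defining data of the triple $(T,L_1,L_2)$ in Lemma~\ref{lem:centralizer:twisted:levi}, and then check compatibility with the standard Cartan involution $\theta(g)={}^tg^{-1}$. Recall from the proof of that lemma that $L_2$ arises as the fixed-point set of $M_2$ under conjugation by an explicit element $\delta=\Mdiag(1,\ldots,1,\delta_0,\ldots,\delta_0)$ built from the blocks $\delta_0=\left(\begin{smallmatrix}0&1\\-1&0\end{smallmatrix}\right)$ that witness the restriction-of-scalars structure $\Res_{\C/\R}$ on the elliptic factors. So I would simply set $\sigma(x)=\delta x\delta^{-1}$ for $x\in M_2$. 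Since $\delta^2=\Mdiag(1,\ldots,1,-\MId_2,\ldots,-\MId_2)$ is central in $M_2$ (it is scalar on each block of $M_2$, because $M_2$ was chosen as the Levi respecting the block sizes $s_1,\ldots,s_a,2t_1,\ldots,2t_b$), conjugation by $\delta^2$ is trivial on $M_2$, hence $\sigma$ is an involution of $M_2$. By construction $L_2=M_2^{\sigma}$, giving the first assertion.

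Next I would verify that $\sigma$ and $\theta$ commute. The point is that $\delta$ is an orthogonal matrix: $\delta_0\in\SO(2)\subseteq\tO(2)$, so ${}^t\delta=\delta^{-1}$. Therefore for $x\in M_2$,
\[
\theta(\sigma(x))
={}^t(\delta x\delta^{-1})^{-1}
={}^{}\delta\,{}^tx^{-1}\,\delta^{-1}
=\sigma(\theta(x)),
\]
using ${}^t(\delta^{-1})={}^{}\delta$ and ${}^t\delta=\delta^{-1}$. (In the general $M\neq G$ case one replaces $\delta$ by the corresponding orthogonal element obtained by concatenating blocks, and conjugating everything by the Weyl element $w\in W$ that was used to put $M$, $T$ and $\gamma_s$ into standard position; since $W$ has representatives in $K^{\circ}=\SO(n)$, which are orthogonal, the same computation goes through.) Commutativity of $\sigma$ and $\theta$ immediately yields $\sigma(K^{M_2})=K^{M_2}$ where $K^{M_2}=M_2^{\theta}$, and dually $\theta(L_2)=\theta(M_2^{\sigma})=M_2^{\sigma}=L_2$.

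Finally, for the statement about $K_2^{\circ}$: since $\theta$ restricts to a Cartan involution of $L_2$ (a $\theta$-stable reductive real group), its fixed-point set $L_2^{\theta}=L_2\cap K$ is a maximal compact subgroup of $L_2$ by the standard theory of Cartan involutions, and $K_2^{\circ}:=L_2\cap K^{\circ}$ is its identity component (intersecting with the connected $\SO(n)$ cuts out exactly one component, and $L_2$ is connected since it is a twisted Levi, i.e.\ reductive of the above restriction-of-scalars shape, so its maximal compact is connected and equals $K_2^{\circ}$). I do not anticipate a serious obstacle here; the only mildly delicate point is bookkeeping for general $M$, namely tracking the Weyl conjugation and making sure the concatenated block element implementing $\sigma$ can be chosen orthogonal — but since all the relevant conjugating elements live in $\SO(n)$ and the elliptic blocks use $\SO(2)$, orthogonality is automatic, and that is precisely what makes $\sigma$ and $\theta$ commute.
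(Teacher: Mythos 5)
Your construction is essentially the paper's: the paper reduces to the case $M_2=G$ and takes $\sigma$ to be either the identity or conjugation by $\delta=\Mdiag(\delta_0,\ldots,\delta_0)$, which is exactly your mixed element $\Mdiag(1,\ldots,1,\delta_0,\ldots,\delta_0)$ read block by block; your verification that $\delta\in M_2$, that $\delta^2$ is central in $M_2$ (so $\sigma^2=\mathrm{id}$), that $M_2^{\sigma}=L_2$, and that orthogonality of $\delta$ gives $\theta\sigma=\sigma\theta$ is correct and is precisely what the paper leaves as "easily checked."

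One side remark in your last paragraph is wrong, though it does not affect the lemma: you assert that, $L_2$ being connected, its maximal compact subgroup is connected and equals $K_2^{\circ}$. This fails whenever $L_2$ has split factors: already for $L_2=G=\GL_n(\R)^1$ (the case $\sigma=\mathrm{id}$) the maximal compact is $\tO(n)$, which is disconnected, and more generally the $\GL_{s_i}(\R)$ factors contribute $\tO(s_i)$ factors to $L_2^{\theta}=L_2\cap K$. Since the lemma only claims that $K_2^{\circ}$ is the \emph{identity component} of a maximal compact subgroup, you should simply delete that parenthetical: $\theta$ restricts to a Cartan involution of the $\theta$-stable reductive group $L_2$, so $L_2\cap K$ is a maximal compact subgroup of $L_2$, and its identity component, being connected and containing $1$, lies in $K^{\circ}$, which is what is needed. (Note also that the literal equality $L_2^{\theta}=L_2\cap K^{\circ}$ in the statement is itself only accurate up to components when $L_2$ has split factors, so no sharper argument is expected here.)
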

\begin{proof}
 It suffices to consider the case $M_2=G$. Then $L_2=G$ or $L_2=(\Res_{\C/\R} 
 \GL_{n/2}) \cap G$. In the first case we take $\sigma$ as the identity. In the 
 second case (which can only happen for even $n$) put $\delta=\Mdiag(\delta_0, \ldots, 
 \delta_0)\in G$ with $\delta_0=\left(\begin{smallmatrix}0&1\\-1&0\end{smallmatrix}\right)$, 
 and define $\sigma(g):=\delta^{-1} g\delta$ for $g\in G$. It is easily checked that 
 $\sigma$ satisfies the asserted properties.
\end{proof}

Let $\km_2$ denote the Lie algebra of $M_2$, $\km_2=\kl_2\oplus\kq_2$ the decomposition of $\km_2$ into the $+1$- and $-1$-eigenspace under $\sigma$, and $\km_2=\kk_2\oplus\kp_2$ the decomposition of $\km_2$ into the $+1$- and $-1$-eigenspace under $\theta$.
Let $\km_2=\km_2^+\oplus\km_2^-$ be the $\pm1$-eigenspace decomposition with respect to $\theta\sigma$.
Hence $\km_2^+=\kl_2\cap \kk_2\oplus\kq_2\cap \kp_2$, and $\km_2^-=\kl_2\cap \kp_2\oplus\kk_2\cap\kq_2$.
Let $\kb_2\subseteq \kq_2\cap\kp_2$ be a maximal abelian subspace.
By construction we may assume that $\kb_2$ is a subspace of $\ka$. 

Let $\Phi_{\kb_2}$ be the set of roots of $(\kb_2,\km_2)$, and fix a subset of positive roots $\Phi_{\kb_2}^+\subseteq \Phi_{\kb_2}$. We denote by $\kb_2^+=\{H\in\kb_2\mid\forall\beta\in\Phi_{\kb_2}^+:~\beta(H)>0\}$ the corresponding positive chamber in $\kb_2$.
For $\beta\in\Phi_{\kb_2}^+$ let $m_{\beta}^{\pm}$ denote the multiplicity of $\beta$ when restricted to $\km_2^{\pm}$.
For $H\in\kb_2$ put
\[
 B_{\kb_2}^{M_2}(H)
=\prod_{\beta\in\Phi_{\kb_2}^+} |\sinh\beta(H)|^{m_{\beta}^+} |\cosh\beta(H)|^{m_{\beta}^-}.
\]

\begin{proposition}\label{prop:integration:formula}
 We have
\[
 \int_{L_2\backslash G} f(g)~dg
= \int_{\kb_2^+}\int_{U_2} \int_{K} f(e^{H} uk) B_{\kb_2}^{M_2}(H)~ dk~du~dH
\]
for every integrable function $f: L_2\backslash G\longrightarrow\C$.
\end{proposition}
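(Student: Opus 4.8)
The plan is to build the asserted decomposition out of two standard ingredients: the Iwasawa decomposition $G=M_2U_2K$, which produces the $U_2$- and $K$-factors, and the Flensted-Jensen generalized Cartan decomposition \cite{FlJe80} for the symmetric space $L_2\backslash M_2$, which produces the $\exp(\kb_2^+)$-factor together with the Jacobian $B_{\kb_2}^{M_2}$. The only genuinely external input is the Flensted-Jensen integration formula; the rest is bookkeeping with the measure conventions of \S\ref{sec:proof}.

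First I would record that, with those conventions, $\int_G\varphi(g)\,dg=\int_{M_2}\int_{U_2}\int_K\varphi(muk)\,dk\,du\,dm$ for $\varphi\in C_c(G)$; this is the usual Iwasawa integration formula for $G=P_2K=M_2U_2K$, and the normalizing constant is $1$ because $\vol(K^{M_2})=1$ for $K^{M_2}:=K\cap M_2$ and the compact group $K^{M_2}\subseteq M_2$ acts on $U_2$ by measure-preserving automorphisms (a compact group of automorphisms of a unipotent group has modulus $1$). Since $L_2$ is reductive, hence unimodular, the quotient $L_2\backslash G$ carries a $G$-invariant measure, and quotienting the previous identity by $L_2\subseteq M_2$ gives, for $f$ left $L_2$-invariant,
\[
\int_{L_2\backslash G}f(g)\,dg=\int_{L_2\backslash M_2}\int_{U_2}\int_K f(muk)\,dk\,du\,dm .
\]

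Next I would apply the Flensted-Jensen decomposition to $\int_{L_2\backslash M_2}$. By the preceding lemma $L_2=M_2^\sigma$ for an involution $\sigma$ commuting with the Cartan involution $\theta$, $K^{M_2}$ is $\sigma$-stable, and $\kb_2\subseteq\kq_2\cap\kp_2$ is maximal abelian, so \cite{FlJe80} yields the generalized Cartan decomposition, which in the form adapted to $L_2\backslash M_2$ reads $M_2=L_2\exp(\overline{\kb_2^+})K^{M_2}$, together with the integration formula
\[
\int_{L_2\backslash M_2}\phi(m)\,dm=\int_{\kb_2^+}\int_{K^{M_2}}\phi(e^{H}k')\,B_{\kb_2}^{M_2}(H)\,dk'\,dH ,
\]
where $B_{\kb_2}^{M_2}$ is precisely the product over $\Phi_{\kb_2}^+$ of $|\sinh\beta(H)|^{m_\beta^+}|\cosh\beta(H)|^{m_\beta^-}$ read off from the root space decomposition of $\km_2$ for $\kb_2$ and its splitting under $\theta\sigma$. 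Inserting this into the previous display and writing $m=e^{H}k'$ gives
\[
\int_{L_2\backslash G}f(g)\,dg=\int_{\kb_2^+}\int_{K^{M_2}}\int_{U_2}\int_K f(e^{H}k'uk)\,B_{\kb_2}^{M_2}(H)\,dk\,du\,dk'\,dH .
\]
Finally I would remove the $k'$-integration: since $K^{M_2}\subseteq M_2$ normalizes $U_2$, for fixed $k'$ one writes $k'u=u'k'$ with $u'=k'u(k')^{-1}\in U_2$; the substitution $u\mapsto u'$ in the $U_2$-integral has modulus $1$ and the substitution $k\mapsto k'k$ preserves the Haar measure of $K$, so the inner triple integral equals $\int_{U_2}\int_K f(e^{H}uk)\,dk\,du$ independently of $k'$, and since $\vol(K^{M_2})=1$ integrating over $k'$ produces exactly the claimed formula.

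The one point I expect to require real care is the appeal to \cite{FlJe80}: one must confirm that for the symmetric pairs occurring here --- $M_2$ a product of general linear groups and $L_2$ the matching product of norm-one restriction-of-scalars subgroups $\Res_{\C/\R}^1\GL_m$ --- the generalized Cartan decomposition holds with the single closed chamber $\overline{\kb_2^+}$ as fundamental domain (i.e.\ without the extra ``components'' that arise for general non-Riemannian symmetric spaces), and that the Flensted-Jensen normalization of the invariant measure on $L_2\backslash M_2$ agrees with the quotient of the Haar measures fixed in \S\ref{sec:proof}. Both are straightforward for these specific pairs but should be stated explicitly; everything else --- the modulus computations, $\vol(K^{M_2})=1$, unimodularity of $L_2$ --- is routine.
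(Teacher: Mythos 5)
Your argument is correct and follows essentially the same route as the paper: reduce via the Iwasawa formula $G=M_2U_2K$ to an integral over $L_2\backslash M_2$, insert the Flensted--Jensen/Heckman--Schlichtkrull integration formula with Jacobian $B_{\kb_2}^{M_2}$, and eliminate the $K^{M_2}$-integration using that $K^{M_2}$ normalizes $U_2$ and has volume $1$. Your extra remarks on measure normalizations and the applicability of the generalized Cartan decomposition to these specific symmetric pairs only make explicit what the paper's shorter proof leaves implicit.
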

\begin{proof}
By definition of our measure on $G$ we have
\[
  \int_{L_2\backslash G} f(g)~dg
=\int_{L_2\backslash M_2}\int_{U_2} \int_{K} f(muk) ~dk~du~dm.
\]
The invariant measure on $L_2\backslash M_2$ is given by~\cite{FlJe80}*{Thm.2.6}, see 
also~\cite{HeSch}*{p.110}, and plugging this into our integral, we get
\[
  \int_{L_2\backslash G} f(g)~dg
=\vol(K\cap M_2) \int_{\kb_2^+}\int_{K^{M_2}} \int_{U_2} \int_{K} f(e^{H} k_M uk) 
B_{\kb_2}^{M_2}(H)~ dk~du~dk_M~dH.
\]
Since $U_2$ is normalized by $K^{M_2}=K\cap M_2$ and $\vol(K^{M_2}) =1$, we can 
absorb the variable $k_M\in K^{M_2}$ into the integration over $k\in K$, which 
gives the assertion.
\end{proof}

\begin{example}
Suppose that $G=\GL_2(\R)^1$. Suppose $L_2=\SO(2)$ so that $M_2=G$, and $\sigma:M_2\longrightarrow M_2$ is given by conjugation with $\delta_0=\left(\begin{smallmatrix}0&1\\-1&0\end{smallmatrix}\right)$. Then $\sigma=\theta$ coincides with the Cartan involution.
Hence the decomposition of $\Fmm_2=\Fmg$ into $\pm1$-eigenspaces of $\sigma$ is the usual Cartan decomposition of $\Fmg$, and $\kb_2=\{H=(H_1,-H_1)\mid H_1\in \R\}$. Therefore,
\[
B_{\kb_2}^{M_2}(H) = |\sinh (2H_1)|.
\]
\end{example}

\begin{example}
Suppose  $G=\GL_4(\R)^1$, $L_1=\{g=\Mdiag(ax,a^{-1} y)\mid x,y\in\SO(2), a\in\R^{\times}\}$ 
(diagonally embedded in $G$), and let $L_2$ be obtained from restriction of 
scalars of $\GL_2(\C)^1$ so that $L_1$ is a maximal torus of $L_2$. Then $M_2=G$, 
and $\sigma:M_2\longrightarrow M_2$ is given by conjugation with $\delta=\Mdiag(\delta_0, 
\delta_0)$ for $\delta_0=\left(\begin{smallmatrix}0&1\\-1&0\end{smallmatrix}\right)$. 
Further,  $\kb_2=\{H=\Mdiag(H_1, -H_1, H_2, -H_2)\}\mid H_1, H_2\in\R\}$, and 
$\Phi_{\kb_2}$ consists of the roots given by mapping $H\in\kb_2$ to $\pm2H_1$, 
$\pm2H_2$, or $\pm H_1 \pm H_2$. We choose $\Phi_{\kb_2}^+$ to consist of the roots 
given by $2H_1$, $2H_2$, and $H_1\pm H_2$. Then
\[
\kb_2^+=\{H=\Mdiag(H_1,-H_1, H_2, -H_2)\mid H_1>H_2>0\},
\]
 and for $H\in\kb_2^+$ we get
\[
 B_{\kb_2}^{M_2}(H)
=\left|\sinh(2H_1)\sinh(2H_2)\sinh(H_1+H_2)\sinh(H_1-H_2)\cosh(H_1+H_2)\cosh(H_1-H_2)\right|.
\]
\end{example}

\section{Bounds for semisimple orbital integrals}\label{sec:HC}

In this section we treat a particular (unweighted) case which will illustrate our strategy in the general weighted case.
Recall that $J_G^G(\gamma,f)$ is the unweighted orbital integral
\[
|D^G(\gamma)|^{1/2}\int_{C_G(\gamma)\backslash G} f(x^{-1}\gamma x)\, dx,
\]
which is defined for any $f\in C_c^\infty(G)$.
The orbital integral without the normalizing factor $|D^G(\gamma)|^{1/2}$ is  denoted by $\CmO_{\gamma}^G(f)$, or $\CmO_{\gamma}(f)$.  We shall be able to directly use results of Harish-Chandra on orbital integrals and his descent formula.

\begin{proposition}\label{prop:ss:orb:bound:HC}
	Let $0<\eta<(n-1)/2$ and $f\in C_c^{\infty}(G)$.
	There is a constant $c(f,\eta)>0$ depending only on $\eta$ and $f$, such that the following holds.
\begin{enumerate}[(i)]
 \item Suppose $n\ge3$. For every regular semisimple $\gamma\in G$,
\[
 \left|J_G^G(\gamma,f\|X(\cdot)\|^{-\eta})\right|
\le c(f,\eta).
\]

\item For every semisimple $\gamma\in G$ which is split and not central, that is, $C_G(\gamma)\subsetneq G$,
\[
 \left|J_G^G(\gamma,f\|X(\cdot)\|^{-\eta})\right|
\le c(f,\eta).
\]
\end{enumerate}
\end{proposition}
We prove this proposition below.

We recall the definition of parabolic descent. Suppose $f:G\longrightarrow\C$ is an integrable function, and $Q=LV$ is an arbitrary semi-standard parabolic subgroup in $G$. The parabolic descent along $Q$,
\[
 C_c^\infty(G)\longrightarrow C_c^\infty(L),~ f\mapsto f^{(Q)},
\]
is then defined by
\begin{equation}\label{def:fQ}
 f^{(Q)}(m):=\delta_Q^{1/2}(m)\int_{V}\int_K f(k^{-1} mv k)~dk~dv, ~ m\in L.
\end{equation}
Note that the dependency of $f^{(Q)}$ on $Q\in\CmP(L)$ can be made explicit as follows: If $Q'$ is another parabolic subgroup with Levi component $L$, there is $w\in W$ with $wQw^{-1}=Q'$ which preserves $L$. Then $f^{(Q)}(w^{-1}mw)=f^{(Q')}(m)$

If $\gamma\in L$ is such that $C_G(\gamma)\subseteq L$, the parabolic descent relates the (normalized) orbital integrals on $G$ and $L$.  More precisely, for any $f\in C_c^\infty(G)$ we have
\begin{equation}\label{eq:parabolic:descent}
 J^G_G(\gamma,f)= J_L^L(\gamma,f^{(Q)})
\end{equation}
for any $Q\in\CmP(L)$ provided that the measures on all involved groups are chosen compatibly. This formula follows from the definitions or from the more general descent formula~\cite{Ar94}*{(1.5)}.

\begin{lemma}\label{l:unidescent}
Suppose $Q=LV\subsetneq G$ is a proper semi-standard parabolic subgroup, and $0 < \eta < (n-1)/2$. Let $f\in C_c^{\infty}(G)$, and define $F_{\eta}:=f\|X(\cdot)\|^{-\eta}$.
\begin{enumerate}[(i)]
\item The parabolic descent $F^{(Q)}_\eta(m)$ converges absolutely for every $m\in L$.
\item\label{eq:bounded:ss:descent} We have $F^{(Q)}_\eta\in C_c(L)$.

\item The descent formula~\eqref{eq:parabolic:descent} holds for $F_\eta$, i.e.,
$J^G_G(\gamma,F_\eta)$ converges absolutely for any $\gamma\in L$ such that $C_G(\gamma)\subseteq L$, and is equal to $J^L_L(\gamma,F_\eta^{(Q)})$.
\end{enumerate}
\end{lemma}

\begin{remark}
 We shall apply the property in \eqref{eq:bounded:ss:descent} to more general weighted orbital integrals later. This will simplify our analysis in several (but not all) cases.
\end{remark}

\begin{proof}[Proof of Lemma \ref{l:unidescent}]
For the proof of the lemma it suffices to show that \eqref{eq:bounded:ss:descent} holds for $f$ replaced by its absolute value $|f|$ in the definition of $F_{\eta}$. We can further assume that $f$ is $K$-conjugation invariant.

Let $\cC\subseteq G$ be a compact set containing the support of $f$.
Note that $F^{(Q)}(m)=0$ unless $m$ is contained in a compact subset $\cC^L\subseteq L$ depending only on $\cC$.
Since $\|X(g)\|^{-\eta}\le 2^{\eta} \CmL(g)^{-\eta}$ for all $g\in \cC$ by Lemma \ref{lem:compare:L:X}, we get
\begin{align*}
\int_{V} |f(mv)| \|X(mv)\|^{-\eta}~dv
&\le 2^{\eta} \int_{V} |f(mv)| \CmL(mv)^{-\eta}~dv\\
&\ll_{\eta,f}  \int_{(m^{-1}\cC) \cap V} \CmL(mv-m+\Mid)^{-\eta}~dv,
\end{align*}
where we used Lemma \ref{lem:inequ:L:Iwasawa} for the second inequality.
To bound this last integral we can clearly assume that $Q$ is a standard parabolic subgroup so that $L$ is of the form $\GL_{n_1}\times\ldots\times\GL_{n_r}$ for some $r$ and $n_1+\ldots+n_r=n$, and we can identify $V$ with $\R^{\dim V}$ by using the matrix entries of elements of $V$.
Changing variables, the integral becomes
\[
\prod_{j=1}^r |\det m_j|^{-(n-n_1-\ldots-n_j)} \int_{\cC'} \left(\log \left(1+\frac{1}{n}\sum_{i=1}^{\dim V} v_i^2\right)\right)^{-\eta} ~d v
\]
where $\cC'=\left(\bigcup_{m\in \cC^L} m^{-1}\cC\right)\cap V \subset \R^{\dim V}$ is a compact subset depending only on $\cC$.
As $m$ is contained in $\cC^M$, the product over the determinants is bounded by a constant depending only on $\cC$. Using polar coordinates one sees that the last integral is finite for any $\eta\in[0,\dim V/2)$. Note that $\dim V\ge n-1$.
In any case, if the last integral is finite, its value depends only on $\cC$ and $\eta$, therefore the claim follows by the dominated convergence theorem.
\end{proof}

\begin{proof}[Proof of Proposition \ref{prop:ss:orb:bound:HC}]
A result of Harish-Chandra~\cite{HC57}*{Thm.2} (cf. also \cite{Art91}*{p.31}, and
\cite{HaChHA}*{Thm.14}) asserts that if $H\subseteq G$ is a Cartan subgroup,
$\cC\subseteq H$ a compact subset, and  $G'$ is the set of regular semisimple
elements in $G$, then for every $f\in C^{\infty}_c(G)$ we have
\[
 \sup_{\gamma\in \cC\cap G'} \left|J_G^G(\gamma,f)\right|<\infty.
\]
Up to $G$-conjugation there are only finitely many Cartan subgroups in $G$, and the centralizer $C_G(\gamma)$ of a regular semisimple element $\gamma\in G'$ is a Cartan subgroup. Hence if $\cC\subseteq G$ is a compact set and $f\in C^{\infty}_c(G)$, then there exists a constant $c=c(f,\cC)>0$ such that
\[
\left|J_G^G(\gamma,f)\right| \le c
\]
for all $\gamma\in \cC\cap G'$.
On the other hand, since $f$ is compactly supported, there exists a compact subset $\cC_f\subseteq G$ such that $J_G^G(\gamma,f)$ vanishes for all $\gamma$ which are not conjugate to some element in $\cC_f$.

To prove the first part of the proposition, first note that $n\ge3$ and the regularity of $\gamma$ imply that the centralizer $C_G(\gamma)$ of $\gamma$ in $G$ is contained in a proper parabolic subgroup $Q=LV$ of $G$. After conjugating $\gamma$ if necessary, we can assume that $Q$ is standard. The parabolic descent \eqref{eq:parabolic:descent} implies that
\[
\left| J_G^G(\gamma,F_\eta)\right|
= \left|J_L^L(\gamma,F_\eta^{(Q)})\right|
\le J_L^L(\gamma, f)
\]
where the last inequality follows from Lemma \ref{l:unidescent} with $f=|F_\eta^{(Q)}|\in C_c(L)$. We then use Harish-Chandra's bound discussed at the beginning of the proof to uniformly bound $J_L^L(\gamma, f)$.

The second part of the proposition is also an easy consequence of Lemma \ref{l:unidescent}. We can assume that $C_G(\gamma)$ equals the Levi component $L$ of a proper standard parabolic subgroup of $G$. Call this standard parabolic $Q$ and define $F_\eta$ as in Lemma \ref{l:unidescent}. By the parabolic descent formula \eqref{eq:parabolic:descent} we get $J_G^G(\gamma,F_\eta)= J_L^L(\gamma,F_\eta^{(Q)})$. Let $f$ be as in the second part of Lemma \ref{l:unidescent}. As above we obtain
$\left| J_G^G(\gamma,F_\eta)\right|
\le J_L^L(\gamma, f)$.
Since now $D^L(\gamma)= D^{C_G(\gamma)}(\gamma)=1$ and $J_L^L(\gamma, f)= \CmO^L_\gamma(f)=\CmO_{\gamma}^{C_G(\gamma)}(f)=f(\gamma)$, the second assertion of the proposition follows.
\end{proof}

\begin{example}\label{ex:split-regular}
Hence if $Q$ is a minimal semi-standard parabolic subgroup, $f^{(Q)}$ is the Abel
transform of $f$, an archimedean analogue of the Satake transform.
It is closely related to the spherical transform, see \cite{Ga71}*{Thm.3.5}, and
also~\cite{book:helg:geometricanalysis}*{IV.\S7},\cite{LM09}*{Thm.1}. We shall
return to this in Example~\ref{ex:orbital-split-regular} below.
\end{example}

The following result shows that the parabolic descent, restricted to bi-$K$-invariant functions, is continuous for the $L^1$-norm. Thus, it follows from general principles that $F^{(Q)}_\eta\in L^1(L)$ because $X(g)$ is bi-$K$-invariant, and thus $f ||X(\cdot)||^{-\eta} \in L^1(K \backslash G /K)$ for any $f\in C_c^\infty(K\backslash G /K)$ and $0< \eta < (n-1)/2$.
If $f$ is right-$K$-invariant, write $f_K(g):=\int_K f(k^{-1} g k)\,dk = \int_K f(kg)\,dk$, which is bi-$K$-invariant.

\begin{lemma}\label{l:L1-descent}
 Let $Q=LV$ be a semi-standard parabolic subgroup of $G$ and let $K^L:=K\cap L$, which is a maximal compact subgroup of $L$.
 If $f\in C_c^\infty(G/K)$, then $f^{(Q)}=f^{(Q)}_K$ is bi-$K^L$-invariant.
 Moreover $f\mapsto f^{(Q)}$ extends from the dense subspace $C^\infty_c(G/K)$ to define a continuous map
 \[
 L^1(G/K) \longrightarrow L^1(K \backslash G/K) \longrightarrow L^1(K^L \backslash L/K^L).
 \]
\end{lemma}
\begin{proof}
The assertion that $f^{(Q)}=f^{(Q)}_K$ is clear, and thus without loss of generality we may assume that $f$ is bi-$K$-invariant.
By the triangle inequality,
 \[
  \int_L |f^{(Q)}(m)|\, dm
  \le  \int_L \int_V \int_K \delta_Q(km)^{1/2} |f(mv)| \,dk \,dv\, dm,
 \]
 where we extend $\delta_Q$ to all of $G$ via Iwasawa decomposition, $\delta_Q(g)=\delta_Q(l)$ if $g=lvk'\in LVK$.
 Note that $\delta_Q= \delta_0 \delta_{L\cap P_0}^{-1}$ with $\delta_{L\cap P_0}^{-1}$
 denoting the modulus function of $T_0$ on $L\cap P_0$. Let $\rho_0^L\in \ka^*$ denote
 the element corresponding to $\delta_{L\cap P_0}$, that is, $\rho_0^L$ is the half
 sum of all roots of $L\cap P_0$ with respect to $T_0$. Similarly, let $\rho_0$ be
 the half sum of all positive roots of $P_0$ with respect to $T_0$. Using the
 bi-$K$-invariance of $f$ we can write the last integral by
 \cite{Knapp:beyond}*{Prop.8.44} as
 \[
  \int_G \phi_{-\rho_0^L}(g) |f(g)|\, dg
 \]
with $\phi_\lambda$ the zonal spherical function with spherical parameter $\lambda$,
see also Section~\ref{sec:spherfcts}. Let $w_L$ denote the longest element in the
Weyl group $W^L$ of $(T_0,L)$. Naturally $W^L$ is a subgroup of $W=W^G$ so that
$w_L\in W$. Then $-\rho_0^L=-\frac{1}{2} (\rho_0- w_L \rho_0)$ so that $-\rho_0^L$ is
contained in the closure of the convex hull of the Weyl group orbit of $\rho_0$.
Hence $\phi_{-\rho_0^L}$ is a bounded function by
\cite{book:helg:geometricanalysis}*{Ch. IV, Thm.8.1}. Let $c>0$ be an upper bound
for $\phi_{-\rho_0^L}$ (it can be shown that $c=1$). Then
\[
 \int_L |f^{(Q)}(m)|\, dm
 \le \int_G \phi_{-\rho_0^L}(g) |f_K(g)|\, dg
 \le c\int_G |f_K(g)|\, dg
 \le c\int_G |f(g)|\, dg <\infty
\]
so that $f^{(Q)}\in L^1(L)$.

Finally, for any $k\in K^L$ and $m\in L$ we have
\[
\delta_Q(mk)^{-1/2}f^{(Q)}(mk)
= \int_{V} f_K(mkv)\, dv
= \int_V f_K(mvk)\, dv
= \int_V f_K(mv)\, dv
= \delta_Q(m)^{-1/2}f^{(Q)}(m)
\]
so that $f^{(Q)}$ is also right $K^L$-invariant. Similarly $f^{(Q)}$ is left $K^L$-invariant.
\end{proof}

\section{Weighted orbital integrals}\label{sec:orbital}
In this section we prove Theorem \ref{thm:bound:orb:int:real}, that is, we are going to
find an upper bound for the weighted orbital integrals $J_M^G(\gamma,
f\|X(\cdot)\|^{-\eta})$. It will be a consequence of Proposition
\ref{prop:bound:weight:orb:non:central} and
Proposition~\ref{prop:weight:orb:int:central}.  We keep the notation as in 
\S\ref{s_convention}, using that $J_M^G$ is 
invariant by $M$-conjugation. In particular, we fix a triple $T\subseteq L_1
\subseteq L_2$, and we restrict to those $\gamma\in M$ such that $\gamma_s\in T$, 
$C_M(\gamma_s)=L_1$ and
$C_G(\gamma_s)=L_2$.

\subsection{Weighted orbital integrals: the semisimple part}\label{sec:weighted:real}
We first provide a detailed description of the distribution $f\mapsto J_M^G(\gamma, f)$.
By results of \cite{Ar88a}, it can be defined as follows. For every $f\in
C_c^{\infty}(G)$ and every $\gamma\in M$, one has~\cite{Ar88a}*{Thm.8.5}
\begin{equation}\label{eq:def:weighted:orb}
J_M^G(\gamma, f):=
 |D^G(\gamma)|^{1/2}\int_{L_2\backslash G}\sum_{R\in \CmF^{L_2}(L_1)} J_{L_1}^{M_R}(\gamma_u, \Phi_{R, y}) ~dy,
\end{equation}
where the function $\Phi_{R, y}: M_R\longrightarrow \C$ is defined by
\[
 \Phi_{R,y}(m)
:=\delta_{R}(m)^{1/2}\int_{K^{L_2}}\int_{N_R} f(y^{-1}\gamma_s k^{-1} m n k y) v_R'(ky) 
~dn~dk.
\]
Here $N_R$ is the unipotent radical of $R$, and $M_R$ the Levi component of $R$ 
containing $L_1$. Note that $M_R\in \CmL^G_{\rm twist}(T)$.
Since $f$ is smooth and compactly supported on $G$, so is $\Phi_{R,y}$  as a 
function on $M_R$. The distribution $\Phi\mapsto J_{L_1}^{M_R}(\gamma_u,\Phi)$ is a 
weighted unipotent orbital integral which we will study in 
\S\ref{subsec:weighted:unipotent} below.

We now turn to the definition of the weight function,
\[
 v_{R}'(z) :=\sum_{\substack{Q\in\CmF(M):\\ C_Q(\gamma_s)=R,  \ka_Q=\ka_R}} v_Q'(z),
\]
where for $Q\in\CmF(M)$, the function
$v_Q':G\longrightarrow \C$ is defined in~\cite{Ar81}*{(6.3)}. It is expressed
in~\cite{Ar86}*{p.200} as
\begin{equation}\label{def:vQ}
 v_Q'(x)=\int_{\ka_Q} \Gamma_Q^G(X, -H_Q(x))\,dX
\end{equation}
with
\[
 \Gamma_Q^G: \ka\times \ka\longrightarrow\C
\]
 given by
\[
 \Gamma_Q^G(X, Y)
:=\sum_{\substack{Q_1\in\CmF(M): \\ Q\subseteq Q_1}} (-1)^{\dim A_{Q_1}^G} \tau_{Q}^{Q_1}(X_Q)\hat{\tau}_{Q_1}((X-Y)_{Q_1}),
\]
where $X_Q$ and $(X-Y)_{Q_1}$ denote the projection of $X$ and $X-Y$ onto $\ka_Q$ and $\ka_{Q_1}$, respectively.
Here $\tau_{Q}^{Q_1}$ and $\hat{\tau}_{Q_1}$ are characteristic functions of certain
cones in $\ka_Q$ and $\ka_{Q_1}$, respectively,  defined in \cite{Ar81}*{p.11}.

For every $Q\in \CmF(M)$,
the function $v_Q'$ is left $M_Q$-invariant and right $K$-invariant.
Hence in particular, $v_R'$ is left $M$-invariant and right $K$-invariant for every $R\in \CmF^{L_2}(L_1)$.

\begin{lemma}\label{lemma:bound:ss:weight}
For every $Q\in\CmF(M)$ and $x\in G$,
\[
 \left|v_Q'(x)\right|
\ll (1+ \log|x|)^{\dim\ka_Q}.
\]
The multiplicative constant depends only on $n$.
\end{lemma}

\begin{proof}
For every $x\in G$, the function $\Gamma_Q^G(X, -H_Q(x))$ has compact support in
$X\in\ka$ by \cite{Ar81}*{Lem.2.1}. More precisely, as explained in the proof of
\cite{Ar81}*{Lem.2.1}, $X\mapsto \Gamma_Q^G(X, -H_Q(x))$ is the characteristic
function of some compact subset contained in a polytope in $\ka$  with sides
given by linear forms in $H_Q(x)$.
Hence by the formula~\eqref{def:vQ} for $v_Q'$, there exists some constant $c>0$, depending only on $n$, such that
\begin{equation}\label{eq:ss:weight:bound}
 \left|v_Q'(x)\right|
\le c (1+ \|H_Q(x)\|)^{\dim\ka_Q}.
\end{equation}
We then show that $\|H_Q(x)\|\le \log|x|$ which will conclude the proof of the lemma.
As explained in \cite{Kot05}*{\S12.1}, $H_Q(x)$ equals the image of $H_0(x)$ under
the orthogonal projection from $\ka$ onto $\ka_{Q}$ so that $\|H_Q(x)\|\le
\|H_0(x)\|$ (in \cite{Kot05}*{\S12.1} the group is assumed to be $p$-adic, but the
arguments are independent of the field).
Recall that $X(x)\in \ka$ is such that $x\in K e^{X(x)}K$. By Kostant's convexity theorem \cite{Ko73}, $H_0(k_1 e^{X(x)})$ lies inside the convex hull in $\ka$ of the Weyl group orbit of the point $X(x)$. Since $\|H_0(e^{X(x)})\|=\|X(x)\|=\log|x|$, the assertion therefore follows from \eqref{eq:ss:weight:bound}.
\end{proof}

\begin{example}
	Suppose $L_1=L_2 \subseteq M$ and $\gamma=\gamma_s$ is semisimple. The expression for the orbital integrals then simplifies:
	\begin{align*}
	J_M^G(\gamma,f)
	& = |D^G(\gamma)|^{1/2}\int_{L_2\backslash G}f(x^{-1}\gamma x) v_M'(x) ~dx\\
	& = |D^G(\gamma)|^{1/2}\int_{\kb_2^+}\int_{U_2}\int_K  f(k^{-1}u^{-1}e^{-H} \gamma e^H uk) v_M'(u) B_{\kb_2}^{M_2}(H)~dk~du~dH.
	\end{align*}
	In this case the weight function $v_M'$ is usually denoted by $v_M$ and
	equals the volume of a certain convex set in $\ka$, see \cite{Ar88a}*{p.224}.
\end{example}

\subsection{Unipotent weighted orbital integrals}\label{subsec:weighted:unipotent}
Let $L_1\subseteq M_R$, and $\gamma_u\in L_1$ be as in~\eqref{eq:def:weighted:orb}. Let 
$\CmO^{L_1}$ be the unipotent conjugacy class in $L_1$ generated by $\gamma_u$.
There is a unique unipotent conjugacy class $\CmO^{M_R}$ in $M_R$, 
induced by 
the conjugacy class $\CmO^{L_1}$ in $L_1$ (\cite{Ar88a}*{p.255}).

For a general unipotent conjugacy class, there exists~\cite{Ra72} an invariant 
measure which can be constructed with a parabolic associated to a 
suitable $\mathfrak{sl}_2$-triple. 
In our situation, $\CmO^{M_R}$ is Richardson, namely there exists a parabolic 
subgroup 
$LV\subseteq M_R$ such that $\CmO^{M_R}$ is induced by the trivial unipotent class 
$\Mid^L$ in $L$. 
Conjugating by an element of $M_R$, we can assume that $LV\in \CmF^{M_R}(T)$, and 
hence that $K^{M_R} LV = M_R$.
 The invariant measure on $\CmO^{M_R}$ admits the following 
expression
\[
 \int_{\CmO^{M_R}} f(x)~dx
=\int_{K^{M_R}}\int_{V} f(k^{-1} v k)~dv~dk,
\quad  f\in C_c^{\infty}(M_R),
\]
according to \cite{Ho74}*{Prop.5} or \cite{LM09}*{Lem.5.3}. It follows that the  
unipotent 
weighted 
orbital integrals 
$J_{L_1}^{M_R}(\gamma_u, f)$ can be 
written as
\[
 J_{L_1}^{M_R}(\gamma_u, f)=J_{L_1}^{M_R}(\CmO^{L_1}, f)
=\int_{K^{M_R}}\int_{V} f(k^{-1}vk) w_{\CmO^{L_1}}^{M_R}(v) ~dv~dk
\]
for a certain weight function $w_{\CmO^{L_1}}^{M_R}:V\longrightarrow\R$,
see~\cite{Ar88a}*{p.256}.

\subsection{Absolute convergence}\label{sec:positive:distr}
We define a modified integral $\tilde{J}_M^G(\gamma, f)$ by setting
\[
\tilde{J}_M^G(\gamma, f)
:= |D^G(\gamma_s)|^{1/2}\int_{\kb_2^+}\int_{U_2}\int_K \sum_{R\in \CmF^{L_2}(L_1)} \tilde{J}_{L_1}^{M_R}(\gamma_u, \tilde{\Phi}_{R, {e^Huk}}) B_{\kb_2}^{M_2}(H)~dk~du~dH,
\]
with
\[
 \tilde{\Phi}_{R, y}(m)
:=\delta_{R}(m)^{1/2}\int_{K^{L_2}}\int_{N_R} \left|f(y^{-1}\gamma_s k^{-1} m n k y)\right|  \sum_{\substack{Q\in\CmF(M):\\ C_Q(\gamma_s)=R,  \ka_Q=\ka_R}}  \left| v_Q'(ky)\right| ~dn~dk
\]
for $y\in G$, and
\[
 \tilde{J}_{L_1}^{M_R}(\gamma_u, \Phi)
:=\int_{K^{M_R}}\int_V \left|\Phi(k_R^{-1}vk_R)\right| \left|w_{\CmO^{L_1}}^{M_R}(v)\right| 
~dv~dk_R
\]
for $\Phi\in C_c^{\infty}(M_R)$.
It follows from Arthur's work \cite{Ar88a}*{\S7} that $\tilde{J}_M^G(\gamma, f)$ is
well-defined and finite for every $f\in C_c^{\infty}(G)$ and $\gamma \in M$. Also
$\tilde{J}_M^G(\gamma, f)$ only depends on the $M$-conjugacy class of $\gamma$.
Clearly, for every $f\in C_c^{\infty}(G)$,
\[
 \big|J_M^G(\gamma, f)\big|
\le \tilde{J}_M^G(\gamma, f)
\]
so that for our purposes it suffices to study $\tilde{J}_M^G(\gamma, f)$.

\subsection{The support of the distributions}\label{sec:support:distr}
For a semisimple element $\sigma_0\in G$, define $\Delta^-(\sigma_0)$ by:
\[
\Delta^-(\sigma_0)
:=\prod_{\substack{\alpha\in\Phi: \\ \alpha(\tilde\sigma)\neq1}} \max\left( 1,|1-\alpha(\tilde\sigma)|^{-1}\right)
\]
where $\tilde\sigma\in G_{\C}$ is a diagonal matrix conjugate to $\sigma_0$ in $G_{\C}$. Hence $\tilde\sigma$ is unique up to Weyl group conjugation and the matrix entries of $\tilde\sigma$ equal the complex eigenvalues of $\sigma_0$.
Note that if $\sigma_0$ is contained in a fixed bounded set $\cC\subset G$, then 
$\Delta^-(\sigma_0)\ll_{\mathcal C} |D^G(\sigma_0)|^{-2}$.

Let $\CmU^{L_2}$ denote the unipotent variety of $L_2=C_G(\sigma)$.

\begin{lemma}\label{lem:bound:supp}
	Let $\cC\subset G$ be a compact set.
 There exists $c \ge 2$ depending only on $n$ and $\cC$ such that the 
 following holds.
For every $H\in\kb_2^+$, $u\in U_2$, and $v\in\CmU^{L_2}$ such that $u^{-1} e^{-H} \gamma_s v e^H u\in \cC$, we have
$|(e^{-H} \gamma_s v e^H)^{-1} u^{-1} (e^{-H} \gamma_s v e^H) u |\le c$, and
\begin{align*}
\|H\|
&\le c+ \sqrt{\frac{r_2}{2}} \cdot \log(\Delta^-(\gamma_s)),				\\
|v|
&\le c \Delta^-(\gamma_s)^{\sqrt{2r_2}},
\end{align*}
where $r_2$ is determined by $L_2$ as in \S\ref{sec:twisted}.
\end{lemma}
\begin{proof}
Throughout the proof we shall denote by $a_i\ge 2$ suitable constants depending 
only on $n$ and $\cC$.
Since the elements $e^{-H} \gamma_s v e^H\in M_2$, $(e^{-H} \gamma_s v e^H)^{-1}u^{-1} 
(e^{-H} \gamma_s v e^H) u \in U_2$, and their 
product belongs to the compact set $\cC$, Lemma \ref{lemma:norm:iwasawa} gives
\begin{equation}\label{eq:2}
 |e^{-H} \gamma_s v e^H|,
~|(e^{-H} \gamma_s v e^H)^{-1} u^{-1} (e^{-H} \gamma_s v e^H)u|
\le \Cst[prf],
\end{equation}
which proves the first asserted inequality.
The  first term has Jordan decomposition $e^{-H} \gamma_s v e^H=(e^{-H} \gamma_s e^H)(e^{-H}v e^H)$. Hence by Lemma \ref{lemma:norm:jordan} and the previous inequality we get
\begin{equation}\label{eq:1}
 |e^{-H} \gamma_s e^H|,
~|e^{-H}v e^H|
\le \Cl[prf]{a} .
\end{equation}
 Conjugating $\gamma_s$ by some Weyl group element if necessary, we can assume that
\[\gamma_s=\Mdiag(\gamma_1, \ldots, \gamma_{r_1}, \gamma_{r_1+1}, \ldots, \gamma_{r_1+r_2})\] with $\gamma_1, \ldots, \gamma_{r_1}\in\R^{\times}$ and $\gamma_{r_1+j}=\left(\begin{smallmatrix} \alpha_j&\beta_j\\-\beta_j&\alpha_j\end{smallmatrix}\right)\in\GL_2(\R)$ with $\beta_j\neq0$. Accordingly,
\[
H=(0, \ldots, 0, H_1, -H_1, \ldots, H_{r_2}, -H_{r_2})
\]
(the first $r_1$ entries are $0$).
Writing $e^{-H} \gamma_s e^H=k_1e^Xk_2$, $X\in\ka$, for the Cartan decomposition, $e^X$ equals (up to permutation of the diagonal entries)
\[
\Mdiag(|\gamma_1|, \ldots, |\gamma_{r_1}|, |\det\gamma_{r_1+1}|^{1/2} A_1, |\det\gamma_{r_1+1}|^{1/2}A_1^{-1}, \ldots,
 |\det\gamma_{r_1+r_2}|^{1/2} A_{r_2}, |\det\gamma_{r_1+r_2}|^{1/2} A_{r_2}^{-1})
\]
with $A_j\ge1$ satisfying
\begin{equation}\label{eq:cosh}
\cosh(\log A_j^2) = \frac{A_j^2+A_j^{-2}}{2}
 = \tilde\alpha_j^2+ \tilde\beta_j^2 \frac{e^{4H_j}+e^{-4H_j}}{2}
 = \tilde\alpha_j^2+ \tilde\beta_j^2 \cosh(4H_j)
\end{equation}
with $\tilde\alpha_j |\det\gamma_{r_1+j}|^{1/2}=\alpha_j$ and $\tilde\beta_j |\det\gamma_{r_1+j}|^{1/2}=\beta_j$.
The first of the inequalities in \eqref{eq:1} implies that
$\left|\log|\gamma_j|\right|\le  \log \Cr{a}$ for $j=1, \ldots, r_1$, and $\left|\log|\det\gamma_{r_1+j}|^{1/2} \pm \log A_{j}\right|\le \log \Cr{a}$ for $j=1,\ldots, r_2$.
Hence
\[
\log|\det\gamma_{r_1+j}|\le 2\log \Cr{a} \;\;\text{ and }\;\;\;
 0\le\log A_{j}\le 2\log \Cr{a}.
\]
Since $|\det\gamma_s|=1$ we immediately get $\log |\det\gamma_{r_1+j}|\ge -2(n-2)\log \Cr{a}$ for every $j=1,\ldots, r_2$, and $\log|\gamma_j|\ge -2(n-1)\log \Cr{a}$ for every $j=1,\ldots, r_1$.

Using $\tilde\alpha_j^2+\tilde\beta_j^2=1$, we deduce from \eqref{eq:cosh} that
\[
 \cosh(\log A_j^2)
 = 1+ 2\tilde\beta_j^2\sinh(2H_j)^2,
\]
hence using the previous bounds on $A_j$ and $\det\gamma_{r_1+j}$ we have
\[
| \sinh(2H_j)|\le a_3 |\beta_j|^{-1}.
\]
Moreover, there exists $\xi\in\Phi^+$ with $\xi(\tilde\gamma_s)\neq1$ and
\[
2|\beta_j|=|\det\gamma_{r_1+j}|^{1/2} |1-\xi(\tilde\gamma_s)|
\ge a_4 |1-\xi(\tilde\gamma_s)|.
\]
We deduce 
\[
|\sinh(2H_j)|\le a_5 |1-\xi(\tilde\gamma_s)|^{-1} \le a_5 \Delta^-(\gamma_s), 
\]
that is, 
$|H_j|\le a_6 + \frac12 \log(\Delta^-(\gamma_s))$.
Therefore,
\[
 \|H\|^2
 =2 \sum_{j=1}^{r_2} H_j^2
\le \frac{r_2}{2} \left( 2 a_6 + \log(\Delta^-(\gamma_s))
\right)^2,
\]
which yields the second estimate.
This implies
\[
 |v|
\le |e^{H}| \cdot |e^{-H}ve^H|\cdot |e^{-H}|
\le a_7 \Delta^-(\gamma_s)^{\sqrt{2r_2}},
\]
which establishes the third estimate.
\end{proof}

\subsection{Weighted orbital integrals for unbounded test functions and non-central $\gamma_s$}\label{sec:non-central}
We now study the weighted orbital integrals for certain unbounded test functions.
More precisely,  let $F_{\eta}:=f \|X(\cdot)\|^{-\eta}:G\longrightarrow\C$ with $\eta\ge0$, and 
$f\in C_c^{\infty}(G)$.
 We show that $\tilde{J}_M^G(\gamma, F_{\eta})$ is finite if $\eta$ is small enough, which then implies that $J_M^G(\gamma,F_{\eta})$ converges absolutely. We further give an upper bound for $\tilde{J}_M^{G} (\gamma, F_{\eta})$ as $\gamma$ varies.

For the rest of \S\ref{sec:non-central} we assume that
$\gamma_s\not\in Z(G)$,
that is, $\gamma_s\neq\pm 1$.
The case that $\gamma_s = \pm 1$ will be treated in \S\ref{sec:central}.

\begin{proposition}\label{prop:bound:weight:orb:non:central}
Let $\eta\in[0,(n-1)/2)$. There exists $c_0>0$ depending on $n$ and $\eta$, such that the 
following holds.
For every $f\in C_c^{\infty}(G)$, there exists $C(f,\eta)>0$ such that for 
every $\gamma=\gamma_s\gamma_u\in M$ with $\gamma_s\neq\pm1$,
\[
\tilde{J}_M^G(\gamma, F_{\eta})
\le C(f,\eta) \Delta^-(\gamma_s)^{c_0},
\]
where $\Delta^-(\gamma_s)$ is defined in \S\ref{sec:support:distr}.
\end{proposition}

We need a few auxiliary estimates for the proof of this proposition.
Recall from \S\ref{s_convention} and \S\ref{sub_gen-Cartan} the semi-standard Levi 
subgroup $M_2$, and parabolic $P_2=M_2U_2$.
\begin{lemma}\label{lem:bound:first:int:non:central}
Fix $c,c_1 > 0$, and define
\begin{equation}\label{def:rgammas}
r(\gamma_s) := c + c_1 \log \left(\Delta^-(\gamma_s)\right).
\end{equation}
 Let $\Mun_{r(\gamma_s)}^{\kb_2^+}: \kb_2^+\longrightarrow\R$, resp.\ 
 $\Mun_{c}^{U_2}:U_2\longrightarrow\R$, be the characteristic function of the set of 
 all $H\in\kb_2^+$ with $\|H\|\le r(\gamma_s)$, resp. $u\in U_2$ with $|u|\le c$.

For every $\eta\in[0,(n-1)/2)$,
$\gamma_s\in M$ with $\gamma_s\neq\pm1$, the integral
\begin{equation}\label{eq:weighted:int:1}
 \int_{\kb_2^+}\int_{U_2} \Mun_{r(\gamma_s)}^{\kb_2^+}(Y) B_{\kb_2}^{M_2}(Y) 
 \Mun_{c}^{U_2}(u)~ \max\left(\kL(e^{-Y}\gamma_s e^Y),\kL(u)\right)^{-\eta}\,du\,dY
\end{equation}
converges. Moreover, there exists $c'\ge 1$ depending only on $c$, $c_1$, $n$, and 
$\eta$ (and not on $\gamma_s$), such that it is bounded from above by
$
c' \Delta^-(\gamma_s)^{c_2}
$,
for some constant $c_2\ge 1$ depending only on $c_1$, $n$, and~$\eta$.
\end{lemma}

\begin{proof}
 Suppose first that $U_2$ is non-trivial, i.e., that $M_2\neq G$. 
 Then~\eqref{eq:weighted:int:1} is bounded by
\[
 \int_{\kb_2^+}\Mun_{r(\gamma_s)}^{\kb_2^+}(Y) B_{\kb_2}^{M_2}(Y)\,dY
\int_{U_2}  \Mun_{c}^{U_2}(u)~ \kL(u)^{-\eta}\,du.
\]
The first integral is obviously bounded by an exponential function in $r(\gamma_s)$, which can be chosen such that it only depends on $n$.

For the second integral note that if we write $u=I_n + (u_{ij})_{i<j}$, then
\[
 \kL(u) = \log\left(1+ \frac{1}{n}\sum_{1\le i<j\le n} u_{ij}^2\right),
\]
and
\[
 \int_{U_2}  \Mun_{c}^{U_2}(u)~ \kL(u)^{-\eta}~du
\le \vol(B^{d_2}(1)) \int_{0}^{\sqrt{n}c}\left(\log \left(1+r^2/n\right) \right)^{-\eta} 
r^{d_2-1} ~dr
\]
where $d_2=\dim U_2\ge n-1$, and $\vol(B^{d_2}(1))$ denotes the volume of the ball 
$B^{d_2}(1)$ 
of radius $1$ around $0$ in $\R^d$.
The last integral is finite if $\eta\in[0,d_2/2)$.
 To be more precise, it is bounded by a constant depending on $n$, $\eta$, and 
 $c$.

Now if $U_2$ is trivial, i.e., if $M_2=G$, then $\gamma_s$ has to be elliptic. 
Since  $|\det \gamma_s|=1$ but $\gamma_s\not\in Z(G)$, we have $\gamma_s=\Mdiag(\delta, \ldots, 
\delta)$ with  $\delta=\left(\begin{smallmatrix} \alpha&\beta\\-\beta 
&\alpha\end{smallmatrix}\right)$, $\alpha^2+\beta^2=1$, $\beta\neq0$. Hence $Y=(Y_1, -Y_1, 
\ldots, Y_{n/2}, -Y_{n/2})\in\kb_2^+$, $Y_1>Y_2>\ldots>Y_{n/2}>0$, and
\[
\kL(e^{-Y}\gamma_s e^Y)
=\log\left(1+\frac{4\beta^2}{n}\sum_{i=1}^{n/2} \sinh^2(2Y_i)\right).
\]
Hence~\eqref{eq:weighted:int:1} equals
\[
 \int_{\kb_2^+}\Mun_{r(\gamma_s)}^{\kb_2^+}(Y) B_{\kb_2}^{M_2}(Y)~\kL(e^{-Y}\gamma_s e^Y)^{-\eta} ~dY
=\int_{\kb_2^+}\Mun_{r(\gamma_s)}^{\kb_2^+}(Y) \frac{\prod_{\beta\in\Phi_{\kb_2}^+}|\sinh\beta(Y)|^{m_{\beta}^+}|\cosh\beta(Y)|^{m_{\beta}^-}}{(\log(1+\frac{4\beta^2}{n}\sum_{i=1}^{n/2} \sinh^2(2Y_i)))^{\eta}} ~dY.
\]
Note that for each $i\in\{1, \ldots, n/2\}$ there is $\beta_i\in \Phi_{\kb_2}^+$ with $\beta_i(Y)=2Y_i$ and $m_{\beta_i}^+=1$.
Hence for $Y$ close to $0$, the integrand in the last integral is bounded by
\[
 \frac{\prod_{i=1}^{n/2} Y_i}{\left(Y_1^2+\ldots + Y_{n/2}^2\right)^\eta}
\]
which is integrable in a neighborhood of $0$ if $\eta\in[0,n/2)$. The 
integral~\eqref{eq:weighted:int:1} can 
therefore be bounded by an exponential function in $r(\gamma_s)$ which can be 
chosen to depend only on $c$, $c_1$, $n$ and $\eta$.
 \end{proof}

\begin{lemma}\label{lem:bound:second}
For $s\ge 1$ define $\Xi_{s}^R:M_R\longrightarrow\R_{\ge 0}$  by
\begin{equation}\label{def:Xis}
 \Xi_{s}^R(m):=\int_{N_R} \Mun_{s}(mn)~dn,
\end{equation}
where $\Mun_s:G\longrightarrow\{0,1\}$ is the characteristic function of all $g\in G$ with 
$|g|\le s$.
 There exist constants $c, c_3>0$ depending only on $n$, such that for every $R\in 
 \CmF^{L_2}(L_1)$,  
 $s\ge 1$, and unipotent $\gamma_u\in L_1$, we have
\[
 \tilde{J}_{L_1}^{M_R}(\gamma_u, \Xi_{s}^R)
\le c s^{c_3}.
\]
\end{lemma}

\begin{proof}
 The set $\CmF^{L_2}(L_1)$ is finite so that it suffices to 
 consider a fixed parabolic $R\in\CmF^{L_2}(L_1)$. Moreover, 
 $\tilde{J}_{L_1}^{M_R}(\gamma_u,\cdot)$ 
 only depends on the $L_1$-conjugacy class of $\gamma_u$ of which there are only 
 finitely many so that it suffices to treat the element $\gamma_u$ as fixed.
 Again, during the proof we denote by $a_i\ge1$ suitable constants depending only 
 on $n$ and the weight function (of which there are of course only finitely 
 many).

Let $\CmO^{L_1}$ be the unipotent conjugacy class in $L_1$ generated by $\gamma_u$, and let $\CmO^{M_R}$ be the unipotent class induced from $\CmO^{L_1}$ to $M_R$. 
Let $LV\in \CmF^{M_R}(T)$ be a Richardson parabolic for $\CmO^{M_R}$.
Then
\[
 \tilde{J}_{L_1}^{M_R}(\gamma_u, \Xi_{s}^R)
=\int_{K^{M_R}}\int_{V} \Xi_s^R(k^{-1} vk) \left|w_{\CmO^{L_1}}^{M_R}(v)\right|~dv~dk
= \int_{V} \Xi_s^R(v) \left|w_{\CmO^{L_1}}^{M_R}(v)\right|~dv.
\]
 By construction of the weight function~\cite{Ar88a}*{\S6}, it suffices to bound 
 integrals of the form
\[
 \int_{V} \Xi_s^R(v) \left|\log |p(v)|\right|^k ~dv
\]
for finitely many integers $k\ge 0$ and polynomial functions $p: V\longrightarrow \R$.
(In \cite{Ar88a}*{\S6}, the polynomial $p$ is vector-valued, however the reduction 
to the scalar case is immediate, see e.g., the proof of \cite{Ar88a}*{Lem.7.1}).

We may assume without loss of generality that $LV$ equals the intersection of 
$M_R$ with a standard parabolic subgroup in $G$.
Let $v= I_n +(v_{ij})_{i<j}\in V$. If $\Xi_s^R(v)\neq0$, then $\|X(v)\|\le 
\Cl[prf]{d}+\Cl[prf]{d} \log s$ by Lemma~\ref{lemma:norm:iwasawa}.
Hence the second inequality of Lemma~\ref{lem:compare:L:X} implies that
$1+ \frac1n\sum_{i<j} v_{ij}^2
\le \Cl[prf]{d} s^2$. Thus $|v_{ij}|\le \Cst[prf] s$ for every $i<j$. Hence for every 
$v\in V$ we have
\[
 \Xi_s^R(v)\neq 0
\Rightarrow
|p(v)|\le \Cl[prf]{d1} s^{\Cl[prf]{d2}}.
\]
Let $\cC_s\subset V$ be the compact subset of all $v=(v_{ij})_{i< j}\in V$ with 
$\sum_{i<j}v_{ij}^2\le n \Cr{d} s^2$.
Note that for any $m\in M_R$ we have
\[
 \left|\Xi_s^R(m)\right|
\ll \vol\left(\{x\in\R\mid x^2\le s^2\}^{\dim N_R}\right)
\ll s^{\dim N_R}.
\]
Hence we are left to estimate
\[
 \int_{\cC_s} \left|\log |p(u)|\right|^k \, du.
\]
We identify $\cC_s$ with the set of all $x\in\R^{\dim V}$ with $\|x\|^2\le n\Cr{d} s^2$, 
where $\|x\|$ denotes the usual euclidean norm. 
Write $\tilde x=x/s$ and let $\tilde \cC:= \{x\in \R^{\dim V}\mid \|x\|^2\le n\Cr{d}\}$ which
is independent of $s$. We can write $p(x)=P^s(\tilde x)$ for some polynomial $P^s$
with $\deg P^s=\deg p$ whose coefficients depend on $s$. More precisely, all
coefficients of $P^s$ are bounded by an absolute multiple of $s^{\deg p}$. Hence
there exists $c>0$ such that for all $\tilde x \in \tilde \cC$ we have $|P^s(\tilde x)|\le c
s^{\deg p}$. Applying \cite{Art88a}*{Lem.7.1} with $\varepsilon=s^{\dim V}$, we find $t>0$ 
such that
\[
\int_{x\in \cC_s\subset \R^{\dim V}} \left|\log |p(x)|\right|^k \,dx
= s^{\dim V}\int_{x\in \tilde \cC} \left|\log |P^s(\tilde x)|\right|^k \,d\tilde x
\ll_{n, p} s^{(t+1)\dim V}.
\]
(See also the proof of Proposition~\ref{prop:bound:weight:orb:non:central} below 
for a 
similar estimate). This completes the proof of the lemma.
\end{proof}

\begin{proof}[Proof of Proposition \ref{prop:bound:weight:orb:non:central}]
Let  $0\le \eta<(n-1)/2$ and $F_\eta = f\|X(\cdot)\|^{-\eta}$. Without loss of generality, 
we assume that $f(k_1gk_2)=f(g)$ for every $k_1,k_2\in K$ and $g\in G$ (in fact we 
shall only use $f(k^{-1}gk)=f(g)$). Recall the 
definition of 
$\tilde{J}_M^G(\gamma, F_{\eta})$ from 
the beginning of \S\ref{sec:positive:distr}, namely $\tilde{J}_M^G(\gamma, F_{\eta})$ is the 
integral 
over $H\in\kb^+_2$ and  $u\in U_2$ and the sum over $R\in\cF^{L_2}(L_1)$ of 
$\tilde{J}^{M_R}_{L_1} (\gamma_u, \tilde{\Phi}_{R, e^Hu}) B^{M_2}_{\kb_2}(H)$, where 
\[
 \tilde{\Phi}_{R, e^Hu} (m) 
 = \delta_R(m)^{1/2} \int_{K^{L_2}}\int_{N_R} |F_{\eta}(u^{-1} e^{-H} \gamma_s k^{-1} m n 
 k e^H u)|\sum_{\substack{Q\in\cF(M)\\ C_Q(\gamma_s) = R, \ka_Q= \ka_R}} |v_Q'(ke^Hu)| \, dn\, 
 dk,
\]
for $m\in M_R$.

The integral $\tilde{J}^{M_R}_{L_1} (\gamma_u, \Phi)$ is defined by 
integrating $|\Phi|$
over $k_R^{-1} v' k_R$, for $v'\in V$ and $k_R\in K^{M_R}$, hence is supported on 
$\mathcal U^{M_R}$.
We may therefore assume that $m\in \mathcal U^{M_R}$.
Then the element $v=k^{-1}m n k$ belongs to $\mathcal U^{L_2}$. We may also assume 
that 
$u^{-1}e^{-H}\gamma_s v e^H 
u$ belongs to the support of $f$.
We then deduce from Lemma~\ref{lem:bound:supp} the following three estimates: 
$\|H\|\le r(\gamma_s)$, where 
$r(\gamma_s)$ is 
defined by~\eqref{def:rgammas} with suitable $c\ge 1$ depending on the support of $f$ 
and $c_1=\sqrt{\frac{r_2}{2}}$, 
also
$|v|\le s$, where $s=
c 
\Delta^-(\gamma_s)^{\sqrt{2r_2}}$, and finally
\begin{equation}\label{pf:m1-conj-u}
 |m_1^{-1} u^{-1} m_1 u| \le c,
\end{equation}
where $m_1=e^{-H}\gamma_s v e^H\in M_2$. Overall we obtain a bound
\[
 \tilde{\Phi}_{R, e^Hu} (m) \le 
\Mun_{r(\gamma_s)}^{\kb_2^+}(H)
 \int_{K^{L_2}}\int_{N_R}
\Mun_{s}(mn) 
\Mun_{c}^{U_2}(m_1^{-1} u^{-1} m_1 u)
\|X(u^{-1} m_1 u)\|^{-\eta}
\]
\[
\sum_{\substack{Q\in\cF(M)\\ C_Q(\gamma_s) = R, \ka_Q= \ka_R}} |v_Q'(ke^Hu)| \, 
dn\, 
 dk.
\]

We next prove that $|u|$ is bounded by some power of $\Delta^-(\gamma_s)$. 
Lemma~\ref{lemma:bound:ss:weight} implies that the weight function 
$|v_Q'(ke^Hu)|$ is
bounded above by a constant times 
$(1+ \|H\| + \log |u|)^n$, which will therefore be bounded by some power of $\log 
\Delta^-(\gamma_s)$.
Since $\|H\|\le r(\gamma_s)$, it suffices to bound $u_1=e^{H}ue^{-H}$ by some power of 
$\Delta^-(\gamma_s)$. 
Conjugating 
conjugating further \eqref{pf:m1-conj-u}  by $e^{H}$, we have
\[
 |(\gamma_s v)^{-1} u_1^{-1} (\gamma_s v)u_1| \le c \Delta^-(\gamma_s)^{c_4}.
\]
Furthermore, in bounding $u_1$, it suffices to work in the complexified $G_\C$. After 
conjugation by some 
element in $M_{2,\C}$ of size bounded by a polynomial in $\Delta^-(\gamma_s)$, we can 
assume $\gamma_s v$ is of the form $tv_1$ with $t$ 
diagonal 
with centralizer in $G_\C$ contained in $M_{2,\C}$, having the eigenvalues of 
$\gamma_s$ as diagonal entries, and  $v_1\in M_{2,\C}\cap U_{0,\C}$. We denote the 
conjugated $u_1$  by $u_2$ which is an element in $U_{2,\C}$, and we obtain that 
$|(tv_1)^{-1}u_2^{-1} (tv_1) u_2|$ is bounded by a power of $\Delta^-(\gamma_s)$.
Since $|v|\le s$, we have that $|v_1|$ is bounded by a power of $\Delta^-(\gamma_s)$.
 Let 
$\Phi^+$ 
denote the set of positive roots corresponding to $(T_{0,\C}, U_{0,\C})$. We have a strict 
partial 
order 
on $\Phi^+$ given by $\beta\prec \alpha$ if and only if $\alpha$ equals the  
sum 
of $\beta$ and other positive roots. 
For each $\alpha\in \Phi^+$, we write $u_\alpha\in\C$ for the value
of the 
matrix entry of $u_2$ at $\alpha$ (since $G_{\C}=
\GL_n(\C)^1$ we make $\alpha$ correspond to matrix entries), which we refer to as the 
$\alpha$-coordinate of $u_2$.

We now prove, inductively on $\alpha$ in this partial order, that $u_\alpha$ is 
bounded 
by some power of 
$\Delta^-(\gamma_s)=\Delta^-(t)$. 
Write $v_1= \MId+ X_1$ with $\MId$ the $n\times n$-identity matrix and 
$X_1$ 
a nilpotent matrix. Similarly, write $v_1^{-1} = \MId + X_2$. The matrix 
entries of both $X_1$ and $X_2$ are bounded by a power of $\Delta^-(t)$.
Then
\begin{equation}\label{eq:almost:weyl}
 - t^{-1}u_2^{-1}tu_2 = (tv_1)^{-1} u_2^{-1} (tv_1)u_2 + X_2 t^{-1}u_2^{-1} 
 tu_2 + t^{-1}u_2^{-1} t X_1 u_2 
 + 
 X_2t^{-1} u_2^{-1} tX_1u_2.
\end{equation}
For each $\alpha\in\Phi^+$, the $\alpha$-coordinate of $t^{-1}u_2^{-1}tu_2$ equals the 
sum 
of  
$(1-\alpha(t))u_\alpha$ and a polynomial in $\beta(t)u_\beta$, $u_\beta$ with $\beta\prec \alpha$. 
On the RHS, the $\alpha$-coordinate of $(tv_1)^{-1} u_2^{-1} (tv_1)u_2$ is bounded 
by a power of $\Delta^-(t)$, 
and the $\alpha$-coordinates of the remaining terms are polynomials in $\beta(t)u_\beta$ 
and $u_\beta$ with $\beta\prec \alpha$, and suitable matrix entries of $X_1$ and $X_2$. 
Hence~\eqref{eq:almost:weyl} implies that $(1-\alpha(t))u_\alpha$ is equal to a  
polynomial in $\beta(t)u_\beta$, $u_\beta$ with $\beta\prec \alpha$ with coefficients 
bounded by a power of $\Delta^-(t)$. It thus 
follows inductively that all $u_\alpha$ for $\alpha\in\Phi^+$ are bounded by some power 
of 
$\Delta^-(t)$.

We next consider $\int_{U_2} \tilde \Phi_{R,e^Hu}(m)du$. 
We deduce from the previous inequality that 
$\tilde{J}_M^G(\gamma, F_{\eta})$ is 
bounded above by the
integral 
over $H\in\kb^+_2$ and the sum over $R\in\cF^{L_2}(L_1)$ of the
 product of
$\tilde{J}^{M_R}_{L_1} (\gamma_u, \tilde{\Psi}_{R, e^H}) \Mun_{r(\gamma_s)}^{\kb_2^+}(H) 
B^{M_2}_{\kb_2}(H)$, where 
\[
 \tilde{\Psi}_{R, e^H} (m) := 
 \int_{K^{L_2}}\int_{N_R}
\Mun_{s}(mn) 
\int_{U_2}
\Mun_{c}^{U_2}(m_1^{-1} u^{-1} m_1 u)
\|X(u^{-1} m_1 u)\|^{-\eta} \,du \, dn\, 
 dk.
\]
Note that we have switched the order of integration, which is justified because 
the integrand is non-negative.

The same argument based on~\eqref{eq:almost:weyl} also implies 
that the inverse of the Jacobian determinant of the 
diffeomorphism $u_2 
\mapsto (tv_1)^{-1} u_2^{-1} (tv_1) u_2$ from $U_{2,\C}$ onto itself is bounded above by 
$\Delta^-(t)$.
Hence the inverse of the Jacobian determinant of the diffeomorphism $u\mapsto 
m_1^{-1}u^{-1} m_1 
u$ from 
$U_2$ 
onto itself is bounded above by a power of $\Delta^-(t)=\Delta^-(\gamma_s)$.

We want to 
replace $\|X(\cdot)\|^{-\eta}$ by a suitable power of $\kL(\cdot)$ in the above 
integral. 
We have $m_1=e^{-H}\gamma_s v e^H\in M_2$, hence
by Lemmas~\ref{lem:compare:L:X} and \ref{lem:inequ:L:Iwasawa}, we find
 \[
2 \|X(u^{-1} m_1 u)\| \ge 
  \kL(u^{-1} m_1 u)
  \ge \max\left(\kL (m_1), \kL (\MId + m_1 (m_1^{-1} u^{-1} m_1 u-\MId))\right),
 \]
since $m_1^{-1}u^{-1}m_1 u \in U_2$.
Using Lemma~\ref{lem:inequ:L:Iwasawa} again, we 
have $\kL(m_1)\ge \kL(e^{-H}\gamma_s e^H)$, since $m_1=e^{-H}\gamma_s e^H e^{-H}ve^H$.

The matrix entries of both $m_1$ and $m_1^{-1}$ are bounded 
by $\Delta^-(\gamma_s)^{c_5}$ for some $c_5>0$.
The map $u\mapsto \MId + m_1 (m_1^{-1} u^{-1} m_1 u-\MId)$ is a diffeomorphism from 
$U_2$ 
onto $U_2$, since it is the composition of the 
diffeomorphism $u\mapsto m_1^{-1} u^{-1} m_1 u$ and the diffeomorphism $u\mapsto \MId + 
m_1(u-\MId)$, and we make this change of variable.
The inverse of the Jacobian determinant is bounded by 
$\Delta^-(\gamma_s)^{c_6}$ for some $c_6>0$, since it is the inverse product of the Jacobian 
determinants of $u\mapsto m_1^{-1} u^{-1} m_1 u$ and $u\mapsto \MId + 
m_1(u-\MId)$.
This yields
\begin{align*}
\tilde \Psi_{R, e^H} (m)
& \le 
\Delta^-(\gamma_s)^{c_6}
\int_{K^{L^2}}
\int_{U_2}
\int_{N_R}
\Mun_{s}(mn) 
\Mun_{c}^{U_2}(\Mid + m_1^{-1}(u-\MId))
\max\left(\kL(e^{-H}\gamma_s e^H),\kL(u)\right)^{-\eta}
\,dn\,du\,dk
\\ & \le 
\Delta^-(\gamma_s)^{c_6}
\cdot
\Xi^R_{s}(m)
\int_{U_2} 
\Mun_{c \Delta^-(\gamma_s)^{c_5}}^{U_2}(u)
\max\left(\kL(e^{-H}\gamma_s e^H),\kL(u)\right)^{-\eta}
\,du,
\end{align*}
where $\Xi_{s}^R$ is as in  \eqref{def:Xis}.

We conclude that 
$\tilde{J}_M^G(\gamma, F_{\eta})$ is 
bounded above by 
\[
\Delta^-(\gamma_s)^{c_6} 
\sum_{R\in\CmF^{L_2}(L_1)} \tilde J_{L_1}^{M_R}(\gamma_u,\Xi_{s}^R)
\]
times
\[
 \int_{\kb_2^+}\int_{U_2} \Mun_{r(\gamma_s)}^{\kb_2^+}(H) B_{\kb_2}^{M_2}(H) 
 \Mun_{c\Delta^-(\gamma_s)^{c_5}}^{U_2}(u) ~\max\left(\kL(e^{-H}\gamma_s 
 e^H),\kL(u)\right)^{-\eta}~du~dH.
\]
These two terms are bounded above by
Lemmas~\ref{lem:bound:second} and~\ref{lem:bound:first:int:non:central}, 
respectively, which concludes the proof of 
Proposition~\ref{prop:bound:weight:orb:non:central}.
\end{proof}

\subsection{Weighted orbital integrals for unbounded test functions and central $\gamma_s$}\label{sec:central}
We now turn to the remaining case that $\gamma_s\in\{\pm1\}$, and take $F_{\eta}=f \|X(\cdot)\|^{-\eta}$ as before.

\begin{proposition}\label{prop:weight:orb:int:central}
There exists $\eta>0$ such that for every $M$, and $\gamma_u \in M$,  with $(M,\gamma_u)\neq(G,1)$,
and every $f\in \cC_c^\infty(G)$,
 the integral $\tilde{J}_M^G(\gamma_u,F_{\eta})$ is finite.
\end{proposition}

\begin{proof}
Let $\CmO^M$ be the unipotent conjugacy class generated by $\gamma_u$, and let  $\CmO^G$ be the class induced from $\CmO^M$ to $G$. 
Let $LV$ be a Richardson parabolic for $\CmO^G$. We can assume that $LV$ is a 
standard parabolic subgroup of $G$.
Without loss of generality we can assume that $f$ is $K$-conjugation invariant.
Then we can write
\[
 \tilde{J}_M^G(\gamma_u, F_{\eta})=\int_V |f(v)| \|X(v)\|^{-\eta} \left|w_{\CmO^M}^G(v)\right|~dv.
\]
By construction of the weight function (see~\cite{Ar88a}*{\S5 and \S7}) it
suffices to consider finitely many integrals of the form
\[
 \int_V \Mun(v) \|X(v)\|^{-\eta} \left| \log|p(v)| \right|^k~dv,
\]
where $p:V\longrightarrow\R$ is a polynomial function, $k\ge0$ a non-negative integer, 
and $\Mun:G\longrightarrow \{0,1\}$ is the characteristic function of a compact subset
$\cC$ in $G$ (depending on the support of $f$).

Suppose first that $p$ does not have a constant term.
For $\varepsilon>0$ sufficiently small let $\Gamma(\varepsilon)=\{v\in V \cap \cC\mid 
|p(v)|<\varepsilon\}$,  and consider the dyadic decomposition
\[
 \Gamma(m, \varepsilon)
=\Gamma(2^{-m}\varepsilon) - \Gamma(2^{-(m+1)}\varepsilon), \;\;m\in\Z_{\ge0}.
\]
By Lemma~\ref{lem:eps} below we can find a constant $c>0$ such that if $v\in V\cap 
\cC$ is such that if $\|X(v)\|<c\varepsilon^2$, then $v\in \Gamma(\varepsilon)$. In 
particular, $\|X(v)\|^{-\eta}$ and $\left|\log|p(v)|\right|$ are both bounded away from 
$0$ on $(V\cap \cC)\backslash \Gamma(\varepsilon)$. Since $\vol(V\cap \cC)<\infty$, the part of the 
above integral over $(V\cap \cC)\backslash\Gamma(\varepsilon)$ is finite.
For the part of the integral corresponding to $\Gamma(\varepsilon)$ we follow 
\cite{Ar88a}*{pp.259--261} and get
\begin{align*}
 \int_{\Gamma(\varepsilon)} \|X(v)\|^{-\eta} |\log|p(v)||^k~dv
& =\sum_{m\ge0}\int_{\Gamma(m,\varepsilon)} \|X(v)\|^{-\eta} |\log|p(v)||^k~dv\\
& \ll_\cC  \sum_{m\ge0}\int_{\Gamma(m,\varepsilon)} (2^{-m}\varepsilon)^{-2\eta} 
|\log(2^{-(m+1)}\varepsilon)|^k~dv\\
& \le \sum_{m\ge0}\vol(\Gamma(m,\varepsilon)) (2^{-m}\varepsilon)^{-2\eta} |\log(2^{-(m+1)}\varepsilon)|^k,
\end{align*}
where the first inequality follows from Lemma \ref{lem:eps}
below. Now by \cite{Ar88a}*{Lem.7.1}, there exist constants $B, t>0$ such that
$\vol(\Gamma(\varepsilon))\le B\varepsilon^t$ for every $\varepsilon <1$. Hence
\[
 \int_{\Gamma(\varepsilon)} \|X(v)\|^{-\eta} |\log|p(v)||^k~dv
\le \Cst[prf] \varepsilon^{t-2\eta} \sum_{m\ge0}2^{m(2\eta-t)} ((m+1)\log2 +\log\varepsilon^{-1})^k ,
\]
and this last sum is finite if $\eta<t/2$. Hence the assertion follows for $p$ without a constant term.

If $p$ has a constant term, and if $\varepsilon$ is sufficiently small, then 
$\|X(v)\|\le \varepsilon$ implies that $c_1\le |p(v)|\le c_2$ for some constants 
$c_1,c_2$ depending on $p$ and $\varepsilon$.  Define $\Gamma'(\varepsilon)=\{v\in V \cap \cC\mid 
\|X(v)\|<\varepsilon\}$, and define $\Gamma'(m, \varepsilon)$ for $m\in\Z_{\ge0}$ similarly as 
before. Then, proceeding as above
\begin{align*}
 \int_{\Gamma'(\varepsilon)} \|X(v)\|^{-\eta} |\log|p(v)||^k~dv
& =\sum_{m\ge0}\int_{\Gamma'(m,\varepsilon)} \|X(v)\|^{-\eta} |\log|p(v)||^k~dv\\
& \ll_{\cC,p,k} \sum_{m\ge0}\int_{\Gamma'(m,\varepsilon)} (2^{-m+1}\varepsilon)^{-\eta} ~dv\\
& \le  \sum_{m\ge0}\vol(\Gamma'(m,\varepsilon)) (2^{-m+1}\varepsilon)^{-\eta}.
\end{align*}
Using the bound $\vol(\Gamma'(\varepsilon))\le B'\varepsilon^{t'}$ for some $B', t'>0$ as before, this sum again converges if we choose $\eta<t'$.
Further let $\Gamma(\varepsilon)=\{v\in V\cap \cC\mid |p(v)|<\varepsilon\}$. The part of the 
integral corresponding to $\Gamma(\varepsilon)$ is bounded similarly as before. On 
$(V\cap \cC)\backslash (\Gamma(\varepsilon)\cup\Gamma'(\varepsilon))$ the functions 
$\|X(\cdot)\|^{-\eta}$ and $|\log|p(v)||$ are bounded away from $0$ so that also the 
integral over $(V\cap \cC)\backslash (\Gamma(\varepsilon)\cup\Gamma'(\varepsilon))$ converges. This 
finishes the assertion for the case that $p$ has a constant term.
\end{proof}

\begin{lemma}\label{lem:eps}
 Let $V$ be the unipotent radical of a semi-standard parabolic subgroup of $G$, 
 and {$p:V\longrightarrow\R$} a polynomial function without constant term. 
 Then there exists a constant $a=a(p)>0$ such that for every $v\in V$ and every 
 $0 \le \varepsilon \le 1$, we have
\[
\|X(v)\|
\le \varepsilon
\implies
|p(v)|\le a\varepsilon^{1/2}.
\]
\end{lemma}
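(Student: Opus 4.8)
The plan is to relate, in two steps, the Cartan projection $\|X(v)\|$ to the value $p(v)$: first bound the matrix entries of $v$ in terms of $\|X(v)\|$ via the function $\kL$, then bound $p(v)$ in terms of those entries using that $p$ has no constant term. This "detour through the entries" is forced by the fact that $X(\cdot)$ is not a polynomial map, so $p(v)$ cannot be estimated directly against $\|X(v)\|$.

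First I would bound the entries. Every $v\in V$ is unipotent, so $v=\MId+N$ with $N$ nilpotent and hence $\Mtr N=0$; therefore
\[
\Mtr(v^{t}v)=n+2\Mtr N+\Mtr(N^{t}N)=n+\sum_{i,j}N_{ij}^{2},
\]
so that $\kL(v)=\log\bigl(1+\tfrac1n\sum_{i,j}N_{ij}^{2}\bigr)$; note this computation uses only the nilpotence of $N$, hence holds whether or not $V$ is the unipotent radical of a \emph{standard} parabolic. The upper inequality $\kL(g)\le 2\|X(g)\|$ of Lemma~\ref{lem:compare:L:X} holds unconditionally (its proof uses only $\Mtr(e^{2X(g)})/n\le e^{2\|X(g)\|}$). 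Hence $\|X(v)\|<\varepsilon$ forces $\log(1+\tfrac1n\sum_{i,j}N_{ij}^{2})<2\varepsilon$, i.e.
\[
\sum_{i,j}N_{ij}^{2}<n\,(e^{2\varepsilon}-1)\le n(e^{2}-1)\,\varepsilon\qquad(0<\varepsilon<1),
\]
the last step by convexity of $\varepsilon\mapsto e^{2\varepsilon}-1$ on $[0,1]$ and its vanishing at $0$. In particular each matrix entry of $v$ satisfies $|N_{ij}|<C_{0}\varepsilon^{1/2}$ with $C_{0}=\sqrt{n(e^{2}-1)}$, and a fortiori $|N_{ij}|<C_0$ since $\varepsilon<1$.

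Finally, writing $p=(p_{1},\dots,p_{m})$ as polynomials in the coordinates of $V\simeq\R^{\dim V}$ (the relevant matrix entries $N_{ij}$), every monomial of every $p_{l}$ has total degree between $1$ and $D:=\deg p$; on the box $\{|N_{ij}|<\delta\}$ with $\delta:=C_{0}\varepsilon^{1/2}\le C_{0}$ such a monomial $c\prod N_{ij}^{a_{ij}}$ is bounded by $|c|\,\delta^{\sum a_{ij}}\le |c|\max(1,C_{0}^{D-1})\,\delta$. Summing over the finitely many monomials and the $m$ components gives $\|p(v)\|\le a'\delta=a'C_{0}\,\varepsilon^{1/2}$ with $a'$ depending only on $n$, $D$, and the coefficients of $p$; take $a=a'C_{0}$. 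The argument has no genuine obstacle; the only point worth flagging is that the bound $e^{2\varepsilon}-1=O(\varepsilon)$ — rather than $O(\varepsilon^{2})$ — is exactly what produces the square-root loss $\varepsilon^{1/2}$ in the statement.
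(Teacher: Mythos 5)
Your proof is correct and follows essentially the same route as the paper's: bound the matrix entries of $v$ via $\kL(v)\le 2\|X(v)\|$ (Lemma~\ref{lem:compare:L:X}), then exploit the absence of a constant term in $p$ together with $\varepsilon<1$ to get the $\varepsilon^{1/2}$ bound. The only difference is cosmetic — you avoid the paper's WLOG reduction to a standard parabolic by computing $\kL(v)$ directly for an arbitrary unipotent $v$, and you make the constants explicit.
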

\begin{proof}
 Without loss of generality we can assume that $V$ is the unipotent radical of a 
 standard parabolic subgroup. Let $v=I_n+(v_{ij})_{i<j}\in V$.
By Lemma \ref{lem:compare:L:X}, we have
\[
 \log(1+ \frac{1}{n}\sum_{i<j} v_{ij}^2) =\kL(v) \le 2 \|X(v)\| \le 2 \varepsilon \le  
 \log(1+7\varepsilon).
\]
Hence $
 v_{ij}^2\le 7n \varepsilon$
for every $1\le i<j\le n$, and the assertion follows.
\end{proof}

\section{Spherical functions and archimedean test functions}\label{sec:spherfcts}
The purpose of this section is first to define a certain family of test functions which will be used in Part~\ref{part2} of this paper, and secondly, to prove an upper bound for the weighted orbital integrals $\tilde{J}_M^G(\gamma,\cdot )$ over these test functions in Proposition~\ref{p:J-sph}.
The key step is a new uniform bound for spherical functions in
Proposition~\ref{prop:spher:fct:bound}.

\subsection{Wave packets and a family of test functions}\label{sub:wave-test}
We want to study cuspidal automorphic representations which have trivial $\TO(n)$-type,
that is, which have a $\TO(n)$-fixed vector, or, if $n$ is even, which have $\TO(n)$-type
$\chi_-$. To isolate such representations in the Arthur-Selberg trace formula we need to
use test functions of a specific type. More precisely, to isolate representations of
trivial $\TO(n)$-type, the archimedean part of the test function has to be
bi-$\TO(n)$-invariant. Such functions can be described by the spherical Paley-Wiener
theorem,
see~\cite{Ga71}*{Cor.3.7}. To isolate representations of $\TO(n)$-type $\chi_-$, we
essentially multiply the aforementioned test functions by $\chi_-$.

Let $\lambda\in\ka^*_{\C}$. The zonal spherical function
$\phi_{\lambda}: G\longrightarrow\C$ of spectral parameter $\lambda$ can be defined by
\begin{equation}\label{def:spherical}
 \phi_{\lambda}(g)=\int_{K}e^{\langle\lambda+\rho, H_0(kg)\rangle} ~dk,
\end{equation}
where $\rho$ is the half sum of all positive roots $\Phi^+$, and we recall that
$K=\TO(n)$ and $K^\circ=\SO(n)$.

The two families of test functions $f_{\pm}^{\mu}$, $\mu\in i\ka^*$, are built from the zonal spherical functions by applying the inverse spherical transform to an arbitrary fixed function $h\in C_c^{\infty}(\ka)^W$:
\begin{equation}\label{eq:def:test:function}
 f^{\mu}_+(g)
=\frac{1}{|W|}\int_{i\ka^*} \hat{h}(\lambda-\mu) \phi_{\lambda}(g)
\left|\frac{\Tmc(\rho)}{\Tmc(\lambda)}\right|^{2} ~d\lambda,
\end{equation}
where $\Tmc(\lambda)$ denotes the Harish-Chandra $\Tmc$-function which in our case is given by
\begin{equation}\label{def:c}
\Tmc(\lambda)
= \pi^{|\Phi^+|/2}
\prod_{\alpha\in\Phi^+}\frac{\Gamma(\alpha(\lambda)/2)}{\Gamma((\alpha(\lambda)+1)/2)}.
\end{equation}
We then put $f^{\mu}_-(g)=\chi_-(k) f^{\mu}_+(p)$ where $g=pk$ is the Iwasawa decomposition with $\det p>0$. Note that $f^\mu_- = f^\mu_+$ if $n$ is odd.
Although the functions $f^{\mu}_{\pm}$ depend on $h$, we suppress $h$ from the notation
since we shall fix one particular $h$.  All multiplicative constants occurring in the
sequel will depend on this choice.

The function $f^{\mu}_{\pm}$ satisfies the following properties (see~\cite{Ga71},~\cite{book:helg:geometricanalysis}*{Ch.~IV, \S7}):
\begin{itemize}
\item $f^{\mu}_{\pm}\in C^{\infty}_c(G)$; more precisely, if $h$ is supported in the ball $\{X\in \ka\mid \|X\|\le R\}$ for some $R>0$, then $f^{\mu}_{\pm}$ is supported in the compact set of all $g\in G$ with $\|X(g)\|\le R$;
\item $f^{\mu}_+$ is bi-$K$-invariant;
\item $f^{\mu}_-$ is bi-$K^{\circ}$-invariant and satisfies $f^{\mu}_-(k_1gk_2)=\chi_-(k_1k_2)f^{\mu}(g)$ for all $g\in G$ and $k_1, k_2\in K$.
\end{itemize}
Note further that $\tilde{J}_M^G(\gamma, f^{\mu}_-)=\tilde{J}_M^G(\gamma,f^{\mu}_+)$.

\subsection{An upper bound for the spherical functions}
Our strategy to bound the orbital integrals $\tilde{J}_M^G(\gamma,f^{\mu}_{\pm})$ below is to 
unfold the integrals defining $\tilde{J}_M^G(\gamma,\cdot)$ and  $f^{\mu}_\pm$. We then need a 
good upper bound on the spherical function $\phi_{\lambda}$ at certain points determined by 
the support of the functional $\tilde{J}_M^G(\gamma,\cdot)$ and the support of $f^\mu_\pm$.

Let $\cU(\Fmk)$ be the universal enveloping algebra of the Lie algebra $\Fmk$ of $K$. Any $D\in \cU(\Fmk)$, induces a right differential operator on smooth functions on $K$; in particular we can form $H_0(kg;D)\in \Fma$, that is, $H_0(kg;D)$ denotes the value at $k$ of the function we obtain by applying $D$ to the function $k\mapsto H_0(kg)$. Let $F^\bullet$ be the  filtration by degree on $\cU(\Fmk)$ and let $\cU_0(\Fmk)$ be the subspace of operators without constant term in the splitting $\cU(\Fmk)=\R \oplus \cU_0(\Fmk)$.
\begin{lemma}\label{l:clean}
For any $k\in K$ and  $g\not \in K$, the linear map $F^2\cU_0(\Fmk)\to \Fma$ induced by $D\mapsto H_0(kg;D)$ is surjective.
\end{lemma}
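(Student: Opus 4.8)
The plan is to reduce the claim to an explicit computation with the second-order terms of the Iwasawa projection, exploiting that $g\notin K$ forces some root direction to be genuinely ``active.'' First I would fix $g\notin K$ and recall that $H_0(kg)$ depends only on the Iwasawa $\ka$-component, so I may write $g=k_1 e^{X(g)}k_2$ with $X(g)\neq 0$ (this uses $X(g)=0\iff g\in K$, the third item of the Remark after Lemma~\ref{lem:compare:L:X}). By right-$K$-invariance of the problem — replacing $g$ by $gk_2^{-1}$ only changes $k$ by a right translate and does not affect the span of the image — I may assume $g=k_1 e^{X}$ with $X\in\ka\setminus\{0\}$, and then, absorbing $k_1$, reduce to $g=e^{X}$. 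Now the map in question is $k\mapsto H_0(k\,e^{X})$, and I want its second-order jet along $K$ at arbitrary base point $k\in K$ to hit all of $\ka$ as $D$ ranges over $F^2\cU_0(\Fmk)$.

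Next I would set up the differential-geometric bookkeeping. For a one-parameter subgroup $e^{tZ}$ with $Z\in\Fmk$, expand $H_0(k e^{tZ} e^{X})=H_0(k e^{tZ} e^{X})$ in powers of $t$; the coefficient of $t$ is $H_0(k\cdot;Z)$ (a first-order derivative) and the coefficient of $t^2/2$ corresponds to $D=Z^2$. More generally, for $Z,Z'\in\Fmk$, the mixed derivative recovers $D=ZZ'$, and these $Z^2$, $ZZ'$ together with the $Z$'s span $F^2\cU_0(\Fmk)$. So the statement is equivalent to: the first and second derivatives of $k'\mapsto H_0(k' e^{X})$ at $k'=k$ jointly span $\ka$. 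The key input is the structure of the Iwasawa decomposition of $K\exp(\ka)$: writing $k' e^{X}=e^{H_0(k'e^X)}n'k''$, one differentiates this identity. I expect that the first derivatives alone already fail to span $\ka$ only in degenerate configurations (they see the ``$\sinh$''-type terms $\sinh\alpha(X)$ attached to roots $\alpha$ with $\alpha(X)\neq 0$), and the second derivatives fill in precisely the missing directions — this is the same phenomenon as in the germ/convexity analysis behind Kostant's theorem and the computations in~\cite{DKV83}. Concretely, I would compute, for $Z=Z_\alpha\in\Fmk$ the compact root vector attached to $\alpha\in\Phi^+$, that $\frac{d}{dt}\big|_0 H_0(e^{tZ_\alpha}e^X)$ and $\frac{d^2}{dt^2}\big|_0 H_0(e^{tZ_\alpha}e^X)$ together contribute a nonzero multiple of the coroot $H_\alpha$ whenever $\alpha(X)\neq 0$, and since $X\neq0$ there is at least one such $\alpha$; varying over a base point $k$ and over all $\alpha$ with $\alpha(X)\neq 0$, the coroots $H_\alpha$ span $\ka$ (the span of coroots of roots not vanishing on $X\neq 0$ is all of $\ka$ for $\Fmg=\mathfrak{sl}_n$, since the only proper subspace annihilated by a subset of coroots would force $X$ to be central, i.e.\ $X=0$).

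The main obstacle I anticipate is making the second-derivative computation clean and uniform in the base point $k\in K$: the first derivative of $H_0$ can vanish at special $k$ (where $k e^X$ already lies in $NAK$ with trivial first-order motion in the $\ka$-direction for that particular $Z$), and one must check that exactly at those $k$ the second-order term $H_0(\cdot;Z^2)$ is nondegenerate in the relevant direction — i.e.\ the two orders of vanishing do not coincide. This is where the hypothesis $g\notin K$ is used essentially: it guarantees $X\neq0$, hence a root with $\alpha(X)\neq0$, hence a genuinely second-order-nonflat direction. I would organize this by diagonalizing the adjoint action on the root spaces, reducing to rank-one $\mathfrak{sl}_2$-computations (one for each $\alpha$ with $\alpha(X)\neq 0$) where the Iwasawa projection of $\begin{pmatrix}\cos t & \sin t\\ -\sin t & \cos t\end{pmatrix}\begin{pmatrix} e^{s} & 0\\ 0 & e^{-s}\end{pmatrix}$ is an elementary explicit function of $(t,s)$ whose $t$-expansion at any $t_0$ one reads off directly, confirming that the order of vanishing in $t$ is at most $2$ precisely when $s\neq 0$. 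Assembling these rank-one contributions and invoking the spanning statement for coroots then finishes the proof.
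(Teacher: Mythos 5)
Your opening reductions are fine (passing to $g=e^{X}$ with $X\neq 0$ only shifts the base point and conjugates $D$ by $\mathrm{Ad}(k_1^{-1})$, which preserves $F^2\cU_0(\Fmk)$), and the closing combinatorial fact --- that the coroots $H_\alpha$ with $\alpha(X)\neq 0$ span $\ka$ once $X\neq 0$ --- is correct for $\mathfrak{sl}_n$. The gap is the middle step, which is where the entire content of the lemma sits. You assert that for \emph{every} base point $k$ and every $\alpha$ with $\alpha(X)\neq 0$, the first and second derivatives of $t\mapsto H_0(ke^{tZ_\alpha}e^{X})$ at $t=0$ contribute a nonzero multiple of $H_\alpha$, and you propose to check this by a rank-one computation with $\left(\begin{smallmatrix}\cos t&\sin t\\-\sin t&\cos t\end{smallmatrix}\right)\mathrm{diag}(e^{s},e^{-s})$. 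That computation is only relevant when the base point lies in the rank-one subgroup attached to $\alpha$: for general $k\in K$ the curve $ke^{tZ_\alpha}e^{X}$ does not factor through the $\alpha$-block. Indeed the first derivative equals the projection onto $\ka$ of $\mathrm{Ad}\bigl(\kappa(ke^{X})\bigr)\bigl(e^{-\alpha(X)}E_\alpha-e^{\alpha(X)}E_{-\alpha}\bigr)$, where $\kappa(ke^{X})$ is the $K$-part of the Iwasawa decomposition of $ke^{X}$; this is a $k$-dependent vector with no preferred relation to $H_\alpha$, and the second derivative in addition involves the $t$-variation of $\kappa(\cdot)$, which your sketch never computes. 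So the per-root claim is false at generic $k$, and the phrase ``varying over a base point $k$'' cannot rescue it: the lemma is a statement about the $2$-jet at each fixed $k$, so the spanning set must be produced at that single $k$.

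What is missing is precisely the treatment of the non-generic $k$. Dually, the lemma says: for every $\lambda\neq 0$, if $k$ is a critical point of $k'\mapsto\langle\lambda,H_0(k'e^{X})\rangle$, then the Hessian there is not identically zero. It is only at such critical points that the configuration rigidifies (criticality forces the alignment with the Weyl-orbit structure) and a $\sinh\alpha(X)$-type rank-one formula for the Hessian becomes legitimate --- but determining the critical set and computing the transversal Hessian there is the actual work. This is exactly what the paper imports from Duistermaat--Kolk--Varadarajan: the explicit formula for $H_0(kg;D)$, the evaluation of the Hessian at critical points, and the cleanness (transversal nondegeneracy) of the critical set. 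Your sketch in effect presupposes the conclusion of that analysis at arbitrary $k$; to repair it you would either have to reprove those results or first identify the critical points of each linear functional $\langle\lambda,H_0(\cdot\,e^{X})\rangle$ and carry out your explicit expansion only there.
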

\begin{proof}
This follows from the explicit formula of $H_0(kg;D)$ in~\cite{DKV83}*{Lem.5.1},
and
the determination of the Hessian at a critical point~\cite{DKV83}*{Cor.6.4}. In
fact a
stronger statement holds~\cite{DKV83}*{Lem.5.9}, namely that for any $\lambda\in
\Fma^*$, the function $k\mapsto \langle \lambda,H_0(ak)\rangle$ has clean critical set in
the sense of Bott, i.e., the Hessian is non-degenerate transversely to the 
critical
manifold.
\end{proof}
We establish the following uniform pointwise bound for the zonal spherical function.
\begin{proposition}\label{prop:spher:fct:bound}
Let $\cC\subset G$ be a compact subset and $A  \ge  0$. Then
\begin{equation}\label{eq:decay:spherical}
 \left|\phi_{\lambda}(g)\right|\ll_{A,\cC} (1+\|\mathrm{Im}\,
 \lambda\|\|X(g)\|)^{-\frac12}
\end{equation}
for all $g\in \cC$ and $\lambda\in \Fma^*_\C$ with $|\mathrm{Re}\, \lambda| \le A$.
\end{proposition}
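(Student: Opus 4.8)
The plan is to extract decay of $\phi_\lambda(g)$ from oscillation of the integrand $k\mapsto e^{\langle\lambda+\rho,H_0(kg)\rangle}$ in the defining formula \eqref{def:spherical}. Write $\lambda=i\nu$ with $\nu\in\Fma^*$, and consider the phase $\Psi_g(k)=\langle\nu,H_0(kg)\rangle$ on the compact group $K$; thus $\phi_\lambda(g)=\int_K e^{i\Psi_g(k)}e^{\langle\rho,H_0(kg)\rangle}\,dk$, and the amplitude $e^{\langle\rho,H_0(kg)\rangle}$ together with all of its $K$-derivatives is bounded uniformly for $g$ in the compact set $\cC$. When $\|\nu\|\|X(g)\|\lesssim 1$ there is nothing to prove since $|\phi_\lambda(g)|\le 1$, so assume $\|\nu\|\|X(g)\|$ is large; in particular $g\notin K$, so by Lemma~\ref{l:clean} (equivalently the Bott-clean-critical-set statement of \cite{DKV83}) the function $k\mapsto\langle\bar\nu,H_0(ak)\rangle$ has clean critical set for the unit vector $\bar\nu=\nu/\|\nu\|$ and $a=e^{X(g)}$, with Hessian non-degenerate transversally to the critical manifold, uniformly in $\bar\nu$ and in $g\in\cC$ away from $K$.

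The first step is to replace $g$ by its Cartan form: since $\phi_\lambda$ is bi-$K$-invariant, $\phi_\lambda(g)=\phi_\lambda(e^{X(g)})=\phi_\lambda(a)$ with $a=e^{X(g)}$, and $H_0(ka)$ depends only on $a$ and $k$. Next I would apply a multidimensional van der Corput / stationary-phase estimate on the compact manifold $K$ to the oscillatory integral $\int_K e^{i\|\nu\|\langle\bar\nu,H_0(ka)\rangle}\,\omega(k)\,dk$ with smooth amplitude $\omega$. The cleanness of the critical set gives a bound $O((1+\|\nu\|\,c(a))^{-1/2})$ where $c(a)$ measures the ``strength'' of the non-degeneracy transverse to the critical manifold; the exponent $1/2$ (rather than a larger power) is forced by the fact that the critical set is generally a positive-dimensional manifold and one only controls the Hessian in the normal directions of dimension one — this matches the remark in the introduction that the root multiplicities of $\GL_n(\R)$ being odd prevents a closed-form simplification and is the source of the $1/2$-loss. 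Concretely one covers $K$ by two regions: a neighborhood of the critical manifold, where one integrates out the normal directions by ordinary one- (or finite-)dimensional van der Corput with a uniform lower bound on the normal Hessian, gaining $\|\nu\|^{-1/2}$; and the complement, where $|\nabla_k\Psi_g|$ is bounded below and repeated integration by parts gives rapid decay in $\|\nu\|$.

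The essential point, and the main obstacle, is to make the lower bound $c(a)$ in the van der Corput estimate \emph{explicit enough in $a=e^{X(g)}$} so that the product $\|\nu\|\,c(a)$ is comparable to $\|\nu\|\,\|X(g)\|=\|\lambda\|\,\|X(g)\|$ uniformly for $g\in\cC$; equivalently, that the transverse Hessian of $k\mapsto\langle\bar\nu,H_0(ak)\rangle$ at its critical manifold is bounded below by a constant times $\|X(g)\|$, uniformly in the unit direction $\bar\nu$. This I would obtain from the explicit formula for $H_0(ka;D)$ and the Hessian computation in \cite{DKV83}*{Lemma 5.1, Cor.~6.4}: the relevant second derivatives are linear combinations of $\sinh$ or $(\cosh-1)$-type expressions in the entries of $X(g)$, hence vanish to first order as $X(g)\to 0$ and are bounded below by $\gg\|X(g)\|$ on the compact range $g\in\cC$. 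Assembling the two regions and this quantitative non-degeneracy yields $|\phi_\lambda(g)|\ll_{\cC}(1+\|\lambda\|\|X(g)\|)^{-1/2}$. Since the same estimate is proved in \cite{BP:sup-norm-Siegel}*{Theorem~2}, one may alternatively simply cite that result; I would include the sketch above to keep Part~\ref{partI} self-contained and to record the interplay with Lemma~\ref{l:clean}.
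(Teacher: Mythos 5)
Your proposal is correct in substance and rests on the same two pillars as the paper's proof -- Lemma~\ref{l:clean} together with the explicit formulas of \cite{DKV83} for $H_0(kg;D)$, and a van der Corput--type oscillatory estimate giving the exponent $\tfrac12$ -- but the implementation is genuinely different. The paper never isolates the critical manifold: it uses Lemma~\ref{l:clean}, homogeneity in $\lambda$ and compactness (plus the fact that $H_0(kg;D)$ involves only $\sinh(\mathrm{ad}\,X(g))$, hence vanishes linearly in $X(g)$) to get the \emph{pointwise} bound $\max_{1\le\deg D\le2}|\langle\lambda,H_0(kg;D)\rangle|\gg_{\cC}\|\lambda\|\,\|X(g)\|$ at every $k\in K$, and then applies the multidimensional van der Corput inequality of Carbery--Christ--Wright, which only requires such a lower bound on some first- or second-order derivative throughout the domain. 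Your route is the classical stationary-phase decomposition: a tubular neighborhood of the clean critical set where the transverse Hessian is $\gg\|X(g)\|$, and the complement where one integrates by parts. That can be made to work, but it carries uniformity obligations the paper's softer argument sidesteps: the critical set of $k\mapsto\langle\bar\nu,H_0(ka)\rangle$ moves (and degenerates as $X(g)\to0$ or as $\bar\nu$ becomes singular), so the width of the tube, the transverse-Hessian lower bound along the whole critical manifold, and the gradient lower bound off the tube must all be made quantitative and uniform in $(g,\bar\nu)\in\cC\times\{\|\bar\nu\|=1\}$ simultaneously -- precisely the bookkeeping that the pointwise max-of-derivatives formulation plus CCW renders unnecessary. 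So: same key lemma and same source of the $\tfrac12$, but your stationary-phase version buys a more geometric picture at the cost of extra uniformity checks, while the paper's version buys brevity and robustness (it would generalize, as remarked after the proof, to worse critical sets by using derivatives of higher order in $F^k\cU_0(\Fmk)$ at the cost of a weaker exponent $-1/k$). Citing \cite{BP:sup-norm-Siegel} as an alternative is legitimate, as the paper itself notes.
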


We shall only need this proposition for $\lambda \in  i \mathfrak{a}^*$, i.e., for 
$\mathrm{Re}\, \lambda=0$. In that case, $|\phi_\lambda(g)|\le 1$ for all $g\in G$
with equality obtained for $g\in K$.
So the result is qualitatively sharp in the sense that the upper-bound is uniformly
non-trivial as soon as $X(g)$ is away from zero.

\begin{proof}
We deduce from Lemma~\ref{l:clean} that uniformly for all $\lambda\in \Fma^*_\C$,
 $k\in K$ and $g\in \cC$,
\[
\max_{1\le \deg D \le 2}
|\langle \lambda, H_0(kg;D)\rangle| \gg_{\cC}
\|\lambda \| \|X(g)\|,
\]
where we have fixed a basis of $\Fmk$ and $D$ ranges over monomials in this basis
of degree $1$ and $2$.  Indeed the uniformity in $\lambda$ follows by compactness,
while the uniformity in $X(g)$ follows from the calculation of $H_0(kg;D)$
in~\cite{DKV83}*{(6.4)} which involves only $\sinh(\mathrm{ad} X(g))$.

We are in position to apply a multidimensional van der Corput 
estimate~\cite{Stein}*{\S{}VIII.2.2} to the 
integral~\eqref{def:spherical} and conclude the proof of the proposition.
\end{proof}

Our method of proof should be applicable to other situations where the critical 
set can be more complicated, for example  when combined with 
~\cite{CCW:multidimensional-vandercorput} which achieve multidimensional van der 
Corput estimates with the best uniformity. 
Indeed in such situation one can establish a soft bound by studying higher 
derivatives in $F^k\cU_0(\Fmk)$ in Lemma~\ref{l:clean} and obtain a bound with a 
power saving estimate that depends on $k$ and the dimension.

\subsection{Comparision with previous results}
If at least one of the parameters $X(g)$ or $\lambda$ is uniformly bounded away from the
singular set, we can do better as follows. For $w\in W$, let
$\Sigma_w^+(\cC)=\{\alpha\in\Phi^+\mid \forall g\in \cC:~(w\alpha)(X(g))\neq0 \}$.  Then
by~\cite{DKV83}*{Cor.11.2}
\begin{equation}\label{eq:dec:spher:optimal}
\left| \phi_{\lambda}(g)\right|
\ll_\cC
\sum_{w\in W} \prod_{\alpha\in\Sigma^+_w(\cC)}\left(1+|B(\alpha,\lambda)|\right)^{-\frac12 m(\alpha)}
\end{equation}
for all $\lambda\in i\ka^*$ and all $g\in \cC$. Here $B$ is the Killing form, and $m(\alpha)$ denotes the multiplicity of $\alpha$ (which is $1$ for any $\alpha$ if $G=\GL_n(\R)^1$).
However, this is not strong enough for us, as we need to consider compact sets $\cC$
containing a neighborhood of the identity. If $\cC$ intersects $K$, then
$\Sigma^+_w(\cC)=\emptyset$ for any $w$
 so that~\eqref{eq:dec:spher:optimal} only recovers the trivial bound.

It follows from the asymptotic expansion in \cite{DKV83}*{Thm.9.1 and \S11} that
the exponents in  \eqref{eq:dec:spher:optimal} are sharp if $\cC$ is a compact
subset of $G$ such that the $X(g)$, $g\in\cC$, are equisingular.
 This implies that the  exponent $1/2$ in \eqref{eq:decay:spherical} is optimal in this degree of uniformity, which can be seen as
follows. Fix any simple root $\alpha_1\in\Phi^+$ of $G=\GL_n(\R)^1$. Let $\varpi_1$ be the corresponding fundamental weight so that  $\alpha_1(\varpi_1^\vee)=1$. Suppose that $\lambda\neq 0$ varies in
$i\R\varpi_1$.
Then, for $\alpha\in\Phi$, $B(\alpha,\lambda)\neq0$ only if
$\alpha(\varpi_1^\vee)=\pm1$, that is, if $\alpha_1\prec\alpha$ or $\alpha_1\prec -\alpha$ where $\prec$ denotes the usual ordering on the root lattice.
Suppose $\cC$ is a compact subset of the set of $g\in G$ with $X(g)\in \R_{ > 0}\varpi_1$.
Let $w_1\in W$ be the simple reflection associated to $\alpha_1$. The only root in the set $\Sigma^+_{w_1}(\cC)$ which does not vanish identically on $\lambda\in
i\R \varpi_1$ is $\alpha_1$. Hence the summand in
\eqref{eq:dec:spher:optimal} corresponding to $w=w_1$ equals the single factor
 \[
  \left(1+|B(\alpha_1,\lambda)|\right)^{-\frac12 m(\alpha_1)}.
 \]
 More precisely, the asymptotic expansion in \cite{DKV83}*{Thm.9.1} implies that
 there is a non-zero function $a:\cC\longrightarrow\C$ such that
 \[
  \phi_\lambda(g)
  = a(g) \|\lambda\|^{-\frac12 m(\alpha_1)} + O_{\cC} \left(\|\lambda\|^{-\frac12 m(\alpha_1)-1}\right)
 \]
 for $\lambda\in i \R_{>0}\varpi_1$.
This limits the decay of $\phi_\lambda(g)$ to the rate specified in \eqref{eq:decay:spherical}.

If $\cB\subset i\Fma^*$ is a compact set bounded away from the singular set, then
Marshall~\cite{Mar13}*{Thm.2} showed that
\[
 \left|\phi_{t\lambda}(g)\right|\ll_{\cB, \cC} \prod_{\alpha\in\Phi^+} \left(1+t|\langle \alpha,X(g)\rangle|\right)^{-\frac12 m(\alpha)}
\]
for all $\lambda\in \cB$, $t\ge 1$, and $g\in \cC$. This however does not yield a good upper bound for $f^{\mu}_\pm$ because the constraint $\lambda \in \cB$ prevent us from performing the integration in the definition of $f^{\mu}_\pm$.

If $G$ is a \emph{complex group}, then there is an exact formula for the zonal spherical
function, see e.g.~\cite{CoNe01}*{(2.2)}.
It can be expressed in terms of basic functions, since then the hypergeometric
functions appearing can be given in a closed form.
For any regular $\lambda\in \Fma^*_\C$ and $X(g)\in \Fma$,
\begin{equation}\label{eq:explicit:spherical}
\phi_\lambda(g)
=2^{|\Phi^+|} \prod_{\alpha\in\Phi^+}\frac{B(\alpha, \rho)}{B(\alpha,\lambda)}
\sinh(\langle \alpha,X(g)\rangle )^{-1}  \sum_{w\in W} \operatorname{sgn}(w) e^{\langle w\lambda, X(g) \rangle}.
\end{equation}
This formula allows for the following estimate by treating the sum over $w\in W$ trivially.
If we combine the formula with the spherical Plancherel density $\left|
\Tmc_\C(\lambda)\Tmc_\C(\rho)^{-1}\right|^{-2}$ for $\GL_n(\C)$, one obtains the upper bound that
for every $g$ in a compact set $\cC\subset G$ and every $\lambda\in i\Fma^*$ is given by
\begin{equation}\label{eq:complex:spherical}
\left| \phi_{\lambda}(g)\right|\left| \Tmc_\C(\lambda)\right|^{-2}
\ll_{\cC} (1+\|\lambda\|)^{d_\C-r-(n-1)} \prod_{\alpha\in\Phi^+:\, \langle \alpha,X(g)\rangle\neq0} |\langle \alpha,X(g)\rangle|^{-1}.
\end{equation}
In comparison, one obtains, using the estimate from Proposition \ref{prop:spher:fct:bound} that for every $g$ in a compact set $\cC\subset G$, and every $\lambda \in i\Fma^*$,
\[
 \left| \phi_{\lambda}(g)\right|\left| \Tmc(\lambda)\right|^{-2}
 \ll_{\cC} (1+\|\lambda\|)^{d-r-\frac12} \|X(g)\|^{-\frac12}.
\]
Thus the estimate~\eqref{eq:explicit:spherical} has a better exponent in $\lambda$.
However, it is not sufficient for our purpose, because $\langle\alpha, X(g)\rangle$ can become arbitrary small for some $\alpha \in \Phi^+$, which is an obstacle in estimating $f^\mu_\pm(g)$ and $J^G_M(\gamma,f^\mu_\pm)$.

As mentioned in the introduction, Blomer-Pohl~\cite{BP:sup-norm-Siegel} have obtained the same estimate as Proposition~\ref{prop:spher:fct:bound}, and their proof differs slightly from ours although it builds on the same idea of applying a multidimensional van der Corput estimate. Finally we refer to~\cite{Anker-Ji} for other known properties of zonal spherical functions.

\subsection{An upper bound for  $J_M^G(\gamma,f^{\mu}_{\pm})$}
We will combine the upper bound for the spherical function in Proposition~\ref{prop:spher:fct:bound},
with the results from the previous Section~\ref{sec:orbital} on $\tilde{J}_M^G$ tested against functions of the form $\tilde f\|X(\cdot)\|^{-\eta}$, $\tilde f\in C_c^{\infty}(G)$.

We first bound $|f^\mu_\pm|$ pointwise.
The trivial bound is that for all $g\in G$ and all $\mu\in i\Fma^*$ one has
\begin{equation}\label{fmu_trivial}
\left|f^{\mu}_{\pm}(g)\right| \ll_{n,h} (1+\|\mu\|)^{d-r}.
\end{equation}
This follows immediately from the trivial bound $|\phi_\lambda(g)|\le 1$ which holds for all $g\in G$ and $\lambda\in i\Fma^*$, and by the definition of $f^\mu_\pm$ in \eqref{eq:def:test:function}.
We deduce from Proposition~\ref{prop:spher:fct:bound} the following upper-bound on $f^{\mu}_{\pm}$, which improves on the trivial bound provided that $g$ is bounded away from $K$.
\begin{corollary}\label{p:bound-fmu}
There exists a smooth and compactly supported function $\widetilde f : G\to \R_{ \ge  0}$  depending only on $n$ and $h$ such that
\begin{equation}\label{eq:growth:test:fcts}
\left|f^{\mu}_{\pm}(g)\right|
\le
             (1+\|\mu\|)^{d-r-\frac12 } \tilde f(g) \|X(g)\|^{-\frac12}
\end{equation}
for every $g\in G-K$ and every $\mu\in i\Fma^*$.
\end{corollary}

\begin{proof}
	We need a bound for the Harish-Chandra $\Tmc$-function:
	\begin{equation}\label{eq:decay:plancherel}
	\left|\Tmc(\lambda)\right|^{-2}
	\ll_{n} (1+\|\lambda\|)^{d-r}
	\end{equation}
	for all $\lambda\in i\ka^*$. This follows from~\eqref{def:c}.
	The corollary follows by combining~\eqref{eq:decay:plancherel} and~\eqref{eq:decay:spherical}.
\end{proof}

Combining this with Lemma~\ref{l:unidescent} we can also bound the parabolic descent of $f^{\mu}_{\pm}$:

\begin{corollary}\label{cor:parabolic:descent}
 Assume $n\ge 3$.
 Let $Q=LV\subsetneq G $ be a proper semi-standard parabolic subgroup. Then there exists a compactly supported smooth function $\tilde f:L\longrightarrow\R_{\ge0}$ depending only on $h$ and $Q$, such that for every $\mu\in i\ka^*$ we have
\[
\left|( f^{\mu}_{\pm})^{(Q)}(g)\right|
\le (\left| f^{\mu}_{\pm}\right|)^{(Q)}(g)
\le 	(1+\|\mu\|)^{d-r-\frac12} \tilde f(g),
\]
for every $g\in L$.
\end{corollary}

Our main result in this section is then the following.

\begin{proposition}\label{p:J-sph}
	Assume $n\ge 3$.
	There exist $c_1>0$ depending only on $n$ and $c>0$ depending only on $n$ and $h$, such that the following holds.
\begin{enumerate}[(i)]
\item\label{item:i} 	For every $M\in\CmL$, $\gamma\in M$ such that $\gamma_s\neq\pm 1$, and $\mu\in i\ka^*$ we have
	\[
	\tilde{J}_M^G(\gamma,f^{\mu}_{\pm})
	\le  c \Delta^-(\gamma_s)^{c_1} (1+\|\mu\|)^{d-r-\frac12}.
	\]
	In particular, this inequality also holds if we replace the left hand side by $|J_M^G(\gamma,f^{\mu}_{\pm})|$.

\item\label{item:ii} For every $M\in \CmL$, $\gamma\in M$, $\mu\in i\Fma^*$, and  every
proper semi-standard parabolic subgroup $Q=LV\subsetneq G$ with $M\subseteq L$, we have
	\[
	 \tilde J^L_M(\gamma,(f^\mu_{\pm})^{(Q)})
	 \le c \Delta^-(\gamma_s)^{c_1} (1+\|\mu\|)^{d-r-\frac12}.
	\]
	Again, the left hand side can be replaced by $| \tilde J^L_M(\gamma,(f^\mu_{\pm})^{(Q)})|$.
\end{enumerate}
\end{proposition}

\begin{proof}
To prove assertion \eqref{item:i}, we first
note that
by Corollary~\ref{p:bound-fmu} we have
\[
 \tilde J_M^G(\gamma, f^{\mu}_{\pm})
\le (1+\|\mu\|)^{d-r-\frac12} \tilde{J}_M^G(\gamma,\tilde{f}\|X(\cdot)\|^{-\frac12}),
\]
because the intersection of the support of the functional
$\tilde{J}_M^G(\gamma,\cdot)$
with $K$ has measure $0$ as long as $(M,\gamma_s)\neq (G,\pm1)$.
Hence applying Proposition~\ref{prop:bound:weight:orb:non:central}
finishes the proof of~\eqref{item:i}.

Assertion~\eqref{item:ii} follows from Corollary
\ref{cor:parabolic:descent} and Proposition~\ref{prop:bound:weight:orb:non:central}.
\end{proof}

\begin{example}\label{ex:orbital-split-regular}
Let $M=G$ and $\gamma$ be a \emph{split regular} semisimple element. Without loss of generality, we may assume that $\gamma \in T_0$. Then for any bi-$K$-invariant function $f$,
\[
 J_G^G(\gamma, f)
 = |D^G(\gamma)|^{1/2} \int_{U_0} f(u^{-1}\gamma u)\, du.
\]
Changing variables from $u$ to $v:=\gamma^{-1}u^{-1}\gamma u$ we need to multiply the integral by the Jacobian $|D^G(\gamma)|^{-1/2} \delta_0(\gamma)^{1/2}$ so that we obtain
\[
J^G_G(\gamma,f)= \delta_0(\gamma)^{1/2} \int_{U_0} f(\gamma v)\, dv
 = f^{(P_0)}(\gamma),
\]
compare with Example~\ref{ex:split-regular}.
Specializing to $f=f^\mu_\pm$, this is the inverse transform to~\eqref{eq:def:test:function},
 that is
\[
J^G_G(\gamma,f^\mu_+)
= h(H_0(\gamma)) e^{\langle\mu, H_0(\gamma) \rangle}
\]
and
\[
J^G_G(\gamma,f^\mu_-)
= \operatorname{sgn}(\det\gamma) h(H_0(\gamma)) e^{\langle\mu, H_0(\gamma) \rangle}.
\]
Hence $|J_G^G(\gamma, f^\mu_\pm)|\le ||h||_\infty$,
for every $\mu\in i\Fma^*$, and every split regular semisimple element $\gamma$.
\end{example}

\subsection{Unipotent weighted orbital integrals}
We establish now the estimates for unipotent weighted orbital integrals as well.
\begin{proposition}\label{prop:bound-weighted-unip}
Let $M\in \CmL$ and let $\gamma\in M$ be unipotent such that $(M,\gamma)\neq (G, \pm 1)$. Then
we have the following:
\begin{enumerate}[(i)]
 \item There exists $\delta>0$ depending only on $n$, such that
 \[
   \left|J_M^G(\gamma,f^{\mu}_{\pm})\right|
   \ll (1+\|\mu\|)^{d-r-\delta}.
 \]

 \item  For every $t\ge2$ we have
 \[
  \left|\int_{t\Omega} J_M^G(\gamma, f^\mu_\pm)\, d\mu\right|
  \ll t^{d-1} (\log t)^{\max(3, n)}.
 \]
\end{enumerate}
\end{proposition}
\begin{proof}
 The proof of assertion (i) is the same as for Proposition~\ref{p:J-sph} but we have to use Proposition~\ref{prop:weight:orb:int:central}  instead of Proposition~\ref{prop:bound:weight:orb:non:central}.
  Assertion (ii) is \cite{LM09}*{\S5}.
\end{proof}

\part{Global theory: Weyl's law and equidistribution}\label{part2}

This Part~\ref{part2} of the paper is about proving global results, namely
Theorem~\ref{thm:weyl}.
This will be done by using the Arthur-Selberg trace formula for
$\GL(n)_{/\Q}$.
The left-hand side and right-hand side of the asymptotic
of Theorem~\ref{thm:weyl} will be identified as the main terms on the spectral and
geometric side of the trace formula, respectively.
We take the approach of studying the geometric side of the trace formula by reducing it to local problems.
This involves using Arthur's fine geometric expansion, Arthur's splitting formula for
weighted orbital integrals, a careful study of the properties of the equivalence classes
$\Fmo$ that contribute non-trivially, and a uniform description of the measures on
centralizers that appear locally and globally on the geometric side of the trace formula.
For these purposes, we build up~in Section~\ref{sec:centralizers} some core 
material on
centralizer subgroups which are not easily accessible in the literature. In
Section~\ref{sec:fine} we carefully summarize Arthur's results, adding to them some 
quantitative analysis, we recall the bound of the first-named 
author~\cite{Ma:coeff} on Arthur's global coefficients,
and we formulate our main 
estimate as Theorem~\ref{th:bound-Jgeom}.
Then Section~\ref{sec:p-adic} finishes the proof, namely it establishes a uniform
upper-bound for non-archimedean orbital integrals.
 The method of proof originates from the work of Shin and the second-named 
 author~\cite{ST11cf}. However we rework the whole argument in depth, first because we 
 are treating the more delicate weighted
orbital integrals, and second because we take the opportunity correct an inaccuracy.
Finally, Section~\ref{sec:spectral} handles the spectral side similarly as in the work of 
Lapid--M\"uller~\cite{LM09}.

The writing of the present paper happened simultaneously with revisions of~\cite{Matz}, 
hence there is some overlap in content with this Part~\ref{part2}. After the publication 
of~\cite{Matz}, some improvements have been incorporated in the present paper. On the 
whole, we provide a 
streamlined treatment which can be read largely independently of~\cite{LM09,ST11cf,Matz}.

\subsection*{Notation}
From now on $G$ denotes the group $\GL(n)$ as an algebraic group over $\Q$.
Thus our notation differs slightly from the notation of Part~\ref{partI}, where we worked 
with the group $G(\R)^1$.
 Further $T_0$ denotes the torus of diagonal matrices in $G$, and $P_0$ the minimal parabolic subgroup in $G$ of upper triangular matrices both considered as $\Q$-algebraic groups. Similarly as before, but now in the category of $\Q$-algebraic groups, we define the notions of \emph{(semi-) standard} parabolic and \emph{semi-standard} Levi subgroups, and also define the sets $\CmL(M)$, $\CmF(M)$, and $\CmP(M)$.

 If $v$ is a place of $\Q$, we denote by $|\cdot|_v$ the normalized absolute value on $\Q_v$. Let $\A$ be the adeles of $\Q$, and let $|\cdot|_{\A}$ denote the adelic absolute value on $\A^{\times}$ which is the product of the $|\cdot|_v$. Then $G(\A)^1$ denotes the set of all $g\in G(\A)$ with $|\det g|_{\A}=1$.

\section{Setup for the Arthur-Selberg trace formula for
\texorpdfstring{$\GL(n)$}{GL(n)}}\label{sec:arthur-selberg}

\subsection{Maximal compact subgroups and measures}
If $p$ is a finite prime, we take $K_p=G(\Z_p)$ as the maximal compact subgroup of $G(\Q_p)$, and normalize the Haar measure $dk$ on $K_p$ such that $\vol(K_p)=1$. Similarly, we normalize the Haar measures on $\Q_p$ and $\Q_p^{\times}$ such that $\vol(\Z_p)=1=\vol(\Z_p^{\times})$. We can identify $T_0(\Q_p)$ with $(\Q_p^{\times})^n$ via the usual coordinates which then defines a Haar measure on $T_0(\Q_p)$. Similarly, if $U$ is the unipotent radical of a semi-standard parabolic subgroup, we identify $U(\Q_p)$ with $\Q_p^{\dim U}$ via the usual coordinates which then again defines a Haar measure on $U(\Q_p)$.
From the integration formula
\[
\int_{G(\Q_p)} f(g) ~dg
= \int_{K_p} \int_{T_0(\Q_p)} \int_{U_0(\Q_p)} f(tuk) ~du~dt~dk,
~~f\in L^1(G(\Q_p)),
\]
we obtain a Haar measure on $G(\Q_p)$. The analogue of this integration formula also defines Haar measures on $M(\Q_p)$ for any $M\in\CmL(T_0)$.

At the archimedean place we use the same maximal compact subgroup and the same measures
as in
Part~\ref{partI}. Globally we take the product measures.
On $G(\A)^1$ we define a Haar measure via the exact sequence
\[
1\longrightarrow G(\A)^1\longrightarrow G(\A)\longrightarrow \R_{>0}\longrightarrow 1,
\]
where the map $G(\A)\longrightarrow\R_{>0}$ is given by $g\mapsto |\det g|_{\A}$.

\subsection{Test functions at the non-archimedean places}\label{s:Hecke-operators}
We are going to use elements of the spherical Hecke algebra $\CmH_p:=C_c^{\infty}(G(\Q_p)/\!/K_p)$ as our test function at $p$.
For a tuple $\xi=(\xi_1, \ldots, \xi_n)\in\Z^n$ we denote by $p^{\xi}$ the diagonal matrix $\Mdiag(p^{\xi_1}, \ldots, p^{\xi_n})$, and define
\[
\tau_{p,\xi}:G(\Q_p)\longrightarrow \{0,1\}
\]
as the characteristic function of the double coset $K_p p^{\xi}K_p$. As a convolution algebra, $\CmH_p$ is generated by the functions $\tau_{p,\xi}$ with $\xi$ running over all $\xi\in\Z^n$ with $\xi_1\ge\ldots\ge\xi_n$.

\subsection{Family of global test functions}\label{sub:global-test}
We construct our family of global test functions as follows:
Recall from \S\ref{sub:wave-test} the family of test functions $f^{\mu}_{\pm}\in C_c^\infty(G(\R)^1)$, indexed by $\mu\in i\Fma^*$. We extend
$f_{\pm}^{\mu}$ to a function in $C^{\infty}(G(\R))$, invariant under the subgroup $A_G\simeq \R_{ >  0}$ of scalar diagonal matrices with positive entries.  Each $f^\mu_{\pm}$ is compactly supported modulo center and the support does not change as $\mu$ varies. We then take the global test function
\[
 F^{\mu}_{\pm}:=(f^{\mu}_{\pm}\cdot\tau)_{|G(\A)^1} \in C_c^{\infty}(G(\A)^1)
\]
with $\tau\in C_c^{\infty}(G(\A_f))$ a bi-$K_f$-invariant compactly supported function.
We will consider factorizable $\tau= \prod_{p<\infty}\tau_p$ with each $\tau_p$ running over a set of generators of the spherical Hecke algebra at $p$. More precisely, for each $p$, we are given a tuple of integers $\xi^p=(\xi^p_1,\ldots,\xi^p_n)$ with $\xi^p_1\ge\ldots\ge\xi^p_n$ such that $\xi^p=0$ for all but finitely many $p$. We then take $\tau_p=\tau_{p,\xi^p}$.
Since $F_\mu^\pm$ is obtained by restriction to $G(\A)^1$ and $f^\mu_\pm$ is invariant under $A_G=\R_{>0}$, we can assume without loss of generality that $\xi^p_n = 0$ for all $p$.

This choice of test function is tailored to prove Theorem~\ref{thm:weyl}. Only unramified spherical (respectively, of $K_{\infty}$-type $\chi_-$) representations with infinitesimal character close to $\mu$ will contribute to the cuspidal part of the trace formula if we use the test function $F^{\mu}_+$ (respectively, $F^\mu_-$).

\subsection{The trace formula}
Arthur's trace formula~\cite{Arthur:tr-I}*{(4)} is an identity of distributions
\[
J_{\text{spec}}(f)
=
\Jgeom(f)
\]
of the geometric and spectral side valid for test functions $f\in C_c^{\infty}(G(\A)^1)$.
In Section~\ref{sec:fine} we
provide a detailed analysis of the geometric side.
The strategy then is to consider the integrated trace formula
\[
\int_{t\Omega} J_{\text{spec}}(F^{\mu}_{\pm})~d\mu=
 \int_{t\Omega} \Jgeom(F^{\mu}_{\pm})~d\mu.
\]
We shall identify the main terms as $t\rightarrow\infty$ on both sides as the main terms occurring in Theorem~\ref{thm:weyl}, and estimate the remainder terms
with a power saving in $t$ and a polynomial control in the Hecke operator $\tau$.

\section{Centralizers of semisimple elements}\label{sec:centralizers}
The purpose of this section is to describe the centralizers of semisimple elements in
$G(\Q)$ in a uniform way. This will allow us to formulate uniform estimates for orbital
integrals in the subsequent sections.

\subsection{Conjugacy classes and splitting fields}
Let $F$ be a field.
Let $\CmE=(E_i, m_i)_{i\in I}$ be a tuple consisting of finite field extensions $E_i$ of $F$, and integers $m_i\ge 1$, indexed by a finite set $I$.
We call $n(\CmE):=\sum_{i\in I} m_i [E_i:F]$ the \emph{dimension} of $\CmE$.
For an integer $n\ge 1$, we let $\kR_F^n$ be the set of all such tuples $\CmE$ of
dimension $n=n(\CmE)$, up to isomorphism.
Namely, we identify $\CmE=(E_i, m_i)_{i\in I}$ and $\CmE'=(E'_i, m'_i)_{i\in I'}$ if
there exists a bijection $\pi:I\stackrel{\sim}{\to} I'$ such that $m'_{\pi(i)}=m_i$ and
$E'_{\pi(i)}$ is isomorphic to $E_i$ for every $i\in I$.

We associate the following reductive group over $F$, using the functor of restriction of scalars
 	\[
M_{\CmE} := \prod_{i\in I}
\operatorname{Res}_{E_i/F} \GL_{m_i}.
\]

Choosing an $F$-linear basis of $E_i$, we can define an $F$-embedding of
$\operatorname{Res}_{E_i/F}\GL_{m_i}$ into $\GL_{d_im_i}$, where $d_i=[E_i:F]$.
 Fixing an ordering $I=\{1,\ldots, r\}$, we construct an $F$-embedding of $M_\CmE$ into
 $G=\GL_n$ by embedding $\GL_{d_1m_1}\times\ldots\times\GL_{d_rm_r}$ diagonally.
 The $F$-algebraic group $M_\CmE$ depends only on $\CmE$ up to isomorphism, and
 furthermore the embedding $M_\CmE \hookrightarrow G$ is independent of the choice of
 linear bases, up to conjugation by $G(F)$.
 In particular, the subgroup
  \[
 M_\CmE(F)=\GL_{m_1}(E_1)\times \GL_{m_2}(E_2)\times \ldots \times\GL_{m_r}(E_r)
 \subseteq G(F),
 \]
is well-defined up to $G(F)$-conjugation, which is a variant of the Skolem--Noether theorem.

Let $\sigma\in G(F)$ be semisimple.
 The characteristic polynomial decomposes as a product
 \[
 P_\sigma(X)=\prod_{i=1}^r P_i(X)^{m_i},
 \]
 where $P_i$ are monic irreducible and pairwise distinct, and $m_i\ge 1$.
 Put $E_i:=F[X]/(P_i(X))$.
 We obtain a map
 \begin{equation}\label{eq:ss:root:systems}
 \CmE:\{\text{semisimple conjugacy classes in }G(F)\}\longrightarrow \kR^n_F
 \end{equation}
 associating with the $G(F)$-conjugacy class $\{\sigma\}$ the tuple $\CmE(\sigma):= (E_i, m_i)_{i\in\{1,\ldots,r\}} $.

\begin{lemma}\label{l:centralizer-isomorphism}
	Let $\sigma\in G(F)$ be semisimple.
	The centralizer $C_G(\sigma)$ is connected and reductive.
	The $G(F)$-conjugacy class of $\sigma$ is determined by $P_\sigma$, and we have an isomorphism
	\[
	M_{\CmE(\sigma)} \stackrel{\sim}{\to} C_G(\sigma) \subseteq G.
	\]
\end{lemma}

\begin{proof}
The centralizer $C_G(\sigma)$ is the open subset, defined by non-vanishing of the 
determinant, of the set of the $F$-vector space consisting of matrices $g\in 
M_n(F)$ that commute with $\sigma$, i.e., that satisfy the linear equation 
$g\sigma=\sigma g$. This implies that $C_G(\sigma)$ is connected.

Up to $G(F)$-conjugacy, $\sigma$ can be written in block diagonal form $\Mdiag(\sigma_1, \ldots, \sigma_r)$ with $\sigma_i$ elliptic elements in $\GL_{d_im_i}(F)$ with distinct minimal polynomials $P_i(X)$, and characteristic polynomials $P_i(X)^{m_i}$, and such that $C_G(\sigma)$ is contained in the Levi subgroup $\GL_{d_1m_1}\times\ldots\times\GL_{d_rm_r}$, diagonally embedded in $G$.
One can further conjugate $\sigma_i$ to a block diagonal matrix $\Mdiag(\delta_i,\ldots,\delta_i)$ where $\delta_i$ is a regular elliptic element in $\GL_{d_i}(F)$ with characteristic polynomial $P_i(X)$.
Indeed, one can simply construct $\delta_i$ as the companion matrix of $P_i$, and this also implies that the $G(F)$-conjugacy class of $\sigma$ is determined by $P_\sigma$.

We also deduce that the centralizer $C_{\GL_{d_i}}(\delta_i)$ is the elliptic torus $\operatorname{Res}_{E_i/F} \G_m$, and the centralizer $C_{\GL_{d_im_i}}(\sigma_i)$ is $\operatorname{Res}_{E_i/F} \GL_{d_i}$. The isomorphism follows.
\end{proof}

\begin{example}
 \begin{enumerate}
\item  If $\sigma\in G(F)$ is an $F$-split regular semisimple element, we have $\CmE(\sigma)=(F,1)_{i\in\{1,\ldots, n\}}$, that is a tuple of length $n$ with entry $(F,1)$ everywhere.

\item If $\sigma$ is regular, then $C_G(\sigma)$ is a maximal torus of $G$, and we have $m_i=1$ for all $i$.

\item If $\sigma$ is $F$-elliptic, we have $r=1$, and $\CmE(\sigma)= (E, m)$ with
$E$ a field extension of degree $d$ over $F$ with $md=n$. For example, if $\sigma$ is
regular then $m=1$ and $d=n$, that is, $\CmE(\sigma)=(E, 1)$. On the other extreme, if
$\sigma$
is central then $m=n$ and $d=1$, that is, $\CmE(\sigma) = (F, n)$.

\item Let $F=\R$.
 Lemma~\ref{l:centralizer-isomorphism} could be compared with the similar
 Lemma~\ref{lem:centralizer:twisted:levi} for the Lie group $\GL_n(\R)^1$.
Using the notation from Section~\ref{sec:proof}, we can identify
$\kR^n_{\R}$ with the set of pairs
 $(T,L_2)$ with $T\in\CmT^{G(\R)^1}_{\max}$ and
 $L_2\in\CmL^{G(\R)^1}_{\text{twist}}(T)$ up to conjugacy by the Weyl group of
 $(T,G(\R)^1)$.
 Indeed, the tori $T$ correspond to pairs $(r_1, r_2)$ of non-negative integers
 with $r_1+ 2r_2=n$, and the twisted Levi subgroup $L_2$ correspond to partitions
 $r_1=\sum_{i=1}^s m_i$, $r_2=\sum_{j=1}^t k_j$, and the corresponding tuple in
 $\kR^n_{\R}$
 equals $
 \left((\R,
 m_1),\ldots,(\R,m_s),(\C,k_1),\ldots, (\C,k_t)\right)$.

 \end{enumerate}
\end{example}

\begin{remark}
	If $F$ has characteristic zero, then the primitive element theorem implies that the
	map $\CmE$ in \eqref{eq:ss:root:systems} is surjective.
\end{remark}

\begin{lemma}\label{l:conjugate-integral}
	Suppose that $F$ is the field of fraction of a unique factorization domain $R$, and that $\sigma\in M_n(F)$ is semisimple and its characteristic polynomial $P_\sigma(X)$ is in $R[X]$. Then $\sigma$ is $G(F)$-conjugate to a matrix in $M_n(R)$.
\end{lemma}

\begin{proof}
	By Gauss lemma, $P_i(X)$ is in $R[X]$ for every $i\in \{1,\ldots ,r\}$. Let
	$\delta_i\in M_{d_i}(R)$ be the companion matrix of $P_i$, and let
	$\sigma_i:=\Mdiag(\delta_i,\ldots,\delta_i)$ with multiplicity $m_i$.
	Then $\Mdiag(\sigma_1,\ldots,\sigma_r)\in M_n(R)$ is semisimple, and its characteristic polynomial is equal to $P_\sigma(X)$. Hence it is $G(F)$-conjugate to $\sigma$.
\end{proof}

\subsection{Discriminant bounds}
\begin{lemma}\label{l:discriminant}
	Let $\sigma \in G(\Q)$ be semisimple, $\CmE(\sigma)= (E_i,m_i)_{i\in I}$ and $P_\sigma(X)=\prod\limits_{i\in I} P_i(X)^{m_i}$.

(i) If $P_\sigma\in \Z[X]$, then
\[
\prod_{i\in I}
|D_{E_i}|^{m_i}
\le
\prod_{i\in I}
|\operatorname{Disc}(P_i)|^{m_i}
\le
|\det(\sigma)|^{n-1}
|D^G(\sigma)|.
\]

(ii) If $P_\sigma\in \Z_p[X]$ and $|\det(\sigma)|_p=|D^G(\sigma)|_p = 1$, then $p$ is 
unramified in $E_i$ for every $i\in I$.
\end{lemma}

\begin{proof}
(i)	It follows from Gauss lemma that each $P_i(X)$ has integral coefficients.
	As in the proof of Lemma~\ref{l:centralizer-isomorphism}, we can find a $G(\Q)$-conjugate $\sigma'$ of $\sigma$ of the form
\[
\sigma' = \Mdiag(\delta_1,\ldots,\delta_1,\delta_2,\ldots,\delta_2,\ldots,\delta_r,\ldots,\delta_r),
\]
where $\delta_i\in \GL_{d_i}(\Q)$ is regular elliptic and appears with multiplicity $m_i\ge 1$, and its characteristic polynomial is $P_i(X)$. Using the integrality of $P_i$, we have
\[
\prod_{1\le i\le r} |\det(\delta_i)|^{(d_i-1)m_i} |D^{\GL_{d_i}}(\delta_i)|^{m_i} \le 
|\det(\sigma)|^{n-1} |D^G(\sigma)|.
\]
Since the field $E_i=\Q[X]/P_i(X)$ contains the order $\Z[X]/P_i(X)\simeq \Z[\delta_i]$, we deduce
\[
|D_{E_i}| \le |\operatorname{Disc}(\Z[\delta_i])| = |\operatorname{Disc}(P_i)| =  
|\det(\delta_i)|^{d_i-1} |D^{\GL_{d_i}}(\delta_i)|.
\]

(ii) The same argument yields $|D_{E_i}|_p \ge |\det(\sigma)|_p^{n-1} |D^G(\sigma)|_p=1$.
Hence $|D_{E_i}|_p=1$, establishing the assertion.
\end{proof}

\begin{lemma}\label{lem:length}
Let $p$ be a prime, and $\sigma\in G(\Z_p)$ be regular semisimple. Then
$|D^G(\sigma)|_p \le p^{-2\delta}$, where $\delta$ is the length of the $\Z_p$-module
$\CmO_E/R$, with $E=\Q_p[X]/P_\sigma(X)$ and $R= \Z_p[X]/P_\sigma(X)$.
\end{lemma}

\begin{proof}
We have $|\operatorname{Disc}(R)|_p = |\operatorname{Disc}(P_\sigma)|_p = |D^G(\sigma)|_p$.
On the other hand, since $p^\delta$ is the index of $R$ inside the normalization 
$\CmO_E$, we have
\[
 |\operatorname{Disc}(R)|_p = |\operatorname{Disc}(\CmO_E)|_p p^{-2\delta} \le p^{-2\delta},
\]
which concludes the proof.
\end{proof}

\subsection{Maximal compact subgroups}

Let $p$ be a prime and $\sigma\in G(\Q_p)$ be a semisimple element.
	By Lemma~\ref{l:centralizer-isomorphism}, we have an isomorphism of groups $\prod_{i\in I} \GL_{m_i}(E_i)
	\stackrel{\sim}{\to} C_G(\sigma, \Q_p)$. The maximal compact subgroups of $\prod_{i\in I} \GL_{m_i}(E_i)$ are all conjugate to the standard maximal compact
	$\prod_{i\in I} \GL_{m_i}(\CmO_{E_i})$, and hence the same is true for $C_G(\sigma, \Q_p)$.

\begin{lemma}\label{l:Kottwitz}
	Let $\sigma \in G(\Z_p)$ be semisimple with good reduction $\pmod{p}$, namely 
	$|D^G(\sigma)|_p=1$. Then $C_G(\sigma, \Q_p)\cap G(\Z_p)$ is a 
	maximal compact subgroup of $C_G(\sigma, \Q_p)$.
If $\sigma'\in G(\Z_p)$ is stably conjugate to $\sigma$, then $\sigma'$ is conjugate to 
$\sigma$ under $G(\Z_p)$.
\end{lemma}

\begin{proof}
	This is~\cite{Kot86}*{Prop.7.1} which establishes the same assertion for an
	unramified reductive $p$-adic group $G$.
\end{proof}

\begin{example}
The condition in the lemma is necessary as the following argument shows.
 Let
 \[
 \sigma = \begin{pmatrix}1&1\\0&1+p\end{pmatrix}
 = \begin{pmatrix}1&p^{-1}\\0&1\end{pmatrix}\begin{pmatrix}1&0\\0&1+p\end{pmatrix}\begin{pmatrix}1&-p^{-1}\\0&1\end{pmatrix},
 \]
 which is integral semisimple, but with bad reduction $\pmod{p}$.
 Then
 \begin{align*}
  C_G(\sigma, \Q_p)\cap G(\Z_p)
  &= \left(\begin{pmatrix}1&-p^{-1}\\0&1\end{pmatrix} T_0(\Q_p) \begin{pmatrix}1&p^{-1}\\0&1\end{pmatrix}\right) \cap G(\Z_p) \\
  &= \{\begin{pmatrix}a&p^{-1}(a-b)\\0&b\end{pmatrix}\mid a, b\in\Z_p^\times, \, |a-b|_p\le p^{-1}\}.
 \end{align*}
This is strictly contained in the maximal compact subgroup of $C_G(\sigma, \Q_p)$, which is
\[
 \{\begin{pmatrix}a&p^{-1}(a-b)\\0&b\end{pmatrix}\mid a, b\in\Z_p^\times\}.
\]
\end{example}

We now establish a variant in the case of bad reduction.
The proof is inspired from Eichler theory of optimal embeddings of quadratic orders in quaternion algebras.

\begin{proposition}\label{p:bad-reduction}
\begin{enumerate}[(i)]
 \item	If $\delta\in G(\Q_p)\cap M_n(\Z_p)$ is regular elliptic semisimple, then 
 $C_G(\delta,\Q_p)$ is an elliptic maximal torus, and its
 maximal compact subgroup contains $C_G(\delta, \Q_p)\cap G(\Z_p)$ with index at most
 $|D^G(\delta)|_p^{-1/2}|\det\delta|_p^{-(n-1)/2}$.

\item  If $\sigma= \Mdiag(\delta_1,\ldots, \delta_1,\ldots, \delta_r,\ldots, \delta_r)$ where $\delta_i\in 
\GL_{d_i}(\Q_p)\cap M_{d_i}(\Z_p)$ are regular elliptic semisimple with 
pairwise distinct characteristic polynomials, then there exists a maximal compact 
subgroup of 
$C_G(\sigma, \Q_p)$ which contains 
with index at most 
$|D^G(\sigma)|_p^{-1/2}|\det\sigma|_p^{-(n-1)/2}$ the open compact subgroup $C_G(\sigma, \Q_p)\cap 
G(\Z_p)$.
\end{enumerate}
\end{proposition}

\begin{proof}

	(i) Let $P(X)\in \Z_p[X]$ be the characteristic polynomial of $\delta$. Let 
	$R=\Z_p[\delta]\simeq \Z_p[X]/P$, which is an order in $E=\Q_p[\delta]\simeq \Q_p[X]/P$. 
	We have the natural inclusions
	\[
	R \subseteq E \cap M_n(\Z_p) \subseteq \CmO_E,
	\]
	of orders in $E\subset M_n(\Q_p)$. Moreover $C_G(\delta,\Q_p)=E^\times$, and its maximal compact subgroup is $\CmO_E^\times$. It suffices to bound the index $[\CmO_E^\times:R^\times]$.

	Let $\Fmf\subseteq \CmO_E$ be the conductor of $R$ in $\CmO_E$, that is, the largest
	ideal in $\CmO_E$ which is contained in $R$. Suppose $a, b\in\CmO_E^\times$ are such
	that $a-b\in\Fmf$. Then $a^{-1}b, b^{-1}a\in 1+\Fmf\subseteq R$ and therefore $a^{-1}b
	\in R^\times$, that is, $aR^\times = bR^\times$ in $\CmO_E^\times/ R^\times$. Hence
	$[\CmO_E^\times : R^\times]\le [\CmO_E:\Fmf]$, cf.\ also the proof of
	\cite{book:grouprings02}*{Lem.2.9.5} although the statement there seems to
	contain an inaccuracy, which we have corrected.
	Moreover $|\operatorname{Disc}(P)|_p^{-1} = |D_E| [\CmO_E:\Fmf]^2$. Hence 
	$[\CmO_E^\times : R^\times]\le |\operatorname{Disc}(P)|_p^{-1/2}=|D^G(\delta)|_p^{-1/2} 
	|\det\delta|_p^{- (n-1)/2}$.

	(ii)
	Let $\sigma_i=\Mdiag(\delta_i,\ldots,\delta_i)\in \GL_{d_im_i}(\Q_p)$ which is an $m_i\times 
	m_i$ block diagonal matrix, with $\delta_i\in \GL_{d_i}(\Q_p)$, such that $\sigma = 
	\Mdiag(\sigma_1,\ldots,\sigma_r)\in G(\Q_p)$.
	By Lemma~\ref{l:centralizer-isomorphism}, we have
	\[C_G(\sigma,\Q_p) \simeq C_{\GL_{d_1m_1}}(\sigma_1,\Q_p) \times \cdots \times C_{\GL_{d_rm_r}}(\sigma_r,\Q_p),
		\]
		and similarly for $C_G(\sigma,\Q_p) \cap G(\Z_p)$.
		The integrality of the characteristic polynomial of $\sigma_i$ implies
		\[
		|\det(\sigma)^{n-1} D^G(\sigma)|_p \le \prod_{1\le i\le r}   
		|\det(\sigma_i)^{(d_i-1)m_i} D^{\GL_{d_im_i}}(\sigma_i)|_p,
	\]
	 so it suffices to give a sufficiently good bound for the case $r=1$.

	 Thus let  $\sigma= \Mdiag(\delta,\ldots, \delta)\in G(\Q_p)$ be an $m\times m$ block diagonal matrix, with $\delta\in \GL_{d}(\Q_p)$ regular elliptic semisimple, with integral characteristic polynomial, and $n=dm$.
	We keep notation as in the proof of (i). Using the special form of $\sigma$, 
	we have that $C_G(\sigma, \Q_p)$ is the unit group of the matrix algebra 
	$M_m(E)$ inside $M_n(\Q_p)$, and that $\GL_m(\CmO_E)$, which is the group of 
	units of the ring $M_m(\CmO_E)$, is a maximal compact 
	subgroup of $C_G(\sigma, \Q_p)$. Moreover $\GL_m(\CmO_E)$ contains 
	$C_G(\sigma,\Q_p)\cap 
	G(\Z_p)$.
	 We have the natural inclusion of orders of $M_m(E)$,
	\[
	M_m(R) \subseteq M_m(E) \cap M_n(\Z_p) \subseteq M_m(\CmO_E).
	\]
Hence it  suffices to bound the index of $\GL_m(R)$ in $\GL_{m}(\CmO_E)$.
Let $\Fmf\subseteq \CmO_E$ be the conductor of $R$.
Then
\[
[\CmO_E:\Fmf] \le |\operatorname{Disc}(P)|_p^{-1/2}
 = |D^{\GL_d}(\delta)|_p^{-1/2} |\det\delta|_p^{-(d-1)/2}
= |D^G(\sigma)|_p^{-1/2m^2} |\det\sigma|_p^{-(d-1)/2m}.
\]
Arguing as in the proof of (i) we get
\[
 	[\GL_{m}(\CmO_E) : \GL_{m}(R)] \le
 	 [M_{m}(\CmO_E): M_{m}(\Fmf)].
\]
Now $[M_{m}(\CmO_E): M_{m}(\Fmf)]\le [\CmO_E:\Fmf]^{m^2}$, so
\[
 [\GL_{m}(\CmO_E) : \GL_{m}(R)] \le |D^G(\sigma)|_p^{-1/2} |\det\sigma|_p^{-(d-1)m/2},
\]
 and the assertion then follows.
\end{proof}

\begin{remark}
	The assumption in (ii) could perhaps be relaxed, so as to obtain an
	assertion valid for every integral semisimple $\sigma \in G(\Q_p)$. The
	difficulty however is to find a suitable representative in its
	$G(\Z_p)$-conjugacy class (as opposed to $G(\Q_p)$-conjugacy class). According
	to~\cite{newman:book}*{Thm.III.12}
	 one can find a representative in upper block triangular form, with each diagonal block regular elliptic semisimple.
	However, for example,
	\[
	\sigma=\begin{pmatrix} 1 & 1 \\ 0& 1+p\end{pmatrix}
	= \begin{pmatrix} 1 & p^{-1} \\ 0& 1\end{pmatrix}\begin{pmatrix} 1 & 0 \\ 0& 1+p\end{pmatrix}\begin{pmatrix} 1 & p^{-1} \\ 0& 1\end{pmatrix}^{-1}
	 \]
	 cannot be brought into diagonal form by $G(\Z_p)$-conjugation.
\end{remark}

\subsection{Localization}
Let $F$ be a number field, and $v$ an arbitrary place of $F$. Let $\CmE=(E_i,m_i)_{i\in
I}\in\kR_F^n$.
For each $i\in I$, let $W_{i,v}$ be the set of places of $E_i$ above $v$. Then
$E_{i,v}= E_i\otimes F_v
=\prod_{ w\in W_{i,v}} E_{i,w}$,
where $E_{i,w}$ denotes the completion of $E_i$ at $w$. We write
\[
\CmE_v=(E_{i,w}, m_i)_{i\in I,\, w\in W_{i,v}}
\]
which is an element of $\kR_{F_v}^n$.
We therefore get a map $\kR_F^n\longrightarrow \kR_{F_v}^n$  and the resulting diagram
\[
\xymatrix{
	\{\text{ss. conj.-cl. in }G(F)\} \ar[d]\ar[r] 	& \kR^n_{F} \ar[d]\\
	\{\text{ss. conj.-cl. in }G(F_v)\} \ar[r] 		& \kR^n_{F_v}
}
\]
commutes. Note that the set $\kR_{F_v}^n$ is finite.

\subsection{Choice of measures}\label{sec:measures:p-adic}
In the following sections we need to make a choice of measures on $C_G(\sigma, 
\Q_v)$ for
every semisimple $\sigma\in G(\Q)$ and every place $v$ of $\Q$. Globally on $C_G(\sigma,
\A)$, we then take the product measure.

Let $\CmE= (E_i, m_i)_{i\in I}\in\kR_\Q^n$. Then $M_{\CmE}(\Q_v)$ is a direct product of groups $\GL_{m_i}(E_{i,w})$, for $i\in I$, $w\in W_{i,v}$. We fix
measures on these groups and take the product measure on $M_{\CmE_v}(\Q_v)$.

If $v$ is non-archimedean, we normalize the Haar measure on each 
$\GL_{m_i}(E_{i,w})$ such that $\GL_{m_i}(\CmO_{E_{i,w}})$ has volume $1$ with 
$\CmO_{E_{i,w}}$ the ring of integers of $E_{i,w}$.
At the archimedean place $v=\infty$ we choose an arbitrary Haar measure on 
$M_{\CmE}(\R)$ (for example one could normalize it as in \S\ref{sub_measures}).

This defines measures on the centralizers $C_G(\sigma, \Q_v)$ as well, thanks to Lemma~\ref{l:centralizer-isomorphism} since $C_G(\sigma, \Q_v)$ is conjugate to $M_{\CmE(\sigma)}(\Q_v)$.

Similarly we can define measures on parabolic subgroups and their unipotent radicals in $C_G(\sigma, \Q_v)$ by pulling them back to parabolic subgroups in $M_{\CmE(\sigma)}(\Q_v)$ and defining measures on the parabolic subgroups in $M_{\CmE(\sigma)}$ as usual.

\section{Coarse and fine geometric expansion}\label{sec:fine}
To handle the geometric side of the trace formula, we break it down into independent
local pieces. We shall use the coarse and fine geometric expansions, and Arthur's
splitting formula.

\subsection{The coarse geometric expansion}
Let $\gamma, \gamma' \in G(\Q)$ with Jordan decomposition $\gamma=\gamma_s\gamma_u$, and $\gamma'=\gamma_s'\gamma_u'$ with $\gamma_s, \gamma_s'$ semisimple and
$\gamma_u\in C_G(\gamma_s,\Q)$, $\gamma_u'\in C_G(\gamma_s',\Q)$ unipotent. Then $\gamma$ and $\gamma'$ are called equivalent if $\gamma_s$ and $\gamma'_s$
are conjugate in $G(\Q)$. Let $\CmO$ denote the set of all such coarse equivalence classes in $G(\Q)$. Hence $\CmO$ is in natural bijection with the set of semisimple $G(\Q)$-conjugacy classes.

If $\Fmo\in \CmO$ is a coarse equivalence class, and $\sigma \in \Fmo$ is a semisimple representative, then $\Fmo$ is a finite union of several
$G(\Q)$-conjugacy classes, the number of which equals the number of unipotent conjugacy classes in the subgroup $C_G(\sigma,\Q)$, that is, the number of orbits of the adjoint action of $C_G(\sigma,\Q)$ on $\CmU_\sigma(\Q)$, where $\CmU_\sigma$ denotes the variety of unipotent elements in $C_G(\sigma)$.

\begin{example}
If $\sigma$ is \emph{regular} semisimple, then $\Fmo$ as a set equals the $G(\Q)$-conjugacy class of $\sigma$.
\end{example}

For each $\Fmo\in\CmO$, Arthur~\cite{Arthur:tr-I} constructs a distribution
$J_{\Fmo}:C_c^{\infty}(G(\A)^1)\longrightarrow\C$  such that
\begin{equation}\label{eq:coarse:exp}
 \Jgeom(f)
=\sum_{\Fmo\in\CmO} J_{\Fmo}(f), \quad \forall f\in C_c^{\infty}(G(\A)^1).
\end{equation}
 All but finitely many of the terms $J_{\Fmo}(f)$ vanish due to the fact that $f$ is
 compactly supported.
More precisely, the support of the distribution $J_{\Fmo}(\cdot)$ is contained in
\[
\bigcup_{g\in G(\A)} g^{-1} \sigma \CmU_{\sigma}(\A) g,
\]
where $\sigma\in \Fmo$ is any semisimple representative for the coarse equivalence class $\Fmo$.

\subsection{The fine geometric expansion}
There is a description of the distributions $J_\Fmo$, the fine geometric expansion, which will be more accessible to analysis.
Arthur~\cite{Ar86}*{Thm.9.2} showed that if $S$ is a sufficiently large finite
set of places containing $\infty$, then there exist coefficients $a^M(\gamma, S)\in \R$
for $M\in\CmL$, $\gamma\in M(\Q)$, such that  one has
\begin{equation}\label{eq:fine:exp:o}
J_{\Fmo}(f)
= \sum_{M\in\CmL} \frac{|W^M|}{|W^G|} \sum_{\{\gamma\}} a^M(\gamma, S) J^G_M(\gamma, f),
\end{equation}
for any $f\in C_c^{\infty}(G(\A)^1)$ of the form $(f_S\cdot f^S)_{|G(\A)^1}$ with $f_S\in C^{\infty}(G(\Q_S))$ and $f^S$ the characteristic function of the
standard
maximal compact subgroup $K^S\subset G(\A^S)$. Here $\gamma$ runs over a set of representatives for the $M(\Q)$-conjugacy classes in $M(\Q)\cap \Fmo$.
By~\cite{Ar86}*{Thm.8.2}, $a^M(\gamma, S)=0$ unless $\gamma_s$ is elliptic in $M(\Q)$.
The value of $a^M(\gamma, S)$ depends on the normalization of measures.
We shall quantify in Lemma~\ref{l:sufficient-fine} below how large $S$ needs to be chosen for Arthur's fine expansion~\eqref{eq:fine:exp:o} to hold.

\begin{remark}
 The assertion that the sum over $\gamma$ in \eqref{eq:fine:exp:o} can be taken over $M(\Q)$-conjugacy classes is because $G=\GL(n)$. In general, one has to take
$\gamma$ over a set of representatives for a certain equivalence relation that depends on $S$.
\end{remark}

Since $J^G_M$ 
is given by an absolutely convergent integral~\cite{Art86}, it follows from the 
trivial 
bound~\eqref{fmu_trivial} that
\begin{equation}\label{simple_geom_est}
\Jgeom((f^\mu_{\pm}\cdot \tau_0)|_{G(\A)^1}) \ll_{h} (1+||\mu||)^{d-r},
\end{equation}
where $\tau_0$ is the characteristic function of $K_f$.

\subsection{Arthur's splitting formula for weighted orbital integrals}
We have that $J_M^G(\gamma,f)=0$ unless $\gamma^S\in K^S:=\prod_{p\not\in S} K_p$, in which case we have $J^G_M(\gamma, f)=J^G_M(\gamma_S, f_S)$.
The distributions $J_M^G(\gamma_S,f_S)$ are $S$-adic weighted orbital integrals.
Their value depends only on the $M(\Q_S)$-conjugacy class of $\gamma_S\in M(\Q_S)$.
These are defined for any finite set $S$ by Arthur~\cite{Ar81} as a special value of a certain $(G,M)$-family.

The archimedean weighted orbital integrals studied in Part~\ref{partI} correspond to
$S=\{\infty\}$.
Similarly, by specializing to $S=\{p\}$, for a finite prime $p$, one obtains $p$-adic weighted orbital integrals, that will be studied
in the next Section~\ref{sec:p-adic}.

Arthur established the splitting formula~\eqref{eq:splitting:orb:int} below, by 
which it is enough to understand the $v$-adic distributions for every $v\in S$. In 
other words the $S$-adic distributions are finite sums of factorizable 
distributions. Suppose that $f_S=\prod_{v\in S} f_v$ with $f_v\in C^{\infty}(G(\Q_v))$, 
and is such that the restriction ${f_S}|_{G(\Q_S)^1}$ is compactly supported.
 Then
\begin{equation}\label{eq:splitting:orb:int}
J_M^G(\gamma_S,f_S)
= \sum_{\underline{L}} d_M^G(\underline{L})\prod_{v\in S} J_M^{L_v}(\gamma_v, f_v^{(Q_v)}),
\end{equation}
where the notation is as follows:
\begin{itemize}
\item $\underline{L}:=(L_v)_{v\in S}$ runs over all tuples of Levi subgroups $L_v\in\CmL(M)$, $v\in S$;
\item $d_M^G(\underline{L})\in\R$ are certain coefficients satisfying $d_M^G(\underline{L})=0$ if the natural map $\bigoplus_{v\in
S}\Fma_{M}^{L_v}\longrightarrow\Fma_M^G$ is not an isomorphism; they take values in a finite set that depends only on $G$;
\item $Q_v\in \CmP(L_v)$ is a certain parabolic subgroup,
 and $f_v^{(Q_v)}\in C^{\infty}(L_v(\Q_v))$ is the parabolic descent of $f_v$ along $Q_v$
 (the $p$-adic parabolic descent is defined similarly to the one in the archimedean
case in Section~\ref{sec:HC}); $Q_v$ depends only on the Levi $L_v$ as a $\Q$-group.
\end{itemize}
This splitting formula follows from a general splitting formula for 
$(G,M)$-families~\cite{Ar81}*{\S6}, see also~\cite{Arthur:intro-trace}*{(18.7)}. The 
formula there is stated only in the case that $S$
is split into two non-empty disjoint subsets $S_1, S_2$. The above version follows from repeatedly applying that formula to the subsets $S_1$ and $S_2$ until one
arrives at sets consisting of a single place each.

\begin{lemma}\label{l:underline-L}
The following holds:
\begin{enumerate}[(i)]
\item For any $\underline{L}$ such that
 $d^G_M(\underline{L})\neq 0$, there are at most $\dim \Fma_M^G$ many places $v\in S$ such that $L_v\neq M$.
 \item  The number of $\underline{L}$ for which $d^G_M(\underline{L})\neq 0$
  is bounded by $c|S|^{\dim\Fma_M^G}$ with $c>0$ some constant depending only on $n$.
\end{enumerate}
\end{lemma}

\begin{proof}
For any $\underline{L}$ with $d_M^G(\underline{L})\neq0$, the map $\bigoplus_{v\in
S}\Fma_{M}^{L_v}\longrightarrow\Fma_M^G$ is an isomorphism.
We associate to $\underline{L}$ the multiset $\{L_v,\ v: L_v\neq M\}$.
It is of the form $\{M_1,\ldots, M_r\}$ with $M_1,\ldots, M_r\in \CmL(M)\backslash\{M\}$ such that $\bigoplus^r_{i=1}\Fma_{M}^{M_i}\longrightarrow\Fma_M^G$ is an isomorphism.
Then $r\le \dim \Fma_M^G$, which implies assertion (i).
Furthermore the number of such multisets is finite, thus bounded by some 
constant $c$ depending only on $n$, since the number of possible Levi $M\in \CmL$ is 
finite. Assertion (ii) follows 
by counting 
the number of $\underline{L}$ that give rise to a given multiset.
\end{proof}

If $v=p$ is non-archimedean, the parabolic descent of a Hecke operator in $C_c^\infty(G(\Q_p)// K_p)$ is a compactly supported function on $L(\Q_p)$ that is bi-invariant
under $K_p^{L}=L(\Z_p)$, the standard maximal compact subgroup in $L(\Q_p)$.  Similarly as in \S\ref{s:Hecke-operators}, we let $\tau_{p,\mu}^{L_p}\in C_c^{\infty}(L(\Q_p))$ denote the characteristic function of the double coset $K_p^{L} p^{\mu}K_p^{L}$.
\begin{lemma}\label{l:descent-Satake}
There is a constant $c_1>0$ depending only on $n$ such that the following holds.
Let $L$ be a standard Levi subgroup of $G$ and $Q\in \CmP(L)$.
Let $\xi=(\xi_1,\ldots,\xi_n)$ be a tuple of integers with $\xi_1\ge\ldots\ge\xi_n\ge0$. Then
\[
 \tau_{p,\xi}^{(Q)} =\sum_{\mu:\ \mu_1\le \xi_1} a_{\mu} \tau_{p,\mu}^{L},
\]
where
$\mu=(\mu_1, \ldots,\mu_n)$ runs over all tuples of integers $\mu_1\ge\ldots\ge\mu_n\ge0$ with $\mu_1\le\xi_1$,
and the coefficients $a_{\mu}\in \Q$
satisfy $|a_{\mu}|\le p^{c_1\xi_1}$.
\end{lemma}

\begin{proof}
	This is~\cite{ST11cf}*{p.69}, or~\cite{Matz}*{Lem.7.3}.
\end{proof}

Note the analogy between Lemma~\ref{l:descent-Satake} and Lemma~\ref{l:L1-descent} 
in the
archimedean case.

\begin{example}
If $\xi=0$, that is for $\tau_{p,0}$ is the characteristic function of $K_p$, then 
$\tau_{p,0}^{(Q)}$ equals the characteristic function of $K_p^{L}$.
\end{example}

We can now derive the following consequence of Arthur's splitting formula.
\begin{corollary}\label{c:bound-splitting}
There exist constants $c,c_1>0$ depending only on $n$ such that the following holds.
Suppose that $f_S=\prod_{v\in S} f_v\in C_c^\infty(G(\Q_S))$, and for each finite place $p\in S$, the function $f_p$ equals the Hecke operator $\tau_{p,\xi^p}$ associated to some $\xi^p \in \Z^n$ with $\xi^p_1
\ge \ldots \ge \xi^p_n \ge 0$. Then for every $\gamma_S \in M(\Q_S)$,
\[
|J_M^G(\gamma_S, f_S)|
\le c
\prod_{p\in S\backslash\{\infty\}}
 p^{c_1\xi^p_1}
\cdot
|S|^{\dim\Fma_M^G}
\cdot
\max_{\underline{L}}
\max_{\mu:~\mu^p_1\le\xi^p_1}
 \left( \tilde{J}_M^{L_{\infty}}(\gamma_{\infty},f_{\infty}^{(Q_{\infty})})
\prod_{p\in S\backslash\{\infty\}}   \widetilde J_M^{L_p}(\gamma_p, \tau_{p,\mu^p}^{L_p})\right).
\]
\end{corollary}
For the definition of the functionals $\widetilde J_M^{L_p}$ see 
\S\ref{sec:positive:distr} in the 
archimedean case, and \S\ref{sec:reduction} below in the non-archimedean case.

\begin{proof}
	If follows from the splitting formula~\eqref{eq:splitting:orb:int} and Lemma~\ref{l:underline-L} that
\[
|J_M^G(\gamma_S, f_S)|
\le c
|S|^{\dim\Fma_M^G}
\max_{\underline{L}}
\tilde{J}_M^{L_{\infty}}(\gamma_{\infty},f_{\infty}^{(Q_{\infty})})
\prod_{v\in S\backslash \{\infty\}} \tilde{J}_M^{L_p}(\gamma_p, \tau_{p,\xi^p}^{(Q_p)}).
\]
We deduce from Lemma~\ref{l:descent-Satake} that for every $p\in S\backslash \{\infty\}$,
\begin{equation*}\label{eq:estimate:descent}
\tilde{J}_M^{L_p}(\gamma_p, \tau_{p,\xi^p}^{(Q_p)})
\le p^{c_1\xi^p_1}\sum_{\mu^p:~\mu^p_1\le\xi^p_1} \tilde{J}_M^{L_p}(\gamma_p, \tau_{p,\mu^p}^{L^p})
\le
p^{c_1\xi^p_1} ((\xi^p_1)^n +1) \max_{\mu:~\mu^p_1\le\xi^p_1} \tilde{J}_M^{L_p}(\gamma_p, \tau_{p,\mu^p}^{L_p}). \qedhere
\end{equation*}
\end{proof}

\subsection{Sufficient size of $S$}
To state quantitatively how large the set $S$ has to be for the fine expansion for $J_\Fmo$ to hold, we proceed as follows.

For $\Fmo\in\CmO$ with semisimple representative $\sigma\in \Fmo$, let
\begin{equation}\label{So}
S_{\Fmo}:= \{\text{primes $p$ s.t. } |D^G(\sigma)|_p\neq1\}  \cup \{\infty\}.
\end{equation}
This definition is independent of the choice of $\sigma$ because the Weyl 
discriminant
$|D^G(\cdot)|_p$ is invariant by $G(\Q_p)$-conjugation.

\begin{lemma}\label{l:sufficient-fine}
For every equivalence class $\Fmo\in \CmO$, and every finite set of places $S$ containing $S_{\Fmo}$, Arthur's fine geometric expansion~\eqref{eq:fine:exp:o} holds.
\end{lemma}

\begin{proof}
There are two cases. In the first case, the equivalence class $\Fmo$ does not intersect $K^S = \prod_{p\not \in S} K_p$. Then $J_\Fmo(f)=0$ for any $f=(f_S\cdot f^S)_{G(\A)^1}$ with $f^S$ the characteristic function of $K^S\subset G(\A^S)$. Similarly $J^G_M(\gamma,f)=0$ for any $M\in \CmL$, $\gamma\in M(\Q)\cap \Fmo$. Hence equation~\eqref{eq:fine:exp:o} holds trivially, because both the left-hand side and right-hand side vanish.

In the second case, the equivalence class $\Fmo$ does intersect $K^S$.
The fine geometric expansion of $J_{\Fmo}(f)$ is established in
\cite{Ar86}*{Thm.8.1}, and we need to compare $S$ with the set of places constructed
in~\cite{Ar86}.
Namely, we are going to show that for every $p\not\in S$ the conditions (i)-(iv) of
\cite{Ar86}*{p.203} are satisfied.
Let 
\[
\CmE(\Fmo):=\CmE(\sigma) = (E_i,m_i)_{i\in \{1,\ldots,r\}} \in \kR^n_\Q.
\]
The notation $\CmE(\Fmo)$ is justified because $\CmE(\sigma)$  
is independent of the choice of semisimple
representative $\sigma \in \Fmo$.
Let $d_i=[E_i:\Q]$, and recall that $m_1d_1 + \cdots + m_r d_r=n$. 

Fix a prime $p\not \in S$. Since $p\not \in S_{\Fmo}$, we have $|D^G(\sigma)|_p=1$, and 
since $\Fmo$ intersects $K_p=G(\Z_p)$, we have $|\det(\sigma)|_p=1$. 
It follows from Lemma~\ref{l:discriminant}.(ii) that $p$ is 
unramified in $E_i$ for every $i\in 
\{1,\ldots,r\}$. Condition (i) of \cite{Ar86} is that 
$|D^{G}(\sigma)|_p=1$, which holds by 
construction. Conditions (ii)-(iv) of \cite{Ar86} are more subtle, and depend on a choice of a 
suitable semisimple representative $\sigma \in \Fmo$. Since $P_{\Fmo}\in \Z[1/S][X]$, 
Lemma~\ref{l:conjugate-integral} 
with $F=\Q$ and $R=\Z[1/S]$ the ring of $S$-integers, shows that we can choose 
$\sigma \in \Fmo\cap K^S$. Condition (iii) of \cite{Ar86}, which says that $\sigma K_p \sigma^{-1}=K_p$, 
then holds since $\sigma \in K_p=G(\Z_p)$.

Furthermore, by the proof of Lemma~\ref{l:conjugate-integral}, we can choose the 
semisimple representative $\sigma\in \Fmo\cap K^S$ in block 
diagonal form, that is 
$\sigma=\operatorname{diag}(\delta_1,\ldots,\delta_1,\ldots,\delta_r,\ldots,\delta_r)$, where each 
$\delta_i\in M_{d_i}(\Z[1/S])$ is a regular elliptic matrix in $\GL_{d_i}(\Q)$.
Moreover, $\delta_i\in \GL_{d_i}(\Z_p)$  and 
$|D^{\GL_{d_i}}(\delta_i)|_p=1$ for every $i\in \{1,\ldots,r\}$. 
Let $M_1$ be the standard Levi subgroup 
of type $(d_1,\ldots,d_1,\ldots,d_r,\ldots, d_r)$ such that $\sigma \in M_1(\Q)$ is regular 
elliptic.

Condition (ii) of \cite{Ar86} says that $K_{p,\sigma}:=K_p \cap C_G(\sigma,\Q_p)$ is a maximal 
compact 
subgroup of $C_G(\sigma,\Q_p)$, which holds by Lemma~\ref{l:Kottwitz}, and also that 
it is admissible with respect to $C_{M_1}(\sigma,\Q_p)$, that is, $K_{p,\sigma}$ 
corresponds to a special vertex in the Bruhat-Tits building of $C_G(\sigma,\Q_p)$ 
and this vertex belongs to the apartment associated to the maximal split torus 
of 
$C_{M_1}(\sigma,\Q_p)$, see \cite{Ar81}*{\S1}.

Let $E/\Q_p$ be the splitting field of $\sigma$, which is an unramified extension, 
in fact $E$ is the composite of the completions $E_{i,w}$ for all $i\in \{1,\ldots 
r\}$ and all 
places 
$w|p$.
We have that $\delta_i$ is split regular semisimple in $\GL_{d_i}(\CmO_E)$ with good 
reduction, namely 
$|D^{\GL_{d_i}}(\delta_i)|_E=1$.
Hence the final assertion of Lemma~\ref{l:Kottwitz} implies that there exists 
$k_i\in \GL_{d_i}(\CmO_E)$ such 
that $k_i \delta_i 
k_i^{-1}$ is diagonal.
We deduce that $k = \operatorname{diag}(k_1,\ldots,k_1,\ldots,k_r,\ldots,k_r)\in M_1(\CmO_E)$ is 
such that 
$k \sigma k^{-1}\in T_0(E)$.
This implies $k C_{M_1}(\sigma,E)k^{-1} = T_0(E)$.
The Levi subgroup $C_{G\otimes E}(k \sigma k^{-1})$ is semistandard because it 
contains $T_0$, hence it is of the form $M\otimes E$ for some semistandard Levi 
subgroup 
$M\subseteq G$.

In summary, we have that the pair $(C_{M_1}(\sigma)\otimes E,C_G(\sigma)\otimes E)$ of a torus 
inside a Levi is 
split and 
$k$-conjugate, for the above $k\in M_1(\CmO_E)$, to the pair $(T_0\otimes E,M\otimes E)$ of 
the 
maximal diagonal torus $T_0$, and a semistandard Levi subgroup $M\subseteq G$.

Let  $K_{E,\sigma}:=G(\CmO_E)\cap C_G(\sigma,E)$. Then $k K_{E,\sigma} k^{-1} = G(\CmO_E)\cap 
M(E) = M(\CmO_E)$ because $k\in M_1(\CmO_E) \subset G(\CmO_E)$. 
Since $M(\CmO_E)$ is an admissible maximal compact subgroup in $M(E)$ with respect 
to $T_0(E)$, we deduce by $k$-conjugation that $K_{E,\sigma}$ is an admissible 
maximal 
subgroup of $C_G(\sigma,E)$ with respect to $C_{M_1}(\sigma,E)$.
The corresponding special vertex $x$ of the Bruhat-Tits building of 
$C_G(\sigma,E)$ belongs to the appartment associated to the maximal split torus 
$C_{M_1}(\sigma,E)$. 
Since $E/\Q_p$ is unramified, and by construction of buildings by \'etale 
descent, the $\operatorname{Gal}(E/\Q_p)$-fixed points of the 
Bruhat-Tits building of $C_G(\sigma,E)$ equals the Bruhat-Tits building of 
$C_G(\sigma,\Q_p)$. 
It therefore follows that $x$ is a special vertex in the Bruhat-Tits building of 
$C_G(\sigma,\Q_p)$ and that it belongs to the appartment associated to the maximal 
split torus of $C_{M_1}(\sigma,\Q_p)$.
Since $x$ is fixed by $K_{p,\sigma} = K_{E,\sigma}\cap C_G(\sigma,\Q_p)$, we deduce that 
$K_{p,\sigma}$ is an admissible maximal compact 
subgroup of $C_G(\sigma,\Q_p)$ with 
respect to $C_{M_1}(\sigma,\Q_p)$, as asserted.

Condition (iv) of \cite{Ar86} says that for any $y\in G(\Q_p)$ and unipotent $\nu\in C_G(\sigma, 
\Q_p)$ 
such that $y^{-1}\sigma \nu y\in\sigma K_p$, we have $y\in C_G(\sigma, \Q_p)K_p$. This 
holds because it is a special case of Lemma \ref{lem:minimal:distance} below. 
Namely $\xi=0$ because $\sigma_p\in K_p$, and also $|D^G(\sigma)|_p=1$, thus there 
exists $k\in C_G(\sigma,\Q_p)$ such that $|k y|_{G(\Q_p)}=1$, i.e., $k y\in 
K_p$.
\end{proof}

\begin{remark}
For a given $f\in C_c^\infty(G(\A)^1)$, only finitely many $\Fmo$ contribute to $\Jgeom(f)$ so that the fine expansions for each $J_{\Fmo}(f)$ could be combined.
Assume that $f=\prod_v f_v$ is factorizable, and that $S$ is a sufficiently large finite set with respect to the support of $f$, namely $S$ should contain all
$S_{\Fmo}$ for all $\Fmo$ contributing to $\Jgeom(f)$, and all places $v$ where $f_v$ is 
not the characteristic function of the standard maximal compact $K_v$. Then
\begin{equation*}
\Jgeom(f)
= \sum_{M\in\CmL} \frac{|W^M|}{|W^G|} \sum_{\{\gamma\}} a^M(\gamma, S) J^G_M(\gamma, f),
\end{equation*}
where $\gamma$ now runs over a set of representatives for the $M(\Q)$-conjugacy classes in $M(\Q)$.
For our purposes, it will be however more direct to consider the fine expansion of $J_{\Fmo}(f)$ individually for every contributing $\Fmo$.
\end{remark}

\subsection{Global coefficients}
The global coefficients $a^M(\gamma, S)$ occurring in \eqref{eq:fine:exp:o}
are related to the global geometry of the Hitchin fibration, see~\cite{Hausel-Villegas} and~\cite{Chaudouard:comptage}.
It seems that $a^M(\gamma,S)$ can always be expressed in terms of derivatives of Artin $L$-functions, in which case precise estimates are established in~\cite{ST11cf}*{\S6.6}.
However such formulas are not well-understood, except in some special cases which we now
describe.
If $\gamma$ is semisimple and elliptic in $M(\Q)$, then by \cite{Ar86}*{Thm.8.2}
\[
a^M(\gamma,S)= \vol(C_M(\gamma,\Q)\backslash C_M(\gamma,\A)^1)
\]
which is therefore independent of $S$, and can be expressed in terms of Tamagawa numbers
and special values of Artin $L$-functions. If the semisimple part $\gamma_s$ is not
elliptic in $M(\Q)$, then $a^M(\gamma,S)=0$, again by \cite{Ar86}*{Thm.8.2}.
If the semisimple part $\gamma_s$ is elliptic in $M(\Q)$, then
$a^M(\gamma,S)=a^{M_{\gamma_s}}(\gamma_u,S)$ by
\cite{Ar86}*{(8.1)}.
Chaudouard~\cite{Chaudouard:comptage}
treats certain types of unipotent elements $\gamma_u$.

It is essential for us to treat a general $\gamma$.
We only require an upper bound for $a^M(\gamma,S)$ and the following result
from~\cite{Ma:coeff}  will suffice for our purpose.

\begin{proposition}\label{prop:global:coeff}
There exist $c,c_\glob>0$ depending only on $n$ such that the following holds. If 
$\gamma=\gamma_s\gamma_u\in M(\Q)$ has a characteristic polynomial with integral 
coefficients, then
\begin{equation}\label{eq:bound:coeff}
 |a^M(\gamma,S)|
\le c |\det(\gamma_s)^{n-1} D^G(\gamma_s)|_\R^{c_\glob}  |S|^{n-1} \max_{p\in S\backslash\{\infty\}} (\log p)^{n-1}.
\end{equation}
\end{proposition}

\begin{proof}
	Recall from~Section~\ref{sec:centralizers} the map $\CmE$ from semisimple conjugacy
	classes in $G(\Q)$ to $\kR_\Q^n$.
	Let $\CmE(\gamma_s)=(E_i, m_i)_{i\in I}$.
Let $P(X)$ be the characteristic polynomial of $\gamma$ and let $P(X)=\prod_{i\in
I}P_i(X)^{m_i}$ be its factorization into irreducible polynomials in $\Q[X]$. Thus
$E_i\simeq \Q[X]/P_i(X)$.
By Lemma~\ref{l:discriminant}, we have
\begin{equation}\label{eq:bound:discriminat1}
 \prod_{i\in I} |{\rm Disc}(P_i)|^{m_i}
 \le  |\det(\gamma_s)^{n-1} D^G(\gamma_s)|_\R.
\end{equation}
Let $M_1\subset M$ be the smallest Levi subgroup in which $\gamma_s$ is regular elliptic.
Then  $M_1\simeq \prod_{i\in I} (\GL_{d_i})^{m_i}$. Let $\gamma_{s,i}\in \GL_{d_i}(\Q)$, $i\in 
I$, denote the elliptic elements corresponding to $\gamma_s$ under this  
isomorphism. Then
$\prod_{i\in I} {\rm Disc}(P_i)^{m_i} = D^{M_1}(\gamma_s) \prod_{i\in 
I}\det(\gamma_{s,i})^{m_i(d_i-1)}$, which coincides with the discriminant denoted 
${\rm disc}^{M_1}(\gamma_s)$ in~\cite{Ma:coeff}.

By \cite{Ma:coeff}*{Cor.1.4},
there exist $c,a_{\rm glob}>0$ depending only on $n$ such that
\begin{equation}\label{eq:global:coeff:est1}
 |a^M(\gamma,S)|
\le c  |\det(\gamma_s)^{n-1} D^{G}(\gamma_s)|_\R^{a_{\rm glob}} \sum_{(s_v)_{v\in S}} \prod_{p\in S\backslash\{\infty\}}\left| \frac{\zeta^{(s_p)}_p(1)}{\zeta_p(1)}\right|,
\end{equation}
where the sum runs over all tuples $(s_p)_{p\in S\backslash \{\infty\}}$ of non-negative integers with $\sum s_p\le n-1$, and  $\zeta_p(s)=(1-p^{-s})^{-1}$ denotes the local Riemann zeta function, $\zeta_p^{(s_p)}$ denotes its $s_p$-th derivative.
We note that the normalization of measures in \cite{Ma:coeff} differs from 
our normalization by some power of the absolute discriminants $|D_{E_i}|$. Since 
$|D_{E_i}|\le |\operatorname{Disc}(P_i)|$, these factors have been absorbed in the 
above exponent $a_{\rm glob}$.

The number of the tuples $(s_p)_{p\in S\backslash \{\infty\}}$ in \eqref{eq:global:coeff:est1} is less than $|S|^{n-1}$. Also for each $p\in S\backslash\{\infty\}$, we have $|\zeta^{(s_p)}_p(1) \zeta_p(1)^{-1}|\ll (\log p)^{s_p}$ with implied constant depending only on $n$.  Hence
\[
  \sum_{(s_v)_{v\in S}} \prod_{p\in S\backslash\{\infty\}}\left| \frac{\zeta^{(s_p)}_p(1)}{\zeta_p(1)}\right|
  \ll |S|^{n-1} \max_{p\in S\backslash\{\infty\}} (\log p)^{n-1},
\]
which, together with \eqref{eq:bound:discriminat1} gives the assertion.
\end{proof}

\subsection{Contributing equivalence classes}
We record in this subsection some properties of the classes $\Fmo\in \CmO$ that 
contribute to the geometric side of the trace formula. Since we are only 
interested in the group $G=\GL_n$, we take the opportunity to provide a shorter 
treatment compared to~\cite{ST11cf}*{\S8}, and with improved estimates.

\begin{lemma}\label{l:upper:DGs}
	Let $p$ be a prime, and $\sigma$ be a semisimple element which is $G(\Q_p)$-conjugate
	to an element of $K_p p^\xi K_p$, with $\xi_1\ge\ldots\ge\xi_n$. Then
	\begin{equation}\label{contr-classes-det}
	|D^G(\sigma)|_p\le p^{n(n-1)(\xi_1-\xi_n)},\ \text{and} \quad  p^{-n\xi_1} \le |\det(\sigma)|_p\le p^{-n\xi_n}.
	\end{equation}
\end{lemma}
\begin{proof}
	Let $\overline{\Q}_p$ denote an algebraic closure of $\Q_p$. We extend $|\cdot|_p$ to
	$\overline{\Q}_p$ and denote the extension again by $|\cdot|_p$.  There exists a
	diagonal $\sigma'=\Mdiag(t_1,\ldots, t_n)\in T_0(\overline{\Q_p})$ which is
	$G(\overline{\Q}_p)$-conjugate to $\sigma$. We have
	\[
	D^G(\sigma)
	=D^G(\sigma')
	=\prod_{\alpha\in\Phi:\ \alpha(\sigma')\neq0} (1-\alpha(\sigma'))
	=\prod_{t_i\neq t_j} (1-t_j^{-1} t_i).
	\]
	Without loss of generality we may assume that $\xi_n= 0$, the characteristic polynomial of $\sigma$ has integral coefficients, and all the eigenvalues $t_j$ are integral, hence
	\[
	|D^G(\sigma)|_p
	= \prod_{j=1}^n  |t_j|_p^{-\#\{i: t_i\neq t_j \}} \prod_{i:t_i\neq t_j} |t_j-t_i|_p
	\le \prod_{j=1}^n |t_j|_p^{-\#\{i: t_i\neq t_j \}}.
	\]
	Moreover, $|t_1\cdot\ldots\cdot t_n|_p=|\det\sigma|_p=p^{-(\xi_1+\ldots+ \xi_n)}\ge p^{-n\xi_1}$. Using the integrality of the $t_j$'s again, we obtain
	\[
	|D^G(\sigma)|_p \le \prod_{j=1}^n |t_j|_p^{-(n-1)}  \le p^{n(n-1)\xi_1}. \qedhere
	\]

\end{proof}

Recall the function $X: G(\R)/A_G \to \Fma$ from Section~\ref{sec:notation}.
\begin{lemma}\label{lem:properties:contr:classes}
There is a constant $c_1 >0$ depending only on $n$, and for every $R\ge 1$ a 
constant $c\ge 1$ depending only on $R$ and $n$, such that the following holds.
For each prime $p$, let $\xi^p=(\xi^p_1,\ldots, \xi^p_n)$ be a tuple of integers with $\xi^p_1\ge\ldots\ge \xi^p_n\ge0$, and such that $\xi^p=0$ for all but finitely many $p$. Then the number of equivalence classes $\Fmo\in\CmO$ whose orbit under
$G(\A)$-conjugation intersects the subset
\[
\{g\in G(\R)\mid \|X(g)\|\le R\}\times\prod_{p<\infty} K_p p^{\xi^p} K_p\subset G(\A),
\]
is finite and bounded by $c \prod\limits_{p<\infty} p^{c_1 \xi^p_1}$.
Furthermore, for any such $\Fmo$, with semisimple representative $\sigma \in \Fmo$, we have
\begin{equation}\label{contr-classes-R}
 c^{-1} \prod_{p<\infty} p^{-n(n-1)\xi_1^p} \le |D^G(\sigma)|_{\R} \le c,
\end{equation}
and for every prime $p$,
\begin{equation}\label{contr-classes-p}
c^{-1} \prod_{q\neq p,\infty} q^{-n(n-1)\xi_1^q}  \le |D^G(\sigma)|_p \le p^{n(n-1)\xi_1^p}.
\end{equation}
\end{lemma}

\begin{proof}
The Weyl discriminant is invariant under multiplication by the center, thus $|D^G(g)|_\R$ is invariant under $A_G$. The upper-bound in assertion~\eqref{contr-classes-R} follows from the compactness of the set $\{g\in G(\R)\mid \|X(g)\|\le R\}$. Also the coefficients of the characteristic polynomial $P(T)=T^n+a_1T^{n-1}+\cdots + a_n$ of $\sigma$ satisfy
\[
\max_{1\le j \le n} |a_j|^{1/j} \le c |\det(\sigma)|^{1/n}_\R \le c \prod_{p<\infty} p^{\xi^p_1}.
\]
The assumption $\xi^p_n\ge 0$ for every prime $p$, implies that $a_j \in \Z$ for every $j$. Since $\Fmo$ is uniquely determined by the coefficients of $P(T)$, we deduce the bound on the number of equivalence classes, with $c_1=n(n+1)/2$.

The upper bound of~\eqref{contr-classes-p} is Lemma~\ref{l:upper:DGs}.
	Since $\sigma$ is rational,
\[|D^G(\sigma)|_p = |D^G(\sigma)|^{-1}_\R \prod_{q\neq p} |D^G(\sigma)|^{-1}_q
\]
by the product formula.
The implies the lower-bound of~\eqref{contr-classes-p}, and also the lower-bound of~\eqref{contr-classes-R}.
\end{proof}

We refer to the equivalence classes $\Fmo\in \CmO$ that satisfy the condition of the lemma as \emph{contributing classes}. This depends on a choice of $R$ and $\xi$. As before, we assume that $\xi^p_1\ge\ldots\ge\xi^p_n\ge0$ for all $p$, and $\xi^p=0$ for all but finitely many $p$.
\begin{corollary}
	There is a constant $c\ge 1$ depending only on $R$ and $n$ such that for every contributing class $\Fmo\in \CmO$, we have
\begin{equation}\label{contr-classes-max}
\max_{p\in S_{\Fmo}\setminus\{\infty\}} p 
\le  c \prod_{p<\infty} p^{n(n-1)\xi_1^p},
\end{equation}
and
\begin{equation}\label{prod-contr-classes}
\prod_{p\in S_{\Fmo}\setminus\{\infty\}} p \le 
c \prod_{p<\infty} p^{(n^2-n+1)\xi_1^p}.
\end{equation}
\end{corollary}
\begin{proof}

	Recall that $S_\Fmo$ is defined in~\eqref{So}.
	Let $p\in S_\Fmo\setminus \{\infty\}$. If $\xi^p\neq0$, we 
	trivially have 
	$p\leq p^{n(n-1)\xi_1^p}$, so \eqref{contr-classes-max} holds. If $\xi^p=0$, then 
	by 
	\eqref{contr-classes-p} we have $|D(\sigma)|_p\le 1$, hence $|D(\sigma)|_p\le p^{-1}$ 
	because $p\in S_\Fmo\smallsetminus \{\infty\}$. Hence $p\le |D(\sigma)|_p^{-1}\le 
	c\prod_{q\neq p,\infty} q^{n(n-1)\xi_1^q}$ by \eqref{contr-classes-p} so that 
	\eqref{contr-classes-max} follows.

If $p\not\in S_{\Fmo}$, then Lemma~\ref{l:upper:DGs} yields $1 \le 
|D(\sigma)|_p^{-1} p^{(n^2-n)\xi^p_1}$.
The same argument as above, together with Lemma~\ref{l:upper:DGs} yields the 
inequality 
\[
p \le 
|D(\sigma)|_p^{-1} p^{(n^2-n+1)\xi^p_1},
\]
for every prime $p\in S_{\Fmo}\smallsetminus \{\infty\}$.
Thus the product formula yields
\[
\begin{aligned}
\prod_{p\in S_{\Fmo}\setminus\{\infty\}} p 
&\le
\prod_{p\not\in S_\Fmo}
|D(\sigma)|^{-1}_p p^{(n^2-n)\xi^p_1}
 \prod_{p\in 
S_\Fmo \setminus \{\infty \}} 
|D(\sigma)|^{-1}_p p^{(n^2-n+1)\xi^p_1} 
\\
&\le|D(\sigma)|_\R \prod_{p<\infty} 
p^{(n^2-n+1)\xi^p_1}.
\end{aligned}
\]
This implies~\eqref{prod-contr-classes} in view of \eqref{contr-classes-R}.
\end{proof}

Recall that for $\Fmo\in \CmO$, we write $\CmE(\Fmo):= \CmE(\sigma)$ for some semisimple 
representative $\sigma\in \Fmo$. 
The characteristic polynomial of a contributing class $\Fmo$ has integral coefficients,
since $K_p p^{\xi^p}K_p$ is a subset of $M_n(\Z_p)$ for every prime $p$, because $\xi^n_p
\ge 0$.
Hence, we can always choose a semisimple representative $\sigma \in \Fmo \cap M_n(\Z)$ 
for a contributing class $\Fmo$.
\begin{corollary}
If $\Fmo$ is a contributing class and $\CmE(\Fmo)=(E_i, m_i)_{i\in I}$, then
\[
\prod_{i\in I} |D_{E_i}|^{m_i}
\le c \prod_{p<\infty} p^{n(n-1) \xi^p_1}.
\]
\end{corollary}

\begin{proof}
By Lemma~\ref{l:discriminant}, we have
\[
\prod_{i\in I} D^{m_i}_{E_i} \le |\det(\sigma)|^{n-1}_\R |D^G(\sigma)|_\R.
\]
The assertion then follows from~\eqref{contr-classes-det} and~\eqref{contr-classes-R}.
\end{proof}

\begin{corollary}
	There is a constant $c_2>0$ depending only on $n$, and a constant $c>0$ depending only on $R$ and $n$, such that for any contributing class $\Fmo$ and any $\gamma\in \Fmo$,
\begin{equation}\label{contr-classes-aM}
	|a^M(\gamma,S_{\Fmo}\cup \{p\mid \xi^p\neq0\})|
	\le c \prod_{p<\infty} p^{c_2\xi^p_1}.
\end{equation}
\end{corollary}
\begin{proof}
	This follows from~Proposition~\ref{prop:global:coeff}, combined 
	with~\eqref{contr-classes-det}, \eqref{contr-classes-R}, 
	\eqref{prod-contr-classes} and \eqref{contr-classes-max}.
	We can take $c_2=n(n-1)c_{\rm glob}$.
\end{proof}

\begin{lemma} For any contributing class $\Fmo$, and semisimple representative $\sigma\in \Fmo$
	\begin{equation}\label{contr-classes-Delta}
	\Delta^-(\sigma)\le c \prod_{p} p^{n(n-1)\xi^p_1},
	\end{equation}
	with $\Delta^-$ defined as before Lemma~\ref{lem:bound:supp}.
\end{lemma}
\begin{proof}
	By the product formula, we have
\[
\Delta^-(\sigma)=
\prod_{i\neq j,\ t_i\neq t_j}
\max(1, \prod_{p<\infty} |1-t_j^{-1}t_i|_p ).
\]
The proof is then similar to that of Lemma~\ref{l:upper:DGs}.
\end{proof}

Recall the definition of the archimedean and global test functions $f^{\mu}_{\pm}$, $F^{\mu}_\pm$ from \S\ref{sub:wave-test} and \S\ref{sub:global-test} respectively.
We are interested in the equivalence classes $\Fmo\in\CmO$ that contribute to the coarse expansion of $\Jgeom(F^\pm_\mu)$, that is such that $J_{\Fmo}(F^{\mu}_\pm)\neq0$.
The support of $f_\pm^\mu\in C_c^\infty(G(\R)/A_G)$ is included in $\{g:\ ||X(g)||\le 
R\}$ for some $R\ge 1$, which is independent of $\mu$.
Recall from~\S\ref{sub:global-test}
that $F^{\mu}_\pm=(f^{\mu}_\pm\cdot\prod_p \tau_{p})_{|G(\A)^1}$ satisfies $\tau_p=\tau_{p,\xi^p}$ for some $\xi^p=(\xi^p_1,\dots,\xi^p_n)$ with  $\xi^p_1\ge\ldots\ge\xi^p_n\ge0$.
Hence all equivalence classes contributing to the coarse geometric expansion satisfy the condition of Lemma~\ref{lem:properties:contr:classes}.
All the properties established in this section apply to any contributing class $\Fmo$ such that $J_{\Fmo}(F^\pm_\mu)\neq 0$.

\subsection{Bounding the geometric side of the trace formula}
We may now reduce the bound of the geometric side of the trace formula
to estimating local weighted orbital integrals. For archimedean places, we solved this problem in Part~\ref{partI}, and for non-archimedean places, this
will be established in the next Section~\ref{sec:p-adic}.

The following is the main technical result of the paper.
\begin{theorem}\label{th:bound-Jgeom}
	Assume $n\ge 3$.
	There exists a constant $c_3\ge 0$ depending only on $n$ and a constant $c>0$ 
	depending only on $n$ and the function $h$ used to define $f^{\mu}_\pm$, such 
	that the following holds.
	For every tuple $\xi=(\xi^p)_p$ of integers with $\xi_1^p\ge \ldots \ge \xi^p_n$, and every $\mu\in i\ka^*$,
	\[
	\left|\Jgeom(F^{\mu}_\pm)-\sum_{\text{unip.}\, \Fmo} J_{\Fmo}(F^{\mu}_\pm)\right|
	\le c \prod_p p^{c_3 (\xi^p_1-\xi^p_n)} (1+\|\mu\|)^{d-r-\frac12}
	\]
	where $\Fmo$ runs over the set of unipotent equivalence classes. We recall that $F^\mu_{\pm}:= (f^\mu_\pm\cdot \tau_\xi)|_{G(\A)^1}$.
\end{theorem}

\begin{proof}
Without loss of generality, we assume that $\xi^p_n=0$ for all $p$.
	From the coarse expansion~\eqref{eq:coarse:exp}, we need to give an upper-bound for the sum over non-unipotent classes $\Fmo$.
	The number of contributing classes is bounded by Lemma~\ref{lem:properties:contr:classes}.
	It then follows from the fine expansion~\eqref{eq:fine:exp:o} that the left-hand side is
\[
\le c \prod_{p<\infty} p^{c_2 \xi_1^p}
\max_{\Fmo} \max_M \max_{\{\gamma\}}
|a^M(\gamma,S)|
|J_M^G(\gamma_S,F^\mu_{\pm,S})|.
\]
Note that $S$ depends on the non-unipotent class $\Fmo$. We choose $S=S_\Fmo \cup \{ p | \xi^p\neq 0 \}$ and use the upper-bound~\eqref{contr-classes-aM} for $a^M(\gamma,S)$.
For the weighted orbital integral, we use Corollary~\ref{c:bound-splitting}, and 
also~\eqref{prod-contr-classes} to bound $|S|$, and obtain:
\[
\le c
\prod_{p<\infty} p^{c'\xi_1^p}
\max_{\Fmo,M,\{\gamma\}}
\max_{\underline{L}}
\max_{\mu: \mu_1^p\le \xi_1^p}
\tilde J_M^{L_\infty}(\gamma_\infty,f_{\pm}^{\mu,(Q_\infty)})
\prod_{p\in S_\Fmo,\\\text{or } \xi^p\neq 0}
\tilde J_M^{L_p}(\gamma_p,\tau^{L_p}_{p,\mu^p}).
\]
for some absolute constant $c'>0$.

For the archimedean weighted orbital integral we apply Proposition~\ref{p:J-sph}, together with~\eqref{contr-classes-Delta} to bound $\Delta^-(\gamma_s)$ and deduce
\[
\tilde J_M^{L_\infty}(\gamma_\infty,f_{\pm}^{\mu,(Q_\infty)}) \le
c
\prod_{p<\infty} p^{c_1 n(n-1)\xi_1^p}(1+\|\mu\|)^{d-r-\frac12}.
\]
For the non-archimedean weighted orbital integral, we apply Theorem~\ref{th:weight-orb-int-p-adic} below to deduce
\[
\prod_{p\in S_\Fmo,\\\text{or } \xi^p\neq 0} 
\tilde J_M^{L_p}(\gamma_p,\tau^{L_p}_{p,\mu^p})
\le  \prod_{p\in S_\Fmo} p^{a_\oi}
\prod_{\xi^p\neq 0} p^{a_\oi+b_\oi \xi_1^p} 
\prod_{p\in S_\Fmo,\\\text{or } \xi^p\neq 0} 
|D^G(\sigma)|_p^{-c_\oi},
\]
where $\sigma=\gamma_s$ is a semisimple representative for $\Fmo$.
The first product is bounded by \eqref{prod-contr-classes}.
We may extend the last product over all primes $p$ as for $p\not\in S_\Fmo$ we have 
$|D^G(\sigma)|_p=1$. Hence by the product formula, we 
get
\[
 \prod_{p\in S_\Fmo,\\\text{or } \xi^p\neq 0} \tilde J_M^{L_p}(\gamma_p,\tau^{L_p}_{p,\mu^p})
 \le  c  \prod_{p<\infty} p^{c_2\xi_1^p} \cdot |D^G(\sigma)|_\R^{c_\oi}
\]
which by \eqref{contr-classes-R} is bounded by a constant multiple of 
$\prod_{p<\infty} p^{c_2\xi_1^p}$.
Combining all the previous estimates we conclude the proof of the theorem.
\end{proof}

\subsection{Unipotent equivalence classes}
We say that an equivalence class $\Fmo\in \CmO$ is \emph{unipotent} if a semisimple 
representative $\sigma \in \Fmo$ is central, i.e, if $\sigma \in Z(\Q)$. Clearly $\sigma$ is 
unique, and unipotent classes are parametrized by $Z(\Q)$.
The unipotent class  $\Fmo$ corresponding to $\sigma \in Z(\Q)$ consists of all 
elements $\sigma u$, with $u\in \CmU(\Q)$, where $\CmU$ is the 
unipotent variety in $G$.

\begin{proposition}\label{p:unipotent-classes}
	There exists a constant $c_4\ge 0$ depending only on $n$ such that the 
	following holds.

(i) There exists $\delta>0$ depending only on $n$,
and $c>0$ depending only on $n$ and the function $h$,
 such that for 
every $\mu\in i\Fma^*$, tuple $(\xi^p)_p$, and unipotent equivalence class $\Fmo$ with 
semisimple representative $\sigma \in Z(\Q)$,
\[
\left|
J_\Fmo(F^\mu_{\pm}) -
\vol(G(\Q)\backslash G(\A)^1) f^\mu_+(1) \prod_{p<\infty}\tau_{p,\xi^p}(\sigma)
\right|
\le c (1+\|\mu\|)^{d-r - \delta} \prod_p p^{c_4 (\xi^p_1-\xi^p_n)}.
\]

(ii) If $\Omega\subset i\ka^*$ is a $W$-invariant and bounded measurable set with 
piecewise $C^2$-boundary, and if $h(0)=1$, there exists $c>0$ depending only on 
$n,h,\Omega$ such that for every $t\ge 2$, tuple $(\xi^p)_p$, and unipotent 
equivalence class $\Fmo$ 
 with 
semisimple representative $\sigma \in Z(\Q)$,
\[
\left|
\int_{t\Omega} J_\Fmo(F^\mu_{\pm})\, d\mu -
\tfrac{1}{2}\Lambda_\Omega(t) \prod_{p<\infty} \tau_{p,\xi^p}(\sigma)
\right|
\le c   t^{d-1} (\log t)^{\max(3,n)} \prod_p p^{c_4 (\xi^p_1-\xi^p_n)}.
\]
\end{proposition}

\begin{proof}
Using~\eqref{eq:coarse:exp}, the main term in (i) arises from $M=G$ and $u=1$,
and since we have $\vol(G(\Q)\backslash G(\A)^1)=a^G(\sigma, S)$,
$F^\mu_{\pm}(\sigma)=f^\mu_\pm(\sigma)\cdot\prod_{p<\infty} \tau_{p,\xi_p}(\sigma)$, and 
using~\eqref{eq:def:test:function},
\[
f^\mu_\pm(\sigma)=f^\mu_+(1)=
\frac{1}{|W|}
\int_{i\Fma^*}
\hat{h}(\lambda-\mu)
\left|\frac{\Tmc(\rho)}{\Tmc(\lambda)}\right|^{2} ~d\lambda,
\]
we obtain $\vol(G(\Q)\backslash G(\A)^1) f^\mu_+(1) \prod_{p<\infty}\tau_{p,\xi^p}(\sigma)$ as 
claimed.

Integrating $f^\mu_+(1)$ over $\mu\in t\Omega$, we obtain $\frac{1}{2\vol(G(\Q)\backslash 
G(\A)^1)}\Lambda_\Omega(t)$ as a main term for (ii) if $h(0)=1$. More precisely,
\[
\int_{t\Omega} f^\mu_+(1)\, d\mu - \frac{\Lambda_\Omega(t)}{2\vol(G(\Q)\backslash G(\A)^1)}
\ll t^{d-1}
\]
by \cite{DKV79}*{\S8}, \cite{LM09}*{(4.6)}.
The remainder term for (ii) is the sum of
\[
 \int_{t\Omega} \sum_{\{u\}\neq 1} a^G(\sigma u, S)  J_G^G(\sigma u,F^{\mu}_\pm)~d\mu,
\]
where $\{u\}$ runs over a set of representatives for the \emph{non-trivial}  
$G(\Q)$-conjugacy classes in $\mathcal{U}(\Q)$,
and of
\[
 \sum_{M\in \CmL,\ M\neq G} \frac{|W^M|}{|W^G|} \int_{t\Omega} \sum_{\{u\}} 
 a^M(\sigma u, 
 S)  J_M^G(\sigma u,F^{\mu}_\pm)~d\mu,
\]
where $\{u\}$ runs over a set of representatives of $M(\Q)$-conjugacy classes in 
$\CmU^M(\Q)$, and $S=S_{\Fmo}\cup \{p\mid \xi^p\neq 0\}$.
The global coefficients are bounded by~\eqref{contr-classes-aM}.
We have the 
factorization
\[
J_G^G(\sigma u,F^{\mu}_\pm)
=J_G^G( u,f_\pm^\mu) \prod_{p<\infty} J_G^G(\sigma u,\tau_{p,\xi^p})
\]
for the unweighted orbital integrals. For the weighted orbital integrals 
$J_M^G(\sigma u, F^\mu_\pm)$ we have a similar decomposition into local terms for 
which an upper bound is given in Corollary~\ref{c:bound-splitting}.
To obtain an upper bound for those integrals we argue as in the proof of 
Theorem~\ref{th:bound-Jgeom} but use 
Proposition~\ref{prop:bound-weighted-unip}.(ii) to 
bound the archimedean orbital integrals. For the $p$-adic integrals we can again 
use Theorem~\ref{th:weight-orb-int-p-adic}. This establishes assertion (ii).
The proof of (i) is similar, using Proposition~\ref{prop:bound-weighted-unip}.(i) 
instead.
\end{proof}

\begin{corollary}\label{cor:bound-Jgeom}
	Assume $n\ge 3$ and $h(0)=1$. Let $\delta(\xi^p)=1$ if $\xi^p$ is central and 
	$\delta(\xi^p)=0$ otherwise.
	Let $F^{\mu}_\pm=(f^{\mu}_\pm\cdot\tau_\xi)_{|G(\A)^1}$ and $\Omega\subset i\ka^*$ be as 
	before.
	 There exists $c>0$ depending only on $n,h,\Omega$ such that for every $t\ge1$, 
	 and tuple $(\xi^p)_p$,
	\[
	\left| \int_{t\Omega} \Jgeom(F^{\mu}_\pm)\, d\mu-
	\Lambda_\Omega(t) \prod_{p<\infty}\delta(\xi^p)
	 \right|
	\le c \prod_{p<\infty} p^{c_4 \xi^p_1}
	t^{d-1/2}.
	\]
\end{corollary}

\begin{proof}
	The contributing unipotent equivalence classes correspond to elements $\sigma 
	\in Z(\Q)\cap \prod\limits_{p<\infty} K_p p^{\xi^p} K_p$ (see 
	Lemma~\ref{lem:properties:contr:classes}). Thus there are at most two 
	contributing unipotent equivalence classes, and they differ by $\pm 1$.
	Combining Theorem~\ref{th:bound-Jgeom}, integrating it over $t\Omega$, and 
	Proposition~\ref{p:unipotent-classes}.(ii), we complete the proof.
\end{proof}

\section{Bound for \texorpdfstring{$p$}{p}-adic weighted orbital integrals}\label{sec:p-adic}
To complete the estimate of the global bound on the geometric side of the trace formula in Theorem~\ref{th:bound-Jgeom}, we need an upper bound for the $p$-adic weighted orbital integrals
$
|J_M^G(\gamma,f_p)| $,
for $\gamma\in M(\Q_p)$  and $f_p\in \CmH_p$.
The following is the main result of this section.

\begin{theorem}\label{th:weight-orb-int-p-adic}
There exist effective constants $a_{\oi},b_\oi,c_\oi \ge0$ depending only on $n$
such that the
following holds. For every prime $p$, tuple of integers $\xi=(\xi_1, \ldots,\xi_n)$ with
$\xi_1\ge \ldots \ge\xi_n$, $M\in \CmL$, and $\gamma\in M(\Q_p)$, we have
\[
\tilde J_M^G(\gamma,\tau_{p,\xi})
\le p^{a_\oi+b_\oi(\xi_1-\xi_n)} |D^G(\gamma_s)|_p^{-c_\oi},
\]
where we recall that $\tau_{p,\xi}\in\CmH_p$ is the characteristic function of the double coset $K_p p^\xi K_p$.
The integral is taken with respect to the  measures constructed in \S\ref{sec:measures:p-adic}.

\end{theorem}

We have  $\tilde J_M^G(\gamma,\tau_{p,\xi})= \tilde 
J_M^G(p^{-\xi_n}\gamma,\tau_{p,\xi'})$, if $\xi':=(\xi_1-\xi_n, \xi_2-\xi_n, 
\ldots,\xi_{n-1}-\xi_n, 0)$. Without loss of generality, we may therefore assume 
that $\xi_n\ge 0$, and shall do so whenever convenient.

\subsection{Preliminaries}
As in the archimedean case of Lemma \ref{lem:bound:supp}, the condition
$\tau_{p,\xi}(y^{-1}\sigma  u y)\neq0$ with $y\in G(\Q_p)$ and $u\in C_G(\sigma, \Q_p)$
unipotent
implies that $y$ and $u$ have to be contained in certain subsets.
To quantify this, we write $|g|_{G(\Q_p)}=p^{\lambda_1-\lambda_n}$ if $g\in G(\Q_p)$ with
$g\in K_p p^\lambda K_p$ and $\lambda=(\lambda_1,\ldots, \lambda_n)$,
$\lambda_1\ge\ldots\ge\lambda_n$, compare with \S\ref{sec:norm-groups} in the archimedean
case.

\begin{lemma}\label{lem:minimal:distance}
	There exist $b_1,c_1\ge 0$ depending only on $n$ such that the following holds.
	Suppose that $\sigma\in G(\Q_p)\cap p^{\xi_n} M_{n}(\Z_p)$ is semisimple, $y\in
	G(\Q_p)$ is arbitrary,  and $u\in C_G(\sigma, \Q_p)$ is a unipotent element such that
	$y^{-1}\sigma  u y\in K_p p^\xi K_p$ with $\xi_1 \ge \cdots \ge \xi_n$.
	Then there exists $\delta\in C_G(\sigma,\Q_p)$
	such that
	\[
	|\delta y|_{G(\Q_p)}, \, |\delta u\delta^{-1}|_{G(\Q_p)}
	\le p^{b_1(\xi_1-\xi_n)} |D^G(\sigma)|^{-c_1}_p.
	\]
\end{lemma}

\begin{proof}
	This is \cite{ST11cf}*{Lem.7.9} in the case $u=1$, and \cite{Matz}*{Cor.8.4}
	in general. The difference of notation with~\cite{Matz} is as follows: the norm
	$||\xi_F||_W$ there is dominated by $\xi_1-\xi_n$; the absolute value $|\log_p
	|D^G(\sigma)|_p|$ there was unnecessary because $|D^G(\sigma)|_p\le
	p^{n(n-1)(\xi_1-\xi_n)}$ by Lemma~\ref{lem:properties:contr:classes}; the constant
	$\delta$ there has been absorbed in the constants $b_1,c_1$, because if $\xi_1=\xi_n$
	and $|D^G(\sigma)|_p=1$, then the splitting field of $\sigma$ is tamely ramified
	(Lemma~\ref{l:DG=1} below);
	finally the integrality assumption on $\sigma$ was missing in the formulation
	of~\cite{Matz}*{Cor.8.4}.
\end{proof}

\begin{lemma}\label{l:DG=1}
	For every semsimple $\sigma \in K_p$, with $|D^G(\sigma)|_p=1$,
\begin{enumerate}[(i)]
	\item $\sigma$ splits over an unramified extension of $\Q_p$;
	\item for every $y\in G(\Q_p)$, we have $y^{-1} \sigma y \in K_p$ if and only if $y \in C_G(\sigma,\Q_p)K_p$;
	\item $J_G^G(\sigma,\tau_{p,0})= \CmO_\sigma(\tau_{p,0}) = 1$.
\end{enumerate}
\end{lemma}

\begin{proof}
	(i) Let $\CmE(\sigma)=(E_i,m_i)_{i\in I}$. Then $\sigma$ splits over the composition
	of the fields $E_i$.
	 By Lemma~\ref{l:conjugate-integral}, $\sigma$ is $G(\Q_p)$-conjugate to
	\[
	\Mdiag(\delta_1,\ldots,\delta_1,\delta_2,\ldots,\delta_2,\ldots,\delta_r,\ldots,\delta_r),
	\]
	where $\delta_i\in \GL_{d_i}(\Z_p)$ is regular elliptic.
We have $|\det(\delta_i)|_p=1$, and the characteristic polynomial $P_i$ is in $\Z_p[X]$.
Proceeding as in the proof of Lemma~\ref{l:discriminant}, there is a $\Z_p$-linear
injection of $\Z_p[\delta_i]\simeq \Z_p[X]/P_i$ into $\CmO_{E_i}$, and we deduce
\[
\prod_{1\le i\le r} |\operatorname{Disc}(\CmO_{E_i})|^{m_i}_p \ge
\prod_{1\le i\le r} |\operatorname{Disc}(P_i)|^{m_i}_p \ge |D^G(\sigma)|_p = 1.
\]
Hence each $E_i$ is an unramified extension of $\Q_p$.

	Assertion (ii) is a special case of Lemma~\ref{lem:minimal:distance}, with $u=1$ and $\xi=0$.
	See also~\cite{Kot86}*{Cor.7.3}.

We deduce from (ii) that $\CmO_{\sigma}(\tau_{p,0})=\int_{K_p}\tau_{p,0}(k^{-1}\gamma k)\, dk = 1$, which implies (iii).
\end{proof}

The following is a variant of the previous lemma in the split case.
\begin{lemma}[\cite{Matz}*{Prop.8.1, Cor.8.3}]\label{lem:minimal:distance:split}
	There exist constants $b_1, c_1\ge 0$ depending only on $n$ such that the 
	following holds. Let $p$ be a prime and $E/\Q_p$ be a finite extension, $\CmO_E$ 
	the ring of
	integers in $E$, and $K_E:=G(\CmO_E)$. Let $\sigma\in T_0(E)$ be such that the centralizer $G_\sigma(E)$ is the Levi component $M(E)$ of some standard parabolic subgroup $P(E)=M(E)U(E)$.
	Suppose $\delta\in G(E)$ is such that $\delta^{-1}\sigma \delta\in K_E \varpi_E^{\xi} K_E$ for some $\xi=(\xi_1,\ldots, \xi_n)$, $\xi_1\ge\ldots\ge\xi_n$, where $\varpi_E$ is a uniformizer. Let $\delta= muk$ denote the Iwasawa decomposition, with $m\in M(E)$, $u\in U(E)$, $k\in K_E$. Then
	\[
	|u|_{G(E)} \le p^{b_1 (\xi_1-\xi_n)} |D^G(\sigma)|_E^{-c_1}.
	\]
\end{lemma}

\subsection{Reduction to the semisimple conjugacy
classes}\label{sec:reduction}
The first step to prove
Theorem~\ref{th:weight-orb-int-p-adic} is to reduce the estimate of
$\tilde J_M^G(\gamma,\tau_{p,\xi})$ to the semisimple and unweighted (i.e., $M=G$) case:
	\begin{proposition}\label{lem:reduction}
	 There are constants $c,a_1,b_1,c_1,a_2,b_2,c_2 \ge0$ depending only on $n$ 
	 such that the following holds. For every prime $p$, $M\in \CmL$, $\gamma\in 
	 M(\Q_p)$, and every tuple $\xi=(\xi_1,\ldots,\xi_n)$ with $\xi_1\ge\ldots\ge\xi_n\ge0$, we 
	 have
	 \[
	  \tilde J_M^G(\gamma,\tau_{p,\xi})
	  \le c p^{a_1+b_1\xi_1} |D^G(\gamma_s)|_p^{-c_1} \max_{\mu:\, \mu_1\le\xi_1'}  J^G_G(\gamma_s,\tau_{p,\mu}),
	 \]
         where
$\xi_1':=  a_2 + b_2\xi_1 - c_2 \log_p|D^G(\gamma_s)|_p$,
          and
the maximum is taken over all tuples of integers $\mu=(\mu_1,\ldots , \mu_n)$ satisfying $\xi_1'\ge\mu_1\ge\ldots\ge\mu_n\ge0$.
Further, if $p>n$, we can take $ a_1=a_2=0$.
	\end{proposition}

\begin{proof}
Write $\tau=\tau_{p,\xi}$. If $H(\Q_p)\subseteq G(\Q_p)$ is a subgroup and $\delta\in
H(\Q_p)$, we write $H_\delta(\Q_p)$ for the centralizer of $\delta$ in $H(\Q_p)$ instead
of $C_H(\delta,\Q_p)$. The weighted orbital integral can be written as
\cite{Ar86}*{\S7}
\[
 J_M^G(\gamma,\tau)
 =|D^G(\gamma_s)|_p^{\frac12} \int_{G_{\gamma_s}(\Q_p)\backslash G(\Q_p)} \sum_{R\in \CmF^{G_{\gamma_s}}(M_{\gamma_s})} J_{M_{\gamma_s}}^{M_R}(\gamma_u, \Phi_{R,y})\, dy
\]
where
\[
 \Phi_{R,y}(m)= \delta_R(m)^{\frac12} \int_{K_{p,\gamma_s}}\int_{U_R(\Q_p)} \tau(y^{-1}\gamma_s k^{-1} mn k y) \nu_R'(ky) \, dn\, dk,\;\;\; m\in M_R(\Q_p),
\]
$\CmF^{G_{\gamma_s}}(M_{\gamma_s})$ denotes the set of Levi subgroups in $G_{\gamma_s}$ containing $M_{\gamma_s}$, $R=M_RU_R$ is the Iwasawa decomposition of $R\in \CmF^{G_{\gamma_s}}(M_{\gamma_s})$, and $\delta_R$ denotes the modulus function for $R(\Q_p)$.
$J_{M_{\gamma_s}}^{M_R}(\gamma_u,\cdot)$ denotes the weighted orbital integral inside of $M_R(\Q_p)$ instead of $G(\Q_p)$. Finally, $\nu_R'$ is a certain weight function defined by
\[
 \nu_R'
 = \sum_{\substack{Q\in\CmF(M):\\ Q_{\gamma_s}=R, \, \Fma_Q=\Fma_R}} v_Q'
\]
with $v_Q'$ defined similarly as in the archimedean situation \S\ref{sec:weighted:real}.
The unipotent weighted orbital integral inside the above integral can also be
written as
\[
 J_{M_{\gamma_s}}^{M_R}(\gamma_u, \Phi_{R,y})
 = \int_{V(\Q_p)}\int_{K_{p,\gamma_s}}\int_{U_R(\Q_p)} \tau(y^{-1}\gamma_s k^{-1} v n k y) \nu_R'(ky) w_{\CmO^{M_{\gamma_s}}}^{M_R}(v) \, dn\, dk\, dv
\]
where
\begin{itemize}
 \item $\CmO^{M_{\gamma_s}}\subset M_{\gamma_s}(\Q_p)$ is the $M_{\gamma_s}(\Q_p)$-conjugacy class of $\gamma_u$,
 \item $V$ is the unipotent radical of the parabolic subgroup $Q^R=LV\subseteq M_R$ such
 that the unipotent conjugacy class which is induced from $\CmO^{M_{\gamma_s}}$ to
 $M_R(\Q_p)$ is the Richardson orbit of $Q^R$,
 \item $w_{\CmO^{M_{\gamma_s}}}^{M_R}$ is a certain weight function on $V(\Q_p)$
 similarly as in the archimedean situation in \S \ref{subsec:weighted:unipotent}, see
 \cite{Ar88a}*{p.256}.
\end{itemize}
The functional $ \tilde J_M^G(\gamma,\tau)$ is then defined by replacing $\tau$,
$v_Q'$, and $w_{\CmO^{M_{\gamma_s}}}^{M_R}$ by their absolute values so that in
particular $ \left| J_M^G(\gamma,\tau)\right| \le \tilde J_M^G(\gamma,\tau)$.

The weight function $v_Q'$ satisfies a similar estimate as its archimedean counterpart in
Lemma \ref{lemma:bound:ss:weight}:
According to \cite{Matz}*{Cor.10.9},
\begin{equation}\label{eq:weight:fct:p}
 \left|v_Q'(g)\right|
\ll_n \left(1+ \log_p |g|_{G(\Q_p)}\right)^{n-1}
\end{equation}
for every $Q\in\CmF(M)$ and $g\in G(\Q_p)$.

We can estimate $\tilde J_M^G(\gamma,\tau)$ by using the integral formulas above.
Since $\tau=\tau_{p,\xi}$ and $\xi_n\ge 0$, we have that
$\tilde J_M^G(\gamma,\tau)$ is non-zero only if the characteristic polynomial of $\gamma$ has $p$-integral coefficients.
 Both sides of the inequality in Proposition~\ref{lem:reduction} are invariant
 if we replace $\gamma$ by a $M(\Q_p)$-conjugate, hence by Lemma
 \ref{l:conjugate-integral} we can assume that $\gamma_s\in M_n(\Z_p)$.

 By Lemma~\ref{lem:minimal:distance} we know in a quantitative way that the variable $y$  can not be too far away from the centralizer $C_G(\gamma_s,\Q_p)$, and that the unipotent variables $v,n$ are similarly bounded away from infinity. This allows us to separate the integration into a ``semisimple part" and a ``unipotent part". Moreover, using \ref{eq:weight:fct:p} we can bound each of the weights $\nu_R'(ky)$.
Write $N:= VU_R$. Then $N$ is the unipotent radical of a parabolic subgroup in $G_{\gamma_s}$ such that its Richardson orbit equals the unipotent conjugacy class induced from $\CmO^{M_{\gamma_s}}$ to $G_{\gamma_s}$.
We deduce that we can find constants $c, a'_1,b'_1,c'_1,a_2,b_2,c_2\ge0$
such that
\begin{multline*}
 \tilde J_M^G(\gamma,\tau)
 \le c p^{a_1'+b'_1\xi_1} |D^G(\gamma_s)|_p^{-c_1'}  \max_{\mu: \mu_1\le \xi'_1} \int_{G_{\gamma_s}(\Q_p)\backslash G(\Q_p)} \tau_{p,\mu}(g^{-1}\gamma_s g)\, dg \\
 \cdot \max_{R\in\CmF^{G_{\gamma_s}}(M_{\gamma_s})} \int_{N(\Q_p)\cap B_{\xi'_1}} \left|w_{\CmO^{M_{\gamma_s}}}^{M_R}(n)\right|\, dn,
\end{multline*}
where $\xi'_1$ is as defined in Proposition \ref{lem:reduction},
and $B_{\xi'_1}$ denote the set of matrices $g=(g_{ij})_{i,j}\in M_n(\Q_p)$ such that $|g_{ij}|_p\le p^{\xi'_1}$ for all $i,j$.

Note that in the last integral, we extended $w_{\CmO^{M_{\gamma_s}}}^{M_R}$
trivially to all of $N(\Q_p)$. By \cite{Matz}*{Lem.10.5}, this last integral is
$\le c p^{c_5 \xi'_1}$. This finishes the proof of Proposition \ref{lem:reduction}
with $a_1:=a'_1+c_5 a_2$, $b_1:=b'_1+c_5 b_2$, and $c_1:=c'_1+c_5 b_3$.
\end{proof}

\subsection{A bound for the unweighted semisimple orbital integral}\label{subsec:bound}
In this section we prove the uniform bound for unweighted $p$-adic semisimple orbital integrals $\CmO_\gamma(\tau_{p,\xi})$, that is, for $M=G$ and $\gamma=\gamma_s$, by using a modified version of the argument in \cite{ST11cf}*{\S7}.
We aim to show that there exist constants $a, b, c\ge 0$ depending only on $n$ 
such that the following holds. For every $p$, every semisimple $\gamma\in G(\Q_p)$ 
and every tuple of integers $\xi=(\xi_1, \ldots, \xi_n)$ with  $\xi_1\ge\ldots\ge\xi_n$,
 \begin{equation}\label{prop:p-adic}
|D^G(\gamma)|_p^{1/2} \CmO_\gamma(\tau_{p,\xi})=  J_G^G(\gamma, \tau_{p,\xi})
\le  p^{a+b(\xi_1-\xi_n)} |D^G(\gamma)|_p^{-c}.
 \end{equation}
In view of Proposition \ref{lem:reduction}, this will conclude the proof of
Theorem~\ref{th:weight-orb-int-p-adic}.
	Recall that the measure on $G_\gamma(\Q_p)$, which enters in the definition of
	$\CmO_\gamma$, has been chosen as in \S\ref{sec:measures:p-adic}.

We fix some notation.
We define
 \[
 X(\gamma, \xi)
 :=\{x K_p \in G(\Q_p)/K_p\mid x^{-1}\gamma x\in K_p p^\xi K_p\}.
 \]
Let $E/\Q_p$ be a field extension of smallest possible degree over which $\gamma$ splits, and define similarly
\[
X_E(\gamma, \xi)
:= \{xK_E\in G(E)/K_E\mid x^{-1}\gamma x\in K_E \varpi_E^{\xi} K_E\},
\]
where $\varpi_E\in\CmO_E$ is a uniformizing element, and $K_E=G(\CmO_E)$.
The groups $G_\gamma(\Q_p)$ and $G_\gamma(E)$ act by left multiplication on $X(\gamma,
\xi)$ and $X_E(\gamma, \xi_E)$, respectively.
Let $e_{E/\Q_p}$ be the ramification index of $E$ over $\Q_p$. Write
$\xi_E:=e_{E/\Q_p}\xi$. The inclusion of buildings $G(\Q_p)/K_p\subset G(E)/K_E$ induces
 an embedding $X(\gamma, \xi)\hookrightarrow X_E(\gamma, \xi_E)$.

The first step to prove \eqref{prop:p-adic} is the following, which reduces the estimate
to understanding the $G_\gamma(\Q_p)$-orbits in $X(\gamma,\xi)$.
 This step is incomplete in \cite{ST11cf}*{\S7.3}, because it was assumed that 
 $Z(G)\backslash G_\gamma$ was anisotropic, whereas~\cite{ST11cf}*{\S7.1} only 
 offered a 
 reduction to the case that $\gamma$ was elliptic, which is the weaker condition that 
 $Z(G)\backslash Z(G_\gamma)$ is 
 anisotropic, hence the assertion ``$I_\gamma(F)$ is a compact group" on line~8 
 of page~74 of~\cite{ST11cf} is incorrect.
\begin{lemma}\label{assum}
For every prime $p$, every $\xi=(\xi_1, \ldots, \xi_n)$ with $\xi_1\ge\ldots\ge\xi_n\ge 0$, and every semisimple $\gamma\in G(\Q_p)\cap M_n(\Z_p)$, which is block diagonal of the form $\Mdiag(\delta_1,\ldots,\delta_r)$ with each $\delta_i$ regular elliptic semisimple,
we have
\begin{equation}\label{eq:orb:p-adic}
 \CmO_\gamma(\tau_{p,\xi})
  \le |D^G(\gamma)|_p^{-1/2}|\det\gamma|_p^{-(n-1)/2} \sum_{\bar x \in G_\gamma(\Q_p)\backslash X(\gamma, \xi)} \vol_{G(\Q_p)} \left( K_p x K_p\right),
\end{equation}
where for every $\overline x$, we choose an arbitrary representative $x\in G_\gamma(\Q_p) \bar x 
K_p$ of the corresponding double-coset.
 \end{lemma}

\begin{proof}
By definition of $\CmO_\gamma(\tau_{p,\xi})$ and $X(\gamma, \xi)$ we have
\[
 \CmO_\gamma(\tau_{p,\xi})
 = \sum_{\bar x \in G_\gamma(\Q_p)\backslash X(\gamma, \xi)} \vol_{G_\gamma(\Q_p)\backslash G(\Q_p)} \left( 
 G_\gamma(\Q_p) \overline xK_p\right)
\]
where $\vol_{G_\gamma(\Q_p)\backslash G(\Q_p)}$ denotes the volume with respect to the quotient measure on $G_\gamma(\Q_p)\backslash G(\Q_p)$.

Applying Lemma~\ref{lem:quotient:measures} below with $H:=G_\gamma(\Q_p)$ acting by left translations on $X:=G(\Q_p)$, and $\cC:=H\cap K_p$ and $\cD:=x K_p$, we get
\[
\vol_{G_\gamma(\Q_p)}(\cC)\cdot
\vol_{G_\gamma(\Q_p)\backslash G(\Q_p)} \left( G_\gamma(\Q_p) \overline x K_p\right)
\le \vol_{G(\Q_p)} \left(\cC x K_p\right).
\]
By Proposition~\ref{p:bad-reduction}, $\cC$ is contained in a maximal compact subgroup of $G_\gamma(\Q_p)$ with index at most $|D^G(\gamma)|_p^{-1/2}|\det\gamma|_p^{-(n-1)/2}$.
Thus, our normalization of measures in~\S\ref{sec:measures:p-adic} implies that $\vol_{G_\gamma(\Q_p)}\left(\cC\right)\ge |D^G(\gamma)|_p^{1/2}|\det\gamma|_p^{(n-1)/2}$.
Moreover, we have that $\cC\subseteq K_p$ by construction, hence
\[
 \vol_{G(\Q_p)} \left(\cC x K_p\right)
 \le \vol_{G(\Q_p)} \left(K_p x K_p\right).
\]
(Note that the volume of $K_p x K_p$ depends on the choice of representative $x$, whereas
the double-coset $G_\gamma(\Q_p)\overline xK_p$, and a fortiori its volume, is independent of 
the choice.)
\end{proof}

\begin{lemma}\label{lem:quotient:measures}
	If $H$ is a closed unimodular subgroup with Haar measure $\Mvol_H$ of the 
	unimodular locally compact group $X$ with Haar measure $\Mvol_X$, then for any two 
	measurable subsets $\cC \subseteq 
	H$ and $\cD\subseteq X$, we have
  \[
   \vol_{H}\left( \cC\right) \cdot \vol_{H\backslash X} \left( H \backslash H\cdot \cD \right)
   \le \vol_{X} \left(\cC\cdot \cD\right).
  \]
 \end{lemma}
 \begin{proof}
Let $\chi_{\cC\cD}: X\longrightarrow \{0,1\}$ and $\chi_{H\cD}: H\backslash X\longrightarrow \{0,1\}$  be the characteristic functions of $\cC\cdot \cD\subseteq X$ and $H\backslash H\cdot \cD \subseteq H\backslash X$, respectively. By definition of the quotient measure we have
  \[
   \vol_{X}(\cC\cdot \cD) = \int_{X} \chi_{\cC\cD}(x)\, dx
   = \int_{H\backslash X} \int_H \chi_{\cC\cD}(h\cdot \bar x)\, dh\, d\bar x.
  \]
Since we have
\[
 \int_H \chi_{\cC\cD}(h\cdot \bar x)\, dh
 \ge \vol_H(\cC) \chi_{H\cD}(\bar x),\quad \forall \bar x\in H\backslash X,
\]
the assertion follows.
 \end{proof}
\begin{example}
	If $H$ is a finite group acting freely on a finite set $X$, and we use the counting measures, then $|\cC|\cdot |H\backslash H\cdot \cD|\le |\cC \cdot \cD|$. 
\end{example}

Since $\CmO_\gamma(\tau_{p,\xi})$ depends only on the $G(\Q_p)$-conjugacy class of $\gamma$, we 
can assume in establishing~\eqref{prop:p-adic} that $\gamma \in M_n(\Z_p)$ is block diagonal of 
the form $\Mdiag(\delta_1,\ldots,\delta_r)$ with each $\delta_i$ regular elliptic semisimple, by 
Lemma~\ref{l:conjugate-integral}.
In this way Lemma~\ref{assum} applies.
The right-hand side of~\eqref{eq:orb:p-adic} depends on the choice of a representative $x\in G_\gamma(\Q_p) \bar x K_p$ of the double coset $\bar x$.
We now choose an optimal representative $x_{{\min}}\in G_\gamma(\Q_p) \bar x K_p$ such that 
$|x_{\min}|_{G(\Q_p)} = \min_{x\in G_\gamma(\Q_p) \bar x K_p} |x|_{G(\Q_p)}$. This representative 
$x_{\text{min}}$ is in general not unique.

\begin{lemma}\label{l_vol-min}
	There are constants $c,b_1, c_1\ge 0$, depending only on $n$, such that for 
	every prime $p$,  every $\xi_1\ge \cdots \ge \xi_n$, every semisimple $\gamma\in 
	G(\Q_p)\cap p^{\xi_n}M_n(\Z_p)$, and $\bar x\in G_\gamma(\Q_p)\backslash X(\gamma,\xi)$,
		\[
	\vol_{G(\Q_p)}\left( K_p x_{\min} K_p\right) \le c |x_{\min}|^{n^2/4}_{G(\Q_p)},
	\]
	and
	\begin{equation}\label{eq:min:distance}
	|x_{\min}|_{G(\Q_p)}
	\le c p^{b_1 (\xi_1-\xi_n)} |D^G(\gamma)|_p^{-c_1}.
	\end{equation}
\end{lemma}

\begin{proof}
	Inequality~\eqref{eq:min:distance} follows from Lemma~\ref{lem:minimal:distance},
	with the same constants $b_1,c_1\ge 0$.
	Indeed, starting with an arbitrary representative $x\in G_\gamma(\Q_p) \bar x K_p$, there exists $\delta\in G_\gamma(\Q_p)$ such that $|\delta x|_{G(\Q_p)}$ satisfies the inequality, and we have $|x_{\min}|_{G(\Q_p)} \le |\delta x|_{G(\Q_p)}$ by construction of $x_{\min}$.

	Let $\nu=(\nu_1,\ldots,\nu_n)$ with  $\nu_1\ge\ldots\ge \nu_n$ be such that $x_{\min} \in K_p p^\nu K_p$. Then $p^{\nu_1-\nu_n}= |x_{\min}|_{G(\Q_p)}$ and $\vol_{G(\Q_p)}\left( K_p x_{\min} K_p\right)
	=\vol_{G(\Q_p)}\left( K_p p^\nu K_p\right)$ so that Lemma~\ref{lem:degree} below gives the first assertion.
\end{proof}

Recall that if $g\in G(\Q_p)$ with $g\in K_p p^\lambda K_p$, $\lambda=(\lambda_1,\ldots, 
\lambda_n)\in\Z^n$, we write $|g|_{G(\Q_p)} = p^{\max_k \lambda_k - \min_k
	\lambda_k}$. If $g\in G(E)$, we define $|g|_{G(E)}$ similarly, namely if $g\in K_E 
	\varpi_E^\lambda K_E$, then $|g|_{G(E)}=|\varpi_E|_E^{-(\max_k \lambda_k - \min_k 
	\lambda_k)}= p^{\frac{1}{e_{E/\Q_p}}(\max_k \lambda_k - \min_k \lambda_k)}$. Note that 
	$|g|_{G(E)}=|g|_{G(\Q_p)}$ if $g\in G(\Q_p)$.
We have that for every $g=(g_{ij})\in G(E)$,
\begin{equation}\label{gij}
|g_{ij}|_E
\le
p^{-\frac{1}{e_{E/\Q_p}} \min_k \lambda_k}
\le |\det(g)|^{\frac1n}_E \cdot |g|^{1-\frac1n}_{G(E)},
\quad \forall i,j=1,\ldots, n.
\end{equation}

 \begin{lemma}\label{l_number-terms}
There exist $b_1, c_1\ge 0$ depending only on $n$ such that for  every $\xi_1\ge \cdots 
\ge
\xi_n$ and for every semisimple $\gamma\in G(\Q_p)\cap p^{\xi_n} M_n(\Z_p)$ with
splitting field $E/\Q_p$, we have
 \begin{equation}\label{eq:ineq1}
 \# \left( G_\gamma(\Q_p)\backslash X(\gamma, \xi) \right)
 \le \# \{ u\in U(E)/U(E)\cap K_E\mid |u|_{G(E)}\le  p^{b_1(\xi_1-\xi_n)}|D^G(\gamma)|_p^{-c_1}\}.
  \end{equation}

  \end{lemma}

\begin{proof}

We can find $\sigma\in T_0(E)$ and $y\in G(E)$ such that $y^{-1}\sigma y=\gamma$.
By changing $y$ if necessary, we can assume that
	$G_\sigma(E)=M(E)$ with $M$ the Levi component of some standard parabolic subgroup $P=MU\subseteq G$.
We get an injective map $X(\gamma, \xi)\longrightarrow X_E(\sigma, \xi_E)$ given by $xK_p\mapsto yx K_E$. It is therefore equivalent to estimate the number of points in $M(E)\backslash X_E(\sigma, \xi_E)$.

An element $\delta \in M(E)\backslash X_E(\sigma, \xi_E)$ is uniquely determined by $u\in U(E)/U(E)\cap K_E$, which appears in its Iwasawa decomposition $\delta=muk$.
By Lemma~\ref{lem:minimal:distance:split}, we have
\[|u|_{G(E)}  \le p^{b_1  (\xi_1-\xi_n)} |D^G(\sigma)|_E^{-c_1}.
\]
Since $|D^G(\sigma)|_E = |D^G(\gamma)|_p$, this proves the assertion.
\end{proof}

\begin{proof}[Proof of~\eqref{prop:p-adic}]
Without loss of generality we may assume that $\xi_n=0$.
Lemma~\ref{assum} implies
\[
J^G_G(\gamma,\tau_{p,\xi})
\le |\det(\gamma)|^{-(n-1)/2}_p 
\sum_{\bar x \in G_\gamma(\Q_p)\backslash X(\gamma, \xi)} \vol_{G(\Q_p)} \left( K_p x_{\mathrm{min}} 
K_p\right).
\]
By Lemma~\ref{l:upper:DGs}, we have $|\det(\gamma)|^{-1}_p \le p^{n\xi_1}$.
Lemma~\ref{l_vol-min} bounds the volume terms in the inner sum, so that 
it remains to estimate the number of elements in the quotient
	$G_\gamma(\Q_p)\backslash X(\gamma, \xi)$. 
By Lemma~\ref{l_number-terms}, it suffices to estimate the right hand side 
of~\eqref{eq:ineq1}.
Since $u\in U(E)$ satisfies $\det(u)=1$ and $|u|_{G(E)}\le p^{b_1\xi_1}|D^G(\gamma)|_p^{-c_1}$, and
the number of $x\in E/\CmO_E$ with $|x|_E\le R$ is bounded by a constant multiple of $R$,
the asserted estimate follows from~\eqref{gij} and counting all the possible matrix
entries of $u$.
\end{proof}

\subsection{Example: regular semisimple orbital integrals}
For unweighted regular semisimple orbital integrals one can give precise estimates as follows.

For $G=\GL(2)$ the orbital integrals can be computed explicitly for a general semisimple element, see~\cite{book:Langlands:bc,Kot05}.
\begin{lemma}
For every prime $p$, every $\xi_1\ge \xi_2$, and every regular semisimple $\gamma \in \GL(2,\Q_p)$,
\[
J_G^G(\gamma,\tau_{p,\xi})=
|D^G(\gamma)|_p^{1/2} \CmO_{\gamma}(\tau_{p,\xi})
\le 4p^{\xi_1-\xi_2}.
\]
\end{lemma}
\begin{proof}
	Recall that $\tau_{p,\xi}\in\CmH_p$ is the characteristic function of $K_p\Mdiag(p^{\xi_1}, p^{\xi_2})K_p$. Without loss of generality, we may assume that $\gamma$ is $\GL(2,\Q_p)$-conjugate to an element of $K_p p^\xi K_p$, since otherwise  $J_G^G(\gamma,\tau_{p,\xi})=0$.

	If $\gamma$ splits over $\Q_p$, that is, if its eigenvalues are elements of $\Q_p$, then by~\cite{Kot05}*{(5.8.4), (5.8.5)}
	\[
	J_G^G(\gamma, \tau_{p,\xi})=|D^G(\gamma)|_p^{1/2} \CmO_{\gamma}(\tau_{p,\xi})
	=
	\begin{cases}
	1
	&\text{if }\xi_1=\xi_2,\\
	p^{\xi_1-\xi_2}(1-p^{-1})
	&\text{if } \xi_1>\xi_2.
	\end{cases}
	\]
	If $\gamma$ does not split over $\Q_p$, there is a quadratic extension $E/\Q_p$ over which $\gamma$ splits. Then 
	\[
	 J_G^G(\gamma, \tau_{p,\xi})=|D^G(\gamma)|_p^{1/2} \CmO_{\gamma}(\tau_{p,\xi})
	 = \vol(Z(\Q_p)\backslash G_\gamma(\Q_p))^{-1} |D^G(\gamma)|_p^{1/2} \int_{Z(\Q_p)\backslash G(\Q_p)} \tau_{p,\xi}(g^{-1}\gamma g)\, dg,
	\]
and the latter integral equals by \cite{Kot05}*{\S5.9}
	\[
	\begin{cases}
	1+ 2\frac{1-|D^G(\gamma)|_p^{1/2}}{p-1}
	&\text{if }E/\Q_p\text{ is unramified and }\xi_1=\xi_2,\\
	(1+p^{-1})p^{\xi_1-\xi_2}
	&\text{if }E/\Q_p\text{ is unramified and } \xi_1> \xi_2,\\
	2+ 2\frac{1-|D^G(\gamma)|_p^{1/2}}{p-1}
	&\text{if }E/\Q_p\text{ is ramified and }\xi_1=\xi_2,\\
	2p^{\xi_1 - \xi_2}
	&\text{if }E/\Q_p\text{ is ramified and }\xi_1 > \xi_2.
	\end{cases}
	\]
	The inverse volume $\vol(Z(\Q_p)\backslash G_\gamma(\Q_p))^{-1}$ equals the 
	discriminant 
	$|\operatorname{Disc}(\CmO_E)|_p \le1$ 
	of $E/\Q_p$. 
	The lemma follows.
	Also the constant $4$ is sharp, since it is achieved for $\xi_1=\xi_2=0$, $E/\Q_p$ ramified quadratic extension, and $\gamma \in \CmO_E^\times \subset K_p$ with $|D^G(\gamma)|_p\to 0$.
\end{proof}

\begin{example}
	If $|D^G(\gamma)|_p=1$ and $\gamma\in K_p$ is semisimple, then $E$ is either $\Q_p\times \Q_p$ or an unramified quadratic extension of $\Q_p$ and $\CmO_\gamma(\tau_{p,0})=1$ by Lemma~\ref{l:DG=1}. This is consistent with the formulas given in the proof of the above lemma.
	\end{example}

For general $G=\GL(n)$, and $\xi=0$, we deduce the following from results of 
Yun~\cite{Yun13}.
\begin{proposition}\label{prop:reg:ss:orb:int}
	For every $\epsilon >0$, there exists a constant $c(n,\epsilon)>0$ depending only on $n$ and $\epsilon$ such that for every prime $p$ and every $\gamma\in G(\Q_p)$ semisimple, which is either regular or splits over $\Q_p$,
\[
 J_G^G(\gamma,\tau_{p,0})
=
|D^G(\gamma)|_p^{1/2} \CmO_{\gamma}(\tau_{p,0})
\le c(n,\epsilon) |D^G(\gamma)|_p^{-\epsilon},
\]
where $\tau_{p,0}\in \CmH_p$ denotes the characteristic function of $K_p$.
\end{proposition}

\begin{proof}
Let $M\subseteq G$ be the smallest $\Q_p$-split Levi subgroup such that $C_G(\gamma,\Q_p)\subseteq M(\Q_p)$. Conjugating $\gamma$ by an element of $K_p$ if necessary, we can assume that $M$ is a standard Levi subgroup. By parabolic descent we have
\[
 J_G^G(\gamma,\tau_{p,0})
=J_M^M(\gamma,\tau_{p,0}^{(P)})
\]
for any $P=MU\in\CmP(M)$. Now for any $m\in M(\Q_p)$ we have
\begin{align*}
\tau_{p,0}^{(P)}(m)
& = \delta_P(m)^{1/2} \int_{U(\Q_p)} \int_{K_p} \tau_{p,0}(k^{-1} m u k)~dk~du
  = \delta_P(m)^{1/2} \int_{U(\Q_p)} \tau_{p,0}( m u )~du\\
& =\begin{cases}
    1						&\text{if } m\in K_p\cap M(\Q_p)=:K_p^M,\\
    0						&\text{else},
   \end{cases}
\end{align*}
that is, $\tau_{p,0}^{(P)}=\tau_{p,0}^M$ is the characteristic function of $K^M_p$. Here for the last equality we used that for any $u\in U(\Q_p)$ we have $\tau_{p,0}( m u )=0$ unless $m\in K_p\cap M(\Q_p)$.

By assumption, $\gamma$ is either regular elliptic or central in $M(\Q_p)$.  In the latter case, we trivially have
\[
 J_G^G(\gamma,\tau_{p,0})
= J_M^M(\gamma, \tau_{p,0}^M)
= \tau_{p,0}^M(\gamma)
= \tau_{p,0}(\gamma) \le 1.
\]

In the former case that $\gamma$ is regular elliptic in $M(\Q_p)$, we apply Yun's
estimate on unweighted regular semisimple orbital
integrals~\cite{Yun13}*{Thm.1.5}.

Without loss of generality, we may assume that $\gamma\in K_p$ is regular elliptic, 
so $M=G$.
Let $R=\Z_p[X]/P_\gamma(X)$, and $\delta$ denote the length of the $\Z_p$-module 
$\CmO_{E}/\Z_p[\gamma]$.
We have $p^\delta \le |D^G(\gamma)|_p^{-1/2}$ by Lemma~\ref{lem:length}.
In the notation of~\cite{Yun13}*{\S1.4}, we have
\[
O_\gamma(\tau_{p,0})
\le  p^{-d \delta} M_{\delta,r}(p^{d}),
\]
where $d,r\in \Z_{\ge 1}$ are certain invariants of $P_\gamma(X)$, and $M_{\delta,r}$ is 
a polynomial of degree $\delta$. We find
\[
 M_{\delta,r}(x) \le M_{\delta,\delta+1}(x)
 \le 2 \delta p(\delta) x^{\delta}
\]
for every $x\ge 1$, where $p(\delta)$ denotes the number of partitions of the 
integer $\delta$. Since $p(\delta)\ll \exp(\pi \sqrt{2\delta/3})\le c(\eps) \exp(\eps \delta)$ for 
every $\eps>0$ and $\delta\in \Z_{\ge 1}$ by the Hardy--Ramanujan asymptotic, we deduce
\[
J_G^G(\gamma,\tau_{p,0})
 \le 2 \delta p(\delta)
 \le 2 c(\eps)^n \exp(\eps  \delta)
 \le  2c(\eps)^n |D(\gamma)|^{-\eps/2}_p.
\]
This concludes the proof of the proposition.
\end{proof}

\section{Spectral side and conclusion of proof of Theorem \ref{thm:weyl}}\label{sec:spectral}

\subsection{Spectral side}
Let $\Pi_{\text{disc}}(G(\A)^1)$ (resp.\ $\Pi_{\text{cusp}}(G(\A)^1)$) denote the set of 
irreducible unitary representations occurring in the discrete (resp.\ cuspidal) 
part of $L^2(G(\Q)\backslash G(\A)^1)$. 
For $\pi\in\Pi_{\text{disc}}(G(\A)^1)$ with $\pi_\infty$ spherical we denote by 
$\lambda_{\pi_\infty}\in i\ka^*$ the infinitesimal character of $\pi_{\infty}$.

\begin{proposition}
Fix $h\in C_c^\infty(\mathfrak a)^W$. For every $\mu\in i\mathfrak a^*$,
\begin{equation}\label{bound-Jspec}
J_{\rm spec}((f^\mu_{\pm}\cdot \tau_0)|_{G(\A)^1}) =  \Jgeom((f^\mu_{\pm}\cdot 
\tau_0)|_{G(\A)^1}) 
\ll_{h} (1+||\mu||)^{d-r},
\end{equation}
where $f^\mu_{\pm}$ is defined in \S\ref{sub:global-test}, and moreover
\[
 J_{\rm disc}((f^\mu_{\pm}\cdot \tau_0)|_{G(\A)^1}) \ll_{h} (1+||\mu||)^{d-r}.
\]
\end{proposition}

\begin{proof}
The first estimate is~\eqref{simple_geom_est}, which is an improvement on 
\cite{LM09}*{Prop.4.2} in that we have removed the requirement on the small 
support of 
$h$.
The second estimate follows from the first estimate~\eqref{bound-Jspec} in the 
same way as for the proof~\cite{LM09}*{Prop.4.3}, via
an inductive procedure on the non-discrete spectrum which is established in 
\cite{LM09}*{\S6}, and \cite{Muller07}*{\S5}, see also the proof of 
Proposition~\eqref{prop:spectral} below.
\end{proof}

\begin{proposition}\label{lem:local_upper}
 For $R\in \R_{\ge1}$ and $\mu\in i\ka^*$, the number of 
 spherical, everywhere unramified $\pi\in\Pi_{\text{disc}}(G(\A)^1)$    with 
 $\|\lambda_{\pi_\infty}-\mu\|\le R$ satisfies
 \[
 \left|
\{
\pi\in \Pi_{\text{disc}}(G(\A)^1),\
\|\lambda_{\pi_\infty}-\mu\|\le R,\
\pi_\infty^{K^\circ_\infty} \neq 0,\
\pi_f^{K_f}\neq 0
\}\right|
 \ll 
R^{r}
\prod_{\alpha>0} 
(R+|\langle\alpha,\mu\rangle|)
 \]
where the implied constant is independent of $R$ and  $\mu$ (it depends only on 
$n$), and the product runs 
over all positive roots with respect to $(T_0,U_0)$.
\end{proposition}

\begin{proof}
We can directly use the results of~\cite{LM09} because we are free to pass to a 
principal congruence subgroup $K_f(3)\subset K_f$ (since $\pi_f^{K_f}\neq 0 \implies 
\pi_f^{K_f(3)}\neq 0$), for which \cite{LM09}*{Prop.4.5} applies.
Alternatively, one can verify 
that the proof of~\cite{LM09}*{Prop.4.5} only relies upon the 
above bound~\eqref{bound-Jspec} for $J_{\rm spec}$, hence we could also repeat their 
proof in our present setting.
\end{proof}

We need to know how the spectral side of the trace formula behaves for our 
family of test
functions $f^\mu_\pm \cdot \tau$ from \S\ref{sub:global-test}.  We assume from now on
that the fixed function $h$ used to construct $f_\pm^\mu$ in 
\eqref{eq:def:test:function}
satisfies $h(0)=1$.

\begin{proposition}\label{prop:spectral}
	There exists $c>0$ depending only on $n$ and $h$ such that for every $\tau\in\CmH$ with $|\tau|\le 1$, and every $\mu\in i\ka^*$,
	\[
	\left| J_{\text{spec}}\left((f^{\mu}_\pm\cdot\tau)_{|G(\A)^1}\right)
	- \sum_{\pi\in\Pi_{\text{cusp}}(G(\A)^1)}  \Mtr \pi(f^\mu_\pm \cdot \tau)\right|
	\le c \|\tau\|_{L^1(G(\A))} (1+\|\mu\|)^{d-r-1} (\log_+ \|\mu\|)^{\max(3, n)},
	\]
	 where $\log_+(x) :=\max(1,\log x)$.
\end{proposition}

\begin{proof}
The method is to reduce to the (local) estimation of the operator norm of $f^{\mu}_\pm\cdot\tau$ acting on certain induced representations, and the (global) estimation of the discrete non-cuspidal spectrum for the test function $(f^{\mu}_\pm\cdot \tau_0)_{|G(\A)^1}$  for $\tau_0$ the characteristic function of the maximal compact subgroup $K_f$.
Up to some changes we shall follow the proof of~\cite{LM09}*{Prop.4.3}, 
and of~\cite{Matz}*{Prop.15.1 and Lem.16.2}.

For a test function $f\in C_c^\infty(G(\A)^1)$ write 
$J_{\text{disc}}(f)=\sum_{\pi\in\Pi_{\text{disc}}(G(\A)^1)} \Mtr \pi(f)$ for the 
contribution of the discrete spectrum to the trace formula. By~\cite{MS:gln} the 
difference $J_{\text{spec}}(f)-J_{\text{disc}}(f)$ can be written as a finite linear 
combination of distributions $J_{M,s}(f)$ with $M\in\CmL(T_0)$ running over all 
semistandard Levi subgroups $\neq G$, and $s\in M\backslash {\rm Norm}_G(M)$ with ${\rm 
Norm}_G(M)$ the  normalizer of $M$ in $G$. We first show that for each such $M$ 
and $s$ the absolute value $\left| J_{M,s}(f^{\mu}_\pm\cdot\tau)\right|$
is bounded by $\|\tau\|_{L^1(G(\A))} (1+\|\mu\|)^{d-r-1} (\log_+ \|\mu\|)^{\max(3,
n)}$. If $\tau=\tau_0$, this was proven in~\cite{LM09}*{\S6} and we generalize their
argument. The absolute value $\left| J_{M,s}(f^{\mu}_\pm\cdot\tau)\right|$ is bounded from
above by
\begin{equation}\label{eq:spectral1}
\sum_{\pi\in\Pi_{\text{disc}}(M(\A)^1)} \int_{i\ka_L} \left|  \widehat{h}(\lambda+\lambda_{\pi_\infty}-\mu)\Mtr\left(\FmM_L(P,\pi,\lambda) M_{P|P}(0,s, \pi) \rho(\pi,\tau) \right) \right| \, d\lambda
\end{equation}
where $L\in\CmL(M)$ is a certain Levi group (determined by $M$ and $s$), $P=MU\in\CmP(M)$, $\FmM_L(P,\pi,\lambda)$ and  $M_{P|P}(0,s, \pi)$ are certain intertwining operators, and $\rho(\pi,\cdot)$ denotes the right regular representations on the $K_\infty$-fixed part of the $\pi$-isotypical component in the space of automorphic forms $\CmA_\pi(P)$ on $U(\A)M(\Q)\backslash G(\A)$.  Note that the image of $\rho(\pi,\tau)$ consists only of  $K$-invariant vectors, i.e, $ \rho(\pi,\tau)= \Mtr \pi^{K_\infty}(\tau^{(P)}) (\Pi_{K})_{|\CmA_\pi(P)}$ with $\Pi_K$ the projection onto $K$-invariant vectors and $\tau^{(P)}$ the parabolic descent of $\tau$ along $P$. Hence~\eqref{eq:spectral1} is bounded from above by
\[
\sum_{\pi\in\Pi_{\text{disc}}(M(\A)^1)} \left|\Mtr\pi^{K_\infty}(\tau^{(P)})\right| \int_{i\ka_L} \left|  \widehat{h}(\lambda+\lambda_{\pi_\infty}-\mu)\Mtr\left(\FmM_L(P,\pi,\lambda) M_{P|P}(0,s, \pi) (\Pi_K)_{|\pi} \right) \right| \, d\lambda.
\]
Since $|\Mtr\pi^{K_\infty}(\tau^{(P)})|\leq \|\tau\|_{L^1(G(\A_f))}$,
and the integral over $d\lambda$ is bounded in~\cite{LM09}*{(6.3)},
 the asserted bound for $\left| J_{M,s}(f^{\mu}_\pm\cdot\tau)\right|$ follows.

To finish the proof we still need to show that
\[
J_{\text{disc}}(f^{\mu}_\pm\cdot\tau) -\sum_{\pi\in \Pi_{\text{cusp}}(G(\A)^1)} \Mtr \pi(f^\mu_\pm \cdot \tau)
=\sum_{\pi\in \Pi_{\text{disc}}(G(\A)^1)\backslash \Pi_{\text{cusp}}(G(\A)^1)} \Mtr \pi(f^\mu_\pm \cdot \tau)
\]
is bounded from above by the right hand side given in the proposition.

In view of Proposition~\ref{prop:local_weyl_bound} below, we
have 
\[
\sum_{\pi\in\Pi_{\text{disc}}(G(\A)^1)\smallsetminus \Pi_{\text{cusp}}(G(\A)^1)}\!\!\!\!\!\! 
\left|\Mtr\pi^{K_\infty}(\tau)\right| |\widehat{h}(\lambda_{\pi_\infty}-\mu)|
\ll \|\tau\|_{L^1(G(\A_f))} (1+\|\mu\|)^{d-r-1},
\]
because for any integer $N \ge 0$ we have 
$|\widehat{h}(\nu)|\ll_N 
(1+\|\nu\|)^{-N}$. 
 This proves that the 
discrete but non--cuspidal automorphic representations only contribute to the 
error term given on the right hand side of the displayed inequality in the 
proposition, and it therefore finishes the proof of the proposition.
\end{proof}

\begin{corollary}\label{cor:spectral}
	There exists a constant $c>0$ depending only on $n$, $h$, and $\Omega$ such that the following holds. For every $\tau\in\CmH$ with $|\tau|\le1$, and every $t\ge 1$,
	\begin{multline*}
	\left|\int_{t\Omega} J_{\text{spec}}\left((f^{\mu}_\pm\cdot\tau)_{|G(\A)^1}\right)~d\mu
	- \sum_{\substack{\pi\in\Pi_{\text{cusp}}(G(\A)^1) \\ \lambda_{\pi}\in t\Omega}} \dim (\pi_{\infty}\otimes\chi_\pm)^{K_{\infty}} \Mtr \pi_f(\tau)\right|\\
	\le c \|\tau\|_{L^1(G(\A))} t^{d-1}(\log t)^{\max(3,n)}.
	\end{multline*}
\end{corollary}
\begin{proof}
	We have that $
	\int_{t\Omega} \Mtr \pi_\infty(f^\mu_\pm) d\mu$
	approximates
	$\dim(\pi_\infty \otimes \chi_\pm)^{K_\infty}$ if $\lambda_\pi\in t\Omega$ by~\cite{LM09}*{(4.6)} so that the corollary follows from Proposition~\ref{prop:spectral} by integrating its inequality over $\mu\in t\Omega$.
\end{proof}

\begin{remark}
	On the spectral side we have a better control over the dependence on $\tau$ compared to the geometric side. This is due to the fact that the proof of Proposition~\ref{prop:spectral} reduces to the (local) estimation of the operator norm of $\rho((f^{\mu}_\pm\cdot\tau) _{|G(\A)^1})$ on certain induced representations (the Jacquet-Shalika bound).   We could establish an even better bound saving a power of $\|\tau\|_{L^1(G(\A_f))}$ by writing down fully the Hecke eigenvalue of the block of the induced representations and applying bounds towards Ramanujan. For example for $\GL(2)$ the eigenvalues of Eisenstein series are units which indeed is often useful in applications of the Selberg trace formula. Ultimately the best possible bound could be deduced by induction on the Sato-Tate equidistribution for smaller groups. These improvements are more complicated to implement  so we have favored the present argument for Proposition~\ref{prop:spectral} which provides a straightforward separation of variables.
\end{remark}

\subsection{Bound on the number of residual representations}
If $\pi\in \Pi_{\text{disc}}(G(\A)^1)$ is spherical and non-cuspidal, it is 
known, for example by the 
classification of the discrete spectrum of $\GL(n)$ by Moeglin--Waldspurger, 
that $\pi_\infty$ is non--tempered, 
that 
is, $\lambda_{\pi_\infty}\in \ka_\C^*\smallsetminus i\ka^*$. 
Moreover $\Im \lambda_{\pi_\infty}\in (\ka_M^G)^*$ for some semistandard Levi subgroup 
$M\neq T_0$ by 
\cite[\S3]{LM09}.
We deduce the following estimate for 
the number of residual
representations in balls of varying radius:
\begin{proposition}\label{prop:local_weyl_bound}
 For $R\in \R_{\ge1}$ and $\mu\in i\ka^*$, the number of non--cuspidal, 
 spherical, everywhere unramified $\pi\in\Pi_{\text{disc}}(G(\A)^1)$    with 
 $\|\lambda_{\pi_\infty}-\mu\|\le R$ satisfies
 \[
 \left|
\{
\pi\in \Pi_{\text{disc}}(G(\A)^1)\setminus \Pi_{\text{cusp}}(G(\A)^1),\
\|\lambda_{\pi_\infty}-\mu\|\le R,\
\pi_\infty^{K^\circ_\infty} \neq 0,\
\pi_f^{K_f}\neq 0
\}\right|
 \ll (R+\|\mu\|)^{d-r-1} R^{r-1}
 \]
where the implied constant is independent of $R$ and  $\mu$ (it depends only on 
$n$).
\end{proposition}
\begin{proof}
For each semistandard Levi subgroup $M$, $R \ge1$, and $\xi\in \ka^*$ let 
$S_R^M(\xi)$ 
denote the set of all non--tempered, spherical, everywhere unramified $\pi\in 
\Pi_{\text{disc}}(G(\A)^1)$ with $\Im(\lambda_{\pi_\infty})\in (\ka_M^G)^*$ and 
$\|\Im(\lambda_{\pi_\infty})-\xi\|\le R$. 
Since $\Im \lambda_{\pi_\infty}\in (\ka_M^G)^*$ for some $M\neq T_0$, we have
\[
\left|
\{
\pi\in \Pi_{\text{disc}}(G(\A)^1)\setminus \Pi_{\text{cusp}}(G(\A)^1),\
\|\lambda_{\pi_\infty}-\mu\|\le R,\
\pi_\infty^{K^\circ_\infty} \neq 0,\
\pi_f^{K_f}\neq 0
\}\right|
\le
\sum_{M\neq T_0} \# S^M_R(\Im \mu),
\]
where the sum runs over all semistandard Levi subgroups $M\neq T_0$. 

Fix such a 
Levi subgroup $M\neq T_0$. 
We can find 
points $\xi_1,\ldots, \xi_{K}\in (\ka_M^G)^*$ with $K\ll R^{\dim\ka_M^G}\le R^{r-1}$ and 
$\|\xi_j\|\le \|\mu\|+R$ such that 
\[
 S^M_R(\Im \mu)
 \subseteq \bigcup_{j=1}^{K} S^M_1(\xi_j).
\]
We bound the number of elements in $S^M_1(\xi_j)$ from above by the number of 
all spherical, everywhere unramified $\pi\in \Pi_{\text{disc}}(G(\A)^1)$ with $\|\Im 
\lambda_{\pi_\infty}-\xi_j\|\le 1$ using Proposition~\ref{lem:local_upper} (note that the 
real part of $\lambda_{\pi_\infty}$ is uniformly bounded because $\pi_\infty$ 
is unitary). 

Since $\xi_j\in (\ka_M^G)^*$, 
we have $|\langle\alpha,\xi_j\rangle|=0$ for at least one positive root $\alpha$. Hence
\[
 \# S^M_1(\xi_j)
 \ll (1+\|\xi_j\|)^{d-r-1} 
 \ll (R+\|\mu\|)^{d-r-1}.
\]
Summing up, we obtain that 
\[
\# S^M_R(\Im \mu)
\ll K 
(R+\|\mu\|)^{d-r-1} 
\ll 
(R+\|\mu\|)^{d-r-1} R^{r-1},
\]
which concludes the proof of the proposition.
\end{proof}

\subsection{Hecke operators and a volume estimate}
The $L^1$-norm of a Hecke operator is equal to the volume of a $K_p$ double-coset which
can be estimated as follows.
\begin{lemma}\label{lem:degree}
	There exists a constant $c>0$ depending only on $n$, such that for every prime $p$ and every $\xi=(\xi_1,\ldots, \xi_n)$ with $\xi_1\ge\ldots\ge\xi_n$,
	\[
	p^{\xi_1-\xi_n}
	\le \vol_{G(\Q_p)} \left(K_p p^\xi K_p\right) =
	\|\tau_{p,\xi}\|_{L^1(G(\Q_p))}
	\le c p^{n^2 (\xi_1-\xi_n)/4}.
\]
\end{lemma}
\begin{proof}
	It follows from \cite{Gross:satake}*{Prop.7.4} that
	\[
	\vol_{G(\Q_p)} \left(K_p p^\xi K_p\right) =
	\frac{\#(G/P_\xi)(\F_p)}{p^{\dim (G/P_\xi)}} p^{\langle\xi,2\rho\rangle},
	\]
	with $\rho$ the half-sum of all positive roots of $T_0$ acting on the Borel 
	$P_0$.

	By a trivial bound on the dimension of cohomology, we deduce that
	the number of $\F_p$-points on the partial flag variety $G/P_\xi$ satisfies
	\[
	1
	\le \frac{\#(G/P_\xi)(\F_p)}{p^{\dim (G/P_\xi)}}
	\le \dim(H^*(G/P_\xi)) \le |W| = n!
	\]
	More precise upper-bounds can be obtained as follows. The above number of
	$\F_p$-points on $G/P_\xi$ is a $p$-multinomial coefficient,
	by writing
	\[
	\#G(\F_p) = p^{n^2} \prod_{1\le i \le n} (1-p^{-i}),
	\]
	and similarly for $\#P_\xi(\F_p)$.
	We find
	\[
	\frac{\#(G/P_\xi)(\F_p)}{p^{\dim (G/P_\xi)}} \le
	\frac{\#(G/P_0)(\F_p)}{p^{\dim (G/P_0)}}
	=(1-p^{-1})^{1-n} \prod_{2\le i \le n} (1-p^{-i})
	\le
	(1-p^{-1})^{-n} \le 2^n.
	\]

	Since
	$
	\langle\xi, 2\rho\rangle = \sum\limits_{1\le i < j \le n} (\xi_i-\xi_j) \ge \xi_1-\xi_n$, the lower-bound follows. Conversely, we have
	\[
	\sum_{1\le i < j \le n} (\xi_i-\xi_j) = \sum_{1\le i \le n-1} i(n-i)(\xi_i-\xi_{i+1}) 
	\le \left\lfloor \frac{n}{2} \right\rfloor 
\left\lceil \frac{n}{2} \right\rceil
 (\xi_1-\xi_n).
	\]
Since $\left\lfloor n/2 \right\rfloor 
\left\lceil n/2 \right\rceil \le n^2/4$, the upper-bound follows.
\end{proof}

\subsection{Conclusion of the proof of Theorem~\ref{thm:weyl}}
If $\tau$ is the characteristic function of a bi-$K_f$-invariant subset as in the theorem, then
$\tau=\sum_{\xi\in\Xi} \tau_{\xi}$ for some finite set $\Xi$ of tuples $\xi=(\xi^p)_p$ with $\xi^p=(\xi_1^p,\ldots,\xi^p_n)$ such that $\xi^p_1\ge\ldots\ge\xi^p_n$, $\xi^p=0$ for all but finitely many $p$, and $\tau_{\xi}=\prod_p \tau_{p, \xi^p}$.
By linearity it suffices to establish the estimate for a single $\tau_\xi$.

Every term appearing in  Theorem~\ref{thm:weyl} is invariant if we replace $\tau_\xi$ by
$\tau_{\tilde\xi}$ under the condition that for every $p$, $\xi^p-\tilde\xi^p=(a_p,\ldots, 
a_p)$ for
some integer $a_p\in\Z$, and thus we may assume without loss of generality that
$\xi_n^p=0$ for every $p$. Indeed, this invariance holds by combining the following:
$\Mtr\pi_p(\tau_{\xi^p})=\Mtr\pi_p(\tau_{\tilde\xi^p})$ for every spherical unramified
$\pi\in\Pi_{\text{cusp}}(G(\A)^1)$, since the central character of $\pi$ is then trivial;
  $\|\tau_\xi\|_{L^1(G(\A_f))} = \|\tau_{\tilde \xi}\|_{L^1(G(\A_f))} $ because of the
  invariance of Haar measures by translation; finally, $\sum_{\gamma\in
Z(\Q)/\{\pm1\}} \tau_\xi(\gamma)=\sum_{\gamma\in Z(\Q)/\{\pm1\}}
\tau_{\tilde\xi}(a^{-1}\gamma)=\sum_{\gamma\in Z(\Q)/\{\pm1\}} \tau_{\tilde\xi}(\gamma)$
with $a=\prod_p p^{a_p}\in \Q^\times \simeq Z(\Q)$.

Globally we take the test function $(f^{\mu}_+\cdot\tau_\xi)_{|G(\A)^1}$ or
$(f^{\mu}_-\cdot\tau_\xi)_{|G(\A)^1}$, depending on whether $\chi=\chi_+$ or
$\chi=\chi_-$.
Corollary~\ref{cor:spectral} relates the first term of Theorem~\ref{thm:weyl} and
$\int_{t\Omega} J_{\rm spec}((f^{\mu}_\pm\cdot\tau_\xi)_{|G(\A)^1})$, up to a remainder
term which is admissible.

Arthur's trace formula is $J_{\rm spec}=J_{\rm geom}$. We then apply
Corollary~\ref{cor:bound-Jgeom}, which relates
$\int_{t\Omega} J_{\rm geom}((f^{\mu}_\pm\cdot\tau_\xi)_{|G(\A)^1})$ to
$\Lambda_\Omega(t)\prod_{p<\infty} \delta(\xi^p)$,
together with Lemma~\ref{lem:degree} to show that the remainder term is admissible.
Since
\[
\prod_{p<\infty} \delta(\xi^p) = \sum_{\gamma \in Z(\Q)/\{\pm 1\}} \tau_\xi(\gamma),
\]
we recover the second term of Theorem~\ref{thm:weyl}, which concludes the proof.
\qed

\subsection{Local Weyl's law}
We record the following variant of Theorem~\ref{thm:weyl}.
\begin{theorem}
	There exist $\delta>0$ and $A<\infty$, depending only on $n$, and $c>0$ depending only on $n$ and $h$ such that for every $\tau$ the characteristic function of a compact bi-$K_f$-invariant set, and every $\mu\in i\Fma^*$,
\begin{equation*}
\left|\sum_{\pi\in\Pi_{\text{cusp}}(G(\A)^1)}  \Mtr \pi(f^\mu_\pm \cdot \tau)
-
2 \vol(G(\Q)\backslash G(\A)^1) f^\mu_+(1) \prod_{p<\infty}\delta(\xi^p)\right|
\le c  (1+\|\mu\|)^{d-r-\delta} ||\tau||^A_{L^1},
\end{equation*}
where $\delta(\xi^p)=1$ if  $\xi^p_1=\ldots=\xi_n^p$, and
$\delta(\xi^p)=0$ otherwise.
\end{theorem}

\begin{proof}
	Similarly as before, we reduce to the case $\tau=\tau_\xi$, and $\xi_n^p=0$.
	We apply Proposition~\ref{prop:spectral} for the spectral side.
For the geometric side, we apply Theorem~\ref{th:bound-Jgeom}, Proposition~\ref{p:unipotent-classes}.(i), together with Lemma~\ref{lem:degree}, and finally note that if $z=\Mdiag(a,\ldots,a) \in G(\Q)$ is central, $\left(f^\mu_\pm\cdot\tau\right)(z)=0$ unless $\xi^p_1=\ldots=\xi_n^p$ for every $p$. In the latter case, $\left(f^\mu_\pm\cdot\tau\right)(z)=0$ unless $|a|_p=p^{\xi^p_1}$ for every $p$ in which case $\left(f^\mu_\pm\cdot\tau\right)(z)
=f^\mu_+(1)$.
\end{proof}

\subsection*{Acknowledgments.}
 The authors would like to thank Erez Lapid and Peter Sarnak for various discussions
 along the years about this project.
NT thanks Sigurdur Helgason for an helpful discussion and confirming that Proposition~\ref{prop:spher:fct:bound} was new.
We thank Farrell Brumley, Simon Marshall, and Sug Woo Shin for helpful 
comments. We thank the referee for a careful reading.
Parts of this paper have been written while the authors were visiting the 
Israel Institute for Advanced Studies, Princeton University and the Hausdorff 
Institute in Bonn. We thank all of these institutions for their hospitality.
JM acknowledges support from the NSF under grant No. DMS-0932078 while in
residence at the MSRI, from the NSF under agreement No. DMS-1128155
while in residence at the IAS, and from Israel Science Foundation grant no.\ 1676/17. NT acknowledges support from the NSF under
agreement No. DMS-1200684.

\bibliographystyle{amsalpha}
\bibliography{glbib,allmac}

\end{document}